\numberwithin{equation}{section}
\definecolor{change}{rgb}{0,.55,.55}
\definecolor{ao(english)}{rgb}{0.0, 0.5, 0.0}
\let\eps\varepsilon  
\newcommand{\N}{{\mathbb N}}  
\newcommand{\R}{{\mathbb R}}
\newcommand{\dom}{{\mathcal O}}
\newcommand{\Law}{{\mathcal L}}
\newcommand{\Prob}{\mathbb{P}}
\newcommand{\E}{\mathbb{E}}
\newcommand{\Wd}{\mathcal{P}^{1}}
\newcommand{\indicator}{\mathbf{1}}
\newcommand{\idx}{\operatorname{id}_{x}}
\newcommand{\idxo}{x}
\renewcommand{\d}{\,\textnormal{d}}
\newcommand{\dt}{\,\textnormal{d}t}
\newcommand{\ds}{\,\textnormal{d}s}
\newcommand{\dr}{\,\textnormal{d}r}
\newcommand{\dx}{\,\textnormal{d}x}
\newcommand{\dy}{\,\textnormal{d}y}
\newcommand{\dW}{\,\textnormal{d}W}
\newcommand{\dB}{\,\textnormal{d}B}
\newcommand{\V}{V}
\newtheorem{theorem}{Theorem}  
\newtheorem{lemma}[theorem]{Lemma}   
\newtheorem{proposition}[theorem]{Proposition}   
\newtheorem{remark}[theorem]{Remark}   
\newtheorem{definition}[theorem]{Definition}  
\newtheorem{notation}[theorem]{Notation}  
\newtheorem{assumption}[theorem]{Assumption} 
\title[Polynomial interacting particle systems and non-linear SPDEs]{Polynomial interacting particle systems and non-linear SPDEs for market capitalization curves}
\author{Christa Cuchiero \and Florian Huber}
\address{University of Vienna, Department of Statistics and Operations Research, Data Science @ Uni Vienna, Kolingasse 14-16, 1090 Wien, Austria}
\email{christa.cuchiero@univie.ac.at}
\thanks{The authors gratefully acknowledge financial support 
through grant Y 1235 of the START-program.}
\address{École Polytechnique Fédérale de Lausanne, Switzerland}
\email{florian.huber@epfl.ch}
\date{}
\subjclass[2020]{60H15, 60B10, 60H30}
\keywords{Particle systems, non-linear SPDEs,  McKean Vlasov SDEs, polynomial processes, stochastic portfolio theory}
\begin{document}

\begin{abstract}
Motivated by the robustness of the capital distribution curves, we study the behavior of a certain polynomial equity market model as the number of companies goes to infinity. 
More precisely, we extend volatility-stabilized market models introduced by Fernholz et al.~ \cite{FK05_relative_arbitrage_volatility_stabilized}
by allowing for a common noise term such that the models remain polynomial. As the number of companies approaches infinity, we show that the limit of the empirical measure of the $N$-company system converges to the unique solution of a degenerate, non-linear SPDE. The obtained limit also has a representation as the conditional probability of the solution to a certain McKean-Vlasov SDE. Together with its conditional, this is again a polynomial process for which we can prove pathwise uniqueness as well as regularity properties for the marginal densities.
We also provide conditional propagation of chaos results and numerical implementations of the particle system as well as its limiting equations.
\end{abstract}

\maketitle
\tableofcontents

\section{Introduction}

The stability of the \emph{capital distribution curves} over time, as shown in Figure~\ref{fig:1},
\begin{figure}[ht]
\centering
\includegraphics[width=0.6\textwidth]{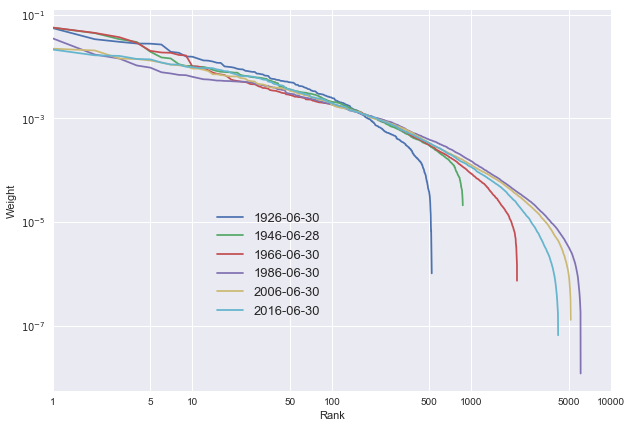}
\caption{Capital distribution curves: 1926 - 2016, source \cite{R:18}} \label{fig:1}
\end{figure}
can be seen as a universal phenomenon in finance.
By this, we here mean a robust empirical feature that holds universally
across different markets, asset classes, and in particular over time.
Each of the curves in Figure~\ref{fig:1} depicts the relative market capitalization in ranked order of the major US markets' stocks on a log-log scale from 1926 to 2016. The \emph{relative market capitalization} or \emph{market weight} is defined as the percentage of the market capitalization of a fixed company, i.e., the number of outstanding shares times the current price of one share, divided by the capitalization of the whole market.
The striking feature of these curves is their remarkably stable shape over the last century. Although the market weights of each company fluctuate stochastically, 
the shape of the capital distribution curves differs (in first order) over the years only by the number of stocks present
in the considered market. This fundamental observation was the starting point for R.~Fernholz to develop \emph{stochastic portfolio theory (SPT)} about 20 years ago, see \cite{fernholz_02_stochastic_portfolio_theory} and in particular Figure 5.1 therein. 

SPT does not only 
analyze these capital distribution curves but also 
 seeks to explain the superior performance relative to the market portfolio of specific portfolios that exploit the described stability features.  This analysis leads to the concept of \emph{relative arbitrages}. According to ~\cite{vervuurt_15_topics_in_stochastic_portfolio_theory}, models which allow for such relative arbitrages can be categorized into three main classes, namely rank-based models (see \cite{banner_05_atlas_model_in_equity_markets,ichiba_11_hybrid_atlas_model, Jourdain_Reygner_15_mean_field_atlas}), diverse models (see \cite{fernholz_02_stochastic_portfolio_theory}) and sufficiently volatile models (see, e.g., \cite{fernholz_18_volatility_and_arbitrage}).
 
One example of the latter are so-called
\emph{volatility-stabilized market models} introduced in \cite{FK05_relative_arbitrage_volatility_stabilized} which fall into the rich class of
of \emph{polynomial diffusion processes} (see \cite{cuchiero2012polynomial,filipovic_larsson_16_polynomial_diffusion, Cuchiero_19_polynomial_processes_SPT}). Together with the subset of affine diffusion processes (see \cite{DFS:03}) and infinite dimensional extensions (see e.g.~\cite{schmidt2020infinite,  Cuchiero2019InfinitedimensionalPP}),
they appear throughout in mathematical finance, but also in several other fields like population genetics, chemistry, and physics. This, together with certain universal approximation properties of so-called signature SDEs \cite{arribas2020sig, cuchiero2023signature}, justifies to classify them
as \emph{universal stochastic modeling class}.

Hence, one goal of the present work is to combine some aspects of this mathematical universality with the financial one and to model in the spirit of  \cite{Cuchiero_19_polynomial_processes_SPT} the capital distribution curves via polynomial processes, which have empirically proved to provide a very good fit to these curves. 
More precisely, we extend volatility-stabilized market models by allowing for a common noise term such that the models remain polynomial. Indeed, for $t \in [0,T]$, where $T$ denotes some finite time horizon,  we consider the following model for the $N$ individual market capitalizations $S(t)=(S_1(t), \ldots, S_N(t))$
\begin{align*}
 d S_{i}(t)=\beta \sum_{j=1}^{N}S_{j}(t)dt+\sqrt{\alpha}\sqrt{S_{i}(t)\sum_{j=1}^{N}S_{j}(t)}dW^{i}_{t} + \sqrt{(N-\alpha)} S_{i}(t) dW_t^0, \quad i=1, \ldots, N,
\end{align*}
where $\alpha \geq 0$, $\beta \geq \frac{\alpha}{2}$ and $W^{(N)}\coloneqq \big(W^{1},\dots,W^{N})$  are the idiosyncratic Brownian motions and $W^0$ the common one independent of $W^{(N)}$.
The introduction of this common noise term permits to overcome the absence of correlation between the individual stocks in the original model of \cite{FK05_relative_arbitrage_volatility_stabilized}, while maintaining the noteworthy empirical feature that smaller stocks tend to exhibit greater volatilities compared to the ones of larger stocks.
Note that the correlation structure with a single common noise term is very simple.
Within the polynomial framework, one could consider various more sophisticated extensions, for example block-diagonal matrices, corresponding to common noises only for stocks in the same economy sector. The application of our methods to such more refined correlation structures is, however, left for future research. 
Let us remark that the market weight process defined by $S_i /\sum_{i=1}^N S_i$ is the same as in volatility-stabilized markets and given by a multivariate Jacobi process which is again a polynomial process whose state space is the unit simplex (see,  e.g.,~\cite{Cuchiero_19_polynomial_processes_SPT}).
 In other words, the common noise term enters only via the total market capitalization process $\Sigma:= \sum_{i=1}^N S_i$ which is given by
\[
d \Sigma(t)= \beta N \Sigma(t) dt + \sqrt{\alpha}\sqrt{\Sigma(t)}\sum_{i=1}^N \sqrt{S_i(t)} dW^i_t+ \sqrt{N-\alpha} \Sigma(t)dW_t^0
\]
and whose law is (similarly to volatility stabilized markets) equal to the one of a Black Scholes model.

The main aim of this article is to analyze the limit of the individual market capitalizations as $N\rightarrow \infty$.
This is inspired by several works that studied mean field limits and sometimes also fluctuations around them, on the one hand for volatility-stabilized markets and on the other hand for ranked-based models. Indeed, the closest connection is to the article by M.~Shkolnikov \cite{Shkolnikov_13_large_volatility-stabilized} who considered large volatility stabilized markets \emph{without} common noise.
Rank-based models without common noise have been treated in \cite{Shkolnikov_12_interacting_through_ranks,Kolli_Shkolnikov_18_fluctuations_rank-based, jourdain_13_propagation_chaos_rank_based,Jourdain_Reygner_15_mean_field_atlas,reygner_17_long_term_behaviour}, while the inclusion of a common noise has been studied in \cite{kolli_19_large_rank_based_common_noise}.

To analyze the limit as $N\rightarrow \infty$, we have to  rescale time, i.e.~let time go slower as we add particles, and consider $X(t):= S(t/N)$
\begin{align}\label{eqn:X_Ni_SDE}
 d X_{i}(t)=\frac{\beta}{N}\sum_{j=1}^{N}X_{j}(t)dt+\sqrt{\frac{\alpha}{N}}\sqrt{X_{i}(t)}\sqrt{\sum_{j=1}^{N}X_{j}(t)}dW_{t}^{i}+\sqrt{1-\frac{\alpha}{N}}X_{i}(t)dW^{0}_{t}.
\end{align}
Note that the difficulty in analyzing the mean-field limit comes from the combination of the non-Lipschitz coefficients in front of the idiosyncratic Brownian motions and the common noise term, which neither allows to apply standard theory nor the methods used in \cite{Shkolnikov_13_large_volatility-stabilized}.
Nevertheless, 
taking formal limits in \eqref{eqn:X_Ni_SDE} and denoting the   typical particle in the limit by $Y$, then yields
   \begin{align}\label{eqn:MKV_SDE_introduction}
      d Y(t) &=\beta\mathbb{E}[Y(t)|\mathcal{W}^{0}]dt+\sqrt{\alpha Y(t)\mathbb{E}[Y(t)|\mathcal{W}^{0}]}dB_{t}+Y(t)dW_{t}^{0}, 
    \end{align}
    where 
$B$ denotes some  Brownian motion  independent of $W^0$ and  $\mathcal{W}^{0}$  the sigma-algebra generated by $W^0$. 
To make this rigorous we consider, as usual for McKean-Vlasov equations, the 
 particles' empirical probability measure on path space, i.e.~
    \begin{align*}
       \rho^{N}:=\frac{1}{N}\sum_{i=1}^{N}\delta_{X_{i}}
    \end{align*}
 and its  mean-field limit $(\rho^{N}_{t})_{t\in[0,T]}\rightarrow (\rho_{t})_{t\in[0,T]}$ in  $C([0,T];M_1(\mathbb{R}_{+}))$,
where 
 $M_1(\mathbb{R}_{+})$ denotes probability measures over $\mathbb{R}_+$ with finite first moment, i.e.
\begin{align*}
    M_{1}(\mathbb{R}_+)=\left\{\mu\in M(\mathbb{R}_+) \mid \int_{\mathbb{R}_+}x\mu(dx)=:\langle x,\mu\rangle<\infty \right\},
\end{align*}
equipped with the Wasserstein-$1$ distance. 

More precisely,  we first prove almost sure convergence of $\rho^N$ in $C([0,T];M_1(\mathbb{R}_{+}))$ to
 the unique probabilistically strong, analytically weak and $M_{1}(\mathbb{R}_{+})$-valued solution $\rho$ of the following  \emph{degenerate, non-linear and non-local} stochastic partial differential equation (SPDE)
\begin{align}\label{eqn:intro_spde}
d \rho_{t}= (\frac{\alpha}{2}\langle \rho_{t},\operatorname{id}_x\rangle\partial_{x}^{2}(x  \rho_{t})+\frac{1}{2}\partial_{x}^{2}(x^2 \rho_{t})-\beta\langle \rho_{t},\operatorname{id}_x\rangle \partial_{x} \rho_{t})dt-\partial_{x}(x \rho_{t})dW^{0}_{t},
\end{align}
driven by the one-dimensional Brownian motion $W^0$ (see Theorem~\ref{thm:introduction_convergence_to_SPDE}). As already mentioned, for this result the methods used in \cite{Shkolnikov_13_large_volatility-stabilized}, in particular the elegant connection with a time-changed squared Bessel-process, are no longer applicable due to the common noise term.
Therefore we have to resort to different more analytic approaches, in particular in view of (pathwise) uniqueness of \eqref{eqn:intro_spde}.
Indeed, one
of the mathematical subtleties  lies in this proof
which involves fine estimates of weighted Sobolev norms where the weights have to be chosen in dependence on the differentiation order. 

In the second step we
 associate the solution $\rho$ of \eqref{eqn:intro_spde} with the conditional law of the solution of the  McKean-Vlasov SDE \eqref{eqn:MKV_SDE_introduction} 
and show that 
$\rho=\mathcal{L}(Y(\cdot)|\mathcal{W}^{0})$.
and $\langle \rho_t, \operatorname{id}_x \rangle=\int_{\mathbb{R}_+} x \rho_t(dx)= \mathbb{E}[Y(t)| \mathcal{W}^{0}]$ hold. Intriguingly,  the two-dimensional process $(Y,\mathbb{E}[Y|\mathcal{W}^{0}])$ is a polynomial diffusion on $\mathbb{R}_{++}^2$ whose dynamics are given by
\begin{align*}
        d Y(t)&=\beta \mathbb{E}[Y(t)|\mathcal{W}^{0}]dt+\sqrt{\alpha}\sqrt{Y(t)\mathbb{E}[Y(t)|\mathcal{W}^{0}]}dB_{t}+Y(t)dW^{0}_{t}
\\d \mathbb{E}[Y(t)|\mathcal{W}^{0}]&=\beta \mathbb{E}[Y(t)|\mathcal{W}^{0}]dt+\mathbb{E}[Y(t)|\mathcal{W}^{0}]dW^{0}_{t}.
    \end{align*}
Note that the equation for  $\mathbb{E}[Y(t)|\mathcal{W}^{0}]$ corresponds to a Black-Scholes model with drift coefficient $\beta$ and volatilty $1$.
Moreover, the pathwise uniqueness result for the SPDE allows us to conclude also uniqueness in law of the polynomial diffusion $(Y,\mathbb{E}[Y(t)|\mathcal{W}^{0}])$ (which was open so far see e.g.~\cite[Section 4]{filipovic_larsson_16_polynomial_diffusion}). Additionally, we also obtain a Besov regularity estimate for certain modifications of the marginal densities of $Y$ with respect to the Lebesgue measure. All these results are gathered in Theorem \ref{thm:collected_results_McKean-Vlasov_SDE_introduction}.

To consider 
the limiting behavior of the particle systems on the level of the individual particles we also establish a 
conditional propagation-of-chaos result (see Proposition ~\ref{prop:conditional_propagation_of_chaos_introduction}). Also in this case we cannot apply standard arguments 
since the usual Lipschitz conditions (\cite[Assumption 3.5]{Gess_Coghi_19_stochastic_fokker_planck}) are not satisfied.

Some numerical implementations of the particle system, the McKean Vlasov SDE as well as the SPDE are presented in Section \ref{sec:numerics}. We investigate there in particular the impact of the common noise by comparing the dynamics with the particle system and the PDE without common noise as studied in \cite{Shkolnikov_13_large_volatility-stabilized}.

In the remainder of this section, we shall introduce the notation used throughout the paper and present the main results.

\subsection{Notation}
\begin{itemize}

   \item We usually write $\R_{+}\coloneqq [0,\infty)$ and $\R_{++}\coloneqq (0,\infty)$.

    \item We denote by 
    $C_{b}^{k}(\R)$ the space of bounded $k$-times continuously differentiable functions from $\R $ to $\R$. If they additionally have compact support or vanish at $\pm \infty$, we write 
    $C_{c}^{k}(\R)$ and $C_{0}^{k}(\R)$ respectively.
     \item To indicate a derivative, with respect to the $x$ variable, we will use $\partial_{x}$, $\frac{\partial}{\partial x}$ and $\nabla$ interchangeably. Higher order derivatives will be denoted in the usual way by  $\partial_{x}^{k}=\underbrace{\partial_{x}\circ\dots\circ \partial_{x}}_{k\operatorname{-times}}$ for $k\in \N$.

    \item For a metric space we shall generically write $d(\cdot,\cdot)$ to  denote its metric.
    \item $\mathcal{S}$ denotes the space of Schwartz functions on $\mathbb{R}$, 
  while $\mathcal{S}_{0}\coloneqq \{\varphi\in \mathcal{S}\colon \varphi(0)=0\}$ is the set of Schwartz functions vanishing at the origin.
   The dual of  $\mathcal{S}$ is the space of Schwartz/tempered distributions, denoted by $\mathcal{S}'$.
\item The identity function $x\mapsto x$ is denoted by $\idx$ or $(\cdot_{x})$.  If it appears as part of an operator, we slightly abuse the notation and simply write $x$ for the map $x\mapsto x$. The same notation will be used when defining weighted Sobolev norms.    
    \item $M(E)$ denotes the space of probability measures on a topological space $E$. For $\mu \in M(E)$ and a $\mu$-integrable function $f: E \to \mathbb{R}$ we define the pairing  $ \langle f, \mu \rangle$ via
    \[
    \langle f, \mu \rangle : = \int_E f(x)\mu(dx).
    \]
Moreover, $M_{1}(E)$ denotes the space of probability measures on $E$ with finite first moment, i.e.~$M_{1}(E)\coloneqq \{\mu\in M(E)\colon \langle \mu,|\idx|\rangle<\infty\}$, equipped with the Wasserstein-1 distance (see \cite[Definition 6.8]{villani_09_optimal_transport_old_and_new} for the notion of convergence in $M_{1}(E)$).
 The Wasserstein distance between two measures $\mu$ and $\nu$ will be denoted by $\Wd(\mu,\nu)$.
 \item $\|\cdot\|_{\mathcal{L}(X,Y)}$ denotes the operator norm of an operator mapping from the Banach space $X$ into the Banach space $Y$.
    
\end{itemize}
We follow the convention that $C$ denotes a generic numerical constant that may change from line to line. If we want to highlight a particular dependence, we add the corresponding parameters as a subscript. In the case that a parameter of interest does not appear in the subscript, the constant is uniform with respect to this parameter.

\subsection{Main results}
We are now ready to present the main results concerning the convergence of the particle system \eqref{eqn:X_Ni_SDE}, existence and uniqueness of the SPDE \eqref{eqn:intro_spde}, its connection with the McKean-Vlasov SDE \eqref{eqn:MKV_SDE_introduction} and the conditional propagation of chaos result.

Let $[0,T]$ be a fixed, but arbitrary time horizon. Consider a filtered probability space $\left(\Omega,\mathcal{F},(\mathcal{F}_{t})_{0 \leq t \leq T},\Prob\right)$. Let $W^{0}$ be a one-dimensional Brownian motion thereon. We denote by  $\mathcal{W}^{0}_{t}$ the sigma algebra generated by  $W^{0}$  up to time $0 \leq t \leq T$ and by $\mathcal{W}^{0}$  the sigma algebra $\bigvee_{0 \leq t \leq T}\mathcal{W}^{0}_{t}$. We assume that the filtration $(\mathcal{F}_{t})_{0 \leq t \leq T}$ is compatible with $W^{0}$ in the sense that there exists a complete filtration $(\mathcal{G}_{t})_{0 \leq t \leq T}$, such that for each $0 \leq t \leq T$, $\mathcal{G}_{t}$ is independent of $\mathcal{W}^{0}_{t}$ and $\mathcal{F}_{t}=\sigma\left(\mathcal{G}_{t}\cup \mathcal{W}^{0}_{t} \right)$. 
An example for the filtration $(\mathcal{F}_{t})_{0 \leq t \leq T}$ is given by $\mathcal{F}^{W^{0},W^{(N)},\xi}_{t}=\sigma(\mathcal{W}^{0}_{t}\cup \sigma(W^{(N)}_{t})\cup \sigma(\xi))$, where $\sigma(W^{(N)}_{t})$ is the sigma-algebra induced by an independent N-dimensional Brownian motion $W^{(N)}$ up to time $0 \leq t \leq T$ and $\sigma(\xi)$ corresponds to the sigma-algebra generated by the initial condition. We remark that for any $(\mathcal{F}_{t})_{0 \leq t \leq T}$-adapted stochastic process $X\colon [0,T]\times\Omega\rightarrow \R^{d}$,
\begin{align*}
    \E[X(t)|\mathcal{W}^{0}]=\E[X(t)|\mathcal{W}^{0}_{t}].
\end{align*}

Recall that the N-particle system introduced in  \eqref{eqn:X_Ni_SDE} is given by
\begin{align*}
    \d X_{i}(t)=\frac{\beta}{N}\sum_{j=1}^{N}X_{j}(t)\dt+\sqrt{\frac{\alpha}{N}}\sqrt{X_{i}(t)}\sqrt{\sum_{j=1}^{N}X_{j}(t)}\dW_{t}^{i}+\sqrt{1-\frac{\alpha}{N}}X_{i}(t)\dW^{0}_{t}, 
\end{align*}
where $(W^0,W^{(N)})=(W^0,W^1, \ldots, W^N)$ is a standard $N+1$-dimensional Brownian motion and $\alpha$ and $\beta$ are constants.
Throughout the paper, we shall consider the following two sets of assumptions for this model.

\begin{assumption}\label{A:Assumptions_A}
\begin{enumerate}[label=\normalfont(A\arabic*)]
    \item \label{A:A1_alpha_beta} $\alpha, \beta$ are real constants such that $\beta \geq \alpha/2$, $\alpha\geq 0$.
    \item \label{A:A_2_X(0)_positive} The initial values $X_{1}(0),\dots,X_{N}(0)$ are strictly positive random variables.
    \item \label{A:A_3_X(0)_Z(0)}

    The laws  satisfy 
        \begin{align}
            \Law\bigg(\frac{1}{N}\sum_{i=1}^{N}\delta_{X_{i}(0)}\bigg)_{N\in\N} \xrightarrow[M(M_{1}(\mathbb{R}_{++}))]{} \delta_{\lambda},
        \end{align}
        for some $\lambda \in M_{1}(\R_{++})$ whose first moment is finite. Recall that $M$ is equipped with the topology of weak convergence and $M_{1}$ with the Wasserstein-1 topology.
 Moreover, we define $m_{\lambda}:=\int_{\mathbb{R}_+}x\lambda(\dx)$ and assume      
    \begin{align}\label{eqn:A:E_sum_X_i_converges}
        &\lim_{N\rightarrow\infty}\frac{\E\sum_{i=1}^{N}X_{i}(0)}{N}=m_{\lambda},&\lim_{N\rightarrow\infty}\frac{\E\bigg(\sum_{i=1}^{N}X_{i}(0)\bigg)^{1+\zeta}}{N^{1+\zeta}}=m_{\lambda}^{1+\zeta},
    \end{align}
    for some $\zeta>0$. If the strong law of large numbers holds for $X_1(0), X_2(0), \ldots$ 
    the above holds automatically true.
    In addition, we require
    \begin{align}\label{eqn:A:assumption_average_second_moment_initial_condition}
        \sup_{N\in\N}\frac{1}{N}\sum_{i=1}^{N}\E X_{i}(0)^{1+\zeta}<\infty.
    \end{align}
\end{enumerate}
\end{assumption}
\begin{assumption}\label{A:Assumptions_B}
    \begin{enumerate}[label=\normalfont(B\arabic*)]
        \item \label{A:B1_alpha,beta} $\alpha, \beta$ are real constants such that $\beta \geq \alpha/2$, $\alpha\geq 0$.
           \item \label{A:B2_X(0)_iid} The initial values $X_{1}(0),\dots,X_{N}(0)$ are strictly positive and iid. It holds that 
           \begin{align*}
               \E[X_{1}(0)]<\infty,\quad\text{ and} \quad  \E[X_{1}(0)^{2}]<\infty.
           \end{align*}
           \item \label{A:B3_convergence}
               The laws 
               \begin{align}
            \Law\bigg(\frac{1}{N}\sum_{i=1}^{N}\delta_{X_{i}(0)}\bigg)_{N\in\N} \xrightarrow[M(M_{1}(\mathbb{R}_{++}))]{} \delta_{\lambda},
        \end{align}
        for some $\lambda \in M_{1}(\R_{++})$ whose first moment is finite.
    \end{enumerate}
\end{assumption}

Clearly Assumptions \ref{A:A1_alpha_beta}--\ref{A:A_3_X(0)_Z(0)} follow from \ref{A:B1_alpha,beta}--\ref{A:B3_convergence}. For most results, we will only rely on Assumptions \ref{A:Assumptions_A}, since the iid assumption is rather restrictive considering our application.
\begin{remark}
The condition $\beta \geq \alpha/2$ in Assumption \ref{A:A1_alpha_beta} is motivated by the financial context and yields strict positivity of the solutions.
From a purely mathematical point of view, it suffices to assume $\beta \geq 0$, $\alpha\geq 0$. This concerns also \ref{A:A_2_X(0)_positive} where non-negativity instead of positivity would be sufficient. 
\end{remark}

\begin{definition}\label{def:strong_solution}
Recall the set $\mathcal{S}_{0}\coloneqq \{\varphi\in \mathcal{S}\colon \varphi(0)=0\}$ of Schwartz functions vanishing at the origin.
Given a stochastic basis $\big(\Omega,\mathcal{F},(\mathcal{F}_{t})_{t\in[0,T]},\Prob \big)$, a one-dimensional Brownian motion $W^{0}$ and an initial condition $\rho_{0}\in L^{2}(\Omega, M_{1}(\R_{+}))$,
then a probabilistically strong solution of  
\begin{equation}
\begin{split}\label{eqn:SPDE_classical_formulation}
    \d \rho_{t}= \frac{\alpha}{2}\langle \rho_{t},\idx\rangle\partial_{x}^{2}(\idxo \rho_{t})+\frac{1}{2}\partial_{x}^{2}(\idxo^{2} \rho_{t})-\beta\langle \rho_{t},\idx\rangle \partial_{x} \rho_{t}\dt-\partial_{x}(\idxo \rho_{t})\dW^{0}_{t}
    \end{split}
\end{equation}
 is a continuous $M_{1}(\R_{+})$--valued $(\mathcal{F}_{t})_{t\in[0,T]}$--adapted process $\rho$ such that for all $\varphi\in \mathcal{S}_{0}$ almost surely the equality
\begin{align}\label{eqn:SPDE_weak_formulation}
\langle\rho_{t},\varphi\rangle&=\langle\rho_{0},\varphi\rangle+\int_{0}^{t}\frac{\alpha}{2}\langle \rho_{r},\idxo\partial_{x}^{2}\varphi\rangle  \langle \rho_{r},\idx\rangle+\frac{1}{2}\langle \rho_{r},\idxo^{2}\partial_{x}^{2}\varphi\rangle +\beta \langle \rho_{r},\partial_{x}\varphi\rangle \langle \rho_{r},\idx\rangle\dr\\
    &\phantom{xx}{}+\int_{0}^{t} \langle \rho_{r},\idxo\partial_{x}\varphi\rangle\dW^{0}_{r}\nonumber,
\end{align}
holds for all $t\in[0,T]$. In particular, $\langle \rho_{t}, 1\rangle =1$ for every $t\in [0,T]$ and $\Prob$-almost every $\omega$, whenever $\langle \rho_{0}, 1\rangle =1$ for every $t\in [0,T]$ and $\Prob$-almost every $\omega$ as assumed.
\end{definition}
The main result of this work is summarized in the following theorem. For its formulation recall that the
empirical measure of $X_1(t), \ldots, X_N(t)$ is denoted by
$$\rho^{N}_{t}:= \frac{1}{N}\sum_{i=1}^{N}\delta_{X_{i}(t)}.$$

\begin{theorem}\label{thm:introduction_convergence_to_SPDE}
Let Assumptions \ref{A:Assumptions_A} be satisfied. Each convergent subsequence of $(\rho^{N}_{\cdot})_{N\in\N}$ converges a.s. in $C([0,T],M_{1}(\R_{+}))$, to the pathwise unique probabilistically strong, $ M_{1}(\R_{+})$--valued solution $\rho$ of the stochastic partial differential equation \eqref{eqn:SPDE_classical_formulation}
in the sense of Definition \ref{def:strong_solution}.
\end{theorem}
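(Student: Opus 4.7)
The plan is to apply the standard tightness/identification/uniqueness scheme for interacting particle systems, but adapted to the Wasserstein-$1$ topology since the non-linear drift involves the first moment $\langle \rho^N_r, \operatorname{id}_x\rangle$. First I would apply It\^o's formula to $\varphi(X_i(t))$ for $\varphi \in \mathcal{S}_0$ and average over $i$, obtaining the pre-limit equation
\begin{align*}
\langle \rho^N_t, \varphi \rangle &= \langle \rho^N_0, \varphi \rangle + \int_0^t \Bigl( \tfrac{\alpha}{2}\bigl(1-\tfrac{1}{N}\bigr)\langle \rho^N_r, x\partial_x^2 \varphi\rangle\langle \rho^N_r, \operatorname{id}_x\rangle + \tfrac{1}{2}\langle \rho^N_r, x^2 \partial_x^2\varphi\rangle \\
&\qquad + \beta \langle \rho^N_r, \partial_x\varphi\rangle \langle \rho^N_r, \operatorname{id}_x\rangle \Bigr)\,dr + M^N_t(\varphi) + \int_0^t \sqrt{1-\tfrac{\alpha}{N}}\,\langle \rho^N_r, x\partial_x\varphi\rangle\,dW^0_r,
\end{align*}
where $M^N(\varphi)$ is a martingale coming from the idiosyncratic noises whose quadratic variation is of order $1/N$ (since it is a sum of $N$ independent pieces each scaled by $1/N^2 \cdot \alpha/N \cdot \|x\partial_x\varphi\|_\infty^2$ times bounded moments of $X_i$).

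Next I would establish tightness of $(\rho^N_\cdot)$ in $C([0,T], M_1(\mathbb{R}_+))$. The key ingredient is a uniform moment bound of the form $\sup_N \mathbb{E}\bigl[\sup_{t\leq T} \langle \rho^N_t, \operatorname{id}_x\rangle^{1+\zeta}\bigr] < \infty$, which follows from Assumption \ref{A:A_3_X(0)_Z(0)} and the fact that the total mass process $\Sigma_N/N = \langle \rho^N_\cdot, \operatorname{id}_x\rangle$ satisfies a Black--Scholes type SDE with bounded coefficients. This moment bound gives uniform integrability of the identity under the empirical measures and hence tightness in the Wasserstein-$1$ topology on a bounded set of $M_1(\mathbb{R}_+)$. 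For time regularity I would use the above Itô decomposition together with BDG and Aldous' criterion, applied to a countable determining family of test functions $\varphi \in \mathcal{S}_0$, combined with a compact containment estimate based on the tail bound provided by the uniform $(1+\zeta)$-moment.

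With tightness in hand, the identification step takes any subsequential limit $\rho$ (realized on a common space via Skorokhod): the martingale $M^N(\varphi) \to 0$ in $L^2$ thanks to the $1/N$ scaling of its bracket, while the non-linear drift $\langle \rho^N_r, x\partial_x^2\varphi\rangle\langle \rho^N_r, \operatorname{id}_x\rangle$ passes to $\langle \rho_r, x\partial_x^2\varphi\rangle\langle \rho_r, \operatorname{id}_x\rangle$ because Wasserstein-$1$ continuity together with the uniform $(1+\zeta)$-moment controls the pairing with the unbounded function $\operatorname{id}_x$ (and with $x\partial_x^2\varphi, x^2\partial_x^2\varphi$ whose growth is at most linear in $x$). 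Finally, to upgrade to almost sure convergence I would invoke the pathwise uniqueness of \eqref{eqn:SPDE_classical_formulation} (established separately in the paper through weighted Sobolev estimates) via a Gy\"ongy--Krylov argument: any two subsequential limits, coupled through the same driving noise $W^0$, must coincide, which combined with tightness forces the whole sequence to converge in probability, and then almost surely along further subsequences, to the unique SPDE solution. The main obstacle is clearly the passage to the limit in the non-linear, non-bounded quantity $\langle \rho^N_r, \operatorname{id}_x\rangle$ appearing both as a factor and inside $\partial_x^2(x\rho_t)$, which forces one to work in Wasserstein-$1$ rather than in the topology of weak convergence and therefore ties tightness, identification, and the uniqueness step all to the same moment estimate.
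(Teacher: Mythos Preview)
Your plan is correct and matches the paper's approach almost step for step: tightness in $C([0,T],M_1(\mathbb{R}_+))$ via moment bounds and Aldous' criterion, Skorokhod representation plus identification of the limit, pathwise uniqueness in weighted Sobolev spaces, and a Yamada--Watanabe/Gy\"ongy--Krylov argument to upgrade to strong convergence. Two small remarks: in your pre-limit equation the $O(1/N)$ correction factor sits on the wrong term---the It\^o computation gives $\tfrac{\alpha}{2}\langle \rho^N_r,x\partial_x^2\varphi\rangle\langle\rho^N_r,\operatorname{id}_x\rangle$ and $\bigl(\tfrac{1}{2}-\tfrac{\alpha}{2N}\bigr)\langle\rho^N_r,x^2\partial_x^2\varphi\rangle$; and the paper additionally exploits that $\langle\rho^N_t,\operatorname{id}_x\rangle$ solves an explicit geometric Brownian motion SDE and converges to $m_\lambda e^{(\beta-1/2)t+W^0_t}$, which not only simplifies the identification step but is essential in the uniqueness proof since it turns the nonlinear SPDE into a \emph{linear} equation with random coefficients before the weighted Sobolev estimates are applied.
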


\begin{proof}
The result follows from Proposition \ref{prop:e-ufinite}, Proposition \ref{prop:convergence_to_weak_solution_SPDE}, Lemma \ref{prop:uniqueness_SPDE_weighted_space} and Proposition \ref{prop:Yamada-Watanabe_type_result}.
\end{proof}

The proof of Theorem \ref{thm:introduction_convergence_to_SPDE} is structured as follows: first, we shall establish existence, uniqueness, and (strict) positivity of solutions to the finite-dimensional system \eqref{eqn:X_Ni_SDE}. This will be the content of Proposition \ref{prop:e-ufinite}.
Next, we use tightness arguments to extract convergent sub-sequences of $(\rho^{N})_{N\in\N}$ and show that they converge to a probabilistically weak solution of \eqref{eqn:SPDE_classical_formulation}. This yields Proposition \ref{prop:convergence_to_weak_solution_SPDE} and the existence of, potentially non-unique, solutions to \eqref{eqn:SPDE_classical_formulation}. In Lemma \ref{lem:convergence_of_mean_rho_N}, we derive an explicit representation for the non-local term $\langle\rho^{N}_{\cdot},\idx\rangle$, which will prove to be quite useful.
We then introduce a family of weighted Sobolev spaces which are suited to study differential operators with unbounded coefficients of polynomial degrees. This allows us to obtain the estimates in Proposition \ref{prop:uniqueness_SPDE_weighted_space} and conclude that the solution is pathwise unique. A Yamada-Watanabe type argument (see Proposition \ref{prop:Yamada-Watanabe_type_result}) then yields the full statement of Theorem \ref{thm:introduction_convergence_to_SPDE}.\newline

\begin{remark}\label{rem:boundary_behaviour_SPDE}
   Equation \eqref{eqn:SPDE_classical_formulation} a priori only makes sense in a distributional setting, so $\rho$ should be viewed as a Schwartz distribution. In our analysis however, we will see that the regularity of the initial condition is propagated and $\rho$ remains a probability measure. 
   
   The correspondence of \eqref{eqn:SPDE_weak_formulation} and \eqref{eqn:SPDE_classical_formulation} is understood in the distributional sense. The space $\mathcal{S}_{0}$ is connected to this and implicitly encodes homogeneous Dirichlet boundary conditions in the sense that the (formal) identities   $\langle \partial_{x}^{2}(x\rho),\varphi\rangle=\langle \rho,x\partial_{x}^{2}\varphi \rangle$  and $\langle \partial_x \rho , \varphi \rangle=- \langle \rho, \partial_x \varphi \rangle$ only hold, if at least one of the involved objects $\rho$ or $\varphi$ vanishes at the boundary. 
Due to the condition
$\beta \geq \frac{\alpha}{2}$ which guarantees strict positivity of 
 the finite dimensional particle system \eqref{eqn:X_Ni_SDE}, the correct domain of $\rho^N$ is $\R_{+}$ with 0 boundary conditions, i.e. formally $\rho^{N}_{t}(0)=0$ for all $t\in[0,T]$. 
 A priori, it is however not clear that this holds true in the limit
and that a solution $\rho$ of \eqref{eqn:SPDE_weak_formulation} does not have mass at $0$. \newline
   
    Nevertheless, to understand the ``natural'' boundary behavior of \eqref{eqn:SPDE_classical_formulation}
   it is possible to adapt the following deterministic arguments. Indeed, note that the $dt$-part of \eqref{eqn:SPDE_classical_formulation} corresponds to the following (non-linear) Fokker-Plank equation
\[
\d \rho_{t}= (\frac{\alpha}{2}\langle \rho_{t},\idx\rangle\partial_{x}^{2}(\idxo \rho_{t})+\frac{1}{2}\partial_{x}^{2}(\idxo^{2} \rho_{t})-\beta\langle \rho_{t},\idx\rangle \partial_{x} \rho_{t})\dt,
 \]
which can formally be interpreted as the marginal laws of an SDE with drift $\beta\langle \rho_{t},\idx\rangle $ and diffusion $\alpha x \langle \rho_{t},\idx\rangle   + x^2$. Assuming, a priori non-negativity of $\langle \rho_{t},\idx\rangle$ for all $t \in [0,T]$, we can apply the boundary non-attainment conditions of  \cite[Theorem 5.7 (i)]{filipovic_larsson_16_polynomial_diffusion} to see that this SDE also stays strictly positive. Indeed, \cite[Condition (5.4)]{filipovic_larsson_16_polynomial_diffusion}, reads as
\begin{align}\label{eq:strictpos}
2 \beta \langle \rho_t,\idx\rangle - \alpha \langle \rho_t,\idx\rangle \geq 0,
\end{align}
which is exactly in line with our assumption that $\beta \geq \frac{\alpha}{2}$ (as long as  $\langle \rho_t,\idx\rangle$ is non-negative).
Hence, the deterministic version of \eqref{eqn:SPDE_classical_formulation}
does not require prescribed (Dirichlet)
boundary conditions since they are automatically satisfied, i.e.~in this case $\rho_t(0)=0$ holds true.
Note that condition \eqref{eq:strictpos} can also be found by applying the criterion introduced by Fichera  
in \cite{fichera_56_book}, and the associated maximum principle, directly to the PDE (see also \cite[Lemma 1.1.2, Theorem 1.1.2]{oleinik_12_second_order_equations}, or \cite{feehan_12_maximum_principles_PDE} for further extensions). For the full equation \eqref{eqn:SPDE_classical_formulation}, heuristically the noise term should not impact the behavior at the boundary, since it can be interpreted as a ``drift''  whose coefficient is $x$ and thus vanishes at $0$. 
 A rigorous proof is however beyond the scope of this work.

\end{remark}

The following theorem establishes the link between the solution to the SPDE \eqref{eqn:SPDE_classical_formulation}
and the law of the McKean-Vlasov SDE that arises as the limit of \eqref{eqn:X_Ni_SDE}. We also show the intriguing property that together with its conditional expectation this McKean-Vlasov SDE is again a polynomial process. Furthermore, we establish an existence and Besov regularity result for certain modifications of the marginal densities.

\begin{theorem}
\label{thm:collected_results_McKean-Vlasov_SDE_introduction}
 Let Assumption \ref{A:A1_alpha_beta} be satisfied and assume that the initial value of \eqref{eqn:MKV_SDE_introduction} is deterministic and satisfies $Y(0) > 0$.
\begin{enumerate}[label=(\Roman*)]
    \item\label{prop:P_1:stochastic_representation_rho_introduction} The McKean-Vlasov SDE
    \eqref{eqn:MKV_SDE_introduction}
with independent, one-dimensional, Brownian motions $B$ and $W^{0}$ has a unique (in law) weak solution and $\rho=\Law(Y(\cdot)|\mathcal{W}^{0})$, where $\rho$ is the unique solution of \eqref{eqn:SPDE_classical_formulation} with $\rho_{0}=\delta_{Y(0)}$.
\item\label{prop:P_2:McKean-Vlasov_SDE_and_mean_jointly_polynomial_introduction}  The two-dimensional process $(Y,\langle\rho,\idx\rangle)= (Y, \mathbb{E}[Y | \mathcal{W}^0])$ is a polynomial diffusion on $\mathbb{R}^2_{++}$ which is unique in law. 
\item \label{prop:P_3:McKean-Vlasov_SDE_regularity_of_density_introduction} Let $Y$ be the solution of the McKean-Vlasov SDE  \eqref{eqn:MKV_SDE_introduction} and denote by $\mu_{Y}$ its law. Let $\frac{1}{3}> \delta >0$, 
 and $m=\lceil \frac{3\left(1-\delta\right)^{2}}{2\delta}\rceil$. Then, for every $t\in (0,T]$, the measure $\min\{1,x\}^{m}\mu_{Y(t)}(\dx)$ is absolutely continuous with respect to the Lebesgue measure on $\mathbb{R}_{+}$ and its density lies in the Besov space $B^{1/2-\widetilde{\eps}}_{1,\infty}$, for any arbitrarily small but fixed $\widetilde{\eps}=\frac{3\delta}{2}>0$.
\end{enumerate}
\end{theorem}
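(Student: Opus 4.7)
\emph{Part (I).} I would start from the SPDE solution $\rho$ of Theorem~\ref{thm:introduction_convergence_to_SPDE} with initial condition $\delta_{Y(0)}$ and set $\eta_{t}:=\langle\rho_{t},\idx\rangle$, which is $\mathcal{W}^{0}$-adapted and strictly positive (cf.\ Remark~\ref{rem:boundary_behaviour_SPDE}). The frozen SDE
\[
dY(t) = \beta\eta_{t}\,dt + \sqrt{\alpha\,Y(t)\,\eta_{t}}\,dB_{t} + Y(t)\,dW^{0}_{t}
\]
is, conditionally on $\mathcal{W}^{0}$, one-dimensional with H\"older-$\tfrac12$ diffusion in $y$, so Yamada--Watanabe together with $\beta\ge\alpha/2$ yields a unique strictly positive strong solution $Y$. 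Testing the SPDE against $\varphi(x)=x$ gives $d\eta_{t}=\beta\eta_{t}\,dt+\eta_{t}\,dW^{0}_{t}$, while conditioning the frozen SDE on $\mathcal{W}^{0}$ shows that $\E[Y|\mathcal{W}^{0}]$ satisfies the same drift equation with $\E[Y|\mathcal{W}^{0}]$ in place of $\eta$ as the $dW^{0}$-coefficient; comparing these two linear SDEs started from the common value $Y(0)$ forces $\E[Y(t)|\mathcal{W}^{0}]=\eta_{t}$, so $Y$ indeed solves \eqref{eqn:MKV_SDE_introduction}. Next, applying It\^o to $\varphi(Y(t))$ for $\varphi\in\mathcal{S}_{0}$ and taking $\E[\cdot|\mathcal{W}^{0}]$ (which kills the $dB$-integral) shows that $\Law(Y(\cdot)|\mathcal{W}^{0})$ is an $M_{1}(\R_{+})$-valued solution of \eqref{eqn:SPDE_classical_formulation} starting at $\delta_{Y(0)}$, so pathwise uniqueness from Theorem~\ref{thm:introduction_convergence_to_SPDE} forces $\Law(Y(\cdot)|\mathcal{W}^{0})=\rho$. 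Uniqueness in law of \eqref{eqn:MKV_SDE_introduction} follows: any other weak solution $Y'$ satisfies $\Law(Y'(\cdot)|\mathcal{W}^{0})=\rho$ by the same argument, hence solves the same frozen SDE and shares the law of $Y$.

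\emph{Part (II).} The autonomous geometric-Brownian-motion equation $d\E[Y|\mathcal{W}^{0}]=\beta\E[Y|\mathcal{W}^{0}]\,dt+\E[Y|\mathcal{W}^{0}]\,dW^{0}_{t}$ is already obtained in Part (I). The drift of $(Y,\E[Y|\mathcal{W}^{0}])$ is linear and its diffusion matrix in the independent drivers $(B,W^{0})$ has entries $\alpha Y\E[Y|\mathcal{W}^{0}]+Y^{2}$, $Y\E[Y|\mathcal{W}^{0}]$ and $\E[Y|\mathcal{W}^{0}]^{2}$, all polynomials of total degree at most two: this is exactly the definition of a polynomial diffusion on $\R_{++}^{2}$ in the sense of \cite{filipovic_larsson_16_polynomial_diffusion,cuchiero2012polynomial}. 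Strict positivity of $Y$ follows from the boundary non-attainment criterion \cite[Theorem 5.7]{filipovic_larsson_16_polynomial_diffusion} applied conditional on $\mathcal{W}^{0}$, exactly as in Remark~\ref{rem:boundary_behaviour_SPDE}. Uniqueness in law of the pair is where the SPDE analysis pays off: the first component of any weak solution solves \eqref{eqn:MKV_SDE_introduction} and is hence unique in law by Part (I), while the second is determined as its $\mathcal{W}^{0}$-conditional mean.

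\emph{Part (III).} This is the main obstacle. The plan is to condition on $\mathcal{W}^{0}$: using the explicit lognormal expression for $\E[Y(t)|\mathcal{W}^{0}]$, a random time change $\tau(t):=\int_{0}^{t}\E[Y(s)|\mathcal{W}^{0}]\,ds$ together with the scaling $\widetilde Y:=Y/\E[Y|\mathcal{W}^{0}]$ reduces the conditional dynamics of $Y$ to a standard squared-Bessel/CIR-type equation whose transition density is smooth in the spatial variable for $t>0$ and admits classical pointwise and Besov-regularity estimates. The weight $\min\{1,x\}^{m}$ with $m=\lceil 3(1-\delta)^{2}/(2\delta)\rceil$ is calibrated precisely to absorb both the boundary degeneracy at $x=0$ and the inverse-moment blow-up that appears when transporting these conditional estimates back to the unconditional marginal. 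By duality, it suffices to prove, for every bounded smooth $\varphi$ and every $h\in(0,1)$,
\[
\Bigl|\E\bigl[\min\{1,Y(t)\}^{m}\bigl(\varphi(Y(t)-h)-\varphi(Y(t))\bigr)\bigr]\Bigr|\lesssim h^{1/2-\widetilde\eps}\|\varphi\|_{\infty},
\]
which I would establish by a Malliavin/integration-by-parts identity conditional on $\mathcal{W}^{0}$ that converts one spatial derivative of $\varphi$ into a weight involving $Y^{-1}$ (compensated by $\min\{1,x\}^{m}$) and then averaging in $\omega$. The hard part is controlling these inverse moments uniformly enough in the lognormal randomness of $\E[Y(t)|\mathcal{W}^{0}]$ for the $\omega$-averaging to preserve the fractional regularity; the exponents $m$ and $\widetilde\eps=3\delta/2$ emerge from balancing the half-derivative gain of one integration by parts against the polynomial-in-$\delta$ loss absorbed by the weight.
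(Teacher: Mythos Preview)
Your treatment of Parts~(I) and~(II) is close to the paper's. The paper obtains existence for the pair $(Y,S)$ directly from the polynomial diffusion theory of \cite{filipovic_larsson_16_polynomial_diffusion} rather than via a conditional Yamada--Watanabe argument, but the core identification $\rho=\Law(Y\mid\mathcal{W}^{0})$ proceeds exactly as you describe: apply It\^o to $\varphi(Y)$, take $\E[\cdot\mid\mathcal{W}^{0}]$, and invoke pathwise uniqueness of the SPDE. Two minor points: testing the SPDE against $\varphi(x)=x$ is not immediate since $x\notin\mathcal{S}_{0}$ (the paper handles this by a cutoff argument, Lemma~\ref{lem:rho_x}); and your uniqueness-in-law argument for the pair is correct in spirit but the paper makes it precise via \cite[Theorem~4.4.2]{ethier_kurtz_09_markov}, reducing to one-dimensional marginals tested against products $\phi(y)\psi(s)$ and using $\mathcal{W}^{0}$-measurability of~$S$.

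Part~(III) is where your proposal diverges substantially, and where there is a real gap. The paper does \emph{not} proceed via a time-change/CIR reduction or Malliavin integration by parts. Instead it uses Romito's finite-difference method \cite{Romito_18_density_SDE}: one freezes the coefficients of $Y$ at time $t-\eps$ to obtain a conditionally Gaussian approximation $Y^{\eps}(t)$, decomposes $\E[\eta(Y(t))\Delta_{h}^{m}\varphi(Y(t))]$ into three pieces, and bounds them by $|h|^{a}\eps^{1/2}$, $\eps^{3a/4}$, and $|h|^{m}\eps^{-m/2}$ respectively (the last via explicit Hermite-polynomial estimates on the Gaussian density). The specific values $m=\lceil 3(1-\delta)^{2}/(2\delta)\rceil$ and $\widetilde{\eps}=3\delta/2$ arise from optimising $\eps=|h|^{\theta}$ with $\theta=2-2\delta$ across these three terms; they are artefacts of Romito's balancing and there is no reason to expect the \emph{same} exponents to fall out of a Malliavin/IBP scheme. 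Your reduction $Y/\E[Y\mid\mathcal{W}^{0}]$ to a Wright--Fisher-type diffusion is correct and attractive, but you then have to push the conditional density back through a lognormal factor and average in $\omega$; you yourself flag the inverse-moment control under this averaging as ``the hard part'' and leave it unresolved. As written, your Part~(III) is a plausible programme rather than a proof, and it does not substantiate the stated exponents.
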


\begin{proof}
The assertion follows from Proposition \ref{prop:McKeanE&U} and Proposition \ref{prop:regularity_density_Y}.
\end{proof}

The limiting behavior of the finite-dimensional particle systems, as $N\rightarrow \infty$, can also be considered on the level of the individual particles. Indeed, as we let $N\rightarrow \infty$, we deduce with the help of Theorem \ref{thm:introduction_convergence_to_SPDE} that the particles will become independent when conditioned on the common noise $W^{0}$.  Mathematically, this can be expressed in the following (conditional) propagation of chaos result:

\begin{proposition}\label{prop:conditional_propagation_of_chaos_introduction}
Let Assumptions \ref{A:Assumptions_B} hold. Furthermore let $(X_{i}^{(N)})_{i=1,\dots,N}$ denote the solution to the N-particle system \eqref{eqn:X_Ni_SDE} and $\rho$ the unique solution of \eqref{eqn:SPDE_classical_formulation}.
    The interacting particles $(X_{i}^{(N)})_{i=1,\dots,N}$ are, conditional with respect to $\mathcal{W}^{0}$, chaotic in the sense that for each $k\in \N$ and $\varphi_{1},\dots,\varphi_{k}\in C_{b}(\R)$, $t\in[0,T]$, we have
\begin{align}\label{eqn:conditionally_chaotic}
    \lim_{N\rightarrow \infty}\E\bigg|\E\big[\varphi_{1}(X_{1}^{(N)}(t))\cdot\dots\cdot\varphi_{k}(X_{k}^{(N)}(t))\big|\mathcal{W}^{0}\big]-\Pi_{i=1}^{k}\langle \rho_{t},\varphi_{i}\rangle \bigg|=0.\quad \forall t\in [0,T].
\end{align}
\end{proposition}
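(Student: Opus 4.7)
The plan is to exploit the classical equivalence, due to Sznitman, between chaoticity of exchangeable particle systems and convergence of the empirical measure to a (possibly random) limit, here in its conditional form. The heavy lifting --- the convergence of $\rho^{N}$ to $\rho$ --- has already been done in Theorem~\ref{thm:introduction_convergence_to_SPDE}, so the present proof reduces to a combinatorial exchangeability argument.

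Under Assumption~\ref{A:Assumptions_B} the initial values $X_{1}(0),\dots,X_{N}(0)$ are i.i.d., the idiosyncratic Brownian motions $W^{1},\dots,W^{N}$ are i.i.d.\ and independent of $W^{0}$, and the coefficients of \eqref{eqn:X_Ni_SDE} are symmetric in the particle indices; pathwise uniqueness of the $N$-particle system (Proposition~\ref{prop:e-ufinite}) therefore forces the joint law of $(X_{1}^{(N)},\dots,X_{N}^{(N)})$ to be exchangeable. Since $W^{0}$ enters each equation symmetrically, this exchangeability is preserved after conditioning on $\mathcal{W}^{0}$. The combinatorial core is the identity
\begin{align*}
\prod_{j=1}^{k}\langle \rho^{N}_{t},\varphi_{j}\rangle
=\frac{1}{N^{k}}\sum_{i_{1},\dots,i_{k}=1}^{N}\varphi_{1}(X_{i_{1}}^{(N)}(t))\cdots\varphi_{k}(X_{i_{k}}^{(N)}(t)).
\end{align*}
Splitting the multi-index sum into pairwise distinct indices versus the complementary diagonal and then taking the conditional expectation with respect to $\mathcal{W}^{0}$, exchangeability yields
\begin{align*}
\E\Big[\prod_{j=1}^{k}\langle \rho^{N}_{t},\varphi_{j}\rangle \,\Big|\,\mathcal{W}^{0}\Big]
=\frac{N!}{(N-k)!\,N^{k}}\,\E\Big[\prod_{j=1}^{k}\varphi_{j}(X_{j}^{(N)}(t))\,\Big|\,\mathcal{W}^{0}\Big]+R_{N},
\end{align*}
where the diagonal remainder satisfies $|R_{N}|\leq C_{k}\,\|\varphi_{1}\|_{\infty}\cdots\|\varphi_{k}\|_{\infty}/N$ uniformly in $\omega$.

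It remains to pass to the limit on the left-hand side. Theorem~\ref{thm:introduction_convergence_to_SPDE} (whose hypotheses are implied by Assumption~\ref{A:Assumptions_B}) delivers $\rho^{N}\to\rho$ almost surely in $C([0,T];M_{1}(\R_{+}))$. Since each $\varphi_{j}\in C_{b}(\R)$, the product $\prod_{j}\langle \rho^{N}_{t},\varphi_{j}\rangle$ converges almost surely to $\prod_{j}\langle \rho_{t},\varphi_{j}\rangle$ and is uniformly bounded by $\prod_{j}\|\varphi_{j}\|_{\infty}$; dominated convergence upgrades this to $L^{1}(\Prob)$, and Jensen's inequality transfers the convergence to conditional expectations. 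Using that $\rho$ is $\mathcal{W}^{0}$-measurable --- a consequence of pathwise uniqueness of the SPDE combined with the identification $\rho=\Law(Y(\cdot)|\mathcal{W}^{0})$ in Theorem~\ref{thm:collected_results_McKean-Vlasov_SDE_introduction}\ref{prop:P_1:stochastic_representation_rho_introduction} --- the conditional expectation of the limit equals $\prod_{j}\langle \rho_{t},\varphi_{j}\rangle$ itself. Combining with the displayed identity and observing that $N!/((N-k)!\,N^{k})\to 1$ produces \eqref{eqn:conditionally_chaotic}.

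The only real obstacle, as flagged in the introduction, is that the direct synchronous-coupling route to propagation of chaos is blocked because the coefficients of \eqref{eqn:X_Ni_SDE} fail the Lipschitz hypotheses of \cite{Gess_Coghi_19_stochastic_fokker_planck}. Our approach sidesteps coupling entirely, replacing it by the strong convergence of empirical measures from Theorem~\ref{thm:introduction_convergence_to_SPDE} together with exchangeability. The i.i.d.\ initial data in Assumption~\ref{A:Assumptions_B} is essential for the combinatorial step above, which is why the weaker averaging conditions in Assumption~\ref{A:Assumptions_A} are not enough for conditional chaos at the level of individual particles.
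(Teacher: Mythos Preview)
Your proof is correct and takes a genuinely different route from the paper. You run the classical Sznitman equivalence directly: conditional exchangeability plus the combinatorial identity expressing $\prod_{j}\langle\rho^{N}_{t},\varphi_{j}\rangle$ as a sum over multi-indices, together with the convergence $\rho^{N}\to\rho$ from Theorem~\ref{thm:introduction_convergence_to_SPDE} and the $\mathcal{W}^{0}$-measurability of $\rho$. The paper instead introduces an auxiliary system of conditionally independent McKean--Vlasov particles $Y_{1},\dots,Y_{N}$ (each a copy of the solution to \eqref{eqn:MKV_SDE_introduction} driven by independent $B^{i}$ and the same $W^{0}$), proves a conditional law of large numbers for the $Y_{i}$ against $\rho$ (Lemma~\ref{lem:conditional_LLN}), and then writes the target difference as a telescoping sum comparing $X_{i}^{(N)}$ to $Y_{i}$ and $Y_{i}$ to $\rho$; the first comparison is closed via $\rho^{N}\to\rho$, the second via the conditional LLN. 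Your argument is cleaner and avoids the auxiliary $Y_{i}$ entirely; the paper's argument is closer in spirit to a coupling proof and would more naturally yield rates if one had quantitative control on $\rho^{N}-\rho$ (which one does not here, for the reasons you correctly identify). Two minor remarks: Theorem~\ref{thm:introduction_convergence_to_SPDE} as stated gives almost-sure convergence along subsequences, which yields convergence in probability of the full sequence --- this is all you need given the uniform bound $\prod_{j}\|\varphi_{j}\|_{\infty}$; and the $\mathcal{W}^{0}$-measurability of $\rho$ follows already from strong existence for the SPDE (Proposition~\ref{prop:Yamada-Watanabe_type_result}) with deterministic initial data $\lambda$, so you need not invoke Theorem~\ref{thm:collected_results_McKean-Vlasov_SDE_introduction}.
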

Property \eqref{eqn:conditionally_chaotic} is also referred to as $\rho$ being chaotic (see \cite[Lemma 3.6]{Gess_Coghi_19_stochastic_fokker_planck}).

\subsection{Structure of the paper}
The remainder of the paper is structured as follows. Section \ref{sec:Convergence_to_SPDE} is dedicated to the mean-field limit of the finite-dimensional particle system (Subsection \ref{sub_sec:existence_solutions}) and the existence of equation~\ref{eqn:SPDE_weak_formulation}. In Subsection \ref{sec:uniqueness}, we prove pathwise-uniqueness for \eqref{eqn:SPDE_weak_formulation} to obtain the full statement of Theorem \ref{thm:introduction_convergence_to_SPDE}. In Section \ref{sec:connection_McKean-Vlasov_SDE} we treat the McKean-Vlasov SDE and show Theorem \ref{thm:collected_results_McKean-Vlasov_SDE_introduction} (see Subsections \ref{sub_sec:stochastic_representation} and \ref{sub_sec:Regularity_of_density}). The conditional propagation of chaos result of Proposition \ref{prop:conditional_propagation_of_chaos_introduction} is obtained in Section \ref{sub_sec:Conditional_propagation_of_chaos}. In Section \ref{sec:numerics} we present numerical studies regarding the qualitative behavior of the particle system and the limiting SPDE. In the Appendix, we collect several auxiliary results needed in the proofs.

\section{Convergence to the SPDE / Proof of Theorem \ref{thm:introduction_convergence_to_SPDE}}\label{sec:Convergence_to_SPDE}
We split the proof of Theorem \ref{thm:introduction_convergence_to_SPDE} into three distinct parts: first we study the finite-dimensional system and its properties. Next, we move on to prove the existence of solutions to the SPDE and finally, we consider the question regarding the uniqueness of these solutions.
\subsection{Solutions to the finite-dimensional problem}
As a first step, we will show that the finite particle system has a solution. The theory developed in \cite{BP03_degenerate_SDE} for degenerate SDEs and in \cite{filipovic_larsson_16_polynomial_diffusion} for polynomial diffusions allows to construct (probabilistically) weak solutions to equation \eqref{eqn:X_Ni_SDE}. In the absence of common noise these solutions can also be expressed in terms of time-changed squared Bessel processes (see \cite{FK05_relative_arbitrage_volatility_stabilized}).
\begin{proposition}\label{prop:e-ufinite}
Let Assumptions \ref{A:Assumptions_A} be satisfied.
\begin{enumerate}[label=(\Roman*)]
\item
The finite-dimensional particle system given by \eqref{eqn:X_Ni_SDE} admits a weak solution with values in $\mathbb{R}_{++}$. 
\item This solution is also pathwise unique. Hence we get the existence of a unique strong solution.
\end{enumerate}
\end{proposition}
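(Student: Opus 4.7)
My plan has three stages: weak existence, strict positivity, pathwise uniqueness (from which strong existence follows by Yamada--Watanabe).

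\textbf{Weak existence on $\R_+^N$.} I first view \eqref{eqn:X_Ni_SDE} as a polynomial diffusion on the cone $\R_+^N$: the drift $b_i(x)=\tfrac{\beta}{N}\sum_j x_j$ is affine, and the diffusion matrix $a(x)=\sigma(x)\sigma(x)^{\top}$ satisfies $a_{ii}(x)=\tfrac{\alpha}{N} x_i \sum_j x_j+(1-\tfrac{\alpha}{N})x_i^2$ and $a_{ij}(x)=(1-\tfrac{\alpha}{N})x_i x_j$ for $i\neq j$, so all entries are polynomials of degree at most two. I would invoke the general weak-existence result for polynomial diffusions on a closed basic semi-algebraic set (cf.~\cite[Theorem 5.1]{filipovic_larsson_16_polynomial_diffusion}): since the coefficients are continuous, the drift points inward at every face $\{x_i=0\}$ (because $b_i\geq 0$ there) and the diffusion coefficient degenerates appropriately at each such face ($a_{ii}=0$ when $x_i=0$), one obtains a weak solution with values in $\R_+^N$ from any initial law supported on $\R_+^N$.

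\textbf{Strict positivity.} Next I would show that every component stays strictly positive, by checking Fichera's boundary non-attainment condition on $\{x_i=0\}$, as formulated in \cite[Theorem 5.7 (i)]{filipovic_larsson_16_polynomial_diffusion}. For each $i$ and each $x$ with $x_i=0$,
\[
2b_i(x)-\partial_{x_i}a_{ii}(x)\big|_{x_i=0}
=\tfrac{2\beta}{N}\sum_{j\neq i}x_j-\tfrac{\alpha}{N}\sum_{j\neq i}x_j
=\tfrac{2\beta-\alpha}{N}\sum_{j\neq i}x_j\geq 0,
\]
by Assumption~\ref{A:A1_alpha_beta}, so each face is not attained starting from $\mathbb{R}_{++}^N$, which, combined with \ref{A:A_2_X(0)_positive}, yields the $\R_{++}^N$-valued solution.

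\textbf{Pathwise uniqueness.} For two solutions $X^1,X^2$ driven by the same Brownian motions with the same initial data, I would apply a Yamada--Watanabe scheme componentwise. Let $\varphi_n\in C^2(\R)$ be the standard Yamada--Watanabe mollifier approximating $|\cdot|$, and set $\Delta_i(t):=X_i^1(t)-X_i^2(t)$, $\Delta\Sigma(t):=\sum_i\Delta_i(t)$. Applying Itô's formula to $\varphi_n(\Delta_i)$ and summing over $i$, the drift contribution is controlled by $\tfrac{\beta}{N}|\Delta\Sigma|\leq\tfrac{\beta}{N}\sum_j|\Delta_j|$ and the common-noise diffusion contribution is Lipschitz in $X$. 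The delicate term is the idiosyncratic one, where I would use the decomposition
\[
\sqrt{X_i^1 \Sigma^1}-\sqrt{X_i^2\Sigma^2}
=\sqrt{\Sigma^1}\bigl(\sqrt{X_i^1}-\sqrt{X_i^2}\bigr)+\sqrt{X_i^2}\bigl(\sqrt{\Sigma^1}-\sqrt{\Sigma^2}\bigr),
\]
together with the Yamada--Watanabe inequality $|\sqrt{a}-\sqrt{b}|^2\leq |a-b|$ and a localization on the event where $\Sigma^1,\Sigma^2$ are bounded below and above by positive constants (using step~2 and continuity). After passing $n\to\infty$ and taking expectations of the stopped quantities, one obtains a Gronwall-type inequality for $t\mapsto\E[\sum_i|\Delta_i(t\wedge\tau_K)|]$ with zero initial value, whence $X^1\equiv X^2$ up to $\tau_K$; sending $K\to\infty$ concludes. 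Alternatively, the pathwise uniqueness for this class of degenerate polynomial coefficients can be retrieved from \cite{BP03_degenerate_SDE}. Weak existence plus pathwise uniqueness yields strong existence by the Yamada--Watanabe theorem, proving (II).

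\textbf{Main obstacle.} The routine parts are steps 1 and 2, which follow from the polynomial-diffusion toolbox. The delicate step is pathwise uniqueness: the diffusion is simultaneously degenerate at each coordinate face and coupled through the sum $\Sigma$, so a plain 1D Yamada--Watanabe argument does not apply. The key trick is the product decomposition of $\sqrt{X_i \Sigma}$ combined with the a priori strict positivity of $\Sigma$ (hence local boundedness of $\sqrt{\Sigma}$ away from $0$), which allows to reduce the estimate to a Hölder-$\tfrac{1}{2}$ comparison in $X_i$ plus a Lipschitz comparison in $\Sigma$, both controllable by a single Gronwall inequality after the Yamada--Watanabe mollification.
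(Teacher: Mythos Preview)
Your proposal is correct and follows essentially the same route as the paper: weak existence and strict positivity via the polynomial-diffusion toolbox of \cite{filipovic_larsson_16_polynomial_diffusion} (the paper cites Theorem~5.3 and Proposition~6.4 rather than Theorem~5.1, and likewise Theorem~5.7(i) for non-attainment), and pathwise uniqueness via a Yamada--Watanabe argument. The paper's proof is extremely terse---for part~(II) it simply refers to ``analogous arguments as in \cite{P14_generalized_volatility_stabilized_processes}''---whereas you spell out the key decomposition $\sqrt{X_i\Sigma}=\sqrt{\Sigma}\sqrt{X_i}$ and the localization on $\Sigma$ that makes the H\"older-$\tfrac12$ coefficient tractable; this is precisely the content behind that citation, so your expansion is a faithful unpacking rather than a different approach. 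One minor remark: your alternative citation of \cite{BP03_degenerate_SDE} for uniqueness is slightly off-target, as the paper invokes that reference for \emph{existence}; the uniqueness reference is \cite{P14_generalized_volatility_stabilized_processes}.
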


\begin{proof}
The existence result of part (i) can be deduced from  \cite[Theorem 1.2]{BP03_degenerate_SDE} but also follows from \cite[Theorem 5.3 and Proposition 6.4]{filipovic_larsson_16_polynomial_diffusion} since $X$ is a polynomial process. The (strict) positivity can be deduced from \cite[Theorem 5.7 (i)]{filipovic_larsson_16_polynomial_diffusion}.
The second part (ii) follows from analogous arguments as in 
\cite{P14_generalized_volatility_stabilized_processes}.
\end{proof}
\begin{remark}
    Assumptions \ref{A:A1_alpha_beta} and \ref{A:A_2_X(0)_positive} can be relaxed to $\beta\geq 0, \alpha \geq 0$ and the initial values being non-negative. In this case one can use \cite[Theorem 1.2]{BP03_degenerate_SDE} and It{\^o}'s formula for a smoothed version of the negative part of a function to get the existence of a non-negative solution.
\end{remark}
\subsection{Existence of solutions to the SPDE}\label{sub_sec:existence_solutions}
We start by introducing the notion of a
 probabilistically weak solution to the SPDE
 \eqref{eqn:SPDE_classical_formulation}.

\begin{definition}\label{def:weak_solution}
Let $\mathcal{S}_{0}\coloneqq \{\varphi\in \mathcal{S}\colon \varphi(0)=0\}$. A probabilistically weak solution of  
\begin{align*}
    \d \rho_{t}= \frac{\alpha}{2}\langle \rho_{t},\idx\rangle\partial_{x}^{2}(\idxo \rho_{t})+\frac{1}{2}\partial_{x}^{2}(\idxo^{2} \rho_{t})-\beta\langle \rho_{t},\idx\rangle \partial_{x} \rho_{t}\dt-\partial_{x}(\idxo \rho_{t})\dW^{0}_{t},
\end{align*}
 is a collection $\big((\Omega,\mathcal{F},(\mathcal{F}_{t})_{t\in[0,T]},\Prob), (\rho,W^{0})\big)$, such that $(\Omega,\mathcal{F},\Prob)$ is a probability space, $(\mathcal{F}_{t})_{t\in[0,T]}$  a complete filtration of $\mathcal{F}$, $W^{0}$  an $(\mathcal{F}_{t})_{t\in[0,T]}$--Brownian motion and $\rho$ a continuous $M_{1}(\R_{+})$--valued $(\mathcal{F}_{t})_{t\in[0,T]}$--adapted process such that for all $\varphi\in \mathcal{S}_{0}$ almost surely the equality
 \begin{align*}
    \langle\rho_{t},\varphi\rangle&=\langle\rho_{0},\varphi\rangle+\int_{0}^{t}\beta \langle \rho_{r},\partial_{x}\varphi\rangle \langle \rho_{r},\idx\rangle+\frac{\alpha}{2}\langle \rho_{r},\idxo\partial_{x}^{2}\varphi\rangle  \langle \rho_{r},\idx\rangle+\frac{1}{2}\langle \rho_{r},\idxo^{2}\partial_{x}^{2}\varphi\rangle\dr
    +\int_{0}^{t} \langle \rho_{r},\idxo\partial_{x}\varphi\rangle\dW^{0}_{r},
 \end{align*}
holds for all $t\in[0,T]$.
Note that we implicitly require $\langle \rho_{t}, 1\rangle =1$ for every $t\in [0,T]$ and $\Prob$-almost every $\omega$, whenever $\langle \rho_{0}, 1\rangle =1$ for every $t\in [0,T]$ and $\Prob$-almost every $\omega$ as assumed.
\end{definition}

\begin{remark}
    Alternatively, since $\R_{++}$ is Schwartz-equivalent (see \cite[Definition 2.7]{prywes2022schwartz}), it would also be natural to consider test functions from the space 
    \begin{align*}
        \mathcal{S}(\R_{++})\coloneqq \bigcap_{x\in \R \backslash \R_{++}}\bigcap_{k\in \N\cup \{0\}} \{\varphi\in \mathcal{S}(\R)\colon \partial_{x}^{k}\varphi(x)=0\}.
    \end{align*}
    Clearly, by extending each $\varphi \in \mathcal{S}(\R_{++})$ and its derivatives by $0$, we obtain a subspace of $\mathcal{S}_{0}$.
\end{remark}
By the definition of probabilistically weak solutions, 
we highlight that the new stochastic basis is part of the solution and might not coincide with the original one.

We shall devote the remainder of this subsection to the proof of the following Proposition.
\begin{proposition}\label{prop:convergence_to_weak_solution_SPDE} Let Assumptions \ref{A:Assumptions_A} be satisfied. Then there exists at least one probabilistically weak, $M_{1}(\R_{+})$--valued solution $\rho$ of the stochastic partial differential equation \eqref{eqn:SPDE_classical_formulation}
in the sense of Definition \ref{def:weak_solution}.
\end{proposition}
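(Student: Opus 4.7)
The plan is the classical three-step scheme for mean-field limits: derive the prelimit weak equation for $\rho^N$, prove tightness of $(\rho^N)_{N\in\N}$ in $C([0,T];M_{1}(\R_{+}))$, and identify every subsequential limit as a probabilistically weak solution in the sense of Definition \ref{def:weak_solution}. For the first step I apply It\^o's formula to $\varphi(X_{i}(t))$ with $\varphi\in\mathcal{S}_{0}$ using \eqref{eqn:X_Ni_SDE}, sum over $i$ and divide by $N$. Using $\sum_{j}X_{j}(r)=N\langle\rho^{N}_{r},\idx\rangle$, this yields
\begin{align*}
\langle \rho^{N}_{t},\varphi\rangle &= \langle \rho^{N}_{0},\varphi\rangle + \int_{0}^{t}\Bigl(\beta\langle\rho^{N}_{r},\partial_{x}\varphi\rangle\langle\rho^{N}_{r},\idx\rangle + \tfrac{\alpha}{2}\langle\rho^{N}_{r},x\partial_{x}^{2}\varphi\rangle\langle\rho^{N}_{r},\idx\rangle \\
&\quad + \tfrac{1}{2}\bigl(1-\tfrac{\alpha}{N}\bigr)\langle\rho^{N}_{r},x^{2}\partial_{x}^{2}\varphi\rangle\Bigr)\dr + \sqrt{1-\tfrac{\alpha}{N}}\int_{0}^{t}\langle\rho^{N}_{r},x\partial_{x}\varphi\rangle\dW^{0}_{r} + M^{N}_{t}(\varphi),
\end{align*}
where the aggregated idiosyncratic martingale $M^{N}(\varphi)$ has quadratic variation of order $N^{-1}$ once moments of $\langle\rho^{N}_{r},\idx\rangle$ are controlled, and the prefactors $1-\alpha/N$ converge to $1$.

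To obtain tightness, I would establish the uniform moment bound $\sup_{N}\E[\sup_{t\in[0,T]}\langle\rho^{N}_{t},x^{1+\zeta}\rangle]<\infty$ with $\zeta>0$ from Assumption \ref{A:A_3_X(0)_Z(0)}, by applying It\^o to $\frac{1}{N}\sum_{i}X_{i}^{1+\zeta}$ and combining Burkholder--Davis--Gundy with Gr\"onwall; the condition $\beta\geq\alpha/2$ keeps the drift compatible with the martingale contribution. Markov's inequality plus uniform $(1+\zeta)$-integrability of the identity $\idx$ then provides tightness of $\mathrm{Law}(\rho^{N}_{t})$ in $M_{1}(\R_{+})$ at each fixed $t$. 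For time regularity I would verify an Aldous-style stopping-time criterion on each real-valued process $\langle\rho^{N}_{\cdot},\varphi\rangle$, $\varphi\in\mathcal{S}_{0}$, from the prelimit equation and the moment bound. Combined with Jakubowski's criterion and a countable-dense-subset argument in the spirit of Mitoma, this yields tightness of $(\rho^{N})$ in $C([0,T];M_{1}(\R_{+}))$.

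Given tightness, Skorokhod's representation theorem provides a new stochastic basis on which $(\rho^{N},W^{0})$ converges almost surely in $C([0,T];M_{1}(\R_{+}))\times C([0,T];\R)$ to some $(\rho,\widetilde{W}^{0})$. Convergence in the Wasserstein-$1$ topology gives $\langle\rho^{N}_{r},\psi\rangle\to\langle\rho_{r},\psi\rangle$ pointwise in $r$ for any continuous $\psi$ of at most linear growth, and the uniform $(1+\zeta)$-moment bound upgrades this to joint $L^{1}$-convergence in $r$ and $\omega$ even for the quadratic-growth brackets $\langle\rho^{N}_{r},x^{2}\partial_{x}^{2}\varphi\rangle$ and $\langle\rho^{N}_{r},x\partial_{x}^{2}\varphi\rangle\langle\rho^{N}_{r},\idx\rangle$ by uniform integrability. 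The martingale $M^{N}(\varphi)$ vanishes in $L^{2}$, and the stochastic integral against $W^{0}$ passes to the limit via a Kurtz--Protter-type argument. The initial datum is identified with $\lambda$ by Assumption \ref{A:A_3_X(0)_Z(0)}, completing the verification of Definition \ref{def:weak_solution}.

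The main obstacle is the passage to the limit of the nonlinear, nonlocal bracket $\langle\rho^{N}_{r},\idx\rangle\langle\rho^{N}_{r},x\partial_{x}^{2}\varphi\rangle$ together with the unbounded-coefficient term $\langle\rho^{N}_{r},x^{2}\partial_{x}^{2}\varphi\rangle$: both involve quadratic-growth test functions, so mere weak convergence of measures is inadequate and the whole argument rests on upgrading to Wasserstein-$1$ via the $(1+\zeta)$-moment bound. Because the common-noise term contributes quadratic growth to the diffusion, no Bessel-process time-change reduction as in \cite{Shkolnikov_13_large_volatility-stabilized} is available, so the moment estimate must be produced by direct It\^o/BDG/Gr\"onwall work, with the hypothesis $\beta\geq\alpha/2$ playing a decisive role in keeping the expansion of $X_{i}^{1+\zeta}$ well-controlled.
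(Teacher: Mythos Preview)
Your overall strategy matches the paper's: prelimit It\^o formula, tightness, Skorokhod representation, and term-by-term identification of the limit. However, two points deserve comment.

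\medskip
\textbf{Tightness in the correct topology.} Verifying the Aldous condition on $\langle\rho^{N},\varphi\rangle$ for $\varphi\in\mathcal{S}_{0}$ and invoking a Mitoma/Jakubowski argument yields tightness only in $C([0,T];(M(\R_{+}),\text{weak}))$, not in $C([0,T];M_{1}(\R_{+}))$ with the Wasserstein-$1$ topology: the maps $\mu\mapsto\langle\mu,\varphi\rangle$ for $\varphi\in\mathcal{S}_{0}$ generate the weak topology, whereas Wasserstein-$1$ convergence requires in addition convergence of $\langle\mu,\idx\rangle$. Since $\idx\notin\mathcal{S}_{0}$, your prelimit equation does not directly provide an Aldous estimate for $\langle\rho^{N},\idx\rangle$, so a separate equicontinuity argument for $Z_{(N)}=\langle\rho^{N},\idx\rangle$ is still needed (the paper gets this implicitly from the explicit geometric-Brownian-motion representation of $Z_{(N)}$, Lemma~\ref{lem:convergence_of_mean_rho_N}). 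The paper instead sidesteps the whole issue by proving the Aldous condition \emph{directly in the Wasserstein metric} via Kantorovich--Rubinstein duality, which reduces to bounding $\E\bigl[\frac{1}{N}\sum_{i}|X_{i}(\tau+\theta)-X_{i}(\tau)|\bigr]$ and is cleaner.

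\medskip
\textbf{Two over-complications.} For $\varphi\in\mathcal{S}$ the functions $x\partial_{x}^{2}\varphi$, $x^{2}\partial_{x}^{2}\varphi$, $x\partial_{x}\varphi$ are all in $C_{b}(\R_{+})$, so what you call ``quadratic-growth brackets'' are in fact bounded-test-function pairings and converge immediately under Wasserstein-$1$; no extra uniform-integrability step is needed for them. The only genuinely unbounded factor is $\langle\rho^{N}_{r},\idx\rangle$, which the paper handles by H\"older with exponent $1+\zeta$ (Lemma~\ref{lem:EsupZp_EZp_estimates}). Separately, the hypothesis $\beta\geq\alpha/2$ is \emph{not} used for the moment bounds: Lemmas~\ref{lem:EsupZp_EZp_estimates}--\ref{lem:EsupXp_EXp_estimates} hold for any $\alpha,\beta\geq 0$. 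That condition is used only to ensure strict positivity of the particle system (Proposition~\ref{prop:e-ufinite}) and plays no role in the estimates underlying Proposition~\ref{prop:convergence_to_weak_solution_SPDE}.
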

\begin{remark}
Since we require the terms $x\partial_{x}\varphi(x)$ and $x^{2}\partial_{x}^{2} \varphi$ in \eqref{eqn:SPDE_weak_formulation} to be continuous and bounded on $\R_{+}$ for the duality in \eqref{eqn:SPDE_weak_formulation} to be well defined, we will consider the equation a-priori in the space of Schwartz distributions $\mathcal{S}_{0}'$. Since we expect our solution to still be a measure, one could consider a larger test function space for the limiting procedure than $\mathcal{S}_{0}$, such as a suitable weighted Sobolev space like those which will be introduced in Section \ref{sec:uniqueness}. 
\end{remark}

\subsubsection{Tightness of Laws}
In order to prove Proposition \ref{prop:convergence_to_weak_solution_SPDE}, we need to
identify a convergent subsequence as well as a potential limit of the sequence of empirical measures $\rho^{N}$, as $N\rightarrow \infty$. This is done by considering the laws of $\rho^{N}$ on $C([0,T],M_{1}(\R_{+}))$ and use Prokhorov's theorem to show that the sequence is relatively compact. In order to use Prokhorov's theorem, we will require the following result. 

\begin{theorem}\label{thm:tightness}
 Let Assumptions \ref{A:Assumptions_A} be satisfied. 
The sequence $(Q^{N}_{T})_{N\in\N}$, where $Q^{N}_{T}$ denotes the law of the process $\rho^{N}_{\cdot}$ on $C([0,T];M_{1}(\R_{+}))$, is tight on $C([0,T],M_{1}(\R_{+}))$.
\end{theorem}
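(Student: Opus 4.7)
The plan is to verify the standard two-ingredient tightness criterion for measure-valued processes on $C([0,T];M_{1}(\R_{+}))$: (i) \emph{compact containment} of the one-dimensional marginals in $M_{1}(\R_{+})$ equipped with the Wasserstein-1 topology, uniformly in $N$ and $t\in[0,T]$; and (ii) a \emph{time-regularity estimate} of Aldous/Kurtz type for the real-valued processes $t\mapsto\langle\rho^{N}_{t},\varphi\rangle$ as $\varphi$ ranges over a countable family of test functions in $\mathcal{S}_{0}$. Combined via a Jakubowski/Mitoma-type argument, these two properties will imply relative compactness of $(Q^{N}_{T})_{N}$ on $C([0,T];M_{1}(\R_{+}))$.

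For step (i), the key ingredient is a uniform $(1+\zeta)$-moment estimate
\[
\sup_{N\in\N}\E\Bigl[\sup_{0\le t\le T}\langle\rho^{N}_{t},\idxo^{1+\zeta}\rangle\Bigr]<\infty.
\]
To obtain it I would apply It\^o's formula to $x\mapsto x^{1+\zeta}$ at each $X_{i}(t)$ and average, yielding an SDE for $\langle\rho^{N}_{t},\idxo^{1+\zeta}\rangle$ whose drift is bounded, up to constants, by a combination of $\langle\rho^{N}_{t},\idxo^{1+\zeta}\rangle$ and $\langle\rho^{N}_{t},\idxo^{1+\zeta}\rangle\langle\rho^{N}_{t},\idxo\rangle$, and whose martingale part is controlled by BDG. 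The explicit Black--Scholes-type dynamics of the total-mass process $\langle\rho^{N}_{t},\idxo\rangle=\tfrac{1}{N}\sum_{i}X_{i}(t)$ written in the introduction give uniform-in-$N$ control on $\sup_{t\le T}\langle\rho^{N}_{t},\idxo\rangle^{1+\zeta}$; combined with Assumption~\ref{A:A_3_X(0)_Z(0)} (in particular \eqref{eqn:A:assumption_average_second_moment_initial_condition}) and Gr\"onwall, the desired estimate follows. By Markov's inequality it then produces, for every $\eps>0$, a Wasserstein-1 relatively compact set $K_{\eps}\subset M_{1}(\R_{+})$ containing $\rho^{N}_{t}$ with probability at least $1-\eps$, uniformly in $N$ and $t$.

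For step (ii), It\^o's formula applied to $\langle\rho^{N}_{t},\varphi\rangle$ with $\varphi\in\mathcal{S}_{0}$ yields a decomposition
\[
\langle\rho^{N}_{t},\varphi\rangle=\langle\rho^{N}_{0},\varphi\rangle+\int_{0}^{t}\mathcal{L}^{N}\varphi(\rho^{N}_{s})\ds+M^{N,\varphi}_{t},
\]
where $\mathcal{L}^{N}\varphi(\rho^{N}_{s})$ is a combination of $\langle\rho^{N}_{s},\idxo\partial_{x}^{2}\varphi\rangle\langle\rho^{N}_{s},\idxo\rangle$, $\langle\rho^{N}_{s},\idxo^{2}\partial_{x}^{2}\varphi\rangle$ and $\langle\rho^{N}_{s},\partial_{x}\varphi\rangle\langle\rho^{N}_{s},\idxo\rangle$, and the quadratic variation of $M^{N,\varphi}$ involves $\tfrac{1}{N^{2}}\sum_{i}X_{i}\bigl(\sum_{j}X_{j}\bigr)(\varphi'(X_{i}))^{2}$ together with a common-noise contribution of order $\langle\rho^{N}_{s},\idxo\varphi'\rangle^{2}$. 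Both are dominated by $C\|\varphi\|_{C^{2}}^{2}\bigl(\langle\rho^{N}_{s},\idxo\rangle+\langle\rho^{N}_{s},\idxo^{2}\rangle\bigr)$, which is integrable uniformly in $N$ by step (i). Hence for every $(\mathcal{F}_{t})$-stopping time $\tau\le T$ and every $\delta>0$,
\[
\E\bigl|\langle\rho^{N}_{(\tau+\delta)\wedge T},\varphi\rangle-\langle\rho^{N}_{\tau},\varphi\rangle\bigr|^{2}\le C_{\varphi}\,\delta,
\]
uniformly in $N$ and $\tau$, which is Aldous's criterion for the real-valued process. Running this over a countable family $\{\varphi_{k}\}\subset\mathcal{S}_{0}$ separating points of $M_{1}(\R_{+})$ and combining with the compact containment of step (i) then yields tightness of $(Q^{N}_{T})$ on $C([0,T];M_{1}(\R_{+}))$.

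The main obstacle is that $M_{1}(\R_{+})$ carries the Wasserstein-1 topology rather than the weak topology, so compactness of a family of measures requires genuine uniform integrability of the identity under the marginals, not merely tightness on $\R_{+}$. The whole argument is therefore anchored on the $(1+\zeta)$-moment estimate in step~(i), where the bookkeeping is delicate because the diffusion coefficient $\sqrt{X_{i}\sum_{j}X_{j}}$ produces cross-terms that couple $\langle\rho^{N},\idxo^{1+\zeta}\rangle$ with $\langle\rho^{N},\idxo\rangle$. The crucial structural fact enabling a closed Gr\"onwall estimate is that the common-noise coefficient is linear in $X_{i}$, so the total-mass process behaves as a GBM with $N$-independent law, allowing one to decouple the two moments and close the estimate.
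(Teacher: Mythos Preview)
Your plan is sound, and step~(i) is essentially what the paper does (via Lemma~\ref{lem:EsupXp_EXp_estimates} and Lemma~\ref{lem:compactness_in_M1}). The genuine difference is in step~(ii). The paper does \emph{not} go through test functions and a Jakubowski/Mitoma argument: instead it verifies Aldous's condition \emph{directly for the Wasserstein-1 metric}, using Kantorovich--Rubinstein duality to bound
\[
\Wd(\rho^{N}_{\tau+\theta},\rho^{N}_{\tau})\le \frac{1}{N}\sum_{i=1}^{N}|X_{i}(\tau+\theta)-X_{i}(\tau)|,
\]
and then estimates each particle increment from the SDE \eqref{eqn:X_Ni_SDE} via Markov/BDG and the moment Lemmas~\ref{lem:EsupZp_EZp_estimates}--\ref{lem:EsupXp_EXp_estimates}. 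This feeds straight into the Ethier--Kurtz criterion (Theorem~\ref{thm:tightness_ethier_kurtz}) with $E=M_{1}(\R_{+})$ and $d=\Wd$, avoiding any appeal to Jakubowski.

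Your route also works, because on any Wasserstein-1 compact set $K\subset M_{1}(\R_{+})$ the weak and Wasserstein topologies coincide, so a countable separating family in $\mathcal{S}_{0}$ generates the subspace topology on $K$ and Jakubowski applies. Its advantage is that the It\^o decomposition of $\langle\rho^{N},\varphi\rangle$ you compute in step~(ii) is exactly what is needed later to identify the limiting SPDE, so nothing is wasted. The paper's advantage is that the Wasserstein increment bound is completely elementary and sidesteps the Jakubowski machinery.

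One small slip: you claim the quadratic variation is bounded by $C\|\varphi\|_{C^{2}}^{2}\bigl(\langle\rho^{N}_{s},\idxo\rangle+\langle\rho^{N}_{s},\idxo^{2}\rangle\bigr)$, but Assumption~\ref{A:Assumptions_A} only supplies $(1+\zeta)$-moments, so $\langle\rho^{N}_{s},\idxo^{2}\rangle$ need not be uniformly integrable. Fortunately this is harmless here: since $\varphi\in\mathcal{S}_{0}$ is Schwartz, the functions $x\varphi'$, $x(\varphi')^{2}$, $x^{2}\varphi''$ are all bounded, so the correct bound is $C_{\varphi}\bigl(1+\langle\rho^{N}_{s},\idxo\rangle\bigr)$, requiring only first moments. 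With that correction your argument closes.
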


\begin{remark}
Proposition \ref{prop:e-ufinite}
guarantees, starting from a positive initial condition, that the solution of \eqref{eqn:X_Ni_SDE} remains positive and hence for every $t\in[0,T]$ and almost every $\omega$, $\rho^{N}_{t}((0,\infty))=1$.
 In the limit however, we can only claim that a limit point $\rho$ is supported on $[0,\infty)$ since by the Portmanteau-Theorem,
\begin{align*}
    &1=\limsup_{N\rightarrow \infty}\rho^{N}_{t}([0,\infty))\leq \rho_{t}([0,\infty))\\
    &\rho_{t}((0,\infty))\leq \liminf_{N\rightarrow \infty}\rho^{N}_{t}((0,\infty))=1.
\end{align*}

\end{remark}

\begin{proof}[Proof of Theorem \ref{thm:tightness}] Note that the space $C([0,T],M_{1}(\R_{+}))$ is equipped with the topology induced by the distance $d_{Z}(\mu,\nu)\coloneqq \sup_{t\in [0,T]}|\Wd(\mu_{t},\nu_{t})|$. To establish the claimed tightness, we verify the conditions of Theorem \ref{thm:tightness_ethier_kurtz} with $E$ replaced by $M_{1}(\R_{+})$, equipped with the Wasserstein-1 topology. For the first condition, we require a characterization of (relatively) compact sets in $M_{1}(\R_{+})$ which is provided by Lemma \ref{lem:compactness_in_M1}. 
We want to find a compact set $K^{0}_{\eps}=K^{0}_{t,\eps}\subseteq (M_{1}(\R_{+}),\Wd)$ such that, for every $t\in[0,T]$ and every $N\in\N$, $\rho^{N}_{t}$ lies in $K^{0}_{\eps}$ with probability greater than $1- \eps$. 
Let $\eps>0$. Given two positive constants  $0<M_{\eps,1}, M_{\eps,2} <\infty$ and $\zeta >0$, we propose the following set: $K^{0}_{\eps}\coloneqq \left\{\mu\colon \int_{\R_{+}}1+|x|^{1+\zeta}\mu(\dx) \leq M_{\eps,1}, \Wd(\mu,\delta_{0})\leq M_{\eps,2} \right\}$. The compactness of $K^{0}_{\eps}$ is a consequence of of Lemma \ref{lem:compactness_in_M1}. Indeed, the (uniform) bound $\int_{\R_{+}}1+|x|^{1+\zeta}\mu(\dx)$ implies the uniform integrability \ref{compactness_M1_C2} condition and the tightness \ref{compactness_M1_C1}. We can now verify \ref{item:tightness_ek_tightness} of Theorem \ref{thm:tightness_ethier_kurtz}:
\begin{align*}
    \sup_{N}\Prob\left(\rho_t^{N}\in K^{0}_{\eps}\right)=\sup_{N}\Prob\left(\rho^{N}_{t}\in \left\{\mu\colon \int_{\R_{+}}1+|x|^{1+\zeta}\mu(\dx) \leq M_{\eps,1}, \Wd(\mu,\delta_{0})\leq M_{\eps,2} \right\}\right)> 1-\eps.
\end{align*}
This is equivalent to showing that the probability of being an element of the complement of $K^{0}_{\eps}$, denoted by $(K^{0}_{\eps})^{c}$, is smaller than or equal to $\eps$. It suffices to show that $ \sup_{N}\Prob\left(\int_{\R_{+}}1+|x|^{1+\zeta}\rho_t^{N}(\dx) > M_{\eps,1}\right)\leq ~\frac{\eps}{2}$ and $ \sup_{N}\Prob\left(\Wd(\rho_t^N,\delta_{0})>  M_{\eps,2}\right)\leq ~\frac{\eps}{2}$. Using Assumptions \ref{A:Assumptions_A}, we get by the Markov inequality,
\begin{align*} \Prob\left(\int_{\R_{+}}1+|x|^{1+\zeta}\rho_t^{N}(\dx) > M_{\eps,1}\right)&\leq \Prob\bigg(1+\frac{1}{N}\sum_{i=1}^{N}|X_{i}(t)|^{1+\zeta}> M_{\eps,1}\bigg)\\
    &\leq \Prob\bigg(1+\frac{1}{N}\sum_{i=1}^{N}|X_{i}(t)|^{1+\zeta}> M_{\eps,1}\bigg)\\
    &\leq  \frac{1}{M_{\eps,1}}+\frac{\frac{1}{N}\sum_{i=1}^{N}\E\sup_{t\in[0,T]}|X_{i}(t)|^{1+\zeta}}{ M_{\eps,1}}\\
    &\leq  \frac{1}{M_{\eps,1}}+C_{T,\alpha,\beta,\zeta}\frac{\E\frac{1}{N}\sum_{i=1}^{N}|X_{i}(0)|^{1+\zeta}}{ M_{\eps,1}}\\
    &\leq  \frac{1}{M_{\eps,1}}+\frac{C_{T,\alpha,\beta,\zeta,X_{0}}}{ M_{\eps,1}}.
\end{align*}
We used Lemma \ref{lem:EsupXp_EXp_estimates} from which we obtained the constant $C_{T,\alpha,\beta,\zeta}$ and \eqref{eqn:A:assumption_average_second_moment_initial_condition} for the last inequality. Choosing $M_{\eps,1}$ such that  $M_{\eps,1} >\max \left\{\frac{4}{\eps},\frac{4C_{T,\alpha,\beta,\zeta,X_{0}}}{\eps}\right\}$  yields the first estimate.
By the Kantorovich-Rubinstein duality formula, we can express the Wasserstein-1 distance as follows
\begin{align*}
    \Wd (\mu, \nu)=\sup_{f\in \operatorname{Lip}} \left\{\int_{\mathbb{R}^d} f d(\mu-\nu) \mid\|f\|_{\operatorname{Lip}} \leq 1\right\},
\end{align*}
where $\operatorname{Lip}$ denotes the Lipschitz functions on $\R_{+}$ and  $\|f\|_{\operatorname{Lip}}$  their Lipschitz constant. We will denote the set of Lipschitz functions with Lipschitz constant $1$ by $\operatorname{Lip}_{1}$.
Since for every $f\in \operatorname{Lip}_{1}$,
\begin{align*}
    \left|\frac{1}{N}\sum_{i=1}^{N}f(x_{i})-f(0)\right|\leq \frac{1}{N}\sum_{i=1}^{N} |x_{i}|,
\end{align*}
\begin{align*}
    \Prob\left(\Wd(\rho_t^N,\delta_{0})>  M_{\eps,2}\right)&\leq  \Prob\left(\int_{\R_{+}}|x|\rho_t^{N}(\dx)> M_{\eps,2}\right)\\
     &\leq \Prob\bigg(\left(1+\frac{1}{N}\sum_{i=1}^{N}|X_{i}(t)|^{1+\zeta}\right)> M_{\eps,2}\bigg)\\
    &\leq  \frac{1}{M_{\eps,2}}+\frac{C_{T,\alpha,\beta,\zeta,X_{0}}}{ M_{\eps,2}}.
\end{align*}
As above, we can choose $M_{\eps,2}>\max \left\{\frac{4}{\eps},\frac{4 C_{T,\alpha,\beta,\zeta,X_{0}}}{\eps}\right\}$. In total, we obtain $  \sup_{N}\Prob\left(\rho_t^{N}\in (K^{0}_{\eps})^{c}\right)\leq \eps$.

This yields condition \ref{item:tightness_ek_tightness} of Theorem \ref{thm:tightness_ethier_kurtz}. \\

We now verify condition \ref{item:tightness_Aldous_condition}.
Recall that, by the Kantorovich-Rubinstein duality formula, we have for $f \in \operatorname{Lip}_1$
\begin{align*}
    \bigg|\Wd(\rho^{N}_{t},\rho^{N}_{s})\bigg|&=\sup_{f\in \operatorname{Lip}_{1}}\bigg|\frac{1}{N}\sum_{i=1}^{N}f(X_{i}(t))-\frac{1}{N}\sum_{i=1}^{N}f(X_{i}(s))\bigg|\\
    &\leq  \frac{1}{N}\sum_{i=1}^{N}\left|X_{i}(t)-X_{i}(s)\right|.
\end{align*}
Hence,
\begin{align*}
        \sup_{0\leq \theta \leq \delta}&\Prob\left(\bigg|\Wd(\rho^{N}_{\tau+\theta},\rho^{N}_{\tau})\bigg| >\eta\right)\leq   \sup_{0\leq \theta \leq \delta}\Prob\left(\frac{1}{N}\sum_{i=1}^{N}\left|X_{i}(\tau+\theta)-X_{i}(\tau)\right| >\eta\right)\\
     &\leq  \sup_{0\leq \theta \leq \delta}\Prob\left(\frac{1}{N}\sum_{i=1}^{N}\left|\int_{\tau}^{\tau+\theta}\frac{\beta}{N}\sum_{j=1}^{N }X_{j}(r)\dr\right| >\eta/2\right)\\
     &\phantom{xx}{}+\sup_{0\leq \theta \leq \delta}\Prob\left(\frac{1}{N}\sum_{i=1}^{N}\left|\int_{\tau}^{\tau+\theta}\sqrt{\frac{\alpha}{N}}\sqrt{X_{i}(r)}\sqrt{\sum_{j=1}^{N }X_{j}(r)}\dW^{i}_{r}\right| >\eta/4\right)\\
      &\phantom{xx}{}+\sup_{0\leq \theta \leq \delta}\Prob\left(\frac{1}{N}\sum_{i=1}^{N}\left|\int_{\tau}^{\tau+\theta}\sqrt{1-\frac{\alpha}{N}}X_{i}(r)\dW^{0}_{r}\right| >\eta/4\right)=I+II+III.
\end{align*}
We will consider each term separately.\newline
Using Markov's inequality the first integral can be estimated in a straightforward way,
\begin{align*}
    I=\sup_{0\leq \theta \leq \delta}\Prob\left(\frac{1}{N}\sum_{i=1}^{N}\left|\int_{\tau}^{\tau+\theta}\frac{\beta}{N}\sum_{j=1}^{N }X_{j}(r)\dr\right| >\eta/2\right)&\leq \sup_{0\leq \theta \leq \delta} \frac{2}{\eta}\E\left(\frac{1}{N}\sum_{i=1}^{N}\bigg|\int_{\tau}^{\tau+\theta}\frac{\beta}{N}\sum_{j=1}^{N }X_{j}(r)\dr\bigg|\right)\\
    &\leq \frac{2}{\eta} \delta \beta\E\sup_{r\in [0,T]}\left(\frac{1}{N}\sum_{j=1}^{N }X_{j}(r)\right),
\end{align*}
where we used the non-negativity of $X_{j}$ and that the integral term is independent of the summation variable $i$. The non-negativity of the integrand further yields
\begin{align*}
    \frac{2}{\eta} \delta \beta\E\sup_{t\in [0,T]}\left(\frac{1}{N}\sum_{j=1}^{N }X_{j}(t)\right)& \leq \frac{C_{T,\alpha,\beta}}{\eta} \delta \E\left(\frac{1}{N}\sum_{j=1}^{N }X_{j}(0)\right)\leq \frac{C_{T,\alpha,\beta}}{\eta} \delta,
\end{align*}
due to Lemma \ref{lem:EsupZp_EZp_estimates} and convergence \eqref{eqn:A:E_sum_X_i_converges} from Assumptions \ref{A:Assumptions_A}, by the boundedness of real-valued, convergent sequences.\newline
Next, we estimate the second term. By the Markov, Burkholder-Davis-Gundy and Jensen inequalities,
\begin{align*}
    II=\sup_{0\leq \theta \leq \delta}&\Prob\left(\frac{1}{N}\sum_{i=1}^{N}\left|\int_{\tau}^{\tau+\theta}\sqrt{\frac{\alpha}{N}}\sqrt{X_{i}(r)}\sqrt{\sum_{j=1}^{N }X_{j}(r)}\dW^{i}_{r}\right| >\eta/4\right)\\
    &\leq   \sup_{0\leq \theta \leq \delta}\frac{4}{\eta}\E\left(\frac{1}{N}\sum_{i=1}^{N}\left|\int_{\tau}^{\tau+\theta}\sqrt{\frac{\alpha}{N}}\sqrt{X_{i}(r)}\sqrt{\sum_{j=1}^{N }X_{j}(r)}\dW^{i}_{r}\right|\right)\\
     &\leq  \sup_{0\leq \theta \leq \delta}\frac{C_{\alpha}}{\eta}\E\left(\int_{\tau}^{\tau+\theta}\left(\frac{1}{N}\sum_{i=1}^{N}X_{i}(r)\right)^{2}\dr\right)^{1/2}\\
     &\leq \frac{C_{\alpha}}{\eta} \delta^{1/2}\E \sup_{t\in[0,T]}\left(\frac{1}{N}\sum_{i=1}^{N}X_{i}(t)\right)\leq \frac{C_{\alpha,\beta,T}}{\eta} \delta^{1/2} \E \left(\frac{1}{N}\sum_{i=1}^{N}X_{i}(0)\right)\leq \frac{C_{\alpha,\beta,T}}{\eta} \delta^{1/2}.
\end{align*}

We used Lemma \ref{lem:EsupZp_EZp_estimates} and convergence \eqref{eqn:A:E_sum_X_i_converges} from Assumptions \ref{A:Assumptions_A}.\newline
The last term can be bounded similarly.
\begin{align*}
   III&=\sup_{0\leq \theta \leq \delta}\Prob\left(\frac{1}{N}\sum_{i=1}^{N}\left|\int_{\tau}^{\tau+\theta}\sqrt{1-\frac{\alpha}{N}}X_{i}(r)\dW^{0}_{r}\right| >\eta/4\right)\\
    &\leq \sup_{0\leq \theta \leq \delta}C \sqrt{1-\frac{\alpha}{N}}\frac{4}{\eta}\frac{1}{N}\sum_{i=1}^{N}\E\left(\int_{\tau}^{\tau+\theta}X_{i}(r)^{2}\dr\right)^{1/2}\leq 
    C_{\alpha}\frac{\delta^{1/2}}{\eta}\frac{1}{N}\sum_{i=1}^{N}\E\sup_{t\in [0,T]}X_{i}(t).
\end{align*}

Following the proof of Lemma \ref{lem:EsupXp_EXp_estimates}, we see that 
\begin{align*}
      \E \sup_{t\in [0,T]} X_{i}(t)&=\E X_{i}(0)+C_{\alpha,\beta}\E\int_{0}^{T}X_{i}(s)\ds +C_{\alpha,\beta}\E\int_{0}^{T}\frac{1}{N}\sum_{j=1}^{N}X_{j}(s)\ds,
\end{align*}
with constants which only depend on $\alpha,\beta$. Hence
\begin{align*}
    \frac{1}{N}\sum_{i=1}^{N}\E\sup_{t\in [0,T]}X_{i}(t)\leq \E\frac{1}{N}\sum_{i=1}^{N}X_{i}(0)+C_{\alpha,\beta}\E\int_{0}^{T}\frac{1}{N}\sum_{i=1}^{N}X_{i}(s)\ds.
\end{align*}
We can use Lemma \ref{lem:EsupZp_EZp_estimates} and \eqref{eqn:A:E_sum_X_i_converges} from Assumptions \ref{A:Assumptions_A} to conclude that $\sup_{0\leq \theta \leq \delta}\Prob\left(\bigg|\Wd(\rho^{N}_{\tau+\theta},\rho^{N}_{\tau})\bigg| >\eta\right)$ is bounded by a constant times $\frac{\delta^{1/2}}{\eta}$.
\end{proof}

\subsubsection{Convergence of the spatial mean}\label{sec:convergence_mean}
Let $f \in C^{2}(\R)$. By It{\^o}'s formula we see that $\rho^{N}$ satisfies the following SPDE
\begin{equation}
\begin{split}\label{eqn:SPDE_rho_N}
   \langle \rho^{N}_{t},f\rangle & = \langle \rho^{N}_{0},f\rangle+\int_{0}^{t}\beta \langle \rho^{N}_{r},\partial_{x}f\rangle \langle \rho^{N}_{r},\idx\rangle+\frac{\alpha}{2}\langle \rho^{N}_{r},\idxo\partial_{x}^{2}f\rangle  \langle \rho^{N}_{r},\idx\rangle+\bigg(\frac{1}{2}-\frac{\alpha}{2N}\bigg)\langle \rho^{N}(r),\idxo^{2}\partial_{x}^{2}f\rangle\dr\\
    &\phantom{xx}{}+\frac{1}{N}\sum_{i=1}^{N}\int_{0}^{t}\partial_{x}f(X_{i}(r))\sqrt{\alpha}\langle \rho^{N}_{r},\idx\rangle\frac{\sqrt{X_{i}(r)}}{\sqrt{\langle \rho^{N}_{r},\idx\rangle}}\dW^{i}_{r}+\int_{0}^{t} \langle \rho^{N}_{r},\idxo\partial_{x}f\rangle\sqrt{1-\frac{\alpha}{N}}\dW^{0}_{r}.
\end{split}
\end{equation}
We expect equation \eqref{eqn:SPDE_rho_N}, when taking the limit as $N\rightarrow\infty$, to converge to a non-linear, non-local SPDE, depending on terms of the form $\langle\rho_{t},\idx\rangle$. The goal of this section is to investigate the explicit structure of $\langle\rho^{N}_{t},\idx\rangle$ as $N  \to \infty$. A ``convenient'' representation of the limit of $\langle\rho^{N}_{t},\idx\rangle$ might allow us to work with a simpler linear equation with random coefficients.
We define $Z_{(N)}\coloneqq \langle\rho^{N}_{t},\idx\rangle=\frac{1}{N}\sum_{j=1}^{N}X_{j}$. Taking the empirical average of system \eqref{eqn:X_Ni_SDE} yields
\begin{align*}
    \d Z_{(N)}(t)=\beta Z_{(N)}(t)\dt+\frac{\sqrt{\alpha}}{N}\sum_{i=1}^{N}\sqrt{X_{i}(t)}\sqrt{Z_{(N)}(t)}\dW_{t}^{i}+\sqrt{1-\frac{\alpha}{N}}Z_{(N)}(t)\dW^{0}_{t}.
\end{align*}
The martingale representation theorem allows us to express $\frac{\sqrt{\alpha}}{N}\sum_{i=1}^{N}\int_{0}^{t}\sqrt{X_{i}(s)}\sqrt{Z_{(N)}(s)}\dW_{s}^{i}$ in terms of an integral with respect to a single Brownian motion, which will be denoted by $B$. In particular,
\begin{align*}
    \frac{\sqrt{\alpha}}{N}\sum_{i=1}^{N}\int_{0}^{t}\sqrt{X_{i}(s)}\sqrt{Z_{(N)}(s)}\dW_{s}^{i}=\int_{0}^{t}\sqrt{\frac{\alpha}{N}}Z_{(N)}(s)\d B_{s}.
\end{align*}
Although not strictly necessary, this step improves the readability of the following calculations.
The equation for $Z_{(N)}$ can be rewritten as
\begin{align}\label{eqn:equation_mean_Z}
    \d Z_{(N)}(t)=\beta Z_{(N)}(t)\dt+\sqrt{\frac{\alpha}{N}}Z_{(N)}(t)\d B_{t}+\sqrt{1-\frac{\alpha}{N}}Z_{(N)}(t)\dW^{0}_{t},
\end{align}
which has an explicit solution given by
\begin{align*}
    Z_{(N)}(t)&=
    Z_{(N)}(0)\exp{\bigg(\bigg(\beta-\frac{1}{2}\bigg)t+\sqrt{\frac{\alpha}{N}}B_{t}+\sqrt{1-\frac{\alpha}{N}}W^{0}_{t}\bigg)}.
\end{align*}
Taking the limit $N\rightarrow \infty$, yields 
\begin{align*}
    Z_{(N)}(0)\exp{\bigg(\bigg(\beta-\frac{1}{2}\bigg)t+\sqrt{\frac{\alpha}{N}}B_{t}+\sqrt{1-\frac{\alpha}{N}}W^{0}_{t}\bigg)}\rightarrow m_{\lambda}\exp{\bigg(\bigg(\beta-\frac{1}{2}\bigg)t+W^{0}_{t}\bigg)},
\end{align*}
with $m_{\lambda}$ given in \ref{A:A_3_X(0)_Z(0)}.
More precisely, we consider the sequence of functions $\big(t\mapsto\langle \rho^{N}_t,\idx\rangle\big)_{N\in\N}$ and show that it converges  in probability in  $C([0,T],\R)$ to the function $t\mapsto m_{\lambda}\exp{\bigg(\bigg(\beta-\frac{1}{2}\bigg)t+W^{0}_{t}\bigg)}$,
\begin{lemma}\label{lem:convergence_of_mean_rho_N} Let Assumptions \ref{A:Assumptions_A} be satisfied. 
The sequence $Z_{(N)}=\langle \rho^{N},\idx\rangle$ of $C([0,T],\R)$--valued random variables converges in probability to $\langle\rho_{\cdot}, \idx\rangle\coloneqq m_{\lambda}\exp{\bigg(\bigg(\beta-\frac{1}{2}\bigg)(\cdot)+W^{0}_{\cdot}\bigg)}$. 
Additionally, we have
\begin{align}
   \lim_{N \to \infty} \E\int_{0}^{t}\bigg|\langle \rho^{N}_{s},\idx \rangle-m_{\lambda}\exp{\bigg(\bigg(\beta-\frac{1}{2}\bigg)s+W^{0}_{s}\bigg)}\bigg|\ds\rightarrow 0.
\end{align}
\end{lemma}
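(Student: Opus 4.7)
My starting point is the explicit formula
\[
Z_{(N)}(t) = Z_{(N)}(0)\exp\!\bigl((\beta-\tfrac12)t + \sqrt{\tfrac{\alpha}{N}}\,B_t + \sqrt{1-\tfrac{\alpha}{N}}\,W^{0}_t\bigr),
\]
already obtained in the excerpt from the linear SDE \eqref{eqn:equation_mean_Z} via It\^o's formula, together with the candidate limit $Z^{\infty}(t):=m_{\lambda}\exp\!\bigl((\beta-\tfrac12)t+W^{0}_t\bigr)$. My plan is to split the discrepancy $Z_{(N)}(t)-Z^{\infty}(t)$ into three independent sources: (i) the error in the initial condition $Z_{(N)}(0)-m_{\lambda}$, (ii) the vanishing idiosyncratic contribution $\sqrt{\alpha/N}\,B_t$, and (iii) the renormalisation of the common-noise coefficient $\sqrt{1-\alpha/N}\to 1$. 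The plan is then to combine the three pieces using the multiplicative structure of the exponential and a triangle inequality, rather than trying to bound the difference directly.

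For (i), I would invoke Assumption \ref{A:A_3_X(0)_Z(0)}: the empirical measure $\rho^{N}_{0}$ of $X_{1}(0),\dots,X_{N}(0)$ converges in distribution in $(M_{1}(\R_{++}),\Wd)$ to the deterministic measure $\lambda$. Since the pairing $\mu\mapsto\langle\mu,\idx\rangle$ is $1$-Lipschitz in the Wasserstein-$1$ distance and hence continuous, the continuous mapping theorem yields $Z_{(N)}(0)=\langle\rho^{N}_{0},\idx\rangle\to m_{\lambda}$ in distribution, and, the limit being deterministic, in probability. For (ii) and (iii) I would use that $\sup_{t\in[0,T]}|B_t|$ and $\sup_{t\in[0,T]}|W^{0}_t|$ are a.s.\ finite, so both $\sqrt{\alpha/N}\,B_t$ and $(\sqrt{1-\alpha/N}-1)W^{0}_t$ tend to $0$ uniformly in $t\in[0,T]$ almost surely. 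Combining these three facts with the local Lipschitz estimate $|e^{x}-e^{y}|\le|x-y|\,e^{\max(x,y)}$ and writing
\[
Z_{(N)}(t)-Z^{\infty}(t)=\bigl(Z_{(N)}(0)-m_{\lambda}\bigr)\,E_{N}(t)+m_{\lambda}\bigl(E_{N}(t)-E_{\infty}(t)\bigr),
\]
where $E_{N}(t)$ and $E_{\infty}(t)$ denote the two exponential factors, would yield $\sup_{t\in[0,T]}|Z_{(N)}(t)-Z^{\infty}(t)|\to 0$ in probability, i.e.\ convergence in probability in $C([0,T],\R)$.

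For the additional $L^{1}$-in-time claim, the key observation is that a direct computation from \eqref{eqn:equation_mean_Z} (equivalently, from the exponential formula, using independence of $X_{i}(0)$ from $B$ and $W^{0}$) gives $\E Z_{(N)}(s)=\E Z_{(N)}(0)\,e^{\beta s}$ and $\E Z^{\infty}(s)=m_{\lambda}\,e^{\beta s}$, so that by \eqref{eqn:A:E_sum_X_i_converges} one has $\E Z_{(N)}(s)\to\E Z^{\infty}(s)$ for every $s\in[0,t]$. Combined with the convergence in probability from the first part, Scheff\'e's lemma applied to the non-negative random variables $Z_{(N)}(s),Z^{\infty}(s)$ upgrades this to $\E|Z_{(N)}(s)-Z^{\infty}(s)|\to 0$ pointwise in $s$. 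Since $\E Z_{(N)}(s)+\E Z^{\infty}(s)\le C\,e^{\beta T}$ uniformly in $s\in[0,t]$ and $N$, Fubini together with dominated convergence on $[0,t]$ yields $\E\int_{0}^{t}|Z_{(N)}(s)-Z^{\infty}(s)|\,\ds\to 0$.

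The main subtlety will be controlling the three non-independent error sources jointly without losing uniformity in $t\in[0,T]$; this is precisely what the explicit multiplicative form of the solution, together with the almost sure sample-path boundedness of $B$ and $W^{0}$ on $[0,T]$, is designed to make transparent. Everything else in the argument is bookkeeping.
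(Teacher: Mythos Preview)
Your argument is correct and takes a genuinely different (and in some respects more economical) route than the paper's.

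For the uniform-in-time convergence in probability, the paper factors out $e^{\beta t}$, recognises the remaining exponentials as Dol\'eans--Dade martingales, applies Doob's maximal inequality, and then separates the initial-condition error from the exponential factors via H\"older's inequality with exponent $1+\zeta$ from Assumption~\ref{A:A_3_X(0)_Z(0)}; this produces terms of the form $\E|Z_{(N)}(0)-m_\lambda|^{1+\zeta}$ and explicit Gaussian moments. Your multiplicative decomposition plus tightness bypasses the higher-moment machinery: you only need $Z_{(N)}(0)\to m_\lambda$ in probability (from continuity of $\mu\mapsto\langle\mu,\idx\rangle$ on $(M_1,\Wd)$) and tightness of $\sup_t E_N(t)$. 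One small correction: the auxiliary Brownian motion $B$ is produced by the martingale representation theorem from an $N$-dependent martingale, so $B=B^{(N)}$ generally varies with $N$; hence ``$\sqrt{\alpha/N}\,B_t\to 0$ uniformly in $t$ almost surely'' should read ``in probability'' (which is immediate since each $B^{(N)}$ has the law of a standard Brownian motion). For the same reason, $B^{(N)}$ need not be independent of $Z_{(N)}(0)$, so your parenthetical ``equivalently, from the exponential formula, using independence'' is not quite right---but your primary SDE-based derivation of $\E Z_{(N)}(s)=\E Z_{(N)}(0)\,e^{\beta s}$ is unaffected.

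For the $L^1$-in-time statement, the paper establishes uniform integrability of the integrand from the $(1+\zeta)$-moment bounds (Lemma~\ref{lem:EsupZp_EZp_estimates}) and applies Vitali's theorem. Your Scheff\'e argument is more elementary: it uses only convergence in probability, non-negativity, and convergence of first moments $\E Z_{(N)}(0)\to m_\lambda$, followed by dominated convergence in $s$. Both approaches are valid; yours shows that the second limit in \eqref{eqn:A:E_sum_X_i_converges} is not actually needed for this particular lemma.
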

\begin{proof}
We have the following estimate
\begin{align*}
    \Prob&\bigg(\sup_{0\leq t\leq T}\bigg|\langle \rho^{N}_{t},\idx\rangle-m_{\lambda}\exp{\bigg(\bigg(\beta-\frac{1}{2}\bigg)t+W^{0}_{t}\bigg)}\bigg|>\eps\bigg)\\
    &=\Prob\bigg(\sup_{0\leq t\leq T}\bigg|Z_{(N)}(0)\exp{\bigg(\bigg(\beta-\frac{1}{2}\bigg)t+\sqrt{\frac{\alpha}{N}}B_{t}+\sqrt{1-\frac{\alpha}{N}}W^{0}_{t}\bigg)}-m_{\lambda}\exp{\bigg(\bigg(\beta-\frac{1}{2}\bigg)t+W^{0}_{t}\bigg)}\bigg|>\eps\bigg)\\
    &\leq  \Prob\bigg(\sup_{0\leq t\leq T}\bigg|Z_{(N)}(0)\exp{\bigg(\sqrt{\frac{\alpha}{N}}B_{t}+\sqrt{1-\frac{\alpha}{N}}W^{0}_{t}-\frac{t}{2}\bigg)}-m_{\lambda}\exp{\bigg(W^{0}_{t}-\frac{t}{2}\bigg)}\bigg|>\eps\exp{\bigg(-\frac{T}{2}-\bigg(\beta-\frac{1}{2}\bigg)T\bigg)}\bigg).
\end{align*}

The exponential factors correspond to the Doleans-Dade exponential of $B+W^{0}$ and $W^{0}$, respectively. Hence we can apply Doob's maximal inequality to get rid of the supremum,
\begin{align*}
   \Prob\bigg(&\sup_{0\leq t\leq T}\bigg|Z_{(N)}(0)\exp{\bigg(\sqrt{\frac{\alpha}{N}}B_{t}+\sqrt{1-\frac{\alpha}{N}}W^{0}_{t}-\frac{t}{2}\bigg)}-m_{\lambda}\exp{\bigg(W^{0}_{t}-\frac{t}{2}\bigg)}\bigg|>\eps\exp{(-\beta T)}\bigg)\\
   &\leq \frac{\E\bigg|Z_{(N)}(0)\exp{\bigg(\sqrt{\frac{\alpha}{N}}B_{T}+\sqrt{1-\frac{\alpha}{N}}W^{0}_{T}-\frac{T}{2}\bigg)}-m_{\lambda}\exp{\bigg(W^{0}_{T}-\frac{T}{2}\bigg)}\bigg|}{\eps\exp{\bigg(\beta T\bigg)}}\\
   &\leq  C\exp{\bigg(\frac{T}{2}\bigg)}\frac{\left(\E\left|Z_{(N)}(0)-m_{\lambda}\right|^{1+\zeta}\right)^{\frac{1}{1+\zeta}}\left(\E\bigg(\exp{\bigg(\sqrt{\frac{\alpha}{N}}B_{T}+\sqrt{1-\frac{\alpha}{N}}W^{0}_{T}\bigg)}^{\frac{1+\zeta}{\zeta}}\bigg)\right)^{\frac{\zeta}{1+\zeta}}}{\eps\exp{\bigg(\beta T\bigg)}}\\
   &\phantom{xx}{}+C\exp{\bigg(\frac{T}{2}\bigg)}\frac{\left(\E\big(m_{\lambda}\big)^{1+\zeta}\right)^{\frac{1}{1+\zeta}}\left(\E\left|\exp{\bigg(\sqrt{\frac{\alpha}{N}}B_{T}+\sqrt{1-\frac{\alpha}{N}}W^{0}_{T}\bigg)}-\exp{\bigg(W^{0}_{T}\bigg)}\right|^{\frac{1+\zeta}{\zeta}}\right)^{\frac{\zeta}{1+\zeta}}}{\eps\exp{\bigg(\beta T\bigg)}}\\
   &= :I+II.
\end{align*}
We use the explicit representation of the Laplace transform of Gaussian random variables to rewrite $I$ as
\begin{align*}
    & C\exp{\bigg(\frac{T}{2}\bigg)}\frac{\left(\E\left|Z_{(N)}(0)-m_{\lambda}\right|^{1+\zeta}\right)^{\frac{1}{1+\zeta}}\left(\exp{\bigg(\left(\frac{1+\zeta}{\zeta}T\right)^{2}\bigg)}\right)^{\frac{\zeta}{1+\zeta}}}{\eps\exp{\bigg(\beta T\bigg)}}.
\end{align*}
This term vanishes as $N\rightarrow\infty$ by Assumption \ref{A:A_3_X(0)_Z(0)}. The term $II$ converges to $0$, since
\begin{align*}
    &\E\left|\exp{\bigg(\sqrt{\frac{\alpha}{N}}B_{T}+\sqrt{1-\frac{\alpha}{N}}W^{0}_{T}\bigg)}-\exp{\bigg(W^{0}_{T}\bigg)}\right|^{\frac{1+\zeta}{\zeta}}\\
    &=\E\left[\left|\exp{\bigg(\sqrt{\frac{\alpha}{N}}B_{T}+\sqrt{1-\frac{\alpha}{N}}W^{0}_{T}\bigg)}-\exp{\bigg(W^{0}_{T}\bigg)}\right|\left|\exp{\bigg(\sqrt{\frac{\alpha}{N}}B_{T}+\sqrt{1-\frac{\alpha}{N}}W^{0}_{T}\bigg)}-\exp{\bigg(W^{0}_{T}\bigg)}\right|^{\frac{1+\zeta}{\zeta}-1}\right]\\
    &\leq \sqrt{\E\left[\left(\exp{\bigg(\sqrt{\frac{\alpha}{N}}B_{T}+\sqrt{1-\frac{\alpha}{N}}W^{0}_{T}\bigg)}-\exp{\bigg(W^{0}_{T}\bigg)}\right)^{2}\right]}\\
    &\phantom{xxxxxxxx}\times\sqrt{\E\left[\left|\exp{\bigg(\sqrt{\frac{\alpha}{N}}B_{T}+\sqrt{1-\frac{\alpha}{N}}W^{0}_{T}\bigg)}-\exp{\bigg(W^{0}_{T}\bigg)}\right|^{2\left(\frac{1+\zeta}{\zeta}-1\right)}\right]}\\
    &\leq \sqrt{\left(2\exp{\bigg(2T^{2}\bigg)}-2\exp{\bigg(\frac{T^{2}}{2}\bigg(\frac{\alpha}{N}+\bigg(\sqrt{1-\frac{\alpha}{N}}+1\bigg)^{2}\bigg)\bigg)}\right)}\\
    &\phantom{xxxxxxxx}\times\sqrt{\E\left[\left(\left|\exp{\bigg(\sqrt{\frac{\alpha}{N}}B_{T}+\sqrt{1-\frac{\alpha}{N}}W^{0}_{T}\bigg)}\right|+\left|\exp{\bigg(W^{0}_{T}\bigg)}\right|\right)^{2\left(\frac{1+\zeta}{\zeta}-1\right)}\right]}\\
    &\leq C_{\zeta, T}\sqrt{\left(2\exp{\bigg(2T^{2}\bigg)}-2\exp{\bigg(\frac{T^{2}}{2}\bigg(\frac{\alpha}{N}+\bigg(\sqrt{1-\frac{\alpha}{N}}+1\bigg)^{2}\bigg)\bigg)}\right)}.
\end{align*}
The continuity of the involved functions yields the convergence to $0$ and proves the first assertion. 
From the previous calculations, we already know that the integrand goes to $0$, uniformly in $t$, in probability. To prove the second assertion, note that by Lemma \ref{lem:EsupZp_EZp_estimates} the function
\begin{align*}
(t, \omega) \mapsto g(t,\omega)\coloneqq \bigg|\langle \rho^{N}_{t},\idx \rangle\bigg|(\omega)+\bigg|m_{\lambda}\exp{\bigg(\bigg(\beta-\frac{1}{2}\bigg)t+W^{0}_{t}\bigg)}\bigg|(\omega)    
\end{align*}
is uniformly integrable since  $\E\langle \rho^{N}_t, \idx \rangle^{1+\zeta}<\infty$,
by Assumption \ref{A:A_3_X(0)_Z(0)}. Hence we can apply Vitali's theorem, which yields the assertion.
\end{proof}

Our next goal is to show that the limit points of (suitable sub-sequences of) the sequence $\rho^{N}$ satisfy the stochastic partial differential equation \eqref{eqn:SPDE_classical_formulation} in the sense of Definition \ref{def:weak_solution}.
\begin{remark}
To be precise, since we are using the results of Lemma \ref{lem:convergence_of_mean_rho_N}, we argue that our limit process $\rho$ is a solution to a linear SPDE with random coefficients (depending on $W^0$) given by 
\begin{align}
    \d \rho_{t}= m_{\lambda}\exp{\big((\beta-1/2)t+W^{0}_{t}\big)}\big(\frac{\alpha}{2}\partial_{x}^{2}(x \rho_{t})-\beta\partial_{x} \rho_{t} \big)\dt+\frac{1}{2}\partial_{x}^{2}(x^{2} \rho_{t})\dt+\partial_{x}(x \rho_{t})\dW^{0}_{t}.\label{eqn:SPDE_random}
\end{align}
We will show in Lemma \ref{lem:rho_x} that, under certain conditions, any solution of equation \eqref{eqn:SPDE_classical_formulation}, whose initial condition is a positive and finite measure, also satisfies 
\begin{align*} \langle\rho_{t},\idx\rangle=m_{\lambda}\exp{\big((\beta \theta-1/2)t+W^{0}_{t}\big)},
\end{align*}
where $\theta = \langle \rho_{0},1\rangle$.
The explicit expression for $\langle\rho,\idx\rangle$ will be also useful later on to show the uniqueness of $\rho$.
\end{remark}
\begin{lemma}\label{lem:rho_x}
  Let $\rho$ be any (signed), finite measure-valued solution of \eqref{eqn:SPDE_classical_formulation} with $\int_{\R_{+}}|x||\rho|_{t}(\dx)<\infty$ for a.e. $t$ and $\omega$, such that
\begin{align*}
    &\langle \rho_{0}, 1\rangle=\theta>0,\quad \langle \rho_{0}, \idx\rangle=m_{\lambda}>0,\quad \textnormal{for almost every } \omega,
\end{align*}
then
\begin{enumerate}[label=(\Roman*)]
\item \leavevmode\vspace*{-\dimexpr\abovedisplayskip + \baselineskip}
\begin{align*}
   \langle \rho_{t}, 1\rangle=\theta,\quad \textnormal{for almost every } t,\omega,
\end{align*}
\item \leavevmode\vspace*{-\dimexpr\abovedisplayskip + \baselineskip} \begin{align*}
    \langle\rho_{t},\idx\rangle=m_{\lambda}\exp{((\beta \theta-1/2)t+W^{0}_{t})}.
\end{align*}
\end{enumerate}
\end{lemma}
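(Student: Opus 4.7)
I plan to prove both parts by testing the weak formulation \eqref{eqn:SPDE_weak_formulation} against suitable approximations of $\varphi \equiv 1$ and $\varphi(x) = x$. Formally, plugging these into the right-hand side of \eqref{eqn:SPDE_weak_formulation} gives
\[
d\langle \rho_t, 1 \rangle = 0, \qquad d\langle \rho_t, \operatorname{id}_x \rangle = \beta \langle \rho_t, 1 \rangle \langle \rho_t, \operatorname{id}_x \rangle\, dt + \langle \rho_t, \operatorname{id}_x \rangle\, dW^0_t,
\]
since every drift and diffusion coefficient in the weak SPDE contains at least one derivative of the test function. Neither $1$ nor $\operatorname{id}_x$ lies in $\mathcal{S}_0$ (the first violates $\varphi(0)=0$; both fail Schwartz decay), so both identities must be obtained by a limiting argument along sequences in $\mathcal{S}_0$.

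For (II), I would take $\varphi_n(x):=x\chi(x/n)\in\mathcal{S}_0$ with $\chi\in C_c^\infty(\R)$ satisfying $\chi\equiv 1$ on $[-1,1]$ and $\operatorname{supp}\chi\subset[-2,2]$. A direct computation shows that $\partial_x\varphi_n\to 1$ pointwise and boundedly, and $x\partial_x\varphi_n\to x$ with pointwise domination by $C|x|$, while $x\partial_x^2\varphi_n$ and $x^2\partial_x^2\varphi_n$ are bounded by $C$ and $C|x|$ respectively and supported in $\{|x|\geq n\}$. Dominated convergence using the assumptions $|\rho_r|(\R_+)<\infty$ and $\int|x|\,d|\rho_r|<\infty$ handles all four integrands in the drift, and the same $L^1$-bounds combined with It\^o's isometry let me pass to the limit in the stochastic integral, yielding the displayed linear SDE for $\langle\rho_t,\operatorname{id}_x\rangle$.

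For (I) the approximation is more delicate, because any $\varphi_n\in\mathcal{S}_0$ converging to $1$ on $(0,\infty)$ must have derivatives concentrating at the origin. My candidate is $\varphi_n(x)=\chi(x/n)\psi(nx)$ with $\psi\in C^\infty(\R)$, $\psi(0)=0$ and $\psi\equiv 1$ on $[1,\infty)$. The derivatives of $\varphi_n$ split into a far-field piece supported in $\{|x|\geq n\}$, negligible by finiteness of $|\rho_r|$ and $\int|x|\,d|\rho_r|$, and a near-origin piece supported in $[1/n,2/n]$. The weighted quantities $\langle\rho_r,x\partial_x\varphi_n\rangle$, $\langle\rho_r,x\partial_x^2\varphi_n\rangle$ and $\langle\rho_r,x^2\partial_x^2\varphi_n\rangle$ vanish in the limit by dominated convergence once the mild non-atom property $\rho_r(\{0\})=0$ is in place, whereas the unweighted $\langle\rho_r,\partial_x\varphi_n\rangle$ requires the quantitative decay $|\rho_r|([1/n,2/n])=o(1/n)$. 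This is exactly the implicit Dirichlet boundary property consistent with testing against $\mathcal{S}_0$ (cf.\ Remark~\ref{rem:boundary_behaviour_SPDE}) and guaranteed by the parameter condition $\beta\geq\alpha/2$. Passing to the limit then yields $\langle\rho_t,1\rangle\equiv\theta$.

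Combining (I) with the SDE established for (II) turns the latter into the geometric Brownian motion
\[
d\langle\rho_t,\operatorname{id}_x\rangle=\beta\theta\,\langle\rho_t,\operatorname{id}_x\rangle\,dt+\langle\rho_t,\operatorname{id}_x\rangle\,dW^0_t,\qquad \langle\rho_0,\operatorname{id}_x\rangle=m_\lambda,
\]
whose unique strong solution is $m_\lambda\exp((\beta\theta-1/2)t+W^0_t)$, giving (II). The hard part is the boundary analysis for (I): controlling $\langle\rho_r,\partial_x\varphi_n\rangle$ as $\varphi_n$ transitions from $0$ to $1$ near the origin is the main obstacle, and one must exploit that the $x$- and $x^2$-weights multiplying the other derivative terms in the SPDE are precisely what encodes the Dirichlet-type behavior at $x=0$ in the admissible parameter regime $\beta\geq\alpha/2$.
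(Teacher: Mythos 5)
Your treatment of part (II) is essentially the paper's: test against $x$ times a cut-off that is identically $x$ near the origin and transitions to $0$ only far out, and kill the error terms using finiteness of $|\rho|_r(\R_+)$ and of the first moment. That part is fine.

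For part (I), however, your route diverges from the paper's and contains a genuine gap. You insist that the approximating sequence for $\varphi\equiv 1$ lie in $\mathcal{S}_0$, hence must vanish at $0$, and you therefore introduce a transition layer $[0,1/n]$ at the origin. On that layer $\partial_x\varphi_n=n\psi'(nx)$ and $x\partial_x^2\varphi_n=n\,(nx)\psi''(nx)$ are both of size $n$, so the corresponding terms in \eqref{eqn:SPDE_weak_formulation} are bounded only by $n\,|\rho|_r([0,1/n])$ and you need the quantitative non-concentration estimate $|\rho|_r([0,1/n])=o(1/n)$. (Your claim that the weighted term $\langle\rho_r,x\partial_x^2\varphi_n\rangle$ is handled by dominated convergence given only $\rho_r(\{0\})=0$ is incorrect for the same reason; only $x\partial_x\varphi_n$ and $x^2\partial_x^2\varphi_n$ stay uniformly bounded.) This decay is neither among the hypotheses of the lemma (which allows an arbitrary signed finite measure with finite first moment) nor proved anywhere in the paper — Remark~\ref{rem:boundary_behaviour_SPDE} explicitly states that a rigorous treatment of the boundary behavior at $0$ is beyond the scope of the work. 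So as written your argument for (I) cannot be closed. The paper avoids the origin entirely: it takes $\varphi_\eps(x)=\varphi(\eps x)$ with $\varphi\equiv 1$ on $B_R(0)$ and $\varphi\equiv 0$ off $B_{R+4}(0)$, so that \emph{every} derivative of the test function is supported in the annulus $[R/\eps,(R+4)/\eps]$, and all error terms vanish as $\eps\to 0$ via the elementary bound \eqref{eqn:rho_xmDn_phi_estimate} using only the total variation and the first moment. The price is that $\varphi_\eps(0)=1$, i.e.\ one uses the weak identity for test functions that are constant (nonzero) near the origin rather than literally in $\mathcal{S}_0$; this is how the formulation is meant to be read (and is what the particle approximation \eqref{eqn:SPDE_rho_N} delivers for general $C^2$ test functions). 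If you want to keep your construction, you must either prove the $o(1/n)$ mass decay at the origin or, more simply, adopt the paper's cut-off and justify testing against functions equal to $1$ near $0$.
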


\begin{proof}
Let $R\in \R,\,R>0$. We define a test function, which is symmetric around $0$, by
\begin{align*}
    \varphi(x)=
    \begin{cases} 1, \quad \text{for } x \in B_{R}(0), \\
                  0, \quad \text{for } x \in \R \backslash B_{R+4}(0) ,
    \end{cases}
\end{align*}
where $B_{R}(y)\coloneqq \{x\in \R\colon |x-y|<R\}$ and $\left|D^{k} \varphi(x)\right| \leq 1$ for all $ x \in B_R, \, k \leq 2$ and $\varphi(\eps \cdot) \rightarrow 1$ as $\eps\rightarrow 0$ monotonically. We introduce the re-scaled function $ \varphi(\eps x)$, for which we have
$\left|D^{k} \varphi(\eps x)\right| \leq C \eps^{k} $ for all  $x \in \R, k \leq 2$. An explicit construction of such a function is given in Appendix \ref{sec:contruction_of_test_function}. 
Without changing the notation, we can restrict the constructed function to $\R_{+}$.
 By the assumptions on $\varphi$, $x\mapsto D^{k}\varphi(\eps x)$, for $k=1,2$, is supported in the annulus $[R/\eps, (R+4)/\eps]$ and $\left(\frac{x}{\eps}\right)^{k}\eps^{k}\approx R+4$. Let $0\leq m,n \leq 2$, then the following estimate holds
\begin{align}\label{eqn:rho_xmDn_phi_estimate}
    \langle \rho_{r}, x^{m} D^{n}(\varphi(\eps\cdot)) \rangle &= C_{n}\langle \rho_{r}, x^{m} \eps^{n} (D^{n}\varphi)(\eps\cdot) \rangle\leq  C_{n} \langle |\rho_{r}|,\eps^{n}|(\cdot_{x})|^{m}
     \chi_{[R/\eps,(R+4)/\eps]}(|\cdot_{x}|)\rangle \nonumber\\
     &=C_{n} \int_{\R_{+}}\eps^{n} |x|^{m} \chi_{[R/\eps,(R+4)/\eps]}(x)|\rho|_{r}(\dx) \nonumber\\
    &\leq C_{n} \int_{R/\eps}^{(R+4)/\eps} \eps^{n} |x|^{m} |\rho|_{r}(\dx) =C_{n}\int_{R}^{(R+4)} \eps^{n} |x|^{m} (\eps_{\text{\#}}|\rho|_{r})(\dx)\\
    &\leq  C_{n} \eps^{n}(R+4)^{m}\left(|\rho|_{r}([\eps R,\eps (R+4)])\right)\leq C_{n} \eps^{n}(R+4)^{m}|\rho|_{r}(\R_{+}).\nonumber
\end{align}

To prove the assertions, we
 use $\idx^{k}\varphi(\eps \cdot)$ ($x\mapsto x^{k}\varphi(\eps x)$) as  test function. To help identify the terms that will appear in each case, we introduce the indicator functions $\chi_{\{\cdot\}}$ which will be $1$ if the condition in the subscript is met and $0$ otherwise. This yields

\begin{align}\label{eqn:test}
     \langle \rho_{t},&\idx^{k}\varphi(\eps \cdot)\rangle 
    =\langle \rho_{0},\idx^{k}\varphi(\eps \cdot)\rangle+\int_{0}^{t}\beta \chi_{\{k\geq 0\}} \langle \rho_{r},\eps\idx^{k}\partial_{x}\varphi(\eps\cdot)\rangle\langle\rho_{r},\idx\rangle\dr\nonumber\\
     &\phantom{xx}{}+\int_{0}^{t}\chi_{\{k\geq 0\}}\frac{\alpha}{2}\langle \rho_{r},\eps^{2}\idx^{k+1}\partial_{x}^{2}\varphi(\eps\cdot)\rangle \langle\rho_{r},\idx\rangle\dr+\int_{0}^{t}\chi_{\{k\geq 0\}}\frac{1}{2}\langle \rho_{r},\idx^{k+2}\eps^{2}\partial_{x}^{2} \varphi(\eps\cdot)\rangle\dr \nonumber\\
     &\phantom{xx}+\int_{0}^{t}\chi_{\{k\geq 0\}} \langle \rho_{r},\eps \idx^{k} \partial_{x}\varphi(\eps\cdot)\rangle\dW^{0}_{r}\\
     &\phantom{xx}{}+\int_{0}^{t}\chi_{\{k\geq 1\}}\beta \langle \rho_{r}, k\idx^{k-1}\varphi(\eps\cdot) \rangle\langle\rho_{r},\idx\rangle\dr +2\int_{0}^{t}\chi_{\{k\geq 1\}}\frac{\alpha}{2}\langle \rho_{r},x\eps^{2}k\idx^{k-1}\partial_{x}\varphi(\eps\cdot)\rangle \langle\rho_{r},\idx\rangle\dr \nonumber\\
    &\phantom{xx}{}+\int_{0}^{t}\chi_{\{k\geq 1\}}\langle \rho_{r},x^{2}k\idx^{k-1}\eps^{2}\partial_{x} \varphi(\eps\cdot)\rangle\dr +\int_{0}^{t}\chi_{\{k\geq 1\}} \langle \rho_{r},x k\idx^{k-1}\varphi(\eps\cdot)\rangle\dW^{0}_{r} \nonumber\\
    &\phantom{xx}{}+\int_{0}^{t}\chi_{\{k\geq 2\}}\frac{\alpha}{2}\langle \rho_{r},x\eps^{2}(k(k-1))\idx^{k-2}\varphi(\eps\cdot)\rangle \langle\rho_{r},\idx\rangle\dr +\int_{0}^{t}\chi_{\{k\geq 2\}}\langle \rho_{r},x^{2}k(k-1)\idx^{k-2}\eps^{2} \varphi(\eps\cdot)\rangle\dr.\nonumber
\end{align}
Due to the restriction $k=0,1$, terms including $\idx^{k-2}$ are $0$ and terms including $\idx^{k-1}$ are only present in the case $k=1$.
The deterministic integrals in \eqref{eqn:test} can now be easily estimated by \eqref{eqn:rho_xmDn_phi_estimate}, e.g. the first and third one by
\begin{align*}
    &\int_{0}^{t}\beta \langle \rho_{r},\eps \idx^{k} \partial_{x}\varphi(\eps\cdot) \rangle\langle\rho_{r},\idx\rangle\dr \leq  C_{1} \int_{0}^{t}\beta \eps(R+4)^{k} |\rho|_{r}(\R) \left|\langle |\rho_{r}|,\idx\rangle\right|\dr,
\end{align*}
and
\begin{align*}
    &\int_{0}^{t}\frac{1}{2}\langle \rho_{r},x^{2}\idx^{k}\eps^{2}\partial_{x}^{2} \varphi(\eps\cdot)\rangle\dr \leq  C_{2}  \int_{0}^{t} \eps^{2}(R+4)^{k+2} |\rho|_{r}(\R) \dr,
\end{align*}
respectively.
In the case $k=0$, as $\eps\rightarrow 0$ (for fixed $R$) all remaining integral deterministic integrals vanish. The stochastic integrals can be treated by similar estimates and a Borell-Cantelli argument which yields that, along a suitable subsequence, they converge to $0$ a.s. as $\eps\rightarrow 0$.
 Since $\eps \mapsto \varphi(\eps x)$ is monotonically increasing by assumption, and as $\varphi(\eps \cdot) \rightarrow 1$ as $\eps\rightarrow 0$, it follows from the monotone convergence theorem that
\begin{align*}
    \langle \rho_{t}, 1\rangle=\langle \rho_{0}, 1\rangle=\theta.
\end{align*}
In the case $k=1$, as $\eps\rightarrow 0$ (for fixed $R$) all integrals except for $\int_{0}^{t}\beta \langle \rho_{r}, \varphi(\eps\cdot) \rangle\langle\rho_{r},\idx\rangle\dr$ and $\int_{0}^{t} \langle \rho_{r},x \varphi(\eps\cdot)\rangle\dW^{0}_{r}$ vanish. 
Since $\varphi_{\eps}(x)$ is monotonically increasing, as $\eps\rightarrow 0$ and $\varphi_{\eps}\rightarrow 1$, be can argue by the monotone convergence theorem, that
\begin{align*}
    \int_{0}^{t}\beta \langle \rho_{r}, \varphi_{\eps}\rangle\langle\rho_{r},\idx\rangle\dr\rightarrow 
    \int_{0}^{t}\beta \langle \rho_{r}, 1\rangle\langle\rho_{r},\idx\rangle\dr.
\end{align*}
A similar argument, using the dominated convergence theorem, holds for the stochastic integral, yielding
\begin{align*}
    \int_{0}^{t} \langle \rho_{r},(\cdot_{x})^{2}\eps \partial_{x}\varphi(\eps\cdot)+(\cdot_{x})\varphi_{\eps}\rangle\dW^{0}_{r}\rightarrow \int_{0}^{t} \langle \rho_{r},\idx\rangle\dW^{0}_{r}.
\end{align*}
In the limit, we obtain
\begin{align*}
       \langle \rho_{t},\idx\rangle &=\langle \rho_{0},\idx \rangle+\int_{0}^{t}\beta \langle \rho_{r},1\rangle\langle\rho_{r},\idx\rangle+\int_{0}^{t} \langle \rho_{r},\idx\rangle\dW^{0}_{r},
\end{align*}
whose unique solution is given by 
\begin{align*}
    \langle \rho_{t},x\rangle=\langle \rho_{0},x\rangle\exp{((\beta \langle \rho_{t},1\rangle -1/2)t+W^{0}_{t})}.
\end{align*}
\end{proof}
\subsubsection{Proof of Proposition \ref{prop:convergence_to_weak_solution_SPDE}}

\begin{proof}
Let $\varphi\in \mathcal{S}_{0}$.
We first rewrite the dynamics of our particle system on our original probability space.

 \begin{align*}   
    \langle \rho^{N}_{t},\varphi\rangle &=\langle \rho^{N}_{0},\varphi\rangle\\
    &\phantom{xx}{}+\int_{0}^{t}\beta \langle \rho^{N}_{r},\partial_{x}\varphi\rangle \langle \rho^{N}_{r},\idx\rangle+\frac{\alpha}{2}\langle \rho^{N}_{r},\idxo\partial_{x}^{2}\varphi\rangle  \langle \rho^{N}_{r},\idx\rangle+\bigg(\frac{1}{2}-\frac{\alpha}{2N}\bigg)\langle \rho^{N}_{r},\idxo^{2}\partial_{x}^{2}\varphi\rangle\dr\\
    &\phantom{xx}{}+\frac{1}{N}\sum_{i=1}^{N}\int_{0}^{t}\partial_{x}\varphi(X_{i}(r))\sqrt{\alpha}\sqrt{\langle \rho^{N}_{r},\idx\rangle}\sqrt{X_{i}(r)}\dW^{i}_{r}\\
    &\phantom{xx}{}+\int_{0}^{t} \langle \rho^{N}_{r},\idxo\partial_{x}\varphi\rangle\sqrt{1-\frac{\alpha}{N}}\dW^{0}_{r}.
\end{align*}
Let us now compute the quadratic variaton of $M_{t}^{\partial_{x}\varphi,N}\coloneqq\frac{1}{N}\sum_{i=1}^{N}\int_{0}^{t}\partial_{x}\varphi(X_{i}(r))\sqrt{\alpha}\sqrt{X_{i}(r)}\sqrt{\frac{1}{N}\sum_{j=1}^{N }X_{j}(r)}\dW^{i}_{r}$ 
in terms of $\rho^{N}$,
\begin{align*}
    [M_{\cdot}^{\partial_{x}\varphi,N},M_{\cdot}^{\partial_{x}\varphi,N} ]_{t}=\frac{\alpha}{N^{2}}\sum_{i=1}^{N}\int_{0}^{t}(\partial_{x}\varphi(X_{i}(r)))^{2}X_{i}(r)\frac{1}{N}\sum_{j=1}^{N }X_{j}(r)\dr=\frac{\alpha}{N}\int_{0}^{t}\langle \rho^{N}_{r},\idxo(\partial_{x}\varphi)^{2} \rangle \langle \rho^{N}_{r},\idx\rangle \dr.
\end{align*}
There exists an auxiliary Brownian motion, denoted by $B$, independent of $W^{0}$, such that 
\begin{align*}
    M_{t}^{\partial_{x}\varphi,N}=\sqrt{\frac{\alpha}{N}}\int_{0}^{t}\sqrt{\langle \rho^{N}_{r}, \idxo(\partial_{x}\varphi)^{2}\rangle \langle\rho^{N}_{r},\idx\rangle}\d B_{s}.
\end{align*}
Hence we obtain the expression
\begin{equation}\label{eq:SPDE_N}
\begin{split}
    \langle \rho^{N}_{t},\varphi\rangle &=\langle \rho^{N}_{0},\varphi\rangle\\
     &\phantom{xx}{}+\int_{0}^{t}\beta \langle \rho^{N}_{r},\partial_{x}\varphi\rangle \langle \rho^{N}_{r},\idx\rangle+\frac{\alpha}{2}\langle \rho^{N}_{r},\idxo\partial_{x}^{2}\varphi\rangle  \langle \rho^{N}_{r},\idx\rangle+\bigg(\frac{1}{2}-\frac{\alpha}{2N}\bigg)\langle \rho^{N}_{r},\idxo^{2}\partial_{x}^{2}\varphi\rangle\dr\\
    &\phantom{xx}{}+\sqrt{\frac{\alpha}{N}}\int_{0}^{t}\sqrt{\langle \rho^{N}_{r}, \idxo(\partial_{x}\varphi)^{2} \rangle \langle \rho^{N}_{r},\idx\rangle}\d B_{s}+\int_{0}^{t} \langle \rho^{N}_{r},\idxo\partial_{x}\varphi\rangle\sqrt{1-\frac{\alpha}{N}}\dW^{0}_{r}.
\end{split}
\end{equation}
We recall the space $C([0,T],M_{1}((\R_{+}))$, which we equipped with the topology induced by the distance $d_{Z}(\mu,\nu)\coloneqq \sup_{t\in [0,T]}|\Wd(\mu_{t},\nu_{t})|$.
Due to the tightness of the laws $(Q^N_T)_{N \in \N}$ of $(\rho^{N})_{N\in\N}$ on $C([0,T],M_{1}((\R_{+}))$ obtained in Theorem \ref{thm:tightness} and Prohorov's thorem, there exists a subsequence $N_{k}$ and a limiting object $Q_{T}$, such that $Q^{N_{k}}_{T}\rightharpoonup Q_{T}$, where ``$\rightharpoonup$'' denotes again the weak convergence of measures. In other words, for every continuous and bounded function $f$ from $C([0,T],M_{1}(\R_{+}))$ to $\R$,
\begin{align*}
    \int_{C([0,T],M_{1})}f(\mu)Q^{N_{k}}_{T}(\d \mu)\rightarrow \int_{C([0,T],M_{1})}f(\mu)Q_{T}(\d \mu).
\end{align*}
We can apply the Skorohod representation theorem (see for example \cite{Jakubowski98_Skorohod}), which yields the existence of a subsequence that is not relabeled, a probability space $(\widetilde{\Omega},\widetilde{\mathcal{F}},\widetilde{\Prob})$ and, on this space,
$C([0,T],M_{1})\times C([0,T];\R)\times C([0,T];\R)$-valued random variables $\big(\widetilde{\rho}, \widetilde{B}, \widetilde{W}\big)$
and $\big(\widetilde{\rho}^{N}, \widetilde{B}^{N}, \widetilde{W}^{N}\big)_{n\in \N}$ such that
\begin{align*}
    \big(\widetilde{\rho}^{N}, \widetilde{B}^{N}, \widetilde{W}^{N}\big) \text{ has the same law as }\big(\rho^{N},B, W^{0}\big),
\end{align*}
on the Borel sigma-algebra  $\mathcal{B}(C([0,T],M_{1})\times C([0,T];\R)\times C([0,T];\R))$ and, as $N\to \infty$,
\begin{align*}
    \big(\widetilde{\rho}^{N}, \widetilde{B}^{N}, \widetilde{W}^{N}\big)\to \big(\widetilde{\rho}, \widetilde{B}, \widetilde{W}\big)\quad \mbox{in }C([0,T],M_{1})\times C([0,T];\R)\times C([0,T];\R)\quad\widetilde{\Prob}\mbox{-a.s.}
\end{align*}

We change to the new variables $\big(\widetilde{\rho}^{N}, \widetilde{B}^{N}, \widetilde{W}^{N}\big)$ obtained by Skorohod's representation theorem. 
The first observation, see \cite[Theorem 2.9.1]{Breit_Feireisl_Hofmanova18stochastic_fluid_flows} or \cite{Bensoussan95_stochastic_navier_stokes,Brzezniak10_stochastic_reaction_diffusion_jump_processes}, is that they also satisfy equation \eqref{eq:SPDE_N} $\widetilde{\Prob}$--a.s., replacing $(\rho^N, B, W^0)$ by $(\widetilde{\rho}^N, \widetilde{B}^N, \widetilde {W}^N)$.

Next, we want to use the $\widetilde{\Prob}$-a.s. convergence of $\widetilde{\rho}^{N}$ to $\widetilde{\rho}$ in $C([0,T],M_{1})$, i.e.
\begin{align*}
    \widetilde{\Prob}\left(\lim_{N\rightarrow \infty}\sup_{t\in [0,T]}|\Wd(\widetilde{\rho}^{N}_{t},\widetilde{\rho}_{t})|=0 \right)=1
\end{align*}
to show that $\widetilde{\rho}$ satisfies \eqref{eqn:SPDE_weak_formulation}, or in other words, $(\widetilde{\Omega},\widetilde{\mathcal{F}},\widetilde{\Prob})$, $\big(\widetilde{\rho}, \widetilde{W}\big)$ is a solution given by Definition \ref{def:weak_solution}. 
Hence, we will show that the following convergences hold
\begin{align}
    &\lim_{N\rightarrow \infty}\widetilde{\E}\left|\langle \widetilde{\rho}^{N}_{t}-\widetilde{\rho}_{t},\varphi\rangle\right|=0 \label{eqn:convergence_rho_t}\\
    &\lim_{N\rightarrow \infty}\widetilde{\E}\left|\langle \widetilde{\rho}^{N}_{0}-\widetilde{\rho}_{0},\varphi\rangle\right|=0\label{eqn:convergence_rho_0}\\
    &\lim_{N\rightarrow \infty}\widetilde{\E}\left|\int_{0}^{t}\beta \langle \widetilde{\rho}^{N}_{r},\partial_{x}\varphi\rangle \langle \widetilde{\rho}^{N}_{r},\idx\rangle-\beta \langle \widetilde{\rho}_{r},\partial_{x}\varphi\rangle \langle \widetilde{\rho}_{r},\idx\rangle\dr\right|=0\label{eqn:convergence_dx_phi_rho_x}\\
    &\lim_{N\rightarrow \infty}\widetilde{\E}\left|\int_{0}^{t}\frac{\alpha}{2}\langle \widetilde{\rho}^{N}_{r},\idxo\partial_{x}^{2}\varphi\rangle  \langle \widetilde{\rho}^{N}_{r},\idx\rangle-\frac{\alpha}{2}\langle \widetilde{\rho}_{r},\idxo\partial_{x}^{2}\varphi\rangle  \langle \widetilde{\rho}_{r},\idx\rangle \dr\right|=0\label{eqn:convergence_xd2x_phi_rho_x}\\
    &\lim_{N\rightarrow \infty}\widetilde{\E}\left|\int_{0}^{t}\bigg(\frac{1}{2}-\frac{\alpha}{2N}\bigg)\langle \widetilde{\rho}^{N}_{r},\idxo^{2}\partial_{x}^{2}\varphi\rangle-\frac{1}{2}\langle \widetilde{\rho}_{r},\idxo^{2}\partial_{x}^{2}\varphi\rangle\dr\right|=0\label{eqn:convergence_x2d2x_phi}\\
    &\lim_{N\rightarrow \infty}\widetilde{\E}\left|\sqrt{\frac{\alpha}{N}}\int_{0}^{t}\sqrt{\langle \widetilde{\rho}^{N}_{r}, \idxo(\partial_{x}\varphi)^{2} \rangle \langle \widetilde{\rho}^{N}_{r},\idx\rangle}\d \widetilde{B}^{N}_{r}\right|=0\label{eqn:convergence_dB_integral}\\
    &\lim_{N\rightarrow \infty}\widetilde{\E}\left|\int_{0}^{t} \langle \widetilde{\rho}^{N}_{r},\idxo\partial_{x}\varphi\rangle\sqrt{1-\frac{\alpha}{N}}\d\widetilde{W}^{N}_{r}-\int_{0}^{t} \langle \widetilde{\rho}_{r},\idxo\partial_{x}\varphi\rangle\d\widetilde{W}_{r}\right|=0.\label{eqn:convergence_dW_integral}
\end{align}

Since $\widetilde{\Law}_{\widetilde{\rho}^{N}}=\widetilde{\Prob}(\widetilde{\rho}^{N}\in \cdot)=\Prob(\rho^{N}\in \cdot)=\Law_{\rho^N}$ and $\widetilde{\rho}_{t}\in M_{1}(\R)$, using Assumption \ref{A:A_3_X(0)_Z(0)} yields
\begin{equation}\label{eqn:E_1+zeta_rho_n-rho}
\begin{split}
    \widetilde{\E}\left|\langle \widetilde{\rho}^{N}_{t}-\widetilde{\rho}_{t},\varphi\rangle\right|^{1+\zeta}&\leq C\widetilde{\E}\left|\langle \widetilde{\rho}^{N}_{t},\varphi\rangle\right|^{1+\zeta}+C\widetilde{\E}\left|\langle \widetilde{\rho}_{t},\varphi\rangle\right|^{1+\zeta}\\
    &\leq  C\widetilde{\E}\left|\langle \widetilde{\rho}^{N}_{t},\varphi\rangle\right|^{1+\zeta}+\liminf_{N\rightarrow \infty} C\|\varphi\|_{\infty}\widetilde{\E}\left|\langle \widetilde{\rho}^{N}_{t},1\rangle\right|^{1+\zeta}<\infty.
\end{split}
\end{equation}
Hence, $\left|\langle \widetilde{\rho}^{N}_{t}-\widetilde{\rho}_{t},\varphi\rangle\right|$ is uniformly integrable.
We note that, for every $\psi \in C_{b}(\R_{+})$, the sequences of continuous functions $t\mapsto   \langle \widetilde{\rho}^{N}(t),\psi\rangle$ and $t\mapsto   \langle \widetilde{\rho}^{N}(t),\idx\rangle$ converge in the space $C([0,T];\R)$, $\widetilde{\Prob}$-a.s.
This yields \eqref{eqn:convergence_rho_t} and \eqref{eqn:convergence_rho_0}.
Since the derivatives of Schwartz-functions will be continuous and bounded, we can replace the terms $\partial_{x}\varphi$, $x\partial^{2}_{x}$ and $x^{2}\partial^{2}_{x}$ in \eqref{eqn:convergence_dx_phi_rho_x}, \eqref{eqn:convergence_xd2x_phi_rho_x} and \eqref{eqn:convergence_x2d2x_phi}, respectively, by a continuous and bounded function $\psi$ and use \eqref{eqn:E_1+zeta_rho_n-rho}. Abusing the notation and in order to keep the argument more concise, we shall not specify to which particular term $\psi$ corresponds to. In order to deal with \eqref{eqn:convergence_dx_phi_rho_x}, \eqref{eqn:convergence_xd2x_phi_rho_x} and \eqref{eqn:convergence_x2d2x_phi} we estimate

\begin{align*}
    \widetilde{\E}\bigg|\int_{0}^{t}\langle \widetilde{\rho}^{N}_{r},\psi\rangle&\langle \widetilde{\rho}^{N}_{r},\idx\rangle\dr-\int_{0}^{t}\langle \widetilde{\rho}_{r},\psi\rangle\langle \widetilde{\rho}_{r},\idx\rangle\dr\bigg|\leq \widetilde{\E}\int_{0}^{t} \bigg|\langle \widetilde{\rho}^{N}_{r},\psi\rangle\langle \widetilde{\rho}^{N}_{r},\idx\rangle-\langle \widetilde{\rho}_{r},\psi\rangle\langle \widetilde{\rho}_{r},\idx\rangle\bigg|\dr\\
    &\leq \widetilde{\E}\int_{0}^{t} \bigg|\langle \widetilde{\rho}^{N}_{r},\psi\rangle\bigg|\bigg|\langle \widetilde{\rho}^{N}_{r},\idx\rangle-\langle \widetilde{\rho}_{r},\idx\rangle\bigg|\dr+\widetilde{\E}\int_{0}^{t} \bigg|\langle \widetilde{\rho}^{N}_{r},\psi\rangle-\langle \widetilde{\rho}_{r},\psi\rangle\bigg|\bigg|\langle \widetilde{\rho}_{r},\idx\rangle\bigg|\dr\\
    &\leq \widetilde{\E}\sup_{r\in[0,t]}\bigg|\langle \widetilde{\rho}^{N}_{r},\idx\rangle-\langle \widetilde{\rho}_{r},\idx\rangle\bigg|\int_{0}^{t} \bigg|\langle \widetilde{\rho}^{N}_{r},\psi\rangle\bigg|\dr+\widetilde{\E}\sup_{r\in [0,t]}\bigg|\langle \widetilde{\rho}^{N}_{r},\psi\rangle-\langle \widetilde{\rho}_{r},\psi\rangle\bigg|\int_{0}^{t} \bigg|\langle \widetilde{\rho}_{r},\idx\rangle\bigg|\dr\\
    &\leq \bigg(\widetilde{\E}\bigg(\sup_{r\in [0,t]}\bigg|\langle \widetilde{\rho}^{N}_{r},\idx\rangle-\langle \widetilde{\rho}_{r},\idx\rangle\bigg|\bigg)^{p}\bigg)^{1/p}\bigg(\widetilde{\E}\bigg(\int_{0}^{t} \bigg|\langle \widetilde{\rho}^{N}_{r},\psi\rangle\bigg|\dr\bigg)^{q}\bigg)^{1/q}\\
    &\phantom{xx}{}+\bigg(\widetilde{\E}\bigg(\sup_{r\in [0,t]}\bigg|\langle \widetilde{\rho}^{N}_{r},\psi\rangle-\langle \widetilde{\rho}_{r},\psi\rangle\bigg|\bigg)^{q}\bigg)^{1/q}\bigg(\widetilde{\E}\bigg(\int_{0}^{t} \bigg|\langle \widetilde{\rho}_{r},\idx\rangle\bigg|\dr\bigg)^{p}\bigg)^{1/p} = \underbrace{I_{a}I_{b}}_{I}+\underbrace{II_{a}II_{b}}_{II},
\end{align*}
where $1< p\leq 1+\eps$, for any $0<\eps<\zeta$, for $\zeta$ from Assumption \ref{A:A_3_X(0)_Z(0)}. Our goal is to show that $I$ and $II$ converge to $0$. Indeed, note first that $\widetilde{\E}\bigg(\sup_{r\in [0,t]}\bigg|\langle \widetilde{\rho}^{N}_{r},\idx\rangle-\langle \widetilde{\rho}_{r},\idx\rangle\bigg|\bigg)^{1+\zeta}$ is bounded by  Lemma \ref{lem:EsupZp_EZp_estimates} and Assumption \ref{A:A_3_X(0)_Z(0)}, where we used that $\widetilde{\rho}^{N}$ and $\rho^{N}$ have the same law. Since $p<1+\zeta$,  the term $\left(\sup_{r\in [0,t]}\bigg|\langle \widetilde{\rho}^{N}_{r},\idx\rangle-\langle \widetilde{\rho}_{r},\idx\rangle\bigg|\right)^{p}$ is uniformly integrable.  
Moreover, $\sup_{r\in[0,t]}\bigg|\langle \widetilde{\rho}^{N}_{r},\idx\rangle-\langle \widetilde{\rho}_r, \idx \rangle\bigg|$ converges in probability by Lemma \ref{lem:convergence_of_mean_rho_N}.
Hence $I_{a}$ converges to $0$ as $N\rightarrow \infty$. The term $I_{b}$ does not pose any issues as we can use the boundedness of $\psi$ to estimate $|\langle \rho^{N},\psi\rangle|\leq\|\psi\|_{\infty}$.

The term $II_{a}$ converges to $0$ by Vitali's theorem. Indeed, let $\delta>0$, then it holds that
\begin{align*}
    \sup_{r\in [0,t]}\bigg|\langle \widetilde{\rho}^{N}_{r},\psi\rangle-\langle \widetilde{\rho}_{r},\psi\rangle\bigg|^{q+\delta}\leq C \sup_{r\in [0,t]}\bigg|\langle \widetilde{\rho}^{N}_{r},\psi\rangle\bigg|^{q+\delta}+\sup_{r\in [0,t]}\bigg|\langle \widetilde{\rho}_{r},\psi\rangle\bigg|^{q+\delta}
\end{align*}
and 
\begin{align*}
    \widetilde{\E}&  \sup_{r\in [0,t]}\bigg|\langle \widetilde{\rho}^{N}_{r},\psi\rangle\bigg|^{q+\delta}+\widetilde{\E}\sup_{r\in [0,t]}\bigg|\langle \widetilde{\rho}_{r},\psi\rangle\bigg|^{q+\delta}\\
    &\leq  \E  \sup_{r\in [0,t]}\bigg|\langle \rho^{N}(r),\psi\rangle\bigg|^{q+\delta}+\liminf_{N\rightarrow \infty}\E  \sup_{r\in [0,t]}\bigg|\langle \rho^{N}(r),\psi\rangle\bigg|^{q+\delta}\leq 2\|\psi\|_{\infty}^{q+\delta}.
\end{align*}
Hence, we obtain uniform 
integrability of $ \sup_{r\in [0,t]}\bigg|\langle \widetilde{\rho}^{N}_{r},\psi\rangle-\langle \widetilde{\rho}_{r},\psi\rangle\bigg|^{q}$ and can use Vitali's convergence theorem to conclude that $II_{a}\rightarrow 0$ as $N\rightarrow \infty$. 
The term $II_{b}$ can be bounded by a constant due to Lemma \ref{lem:EsupZp_EZp_estimates} and Assumption \ref{A:A_3_X(0)_Z(0)}.

This argument covers \eqref{eqn:convergence_dx_phi_rho_x} and \eqref{eqn:convergence_xd2x_phi_rho_x}. Equation \eqref{eqn:convergence_x2d2x_phi} follows from the above argument with $\langle\cdot,\idx\rangle$ replaced by $1$. It remains to prove the convergence of the stochastic integrals. These are handled slightly differently. 
By the Burkholder-Davis-Gundy (BDG) and Jensen's inequality,
 \begin{align*}
     \widetilde{\E}&\left|\sqrt{\frac{\alpha}{N}}\int_{0}^{t}\sqrt{\langle \widetilde{\rho}^{N}_{r},\idxo(\partial_{x}\varphi)^{2} \rangle \langle \widetilde{\rho}^{N}_{r},\idx\rangle}\d \widetilde{B}^{N}_{s}\right|=\sqrt{\frac{\alpha}{N}}\E\left(\int_{0}^{T}|\langle \widetilde{\rho}^{N}_{r}, \idxo(\partial_{x}\varphi)^{2} \rangle \langle \widetilde{\rho}^{N}_{r},\idx\rangle\ds\right)^{1/2}\\
     &\leq \sqrt{\frac{\alpha}{N}}\|\idxo(\partial_{x}\varphi)^{2}\|_{\infty}^{1/2}\left(\E\int_{0}^{T}| \langle \widetilde{\rho}^{N}_{r},\idx\rangle|\ds\right)^{1/2}\leq C_{\alpha}\sqrt{\frac{\|\idxo(\partial_{x}\varphi)^{2}\|_{\infty}}{N}}\rightarrow 0,
 \end{align*}
 as $N \to \infty$, since $\varphi\in\mathcal{S}$. This yields \eqref{eqn:convergence_dB_integral}. The argument for the last integral is a direct adaptation of \cite[Lemma 2.6.6, step 3]{Breit_Feireisl_Hofmanova18stochastic_fluid_flows} or \cite[Lemma 2.1]{DGHT11_Local_martingale_solutions_abstract_fluids} to the case of convergence in $L^{1}(\Omega)$ by the BDG inequality. 
By standard arguments, see e.g. \cite{Brzezniak_17_invariant_measures_navier_stokes}, we conclude that $(\widetilde{\Omega},\widetilde{\mathcal{F}},\widetilde{\Prob})$ and $\big(\widetilde{\rho}, \widetilde{W}\big)$ satisfy the conditions of Definition~\ref{def:weak_solution} and are a probabilistically weak solution to the SPDE \eqref{eqn:SPDE_classical_formulation}. 
\end{proof}

\subsection{Uniqueness of the SPDE solution}\label{sec:uniqueness} Next, we proceed with the second part of the proof of Theorem \ref{thm:introduction_convergence_to_SPDE}, namely the uniqueness of solutions for equation \eqref{eqn:SPDE_weak_formulation}.

SPDEs with coefficients degenerating at the boundary of a domain have been studied extensively, see \cite{krylov_99_sobolev_space_constant_coefficient_half_space,krylov_99_weighted_spaces_Rd_plus,lototsky_00_sobolev_spaces_with_weights,lototsky_01_SPDE_degenerating_boundary_domain,kim_07_sobolev_theory_degenerating} and the references therein. The spaces used in these works however do not harmonize very well with operators of the form $(x+x^{2})\Delta_{x}$, due to the two different powers of the multiplication operator, which makes them not applicable. We also want to highlight the more recent work \cite{zhang_11_SPDE_unbounded_degenerate}, which, after some technical adaptations, could be applicable to our situation and provide an unweighted solution theory. In order to avoid these technicalities, we opt to work in so-called ``well-weighted'' Sobolev spaces \cite{Gyongy_Krylov_90_unbounded_coefficients_1,Gyongy_Krylov_90_unbounded_coefficients_2,Gyongy_Krylov_92_unbounded_coefficients_3}. For convenience of the reader, we recall the basics and for the sake of readability, we focus on the case of $\R_{+}$ with specific weights. The general setting can be found in \cite[Section 3]{Gyongy_Krylov_90_unbounded_coefficients_1}.

\begin{definition}\label{def:fractionalSobolev}
Let $m\in \N$, $p\geq 1$. We define the space $W^{m,p}(a,b)=W^{m,p}(a,b,\R_{+})$ as the normed space of locally integrable real functions on $[0,\infty)$, whose weak or generalized (see e.g.~\cite[Chapter 8]{Brezis_10_functional_analysis}) derivatives up to order $m$ are functions such that
\begin{align}\label{eqn:weighted_Sobolev_norm}
    \|f\|_{W^{m,p}(a,b)}\coloneqq \bigg(\sum_{\gamma=0}^{m}\int_{\R_{+}}\big|(1+|x|^{2})^{a+\gamma b} \partial_{x}^{\gamma}f(x)\big|^{p}\bigg)^{1/p}<\infty.
\end{align}
The closure of $C_{c}^{\infty}(\mathbb{R}_{++})$, the smooth functions with compact support in $\mathbb{R}_{++}$, with respect to the norm $\|\cdot\|_{W^{m,p}(a,b)}$ will be denoted by $W^{m,p}_{0}(a,b)$ and corresponds to the $W^{m,p}(a,b)$ functions which vanish at the boundary. (To be exact, one should speak of functions whose boundary trace is equal to $0$, see e.g.~\cite[Chapter 5.5]{evans_22_partial}).
\end{definition}

\begin{remark}
    Compared to what is often found in the literature, namely $b=0$, this more involved definition of a weighted space enables us to fine-tune and match the weights appearing in our equation. By the product rule, we see that $\rho$ in equation \eqref{eqn:SPDE_classical_formulation} itself does not appear in combination with any power of $x$. Only derivatives of $\rho$ include such factors. This indicates that in order to control a solution of equation \eqref{eqn:SPDE_classical_formulation}, we will have to control the $L^p$-norms of certain weighted derivatives, but only an unweighted $L^p$-norm of the solution itself. It would be possible to perform the remaining arguments solely with a base weight $a$ (without the additional weight $b$ for the derivatives), which is ``large enough'' to account for all the weights appearing, at the potential cost of some regularity/integrability of the solution.
\end{remark}

To keep the paper self-contained we recall the definition of Hilbert scales and Gelfand triples.

\begin{definition}\leavevmode
\begin{enumerate}[label=(\Roman*)]
    \item 
   A family $\left\{H_{r}, r \in \R\right\}$ of Hilbert spaces is called a Hilbert scale if
   \begin{enumerate}[label=(\roman*)]
       \item  for every $\gamma<v$, the space $H_{v}$ is dense and continuously embedded in $H_{\gamma}$;
       \item for every $\alpha<\beta<\gamma$ and $x \in H_{\gamma}$,
\begin{align*}
\|x\|_{H_{\beta}} \leq\|x\|_{H_{\alpha}}^{(\gamma-\beta) /(\gamma-\alpha)}\|x\|_{H_{\gamma}}^{(\beta-\alpha) /(\gamma-\alpha)} .
\end{align*}
   \end{enumerate}
  \item A normal (or Gelfand) triple of Hilbert spaces is an ordered collection $\left(V, H, V^{\prime}\right)$ of three Hilbert spaces with the following properties:
  \begin{enumerate}[label=(\roman*)]
      \item  $V$ is dense and continuously embedded in $H$,
      \item $H$  is dense and continuously embedded in $V^{\prime}$,
      \item The inequality
\begin{align*}
\left|\langle v, h\rangle_H\right| \leq\|v\|_V\|h\|_{V^{\prime}}
\end{align*}
holds for all $v \in V, h \in H$,
  \end{enumerate}
\end{enumerate}
\end{definition}
\begin{notation}
    Unweighted spaces ($a=b=0$) will be denoted by $W^{m,p}$, analogously for $W^{m,p}_0$.
    We will write $L^{p}(a)$ for $W^{0,p}(a,b)$ and $L^p$ for $W^{0,p}$ and follow the convention to denote the dual space of $W^{m,p}(a,b)$, identified via the triple  \begin{align*}
    &W^{m,2}(a,b)\hookrightarrow L^{2}\equiv (L^{2})^{\prime}\hookrightarrow (W^{m,2}(a,b))',
\end{align*} by $(W^{m,p}(a,b))'$. In the customary way, we denote the dual space of $W^{m,p}_{0}(a,b)$ by $W^{-m,p}(a,b)$. 
\end{notation}
With regard to the spaces, our central object of interest will be the normal triple
 \begin{align*}
    &W^{m,2}_{0}(a,b)\hookrightarrow L^{2}\equiv (L^{2})^{\prime}\hookrightarrow W^{-m,2}(a,b).
\end{align*}
To perform a more fine-grained analysis, we introduce the Hilbert-scale associated with our triple of spaces by relying on the following theorems.
\begin{theorem}\label{thm:hilbert_scale_unique}

\leavevmode
\begin{enumerate}[label=(\Roman*)]
    \item  If $\left\{H_{r}, r \in \R\right\}$ is a Hilbert scale, $\gamma \in \R$ and $m>0$, then $\left(H_{\gamma+m}, H_{\gamma}, H_{\gamma-m}\right)$ is a normal triple.
    \item If $\left(V, H, V^{\prime}\right)$ is a normal triple, then there is a unique Hilbert scale $\left\{H_{r}, r \in \R\right\}$ such that $H_{1}=V, H_{0}=H, H_{-1}=V^{\prime}$.
\end{enumerate}
\end{theorem}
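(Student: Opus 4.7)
\textbf{Plan for Theorem \ref{thm:hilbert_scale_unique}.} The plan is to reduce both assertions to the fact that any Hilbert scale is generated by a single positive self-adjoint operator on $H_{0}$ via its real powers. More precisely, I would first establish that, from properties (i) and (ii) of the definition, there exists a densely defined positive self-adjoint operator $A$ on $H_{0}$ such that for every $r \in \mathbb{R}$,
\begin{align*}
H_{r} = D(A^{r/2}) \text{ (for } r \geq 0\text{)}, \qquad H_{r} = \overline{H_{0}}^{\,\|A^{r/2}\cdot\|_{H_{0}}} \text{ (for } r < 0\text{)},
\end{align*}
with inner product $\langle x, y \rangle_{H_{r}} = \langle A^{r/2} x, A^{r/2} y \rangle_{H_{0}}$. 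This uses the spectral theorem together with a Friedrichs-type construction: the continuous positive quadratic form coming from the embedding $H_{1} \hookrightarrow H_{0}$ selects a self-adjoint $A \geq 0$ whose fractional powers, thanks to the logarithmic convexity in (ii), reproduce all intermediate and extrapolation spaces $H_{r}$.

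\emph{Part (I).} With $A$ in hand, properties (i) and (ii) of a normal triple are immediate from condition (i) of the Hilbert scale applied twice, at the levels $\gamma-m < \gamma < \gamma+m$. For the duality inequality (iii), given $v \in H_{\gamma+m}$ and $h \in H_{\gamma}$, the plan is to shift $A^{m/2}$ across the inner product by self-adjointness,
\begin{align*}
\langle v, h \rangle_{H_{\gamma}} = \langle A^{\gamma/2} v, A^{\gamma/2} h \rangle_{H_{0}} = \langle A^{(\gamma+m)/2} v, A^{(\gamma-m)/2} h \rangle_{H_{0}},
\end{align*}
which is legitimate because $v \in D(A^{(\gamma+m)/2})$ and $h \in D(A^{\gamma/2}) \subset D(A^{(\gamma-m)/2})$, and then to apply Cauchy--Schwarz in $H_{0}$ to obtain $|\langle v, h \rangle_{H_{\gamma}}| \leq \|v\|_{H_{\gamma+m}} \|h\|_{H_{\gamma-m}}$.

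\emph{Part (II).} Given a normal triple $(V, H, V')$, I would construct the generator $A$ directly. The Riesz isomorphism of $V$, combined with the inclusion $V \hookrightarrow H$, yields a continuous coercive symmetric form on $V$; its Friedrichs extension is a positive self-adjoint $A$ on $H$ with $D(A^{1/2}) = V$ isometrically. Setting $H_{r} := D(A^{r/2})$ for $r \geq 0$ and $H_{r}$ as the appropriate completion of $H$ for $r < 0$ then produces a family verifying (i) and (ii), with $H_{0} = H$, $H_{1} = V$, and (after the identification through the pivot) $H_{-1} = V'$. For uniqueness, suppose $\{H_{r}'\}$ is another such scale. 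Its generator $A'$ is determined by the inner product on $H_{1}' = V$ and the pivot $H_{0}' = H$, hence coincides with $A$; consequently $H_{r}' = H_{r}$ for every $r$.

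\emph{Main obstacle.} The technical heart is the isometric identification $H_{-1} = V'$ in the construction of part (II): one must check that the completion of $H$ with respect to $\|A^{-1/2}\cdot\|_{H_{0}}$ coincides, norm and pairing, with the dual of $V$ taken with pivot $H$, and that the extended duality bracket is the unique continuous extension of $\langle \cdot, \cdot \rangle_{H}$. Keeping the conventions consistent so that the bracket used in condition (iii) of the normal triple matches the one induced by the spectral calculus of $A$ across every level of the scale is the delicate point; everything else follows from standard functional calculus.
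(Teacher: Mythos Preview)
The paper does not prove this theorem at all; immediately after stating it (and the companion Theorem~\ref{thm:hilbert_scale_operator_Lambda}) it simply writes ``The proofs of the previous statements can be found in \cite[Chapter 4.10]{krein_02_interpolation} (or \cite[Proposition 7, Lemma 8]{braukhoff2023global} for a more detailed version).'' Your outline is precisely the standard spectral/Friedrichs construction that those references carry out, so there is nothing to compare against in the paper itself---your approach is the one the cited literature takes.
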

\begin{theorem}\label{thm:hilbert_scale_operator_Lambda}
    Let $H_{1}$ and $H_{0}$ be two Hilbert spaces such that $H_{1}$ is densely and continuously embedded into $H_{0}$. Then there exists a uniquely determined  Hilbert scale $\left\{H_{r}, r \in \R\right\}$ and a unique family of positive-definite, self-adjoint operators $\Lambda^r: D(\Lambda^r) \to H_0$
with $D(\Lambda^r)$ a dense subset of $H_0$ if $r >0$ and otherwise a superset of $H_0$ such that
    \begin{enumerate}[label=(\Roman*)]
        \item for every $r>0$, 
        $H_{r}$ is the closure of the domain of $\Lambda^r$ in the norm $\left\|\Lambda^r \cdot\right\|_{H_{0}}$;
        \item for every $r \leq 0$, 
        $H_{r}$ is the closure of $H_{0}$ in the norm $\left\|\Lambda^r \cdot\right\|_{H_{0}}$.

   \item for $r=1$ we have $H_1=D(\Lambda^1)$.
    \end{enumerate}
\end{theorem}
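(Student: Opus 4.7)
The construction reduces to producing a canonical positive-definite self-adjoint operator $\Lambda$ on $H_{0}$ whose fractional powers generate the entire scale. The plan is to build $\Lambda$ from the duality between $H_{1}$ and $H_{0}$, define its Borel functional calculus, and only then define the spaces $H_{r}$.

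First, I would exploit the dense continuous embedding $\iota \colon H_{1} \hookrightarrow H_{0}$. Its Hilbert-space adjoint $\iota^{*} \colon H_{0} \to H_{1}$ is bounded, injective (because $\iota$ has dense range), and has dense range (because $\iota$ is injective); hence $T := \iota \circ \iota^{*} \colon H_{0} \to H_{0}$ is bounded, self-adjoint, strictly positive, and injective. Define $\Lambda := T^{-1/2}$ via the spectral theorem. Then $\Lambda$ is a positive-definite self-adjoint (in general unbounded) operator on $H_{0}$ with $D(\Lambda) = H_{1}$ and $\|\Lambda u\|_{H_{0}}^{2} = (u,u)_{H_{1}}$ for every $u \in H_{1}$; this already yields (III).

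Next, I would invoke the Borel functional calculus to set $\Lambda^{r} := f_{r}(\Lambda)$ with $f_{r}(\lambda) = \lambda^{r}$ for every $r \in \R$. For $r > 0$, $\Lambda^{r}$ is an unbounded positive self-adjoint operator with dense domain; for $r \leq 0$, $\Lambda^{r}$ is bounded on $H_{0}$. The spaces are then defined by
\begin{equation*}
H_{r} := \begin{cases} D(\Lambda^{r}) \text{ with norm } \|\Lambda^{r} \cdot\|_{H_{0}}, & r > 0, \\ H_{0}, & r = 0, \\ \text{completion of } H_{0} \text{ in } \|\Lambda^{r} \cdot\|_{H_{0}}, & r < 0, \end{cases}
\end{equation*}
so that properties (I) and (II) hold by construction. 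The dense continuous inclusions $H_{\gamma} \hookrightarrow H_{\beta}$ for $\gamma > \beta$ follow from the identity $\Lambda^{\beta} = \Lambda^{\beta-\gamma} \Lambda^{\gamma}$ and boundedness of the bounded factor.

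The key analytic step is the Hilbert-scale interpolation inequality
\begin{equation*}
\|u\|_{H_{\beta}} \leq \|u\|_{H_{\alpha}}^{(\gamma-\beta)/(\gamma-\alpha)} \, \|u\|_{H_{\gamma}}^{(\beta-\alpha)/(\gamma-\alpha)}, \qquad \alpha < \beta < \gamma,
\end{equation*}
which I would derive from the spectral representation $\|\Lambda^{s} u\|_{H_{0}}^{2} = \int_{0}^{\infty} \lambda^{2s} \, d\langle E_{\lambda} u, u\rangle_{H_{0}}$ by Hölder's inequality with the convex decomposition $2\beta = \theta \cdot 2\alpha + (1-\theta) \cdot 2\gamma$ and conjugate exponents $1/\theta, 1/(1-\theta)$. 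The delicate point, and the step where I expect most of the technical care to be needed, is extending this estimate to negative indices, which requires first establishing it on the dense subspace $\bigcap_{r} D(\Lambda^{r})$ and then passing to the completions; I would also need to check that the embeddings in (i)--(ii) of the Hilbert-scale definition remain dense after completion.

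Finally, for uniqueness: any alternative scale $\{\widetilde{H}_{r}\}$ coming from an operator $\widetilde\Lambda$ with $\widetilde H_{0} = H_{0}$ and $\widetilde H_{1} = H_{1}$ as inner-product spaces must satisfy $\|\widetilde\Lambda u\|_{H_{0}}^{2} = (u,u)_{H_{1}} = \|\Lambda u\|_{H_{0}}^{2}$ on $H_{1}$. Polarising this identity on the common core $H_{1}$ and using that both operators are positive-definite self-adjoint on $H_{0}$ forces $\widetilde\Lambda = \Lambda$, after which uniqueness of the Borel functional calculus pins down each $\Lambda^{r}$ and hence each $H_{r}$.
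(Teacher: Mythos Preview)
The paper does not give its own proof of this theorem; it simply refers the reader to \cite[Chapter 4.10]{krein_02_interpolation} (and \cite{braukhoff2023global} for more detail). Your plan is precisely the classical Krein--Petunin construction carried out in those references: build $\Lambda$ from the embedding, use the spectral calculus to define $\Lambda^{r}$, and obtain the interpolation inequality from H\"older on the spectral measure. So there is no discrepancy in approach to discuss.

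Two small points worth tightening before you call this complete. First, the identity $D(T^{-1/2}) = H_{1}$ with $\|T^{-1/2}u\|_{H_{0}} = \|u\|_{H_{1}}$ is correct but not immediate from the definition of $T=\iota\iota^{*}$; it follows cleanly from the polar decomposition $\iota = T^{1/2}W^{*}$ with $W^{*}\colon H_{1}\to H_{0}$ unitary, and you should say so. Second, your uniqueness paragraph only shows that any \emph{operator-generated} scale with the same $H_{0},H_{1}$ coincides with yours. The full uniqueness claim in the statement is that \emph{every} Hilbert scale through $H_{0}$ and $H_{1}$ (a priori defined only by the embedding and interpolation axioms) is this one; for that you also need the converse direction, namely that any abstract Hilbert scale is generated by some positive self-adjoint operator, which is the content of the companion result (Theorem~\ref{thm:hilbert_scale_unique}(II)) and is again in \cite[Chapter 4.10]{krein_02_interpolation}.
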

The proofs of the previous statements can be found in \cite[Chapter 4.10]{krein_02_interpolation} (or \cite[Proposition 7, Lemma 8]{braukhoff2023global} for a more detailed version).

Setting $H_{1}=W^{m,2}_{0}(a,b)$, $H_{0}=L^{2}$ in the notation of Theorem \ref{thm:hilbert_scale_operator_Lambda} (or of \cite{krein_02_interpolation}), we can thus construct a unique operator $\Lambda$ characterizing a Hilbert-scale including both spaces. Indeed, setting $\Lambda$ as $\Lambda^1$ from Theorem \ref{thm:hilbert_scale_operator_Lambda}, we obtain a positive, self adjoint operator $\Lambda: W^{m,2}_{0}(a,b) \to L^2$.

Its inverse $\Lambda^{-1}\colon L^{2}\rightarrow W^{m,2}_{0}(a,b)$ is self-adjoint, positive and compact by the compact embedding of $W^{m,2}_{0}(a,b)\hookrightarrow L^{2}$ (see Lemma \ref{lem:weighted_spaces_embeddings}, assertion (3)). Hence we can find an orthonormal basis $(f_{k})_{k\in\N}$ of $L^{2}$ consisting of eigen-functions of $\Lambda$. 
From the construction in \cite[Chapter 4.10, Theorem 1.12 (p.236)]{krein_02_interpolation}, the norm on $W^{m,2}_{0}(a,b)$ is given by $\|\Lambda \cdot\|_{L^{2}}$ and one can check that $\{\Lambda^{-1}f_{n}\}_{n\in\N}$ forms an orthonormal basis (ONB) of $W^{m,2}_{0}(a,b)$.
Moreover, $\Lambda^{-1}$ can be extended such that its domain $D(\Lambda^{-1})=(W^{m,2}_{0}(a,b))^{\prime}$  and the dual space of $W^{m,2}_{0}(a,b)$ can thus be identified with $D(\Lambda^{-1})$. 

In this identification and the following, we will consider the operators $\Lambda, \Lambda^{-1}$ mapping between different spaces on our Hilbert scale. In these cases, without changing the notation of $\Lambda$, we will use the corresponding extension (via continuous linear extensions see the BLT-theorem \cite[Theorem I.7]{reed2012methods}) and isometric identifications) or restriction. 
For the following, it will be useful to recall the norm on the interpolation spaces when $r$ is not an integer. 
\begin{definition}\label{def:hilbert_scale_unweighted_duality}
Let $m \in \mathbb{N}$ be arbitrary, but fixed, and consider the operators $\{\Lambda^r, r >0\}$ associated to the Hilbert scale generated by  $H_1=W_0^{m,2}(a,b)$ and $H_0=L^2$. Then, for $r>0 $, the spaces $H_r(a(r),b)$ are defined as the closure of $D(\Lambda^r)$ with respect to the norm $\|\Lambda^{r}\cdot\|_{L^{2}}$. We use here the notation $H_r(a(r),b)$ to highlight that these spaces depend on the weights where the first weight component $a$ can additionally depend on $r$. When there is no ambiguity, we denote these spaces by $H_{r}$. This notation also applies to $r\in \N$.
   The norm on $ H_r(a(r),b)$ can be expressed by
    \begin{align}\label{eqn:fractional_norm}
         \|\Lambda^{r}u\|_{L^{2}}^{2}=\sum_{k=1}^{\infty} \lambda_{k}^{2 r}\langle u,f_{k}\rangle_{L^{2}}^{2},
    \end{align}
    with $\{f_{k}\}_{k\in\N}$ denoting the eigenfunctions of $\Lambda=\Lambda^1$ and $\{\lambda_{k}\}_{k\in\N}$ the corresponding eigenvalues.

    The dual spaces of $H_r(a(r),b)$ will be denoted by $H_{-r}(a(-r),b)$. 
\end{definition}

Theorem \ref{thm:hilbert_scale_unique} implies that for $r>0$ the normal (or Gelfand) triple
\begin{align*}
    H_{r}\hookrightarrow L^{2}\equiv (L^{2})^{\prime}\hookrightarrow H_{-r}
\end{align*}
is part of our Hilbert scale and the duality pairing is given by
\begin{align*}
    \langle \cdot,\cdot\rangle_{H_{-r}\times H_{r}}=\langle \Lambda^{-r}\cdot,\Lambda^{r}\cdot\rangle_{L^{2}}.
\end{align*}

Unless specified otherwise we shall work with this Hilbert scale where $H_1=W^{m,2}_0(a,b)$, $H_0=L^2$ and $H_{-1}=W^{-m,2}(a,b)=H_{-1}(a(-1),b)$.

We will however need a second scale that is similar to the previous one, but uses a different intermediary space.
\begin{definition}\label{def:hilbert_scale_weighted_duality}
    Let $m \in \mathbb{N}$ be arbitrary, but fixed, and consider the operators $\{\Gamma^r, r >0\}$ associated to the Hilbert scale generated by  $H_1=W_0^{m,2}(a,b)$ and $\widetilde{H}_0=L^2(a)$. Then, for $r>0 $, the spaces $W_0^{r,2}(a,b)$ are defined as the closure of $D(\Gamma^r)$ with respect to the norm $\|\Gamma^{r}\cdot\|_{L^{2}}$. For $r\in \N \cup \{0\}$, the norm corresponds to \eqref{eqn:weighted_Sobolev_norm} and in the general case $r\in \R$ the norm is given, via the spectral theorem, by
   \begin{align*}
       \|\Gamma^{r}u\|_{L^{2}(a)}=\sqrt{\int_{0}^{\infty} \gamma^{2r} \langle \d E_{\gamma} u,u\rangle},
   \end{align*}
   where $E_{\lambda}$ denotes the spectral resolution corresponding to $\Gamma$. Although we will not require them, the dual spaces on this scale will be denoted by $(W_0^{r,2}(a,b))^{\star}$.  
\end{definition}
\begin{remark}\label{rem:transferrable_mapping_properties}
     Let $\Gamma\colon  W_{0}^{m,2}(a,b)\rightarrow L^{2}(a)$ be the operator generating the Hilbert scale with $H_{1}=W_{0}^{m,2}(a,b)$ and $\widetilde{H}_{0}=L^{2}(a)$ and let $\Lambda\colon W_{0}^{m,2}(a,b)\rightarrow L^{2}$ be the operator generating the Hilbert scale with $H_{1}=W_{0}^{m,2}(a,b)$ and $H_{0}=L^{2}$. Then, $\Lambda^r=((1+x^{2})^{a}\Gamma)^r $ 
    (on the intersection of the respective domains). 
   The claim follows from the uniqueness of the Hilbert scale connection $W_{0}^{m,2}(a,b)$ with $L^2$ (see \cite[IV, Theorem 1.12]{krein_02_interpolation}). 

   This fact allows us to transfer mapping properties of operators between the two scales associated to $\Gamma$ (Definition \ref{def:hilbert_scale_weighted_duality}) and $\Lambda$ (Definition \ref{def:hilbert_scale_unweighted_duality}). Indeed,  \cite[IV, Theorem 1.12]{krein_02_interpolation} shows that for $\mathbb{N} \ni n\leq m$, $W_{0}^{n,2}(a,b)\subseteq H_{\frac{n}{m}}$. Considering the normal triples
$(W^{2m,2}_{0}(a,b),W^{m,2}_{0}(a,b), L^{2}(a))$ and $(H_{\frac{2m}{m}},W^{m,2}_{0}(a,b),L^{2})$, it can be seen that the reverse inclusions hold for the spaces where $n\geq m$, i.e., $W^{n,2}_{0}(a,b) \supseteq H_{\frac{n}{m}}$. 
\end{remark}

The main difference between the two scales we introduced is that, for $m>\frac{1}{2}$, the embedding $W_0^{m,2}(a,b)\hookrightarrow L^2$ is compact, whereas the embedding $W_0^{m,2}(a,b)\hookrightarrow L^2(a)$ is only continuous. Although the scale generated by $\Gamma$ has a more convenient structure when analyzing the mapping properties of certain operators appearing in our equation, the compactness of the previously mentioned embedding provides a sufficiently regular orthonormal basis of $L^{2}$ which enables us to derive the desired estimates.
\begin{remark}
Since our solution theory did not involve these particular spaces, we want to convince ourselves, that any solution for \eqref{eqn:SPDE_classical_formulation} we obtained from Proposition \ref{prop:convergence_to_weak_solution_SPDE} can be embedded into one of those spaces. We recall  (see also Definition \ref{def:weak_solution}) that our solution $\rho$, as well as initial condition $\rho_{0}$ are (for almost every $\omega$) elements of $M_{1}(\R_{+})$, equipped with the Wasserstein-1 distance.  
By Morrey's inequality, the space of signed measures on $\R$ and hence also $M_{1}(\mathbb{R}_+)$ can be embedded (see e.g. \cite[Section 2.1]{fernandez_97_hilbertian}) in the (fractional) unweighted Sobolev Slobodeskji space $W^{-1/2-\eps,2}$ (see \cite{di2012hitchhikers} for a definition).
This allows us to consider $\rho$ as an element of an unweighted Hilbert space.
For $m \geq 1/2 +\eps$, $\eps >0$, $a\geq 0$, $b\geq 0$, we have by Lemma~\ref{lem:weighted_spaces_embeddings} the continuity of the 
 following chain of embeddings
 \begin{align*}
    &H_{1}\hookrightarrow W_{0}^{1/2+\eps,2}\hookrightarrow L^{2}\equiv (L^{2})^{\prime}\hookrightarrow W^{-1/2-\eps,2}\hookrightarrow H_{-1}.
\end{align*}
It should be noted that the weighted and unweighted spaces are technically part of yet another Hilbert-scale, the one determined by the unweighted triple $W_{0}^{1/2+\eps,2}\hookrightarrow L^{2}\equiv (L^{2})^{\prime}\hookrightarrow W^{-1/2-\eps,2}$. In this argument, we are however only interested in the embeddings, and since all the dualities are understood via the extended $L^{2}$-inner duality we can conclude the desired embeddings of the dual spaces.
\end{remark}

In the following, we will assume $m$ to be ``sufficiently large''. Quantitatively this means $m\geq 3$.

The next lemma will establish a more explicit relation between the two scales we introduced.

The following lemma provides the essential estimate for the uniqueness of solutions to equation \eqref{eqn:SPDE_classical_formulation}. We will state it for a larger class of solutions than we would require based on Proposition \ref{prop:convergence_to_weak_solution_SPDE}. For example, we will only assume $L^{2}([0,T])$-integrability in time, instead of continuity on the interval $[0,T]$. We also allow for a potentially random, measure-valued, initial condition.

\begin{lemma}\label{prop:uniqueness_SPDE_weighted_space}
Let $\rho$ denote a solution of \eqref{eqn:SPDE_classical_formulation} in the sense of Definition \ref{def:weak_solution} with coefficients $\alpha, \beta\geq 0$ (applied to the slightly more general setting here).
Assume $\rho_{0}\in L^{2}(\Omega,M_{1}(\R_{+}))$ and let $m\geq 3$, $a>0, b \geq \frac{1}{2}$. Then the following statements hold.
\begin{enumerate}[label=(\Roman*)]
    \item $\rho$ satisfies the estimate
    \begin{align}\label{eqn:estimate_weighted_dual}
      \E \| \rho_{t}\|_{H_{-1}}^{2}&\leq 2\E\| \rho_{0}\|_{H_{-1}}^{2}+\E \int_{0}^{t} C_{m,\alpha,\beta,a,b,\sigma}(1+\langle\rho_{s},\idx\rangle)\|\rho_{s}\|_{H_{-1}}^{2}\ds,
\end{align}
where $C_{m,\alpha,\beta,a,b,\sigma}$ denotes a constant depending on $m, a, b$, the model parameters $\alpha,\beta$ and $\sigma$ which will be specified in the proof.
\item 
The solution of \eqref{eqn:SPDE_weak_formulation} is pathwise unique.
\end{enumerate}
\end{lemma}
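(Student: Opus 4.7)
The plan is to prove (I) by applying It\^{o}'s formula to $\|\rho_t\|_{H_{-1}}^2=\|\Lambda^{-1}\rho_t\|_{L^2}^2$, viewed as the squared norm of an evolution in the Hilbert scale, and then to derive (II) from (I) by linearising the equation and invoking a Gronwall-type argument. Since the SPDE \eqref{eqn:SPDE_weak_formulation} is formulated against test functions in $\mathcal{S}_{0}$, the first technical step is to extend the admissible test-function class by density to $W^{m,2}_0(a,b)$, which is possible because $C_c^\infty(\R_{++})\subset\mathcal{S}_0\cap W^{m,2}_0(a,b)$ is dense in both. This way, pairings against the eigenfunctions $f_k$ of $\Lambda$ are well-defined, and one can test the equation against $f_k$ to obtain a real-valued semimartingale $t\mapsto\langle\rho_t,f_k\rangle$ with explicit drift and diffusion.

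For part (I), I would work with the Galerkin projections $P_N\rho_t=\sum_{k=1}^N\langle\rho_t,f_k\rangle f_k$, apply It\^o's formula to the finite-dimensional squared norm $\sum_{k=1}^N\lambda_k^{-2}\langle\rho_t,f_k\rangle^2$, and then pass to the limit $N\to\infty$. Writing the SPDE abstractly as $d\rho_t=L_t^*\rho_t\,\dt+M_t^*\rho_t\,\dW^0_t$ with
\begin{equation*}
L_t^*\rho=\tfrac{\alpha}{2}\langle\rho,\idx\rangle\partial_x^2(\idxo\rho)+\tfrac{1}{2}\partial_x^2(\idxo^2\rho)-\beta\langle\rho,\idx\rangle\partial_x\rho,\qquad M_t^*\rho=-\partial_x(\idxo\rho),
\end{equation*}
the It\^o formula produces a drift $2\langle L_t^*\rho_t,\rho_t\rangle_{H_{-1}}+\|M_t^*\rho_t\|_{H_{-1}}^2$ and a true martingale part $2\langle M_t^*\rho_t,\rho_t\rangle_{H_{-1}}\dW^0_t$. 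The heart of the estimate is then to bound the four building blocks
\begin{equation*}
\langle\partial_x^2(\idxo\rho_t),\rho_t\rangle_{H_{-1}},\quad\langle\partial_x^2(\idxo^2\rho_t),\rho_t\rangle_{H_{-1}},\quad\langle\partial_x\rho_t,\rho_t\rangle_{H_{-1}},\quad\|\partial_x(\idxo\rho_t)\|_{H_{-1}}^2
\end{equation*}
by a constant multiple of $\|\rho_t\|_{H_{-1}}^2$. Using the duality $\langle A\rho,\rho\rangle_{H_{-1}}=\langle\Lambda^{-1}A\rho,\Lambda^{-1}\rho\rangle_{L^2}$, this reduces to showing that each of $\partial_x^2(\idxo\,\cdot)$, $\partial_x^2(\idxo^2\,\cdot)$, $\partial_x$, $\partial_x(\idxo\,\cdot)$ extends to a bounded operator $H_{-1}\to H_{-1}$, or equivalently that its transpose is bounded $H_1\to H_1$. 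This is precisely where the scale $W^{m,2}_0(a,b)$ is designed for the problem: the weight $a>0$ tames growth of the multiplier, the derivative weight $b\geq 1/2$ balances the differentiation by one extra power of $\langle x\rangle$, and $m\geq 3$ provides enough regularity so that two derivatives plus a quadratic coefficient stay in the scale. Collecting all contributions, one of the two non-local terms produces the $\langle\rho_t,\idx\rangle$ factor in the final estimate, yielding \eqref{eqn:estimate_weighted_dual} after taking expectation, where the factor $2$ on the initial data accommodates Young's inequality applied to the cross term coming from $\|M_t^*\rho_t\|_{H_{-1}}^2$.

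For part (II), let $\rho^{(1)},\rho^{(2)}$ be two solutions sharing the same initial datum. By Lemma \ref{lem:rho_x} both have the same first moment process $\langle\rho_t^{(i)},\idx\rangle=m_\lambda\exp((\beta-\tfrac{1}{2})t+W^0_t)$, so $\eta_t:=\rho_t^{(1)}-\rho_t^{(2)}$ solves the \emph{linear} equation obtained by freezing $\langle\rho_t,\idx\rangle$ in \eqref{eqn:SPDE_classical_formulation} with $\eta_0=0$. The derivation of (I) is linear in $\rho$ (no sign of mass of $\rho_t$ was used, only the linear structure of the operators), hence it applies verbatim to $\eta_t$ and gives
\begin{equation*}
\E\|\eta_t\|_{H_{-1}}^2\leq\E\int_0^t C_{m,\alpha,\beta,a,b}\bigl(1+\langle\rho_s,\idx\rangle\bigr)\|\eta_s\|_{H_{-1}}^2\,\ds.
\end{equation*}
Since $\langle\rho_s,\idx\rangle$ has finite moments of all orders on $[0,T]$, a stochastic Gronwall argument (or conditioning on $\mathcal{W}^0$ and applying the deterministic Gronwall $\Prob$-a.s., using the pathwise version of (I) before taking expectation) forces $\E\|\eta_t\|_{H_{-1}}^2=0$ for all $t\in[0,T]$, and as $\|\cdot\|_{H_{-1}}$ separates finite signed measures this gives $\rho^{(1)}=\rho^{(2)}$. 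The main obstacle is the rigorous verification of the operator-boundedness claims together with the It\^o formula at the $H_{-1}$ level: the quadratic coefficient in $\partial_x^2(\idxo^2\rho)$ has exactly the critical growth for which an unweighted estimate would fail, so the proof must exploit the combined pair of weights $(a,b)$ and the precise cancellations between the It\^o correction $\|\partial_x(\idxo\rho)\|_{H_{-1}}^2$ and the deterministic quadratic diffusion to keep only a first power of $\langle\rho_s,\idx\rangle$ in the final bound.
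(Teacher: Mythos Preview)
Your overall architecture---It\^o's formula for $\|\rho_t\|_{H_{-1}}^2$ via an eigenbasis expansion, then Gronwall for uniqueness after linearising with Lemma~\ref{lem:rho_x}---matches the paper. The gap is in the core estimate. You write that the problem ``reduces to showing that each of $\partial_x^2(\idxo\,\cdot)$, $\partial_x^2(\idxo^2\,\cdot)$, $\partial_x$, $\partial_x(\idxo\,\cdot)$ extends to a bounded operator $H_{-1}\to H_{-1}$, or equivalently that its transpose is bounded $H_1\to H_1$.'' This is false for the second-order terms: $x^2\partial_x^2$ does \emph{not} map $W^{m,2}_0(a,b)$ to itself, because the top-order contribution $x^2\partial_x^{m+2}u$ is not controlled by the $W^{m,2}_0(a,b)$-norm of $u$ no matter how $a,b$ are chosen. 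The operators genuinely lose two derivatives; no weight repairs that. You acknowledge at the end that a cancellation with the It\^o correction is needed, but individual boundedness and cancellation are mutually exclusive strategies---you cannot have both, and only the second one works here.

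What the paper does instead is transfer the bilinear form to the $H_1$ side via the identity
\[
\langle\rho_s,L^*\rho_s\rangle_{H_{-1}}=\langle L\,(\Lambda^{-1})^*\Lambda^{-1}\rho_s,\;(\Lambda^{-1})^*\Lambda^{-1}\rho_s\rangle_{H_1},
\]
and then proves a \emph{weak coercivity} (weak parabolicity) estimate for the quadratic form $u\mapsto 2\langle Lu,u\rangle_{H_1}+\sup_{\|\varphi\|_{H_1}=1}|\langle u,\sigma\varphi\rangle_{H_1}|^2$ on $H_1$ (this is Lemma~\ref{lem:coercivity_L_sigma}). The point is that $\langle Lu,u\rangle_{H_1}$, computed by explicit integration by parts in each weighted summand of the $H_1$-norm, produces a \emph{negative} principal term $-\sum_\gamma\int f(x)|\partial_x^{\gamma+1}u|^2(1+x^2)^{2a+2\gamma b}\,dx$ with $f(x)=\tfrac{\alpha}{2}\langle\rho_t,\idx\rangle x+\tfrac{x^2}{2}$, and precisely this term absorbs the otherwise uncontrollable $+\sum_\gamma\int x^2|\partial_x^{\gamma+1}u|^2(1+x^2)^{2a+2\gamma b}\,dx$ coming from the It\^o correction $\|\sigma^*\rho\|_{H_{-1}}^2$. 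All remaining commutator terms are of lower order and are controlled by $C(1+\langle\rho_s,\idx\rangle)\|u\|_{H_1}^2$ thanks to the weight structure with $b\ge\tfrac12$. So the mechanism is not operator boundedness but a sign in a quadratic form, and the transfer $H_{-1}\to H_1$ via $\Lambda^{-2}$ is what makes that sign visible.

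For part~(II) your linearisation via Lemma~\ref{lem:rho_x} is correct. The paper handles the random coefficient $\langle\rho_s,\idx\rangle$ by introducing the stopping times $\tau_K=\inf\{t:|W^0_t|>K\}$, which renders $\langle\rho_s,\idx\rangle$ pathwise bounded on $[0,\tau_K]$, applies deterministic Gronwall, and lets $K\to\infty$. Your ``stochastic Gronwall via finite moments'' route is less direct here because the coefficient sits inside the $ds$-integral multiplying $\|\eta_s\|_{H_{-1}}^2$, not outside; the stopping-time localisation is the clean way.
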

\begin{proof}
We recall that by Lemma \ref{lem:rho_x}, we know that
\begin{align*}
    \langle \rho_{t},\idx\rangle = m_{\lambda}\exp{((\beta-1/2)t+W^{0}_{t})}.
\end{align*}
Hence, instead of dealing with non-linear operators of the form 
\begin{align*}
    \rho\mapsto \partial_{x}\left(\left(\langle\rho,\idx\rangle\frac{\alpha}{2}x+\frac{x^{2}}{2}\right)\partial_{x}\rho\right),
\end{align*}
it suffices to consider linear operators of the type
\begin{align}\label{eqn:linearized_operator}
    \rho\mapsto \partial_{x}\left(\left(m_{\lambda}\exp{((\beta-1/2)t+W^{0}_{t})}\frac{\alpha}{2}x+\frac{x^{2}}{2}\right)\partial_{x}\rho\right).
\end{align}
This will eliminate any ambiguity regarding the definition of the adjoints of the appearing operators.
Let $\Lambda:  W^{m,2}_0(a,b) \to L^2$ be the operator introduced in Definition \ref{def:hilbert_scale_unweighted_duality} and $(f_{k})_{k\in\N}$ the associated orthonormal basis of $L^{2}$. Consider $\langle\rho,e_{j}\rangle^{2}$, where $e_{j}=\Lambda^{-1}f_{j}\in W^{m,2}_0(a,b)$ is used as a test function in the original equation.   
For $m\geq 3$, the image of $e_{j}$ under an operator of the form \eqref{eqn:linearized_operator} remains continuous and bounded by Lemma \ref{lem:weighted_spaces_embeddings}. This allows us to pair this image with $\rho_{s}$ and guarantee that all terms in equation \eqref{eqn:SPDE_weak_formulation} are well defined.

 We highlight that the pairing with only one space in the subscript will be understood as the inner product of the respective space. The pairing without any subscript remains to be understood as the pairing of measures in $M_{1}$ and continuous functions of at most linear growth.

 For convenience, we use $x\partial_{x}^{2}\varphi=\partial_{x}\left(x\partial_{x}\varphi\right)-\partial_{x}\varphi$ and $x^{2}\partial_{x}^{2}\varphi=\partial_{x}\left(x^{2}\partial_{x}\varphi\right)-2x\partial_{x}\varphi$ to rewrite our operators in divergence-form and apply It{\^o}'s formula (see e.g.~\cite{Krylov_13_Ito} in this context) to the squared dual pairing between $W^{m,2}_{0}(a,b)=H_{1}$ and $H_{-1}$,  
 which we denote by $\langle \cdot , \cdot \rangle_{H_{-1}\times H_{1}}=\langle \Lambda^{-1}\cdot,\Lambda \cdot\rangle_{L^{2}}$. Then,
\begin{align*}
    \langle \rho_{t},e_{j}\rangle_{H_{-1}\times H_{1}}^{2}&=\langle \rho_{0},e_{j}\rangle_{H_{-1}\times H_{1}}^{2}+2\int_{0}^{t} \langle \rho_{s},e_{j}\rangle_{H_{-1}\times H_{1}}\left\langle  \rho_{s},\partial_{x}\left(\left(\langle\rho_{s},\idx\rangle\frac{\alpha}{2}\idxo+\frac{\idxo^{2}}{2}\right)\partial_{x} e_{j}\right) \right\rangle_{H_{-1}\times H_{1}}\ds\\
 &\phantom{xx}{}-2\int_{0}^{t} \langle \rho_{s},e_{j}\rangle_{H_{-1}\times H_{1}}\left\langle \rho_{s},\left(\langle\rho_{s},\idx\rangle\frac{\alpha}{2}-\beta\langle\rho_{s},\idx\rangle+\idxo\right)\partial_{x} e_{j} \right\rangle_{H_{-1}\times H_{1}}\ds\\
 &\phantom{xx}{}+2\int_{0}^{t}\langle \rho_{s},e_{j}\rangle_{H_{-1}\times H_{1}}\left\langle \rho_{s},e_{j}-\partial_{x}\left(\idxo e_{j}\right)\right\rangle_{H_{-1}\times H_{1}}\dW^{0}_{s}\\
 &\phantom{xx}{}+\int_{0}^{t} \left\langle \rho_{s},e_{j}-\partial_{x}\left(\idxo e_{j}\right)\right\rangle_{H_{-1}\times H_{1}}^{2}\ds. 
\end{align*}
Since $\langle \Lambda^{-1}\cdot,\Lambda e_{j}\rangle^{2}_{L^{2}}=\langle \Lambda^{-1}\cdot,f_{j}\rangle^{2}_{L^{2}}$, summing over $j$ yields $\|\Lambda^{-1}\cdot\|^{2}_{L^{2}}=\|\cdot\|_{H_{-1}}^{2}$ and we obtain

\begin{align}
\label{eqn:ito_weighted_space_norm}
    \| \rho_{t}\|_{H_{-1}}^{2}&= \| \rho_{0}\|_{H_{-1}}^{2}+2\int_{0}^{t} \left\langle  \rho_{s},\left(\partial_{x}\left(\left(\langle\rho_{s},\idx\rangle\frac{\alpha}{2}\idxo+\frac{\idxo^{2}}{2}\right)\partial_{x} (\cdot)\right)\right)^{*}\rho_{s} \right\rangle_{H_{-1}}\ds\nonumber\\
    &\phantom{xx}{}-2\int_{0}^{t} \left\langle  \rho_{s},\left(\left(\langle\rho_{s},\idx\rangle\frac{\alpha}{2}-\beta\langle\rho_{s},\idx\rangle+\idxo\right)\partial_{x} (\cdot)\right)^{*}\rho_{s} \right\rangle_{H_{-1}}\ds\nonumber\\
    &\phantom{xx}{}+2\int_{0}^{t}\left\langle \rho_{s},\left((\cdot)-\partial_{x}\left(\idxo (\cdot)\right)\right)^{*}\rho_{s}\right\rangle_{H_{-1}}\dW^{0}_{s}\nonumber\\
    &\phantom{xx}{}+\int_{0}^{t}\left\langle \left((\cdot)-\partial_{x}\left(\idxo (\cdot)\right)\right)^{*}\rho_{s},\left((\cdot)-\partial_{x}\left(\idxo (\cdot)\right)\right)^{*}\rho_{s}\right\rangle_{H_{-1}}^{2}\ds\nonumber\\
    &=\| \rho_{0}\|_{H_{-1}}^{2}+2\int_{0}^{t} \left\langle  \rho_{s},L^{*}\rho_{s} \right\rangle_{H_{-1}}\ds+\int_{0}^{t} \left\| \sigma^{*}\rho_{s} \right\|^{2}_{H_{-1}}\ds + 2\int_{0}^{t}\left\langle \rho_{s},\sigma^{*}\rho_{s}\right\rangle_{H_{-1}}\dW^{0}_{s},
\end{align}
with 
 \begin{align*}
     L u &\coloneqq
     \partial_{x}\left(\left(\frac{\alpha}{2}\langle \rho_{t},\idx\rangle \idxo+\frac{1}{2}\idxo^{2}\right)\partial_{x}u\right)-\left(\frac{\alpha}{2}\langle \rho_{t},\idx\rangle- \beta\langle \rho_{t},\idx\rangle+\idxo\right)\partial_{x} u,
 \end{align*}
 \begin{align*}
     \sigma(u)\coloneqq u -\partial_{x}(x u)=x\partial_{x}u.
 \end{align*}
 The $L^{2}$ adjoints of these operators are given by
 \begin{align*}
     L^{*} u &\coloneqq \partial_{x}\left(\left(\frac{\alpha}{2}\langle \rho_{t},\idx\rangle \idxo+\frac{1}{2}\idxo^{2}\right)\partial_{x}u\right)+\partial_{x}\left(\left(\frac{\alpha}{2}\langle \rho_{t},\idx\rangle- \beta\langle \rho_{t},\idx\rangle+\idxo\right)u\right),
 \end{align*}
 \begin{align*}
     \sigma^{*}(u)\coloneqq \idxo \partial_{x}u+u= \partial_{x}(\idxo u).
 \end{align*}

Now the goal is to show that 
\begin{align}\label{eq:rhonorm}
\left\langle  \rho_{s},L^{*}\rho_{s} \right\rangle_{H_{-1}}&=\left\langle  L (\Lambda^{-1})^{*}(\Lambda^{-1})(\rho_{s}), (\Lambda^{-1})^{*}(\Lambda^{-1})(\rho_{s}) \right\rangle_{H_{1}}.
    \end{align}

For this, note first that similarly to \cite[Proof of Theorem 2.5]{gyongy1992stochastic_unbounded} it can be shown that $L\colon W_{0}^{m+1,2}(a,b)\rightarrow W_{0}^{m-1,2}(a,b)$,  
which is however not the Hilbert scale we work with, but the one introduced in Definition \ref{def:hilbert_scale_weighted_duality}. 
Therefore we quickly verify that $L$ has similar mapping properties between the $H_{r}$ spaces belonging to the scale introduced in Definition \ref{def:hilbert_scale_unweighted_duality}, on which we perform our analysis. Let $m+2 \geq n\geq m$, then $W_{0}^{n,2}(a,b)\supseteq H_{\frac{n}{m}}$ and $W_{0}^{n-2,2}(a,b)\subset H_{\frac{n-2}{m}}$ (see Remark \ref{rem:transferrable_mapping_properties}). Hence, $L\colon H_{\frac{n}{m}}\rightarrow H_{\frac{n-2}{m}}$, showing that one also loses $2$ degrees of regularity.

As a first step, consider functions $\varphi\in C_{c}^{\infty}(\R_{+})$. 
Since $\Lambda$ and $\Lambda^{-1}$ are self-adjoint as operators on $L^{2}$,
\begin{equation}\label{eq:equality}
\begin{split}
    \langle\varphi, L^{*}\varphi\rangle_{H_{-1}}&=\langle \Lambda^{-1}(\varphi),\Lambda^{-1} L^{*}\varphi\rangle_{L^{2}}=\langle (\Lambda^{-1})^{*}(\Lambda^{-1})(\varphi),L^{*}\varphi\rangle_{L^{2}}=\langle L(\Lambda^{-1})^{*}(\Lambda^{-1})(\varphi),\varphi\rangle_{L^{2}}\\
    &=\langle \Lambda  L(\Lambda^{-1})^{*}(\Lambda^{-1})(\varphi),\Lambda^{-1}\varphi\rangle_{L^{2}}\\
    &=\langle \Lambda  L(\Lambda^{-1})^{*}(\Lambda^{-1})(\varphi),\Lambda \Lambda^{-1}\Lambda^{-1}\varphi\rangle_{L^{2}}=\langle \Lambda  L(\Lambda^{-1})^{*}(\Lambda^{-1})(\varphi),\Lambda (\Lambda^{-1})^{*}\Lambda^{-1}\varphi\rangle_{L^{2}}\\
    &=\langle  L(\Lambda^{-1})^{*}(\Lambda^{-1})(\varphi),(\Lambda^{-1})^{*}\Lambda^{-1}\varphi\rangle_{H_{1}}.
\end{split}
\end{equation}

In the following we show  that $ \langle\varphi, L^{*}\varphi\rangle_{H_{-1}}=\langle L(\Lambda^{-1})^{*}(\Lambda^{-1})(\varphi), (\Lambda^{-1})^{*}(\Lambda^{-1})(\varphi)\rangle_{H_{1}}$ remains also valid for less regular functions $\varphi$, as long as we can define $\langle L(\Lambda^{-1})^{*}(\Lambda^{-1})(\varphi), (\Lambda^{-1})^{*}(\Lambda^{-1})(\varphi)\rangle_{H_{1}}$.

The next step is to identify, for which $\varphi$, we can define this bi-linear form. For this, we first note that the operator $(\Lambda^{-1})^{*}(\Lambda^{-1})$ can also be identified with a mapping from $H_{r}\rightarrow H_{r+2}$ for each $r\in \mathbb{R}$. Indeed, for $u\in H_{r}$ , we have $\|(\Lambda^{-1})^{*}(\Lambda^{-1})u\|_{H_{r+2}}=\|\Lambda^{r+2}(\Lambda^{-1})^{*}(\Lambda^{-1})u\|_{L^{2}}=\|\Lambda^{r}u\|_{L^{2}}=\|u\|_{H_r}$, which implies in  particular that $\|(\Lambda^{-1})^{*}(\Lambda^{-1})u\|_{H_{r+2}} <  \infty$. 
For  $\varphi\in H_{-1+\frac{1}{m}}$ we thus get
\begin{equation}
\begin{split}\label{eq:estimatesH}
    \langle  &L(\Lambda^{-1})^{*}(\Lambda^{-1})(\varphi),(\Lambda^{-1})^{*}\Lambda^{-1}\varphi\rangle_{H_{1}}\\
    &=\langle \Lambda  L(\Lambda^{-1})^{*}(\Lambda^{-1})(\varphi),\Lambda (\Lambda^{-1})^{*}(\Lambda^{-1})(\varphi)\rangle_{L^{2}}=\langle \Lambda^{1-\frac{1}{m}}  L(\Lambda^{-1})^{*}(\Lambda^{-1})(\varphi),\Lambda^{1+\frac{1}{m}}(\Lambda^{-1})^{*}(\Lambda^{-1})(\varphi)\rangle_{L^{2}}\\
    &\leq \|\Lambda^{1-\frac{1}{m}}\|_{\mathcal{L}(H_{1-\frac{1}{m}},L^{2})}\| L\|_{\mathcal{L}(H_{1+\frac{1}{m}},H_{1-\frac{1}{m}})}\|(\Lambda^{-1})^{*}(\Lambda^{-1})\|_{\mathcal{L}(H_{-1+\frac{1}{m}},H_{1+\frac{1}{m}})}\|\varphi\|_{H_{-1+\frac{1}{m}}}\underbrace{\|\Lambda^{\frac{1}{m}} \Lambda (\Lambda^{-1})^{*}(\Lambda^{-1})(\varphi)\|_{L^{2}}}_{\| \varphi\|_{H_{-1+\frac{1}{m}}}}\\
       &\leq C_{L} \|\varphi\|_{H_{-1+\frac{1}{m}}}^{2},
       \end{split}
\end{equation}
\phantom{xxxxxx} where we used the mapping property of $(\Lambda^{-1})^{*}(\Lambda^{-1})$ from $H_{-1 +\frac{1}{m}}$ to $H_{1 +\frac{1}{m}}$ to conclude that
$\|(\Lambda^{-1})^{*}(\Lambda^{-1})\|_{L\left(H_{-1+\frac{1}{m}},H_{1+\frac{1}{m}}\right)}$ is finite. 
This shows that $\langle L(\Lambda^{-1})^{*}(\Lambda^{-1})(\varphi),(\Lambda^{-1})^{*}\Lambda^{-1}\varphi\rangle_{H_{1}}$ is well defined for  $\varphi\in H_{-1+\frac{1}{m}}$, whence by density \eqref{eq:equality} also holds for $\varphi \in H_{-1+\frac{1}{m}} $.

What remains to show is that, for every $s\in [0,T]$, $\rho_{s}\in H_{-1+\frac{1}{m}}$. Since $\rho_{s}$ is an element of $M_{1}(\R_{+})\subseteq W^{-\frac{1}{2}-\eps}$, it is also an element of $ H_{-\frac{1}{m}}=H_{-1+\frac{m-1}{m}}\subseteq H_{-1+\frac{1}{m}}$ as long as $m\geq 2$, which is satisfied in our setting. Together with the previous arguments, this yields \eqref{eq:rhonorm} for every $s\in [0,T]$.

For $\sigma$, we use a different approach:
First, observe that $\|\sigma^{*}\rho_{s}\|_{H_{-1}}<\infty$ since $\rho_{s}\in H_{-1+\frac{1}{m}}$. Indeed, $\sigma^{*}\colon H_{-1+\frac{1}{m}}\rightarrow H_{-1}$, since
\begin{align*}
     \|\sigma^{*}\rho\|_{H_{-1}}&=\sup_{\varphi\in H_{1}\colon \|\varphi\|_{H_{1}}=1}|\langle \Lambda^{-1}\sigma^{*}\rho,\Lambda\varphi\rangle_{L^{2}}|=\sup_{\varphi\in H_{1}\colon \|\varphi\|_{H_{1}}=1}|\langle \Lambda ^{-1+\frac{1}{m}}\rho,\Lambda^{-\frac{1}{m}}\Lambda\sigma\varphi\rangle_{L^{2}}|\\
     &\leq \sup_{\varphi\in H_{1}\colon \|\varphi\|_{H_{1}}=1}\| \Lambda ^{-1+\frac{1}{m}}\rho\|_{L^{2}}\|\Lambda^{-\frac{1}{m}}\Lambda\sigma\varphi\|_{L^{2}}\leq C_{\sigma}\sup_{\varphi\in H_{1}\colon \|\varphi\|_{H_{1}}=1}\|\varphi\|_{H_{1}}\|\rho\|_{H_{-1+\frac{1}{m}}},
\end{align*}
where we used the dual norm and the last inequality follows from similar estimates as in \eqref{eq:estimatesH}.

In the following, we use again the dual norm to express  $\|\sigma^{*}\rho\|_{H_{-1}}$ as follows:
\begin{align*}
     \|\sigma^{*}\rho\|_{H_{-1}}=\sup_{\varphi\in H_{1}\colon \|\varphi\|_{H_{1}}=1}|\langle \Lambda(\Lambda ^{-1})^{*}\Lambda^{-1}\rho,\Lambda\sigma\varphi\rangle_{L^{2}}|=\sup_{\varphi\in H_{1}\colon \|\varphi\|_{H_{1}}=1}|\langle (\Lambda ^{-1})^{*}\Lambda^{-1}\rho,\sigma\varphi\rangle_{H_{1}}|.
\end{align*}

Now we are in the position to use Lemma \ref{lem:coercivity_L_sigma} (with $u=(\Lambda^{-1})^{*}(\Lambda^{-1})(\rho_{s})$ and recalling Lemma \ref{lem:rho_x}) and bound the right-hand side of \eqref{eqn:ito_weighted_space_norm} further.
\begin{align*}
    &2\left\langle  \rho_{s},L^{*}\rho_{s} \right\rangle_{H_{-1}}+\|\sigma^{*}\rho_{s}\|_{H_{-1}}^{2}\\
        &= 2\left\langle  L (\Lambda^{-1})^{*}(\Lambda^{-1})(\rho_{s}), (\Lambda^{-1})^{*}(\Lambda^{-1})(\rho_{s}) \right\rangle_{H_{1}}
        +\sup_{\varphi\in H_{1}\colon \|\varphi\|_{H_{1}}=1}|\langle (\Lambda ^{-1})^{*}\Lambda^{-1}\rho,\sigma\varphi\rangle_{H_{1}}|^{2}\\
    &\leq C_{m,\alpha,\beta,a,b}(1+\langle\rho_{s},\idx\rangle)\|(\Lambda^{-1})^{*}(\Lambda^{-1})(\rho_{s})\|_{H_{1}}^{2}\leq C_{m,\alpha,\beta,a,b}(1+\langle\rho_{s},\idx\rangle)\|\rho_{s}\|_{H_{-1}}^{2}.
\end{align*}
From this, we obtain
\begin{align}\label{eqn:rho_estimate_before_expectation}
        \| \rho_{t}\|_{H_{-1}}^{2}&\leq \| \rho_{0}\|_{H_{-1}}^{2}+C_{m,\alpha,\beta,a,b}\int_{0}^{t} (1+\langle\rho_{s},\idx\rangle)\|\rho_{s}\|_{H_{-1}}^{2}\ds+ 2\int_{0}^{t}\left\langle \rho_{s},\sigma^{*}\rho_{s}\right\rangle_{H_{-1}}\dW^{0}_{s}.
\end{align}
Since $\int_{0}^{t}\left\langle \rho_{s},\sigma^{*}\rho_{s}\right\rangle_{H_{-1}}\dW^{0}_{s}$ is a continuous local martingale, we can use a standard localization argument, take expectation and obtain \eqref{eqn:estimate_weighted_dual}.

If we consider two solutions $\rho^{1}, \rho^{2}$, starting at the same initial condition, it is clear that the difference $\rho_{1}-\rho_{2} \in H_{-1}$ solves equation \eqref{eqn:SPDE_random} with initial condition 0. By the continuity of sample paths in time, we can introduce the sequence of stopping times $\tau_{K}\coloneqq \inf\{ t \colon |W^{0}_{t}|>K\}$ and perform the previous steps with $\rho$ replaced by $\rho^{1}-\rho^{2}$, which leads to
\begin{align*}
       \E \| \rho^{1}_{t\wedge \tau_{K}}-\rho^{2}_{t\wedge \tau_{K}}\|_{H_{-1}}^{2}&\leq C_{m,\alpha,\beta,a,b,\sigma}\E \int_{0}^{t\wedge \tau_{K}} (1+\langle\rho_{s},\idx\rangle)\|\rho^{1}_{s}-\rho^{2}_{s}\|_{H_{-1}}^{2}\ds\\
           &\leq C_{m,\alpha,\beta,a,b,\sigma,K}\int_{0}^{t} \E\|\rho^{1}_{s}-\rho^{2}_{s}\|_{H_{-1}}^{2}\chi_{s\in [0,t\wedge \tau_{K}]}\ds.
\end{align*}
Gronwall's inequality now leads to
\begin{align*}
     \E \| \rho^{1}_{t}-\rho^{2}_{t}\|_{H_{-1}}^{2}=0.
\end{align*}
Letting $K\rightarrow \infty$ implies that the solution is pathwise-unique on [0,T]. This yields the second statement of the Lemma.
\end{proof}

The pathwise uniqueness directly leads to the conclusion that we have indeed a probabilistically strong solution. In other words, $\rho$ is defined on the initial stochastic basis.
\begin{proposition}\label{prop:Yamada-Watanabe_type_result}
 The sequences $(\rho^{N}, W^{0})$ converge (up to a sub-sequence) almost surely relative to the initially chosen stochastic basis.
\end{proposition}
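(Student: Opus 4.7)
The plan is to apply a Gy\"ongy--Krylov type argument that upgrades tightness plus pathwise uniqueness to convergence in probability on the original stochastic basis. Recall the classical characterization (see Gy\"ongy--Krylov): a sequence $Y_N$ of random variables taking values in a Polish space $E$ converges in probability if and only if for every pair of subsequences $Y_{N_k}, Y_{N_l}$, the joint laws on $E\times E$ are tight and any weak sub-sequential limit is supported on the diagonal. Once convergence in probability is established on the original space, a standard diagonal argument extracts an a.s.\ convergent sub-sequence.

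First, I would fix two arbitrary sub-sequences indexed by $N_k$ and $N_l$ and consider the joint laws $\mu_{k,l}$ of the triples $(\rho^{N_k}, \rho^{N_l}, W^0)$ on the Polish space
\[
\mathcal{E}\coloneqq C([0,T];M_1(\mathbb{R}_+))\times C([0,T];M_1(\mathbb{R}_+))\times C([0,T];\mathbb{R}).
\]
Tightness of the marginals follows from Theorem~\ref{thm:tightness} (for the first two components) and from the fact that $W^0$ is fixed (third component). Hence the family $(\mu_{k,l})$ is tight on $\mathcal{E}$, and Prokhorov's theorem yields a further sub-sequence, still indexed by $(k,l)$, converging weakly to some limiting law $\mu$. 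By the Skorohod representation theorem, on a new stochastic basis $(\widetilde{\Omega},\widetilde{\mathcal{F}},\widetilde{\Prob})$ there exist random variables $(\widetilde{\rho}^{1,k},\widetilde{\rho}^{2,l},\widetilde{W}^{0})$ with the same joint law as $(\rho^{N_k},\rho^{N_l}, W^0)$, converging $\widetilde{\Prob}$-a.s.\ in $\mathcal{E}$ to a triple $(\widetilde{\rho}^1,\widetilde{\rho}^2,\widetilde{W}^0)$.

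Next, repeating the identification-of-the-limit argument from the proof of Proposition~\ref{prop:convergence_to_weak_solution_SPDE}, both $\widetilde{\rho}^1$ and $\widetilde{\rho}^2$ are probabilistically weak solutions of \eqref{eqn:SPDE_classical_formulation} driven by the same Brownian motion $\widetilde{W}^0$, starting from the same initial condition (since by Assumption~\ref{A:A_3_X(0)_Z(0)} the initial empirical measures converge to the deterministic $\lambda$). Pathwise uniqueness from Lemma~\ref{prop:uniqueness_SPDE_weighted_space} then forces $\widetilde{\rho}^1=\widetilde{\rho}^2$ $\widetilde{\Prob}$-a.s., which means the limit law $\mu$ is supported on the diagonal $\{(\nu,\nu,w)\in\mathcal{E}\}$. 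Since the sub-sequences $N_k,N_l$ were arbitrary, the Gy\"ongy--Krylov criterion applies and $\rho^N$ converges in probability in $C([0,T];M_1(\mathbb{R}_+))$ on the original probability space to some limit $\rho$, which is automatically adapted to the original filtration and solves \eqref{eqn:SPDE_classical_formulation} driven by $W^0$. Extracting a further sub-sequence then gives the claimed $\Prob$-a.s.\ convergence of $(\rho^{N},W^0)$ on the initial stochastic basis.

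The main conceptual step is the identification that $\widetilde{\rho}^1$ and $\widetilde{\rho}^2$ are weak solutions \emph{with respect to the same Brownian motion}, so that pathwise uniqueness genuinely applies; this requires that the Skorohod coupling preserves the common noise (the third component is the same in both triples), which is guaranteed by taking the joint law including $W^0$ rather than two independent copies. The rest is a routine passage from convergence in probability to a.s.\ convergence along a sub-sequence.
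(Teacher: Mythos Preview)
Your proposal is correct and follows essentially the same approach as the paper, which simply refers to a straightforward adaptation of the Gy\"ongy--Krylov/Yamada--Watanabe type argument from \cite{DGHT11_Local_martingale_solutions_abstract_fluids}. Your sketch spells out precisely those steps: joint tightness of $(\rho^{N_k},\rho^{N_l},W^0)$, Skorohod coupling preserving the common noise, identification of both limits as weak solutions driven by the same Brownian motion, and pathwise uniqueness forcing the diagonal support.
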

The proof is a straightforward adaptation of the arguments from \cite{DGHT11_Local_martingale_solutions_abstract_fluids}.
\begin{remark}
    If one wanted to study the existence of solutions to \eqref{eqn:SPDE_classical_formulation} isolated from the particle system consideration, one could add an additional regularizing term of the form $+\eps \Delta \rho$ on the right-hand side of \eqref{eqn:SPDE_classical_formulation}, perform standard arguments (see e.g. \cite[Chapter 5]{Roeckner_15_SPDE}) to obtain a solution and pass to the limit $\eps\rightarrow 0$. The estimates to verify the conditions in \cite[Chapter 5]{Roeckner_15_SPDE} and pass to the limit are almost identical to the ones performed in the uniqueness argument. 
\end{remark}

\section{Connection with the McKean-Vlasov SDE / Proof of Theorem \ref{thm:collected_results_McKean-Vlasov_SDE_introduction}}\label{sec:connection_McKean-Vlasov_SDE}
\subsection{Stochastic representation as conditional law/ Theorem \ref{thm:collected_results_McKean-Vlasov_SDE_introduction} \ref{prop:P_1:stochastic_representation_rho_introduction} and \ref{prop:P_2:McKean-Vlasov_SDE_and_mean_jointly_polynomial_introduction}}\label{sub_sec:stochastic_representation}
In this section, we want to establish the connection between $\rho$, being the solution of \eqref{eqn:SPDE_classical_formulation}, and the conditional law of the solution to the McKean-Vlasov SDE given in \eqref{eqn:MKV_SDE_introduction}, i.e.,
\begin{align*}
    \d Y(t)=\beta \E[Y(t)|\mathcal{W}^{0}]\dt+\sqrt{\alpha}\sqrt{Y(t)\E[Y(t)|\mathcal{W}^{0}]}\dB_{t}+Y(t)\dW^{0}_{t},
\end{align*}
with independent Brownian motions $B$ and $W^{0}$. Recall that $\mathcal{W}^{0}$ denotes the filtration generated by the increments of $W^{0}$. In the sequel $\Law(Y(t)|\mathcal{W}^{0})$ will denote the family $(\Law(Y(t)|\mathcal{W}^{0}))_{t\geq 0}$
of conditional  probability measures, such that for every $t\in [0,T]$ and $\varphi\in C_b(\R_{+})$,
\begin{align}
    \langle \Law(Y(t)|\mathcal{W}^{0}), \varphi \rangle\coloneqq \E [\varphi(Y(t))|\mathcal{W}^{0}]=\E [\varphi(Y(t))|\mathcal{W}^{0}_{t}]\quad\Prob \mbox{ -a.s.} \label{eqn:tested_conditional_law}
\end{align}
The next proposition establishes the existence and uniqueness of \eqref{eqn:MKV_SDE_introduction}, as well as the property that the solution $Y$, together with its conditional expected value, is a polynomial process.

\begin{proposition} \label{prop:McKeanE&U}
Let $Y(0)\in L^{2}(\Omega)$ be independent of $\mathcal{W}^0$ and 
$Y(0)\geq 0$ a.s.  Then the following assertions hold true.
\begin{enumerate}[label=(\Roman*)]
    \item \label{prop:MKV_existence_representation_existence_of_solution}
    The McKean-Vlasov SDE \eqref{eqn:MKV_SDE_introduction}
has a non-negative weak solution if $\alpha, \beta \geq 0$ and a strictly positive weak solution if $Y(0)>0$ a.s. and $\alpha, \beta$ satisfy \ref{A:A1_alpha_beta}.

    \item \label{prop:MKV_existence_representation_stoachstic_representation_connection_to_SPDE} Let 
    $Y(0)>0$ a.s., $\alpha, \beta$ satisfy \ref{A:A1_alpha_beta} and  $\rho$ be the unique solution of \eqref{eqn:SPDE_classical_formulation} with initial condition $\rho_{0}=\Law(Y(0))$. Then any solution to \eqref{eqn:MKV_SDE_introduction} satisfies $\rho=\Law(Y(\cdot)|\mathcal{W}^{0})$ as well as
\begin{align*}
    \E[Y(t)|\mathcal{W}^{0}]=\langle\rho_{t},\idx\rangle.
\end{align*}
\item \label{prop:polynomialprop}
The two-dimensional process $(Y,\langle\rho,\idx\rangle)=(Y, \E[Y|\mathcal{W}^{0}])$ is a polynomial diffusion on $\mathbb{R}_+^2$ which is unique in law. In particular, the weak solution of \eqref{eqn:MKV_SDE_introduction} is unique in law.
\end{enumerate}
\end{proposition}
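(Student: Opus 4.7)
The linchpin is Lemma \ref{lem:rho_x}, which gives the explicit formula $\langle \rho_t,\idx\rangle = m_\lambda \exp((\beta-1/2)t+W^0_t)$ with $m_\lambda=\E[Y(0)]$. This lets us decouple the McKean--Vlasov nonlinearity: whenever $\rho=\Law(Y(\cdot)\,|\,\mathcal{W}^0)$, the conditional-expectation coefficient in \eqref{eqn:MKV_SDE_introduction} reduces to a prescribed $\mathcal{W}^0$-adapted process. I would therefore first construct a solution candidate from the (already unique) SPDE solution, then identify $\Law(Y(\cdot)\,|\,\mathcal{W}^0)$ with $\rho$ via a conditional Fokker--Planck argument, and finally read off the polynomial structure and uniqueness.

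\textbf{Part \ref{prop:MKV_existence_representation_existence_of_solution} (Existence).} Take the unique SPDE solution $\rho$ with $\rho_0=\Law(Y(0))$ from Proposition \ref{prop:convergence_to_weak_solution_SPDE} and Lemma \ref{prop:uniqueness_SPDE_weighted_space}, and set $m(t):=\langle\rho_t,\idx\rangle$, a strictly positive continuous $\mathcal{W}^0$-adapted process. I would then consider the linearized SDE
\begin{align*}
dY(t) = \beta m(t)\,dt + \sqrt{\alpha Y(t)m(t)}\,dB_t + Y(t)\,dW^0_t, \qquad Y(0)\text{ given},
\end{align*}
and apply the deterministic change of variables $\tilde Y(t):=Y(t)\exp(-W^0_t+t/2)$. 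A direct Itô computation yields
\begin{align*}
d\tilde Y(t) = \beta m_\lambda e^{\beta t}\,dt + \sqrt{\alpha \tilde Y(t)\,m_\lambda e^{\beta t}}\,dB_t,
\end{align*}
a time-inhomogeneous CIR-type SDE with deterministic coefficients. Yamada--Watanabe gives a pathwise unique non-negative strong solution; strict positivity under $\beta\geq\alpha/2$ follows either from the Feller-type criterion or directly from \cite[Theorem~5.7(i)]{filipovic_larsson_16_polynomial_diffusion} (compare \eqref{eq:strictpos}). Transforming back gives a weak (in fact strong) solution of \eqref{eqn:MKV_SDE_introduction} once the self-consistency $\E[Y(t)\,|\,\mathcal{W}^0]=m(t)$ is verified, which is the content of part \ref{prop:MKV_existence_representation_stoachstic_representation_connection_to_SPDE}.

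\textbf{Part \ref{prop:MKV_existence_representation_stoachstic_representation_connection_to_SPDE} (Conditional-law identification).} Let $Y$ be any weak solution of \eqref{eqn:MKV_SDE_introduction} and write $\bar m(t):=\E[Y(t)\,|\,\mathcal{W}^0]$ and $\mu_t:=\Law(Y(t)\,|\,\mathcal{W}^0)$. For $\varphi\in\mathcal{S}_0$, Itô's formula gives
\begin{align*}
d\varphi(Y) = \beta\bar m\,\varphi'(Y)\,dt + \sqrt{\alpha Y\bar m}\,\varphi'(Y)\,dB + Y\varphi'(Y)\,dW^0 + \tfrac{1}{2}\bigl(\alpha Y\bar m+Y^2\bigr)\varphi''(Y)\,dt.
\end{align*}
Taking the $\mathcal{W}^0$-conditional expectation, the $dB$-integral vanishes by independence of $B$ and $\mathcal{W}^0$ (combined with the compatibility of the filtration stated at the beginning of the main-results section), while the remaining terms produce exactly the weak formulation \eqref{eqn:SPDE_weak_formulation} for $\mu_t$, with $\bar m=\langle\mu_t,\idx\rangle$ and $\mu_0=\Law(Y(0))=\rho_0$. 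Lemma \ref{prop:uniqueness_SPDE_weighted_space} then forces $\mu=\rho$, and in particular $\bar m=\langle\rho,\idx\rangle=m$, closing the self-consistency loop.

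\textbf{Part \ref{prop:polynomialprop} (Polynomial structure and uniqueness in law).} With $M:=\bar m=\E[Y\,|\,\mathcal{W}^0]$ identified, part \ref{prop:MKV_existence_representation_stoachstic_representation_connection_to_SPDE} yields the explicit joint system displayed in the introduction: the drift $(\beta M,\beta M)^\top$ is linear in $(Y,M)$ and the entries of $\sigma\sigma^\top$, namely $\alpha YM+Y^2$, $YM$, and $M^2$, are polynomials of degree $\leq 2$ in $(Y,M)$, so $(Y,M)$ is a polynomial diffusion on $\R_{++}^2$ in the sense of \cite{filipovic_larsson_16_polynomial_diffusion}. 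For uniqueness in law, observe that $M$ is a geometric Brownian motion driven by $W^0$ and is therefore pathwise unique; conditional on $(W^0,M)$ the equation for $Y$ reduces to the same time-inhomogeneous CIR-type SDE for $\tilde Y$ constructed above, which is pathwise unique. Hence the joint law of $(Y,M)$, and a fortiori the law of the McKean--Vlasov solution, is uniquely determined.

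\textbf{Main obstacle.} The most delicate step is the conditional Fokker--Planck computation in part \ref{prop:MKV_existence_representation_stoachstic_representation_connection_to_SPDE}: one must justify that the disintegration $\mu_t=\Law(Y(t)\,|\,\mathcal{W}^0)$ is a continuous $M_1(\R_+)$-valued process with enough integrability (in particular finite first moment $\bar m$) to serve as a weak SPDE solution in the sense of Definition \ref{def:strong_solution}, and that conditioning the nonlinear stochastic integrals on the full $\sigma$-algebra $\mathcal{W}^0$ (not merely on $\mathcal{W}^0_t$) produces the correct compensator. The filtration compatibility assumption and the independence of $B$ and $W^0$, together with uniform moment bounds obtainable from the CIR representation, are what make this argument go through.
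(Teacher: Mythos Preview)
Your argument is correct, and Part \ref{prop:MKV_existence_representation_stoachstic_representation_connection_to_SPDE} is essentially the paper's own proof. Parts \ref{prop:MKV_existence_representation_existence_of_solution} and \ref{prop:polynomialprop}, however, take a genuinely different route.

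For existence, the paper does not perform your transformation $\tilde Y = Y\exp(-W^0_t+t/2)$. Instead, after observing formally that $\E[Y\,|\,\mathcal W^0]$ must satisfy the geometric Brownian motion equation, it writes \eqref{eqn:MKV_SDE_introduction} as the first component of the two-dimensional system \eqref{eqn:MKV_proof_MKV_equation_rewritten_as_system}--\eqref{eqn:condexp} and invokes the polynomial-diffusion existence results \cite[Theorem~5.3, Proposition~6.4, Theorem~5.7]{filipovic_larsson_16_polynomial_diffusion} on that system directly. Your CIR reduction is more hands-on and in fact yields a \emph{strong} solution via Yamada--Watanabe, whereas the paper obtains only a weak one at this stage.

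For uniqueness in law, the paper does not use your pathwise-uniqueness argument for $\tilde Y$. It instead appeals to the one-dimensional marginal criterion of \cite[Theorem~4.4.2\,(a)]{ethier_kurtz_09_markov}: testing against product functions $\phi(y)\psi(s)$ with $\phi\in\mathcal S_0\cup\{1\}$, using that $S$ is $\mathcal W^0$-adapted, and then invoking Part \ref{prop:MKV_existence_representation_stoachstic_representation_connection_to_SPDE} together with the pathwise uniqueness of the SPDE (Lemma \ref{prop:uniqueness_SPDE_weighted_space}) to match the marginals. Your argument is more elementary and delivers the stronger conclusion of pathwise uniqueness for $(Y,M)$; the paper's route has the conceptual payoff of exhibiting SPDE uniqueness as the mechanism behind McKean--Vlasov uniqueness, which is one of the themes of the article.
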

\begin{proof}
We first prove \ref{prop:MKV_existence_representation_existence_of_solution}.
Taking conditional expectations, yields
\begin{align*}
      \d \E[Y(t)|\mathcal{W}^{0}]=\beta \E[Y(t)|\mathcal{W}^{0}]\dt+\E[Y(t)|\mathcal{W}^{0}]\dW^{0}_{t}.
\end{align*}
Hence, $\E[Y(t)|\mathcal{W}^{0}]=\langle\rho_{t},\idx\rangle= m_{\lambda}\exp{\bigg(\bigg(\beta-\frac{1}{2}\bigg)t+W^{0}_{t}\bigg)}$, by Lemma \ref{lem:convergence_of_mean_rho_N}, where $\rho$ denotes the unique solution of \eqref{eqn:SPDE_classical_formulation} with initial condition $\rho_{0}=\Law(Y_{0})$.
 Hence \eqref{eqn:MKV_SDE_introduction} can be seen as a factor of a two-dimensional system of equations, given by
    \begin{align}
        &\d Y(t)=\beta S(t)\dt+\sqrt{\alpha}\sqrt{S(t)}\sqrt{Y(t)}\dB_{t}+Y(t)\dW^{0}_{t}\label{eqn:MKV_proof_MKV_equation_rewritten_as_system}
\\&\d S(t)=\beta S(t)\dt+S(t)\dW^{0}_{t}. \label{eqn:condexp}   
    \end{align}
If $\alpha, \beta \geq 0$, then the existence of a weak solution of this system with values in $\mathbb{R}_+^2$ is a consequence of 
\cite[Theorem 5.3, Proposition 6.4]{filipovic_larsson_16_polynomial_diffusion}.
The strict positivity, under Assumption \ref{A:A1_alpha_beta}, follows from \cite[Theorem 5.7]{filipovic_larsson_16_polynomial_diffusion}.

To prove \ref{prop:MKV_existence_representation_stoachstic_representation_connection_to_SPDE},
note that by It{\^o}'s formula,
\begin{align*}
  \varphi(Y(t))&=\varphi(Y(0))+\int_{0}^{t}\partial_{x}\varphi (Y(r))\beta\langle\rho_{r},\idx\rangle\dr\\
    &\phantom{xx}{}+\frac{1}{2}\int_{0}^{t}\partial_{x}^{2}\varphi (Y(r))\alpha Y(r)\langle\rho_{r},\idx\rangle+\partial_{x}^{2}\varphi (Y(r))Y^{2}(r)\dr\\
    &\phantom{xx}{}+\int_{0}^{t}\partial_{x}\varphi (Y(r))\sqrt{\alpha}\sqrt{Y(r)}\sqrt{\langle\rho_{r},\idx\rangle}\dB_{r}+\int_{0}^{t}\partial_{x}\varphi (Y(r))Y(r)\dW^{i}_{r}.
\end{align*}
Taking conditional expectation with respect to $\mathcal{W}^{0}_{t}$ yields
\begin{align*}
  \E[\varphi(Y(t))|\mathcal{W}^{0}_{t}]&=
\E[\varphi(Y(0))|\mathcal{W}^{0}_{t}]+\E\bigg[\int_{0}^{t}\partial_{x}\varphi (Y(r))\beta\langle\rho_{r},\idx\rangle\dr\bigg|\mathcal{W}^{0}_{t}\bigg]\\
    &\phantom{xx}+\frac{1}{2}\E\bigg[\int_{0}^{t}\partial_{x}^{2}\varphi (Y(r))Y(r)\langle\rho_{r},\idx\rangle+\partial_{x}^{2}\varphi (Y(r))Y^{2}(r)\dr\bigg|\mathcal{W}^{0}_{t}\bigg]\\
    &\phantom{xx}{}+\int_{0}^{t}\E\big[\partial_{x}\varphi (Y(r))Y(r)\big|\mathcal{W}^{0}_{r}\big]\dW^{0}_{r}.
\end{align*}
where we used the independence of $B$ and $W^0$ and a Fubini-type argument. By \eqref{eqn:tested_conditional_law},
\begin{align*}
  \langle \Law(Y(t)|\mathcal{W}^{0}_{t}),\varphi\rangle&=\langle \Law(Y(0)|\mathcal{W}^{0}_{0}),\varphi\rangle+\E\int_{0}^{t}\langle \Law(Y(r)|\mathcal{W}^{0}_{r}),\partial_{x}\varphi (\cdot)\langle\rho_{r},\idx\rangle\rangle\dr\\
    &\phantom{xx}+\frac{1}{2}\int_{0}^{t}\alpha\langle \Law(Y(r)|\mathcal{W}^{0}_{r}),(\cdot)\partial_{x}^{2}\varphi (\cdot)\langle\rho_{r},\idx\rangle\rangle +\langle \Law(Y(r)|\mathcal{W}^{0}_{r}),(\cdot)^{2}\partial_{x}^{2}\varphi (\cdot)\rangle\dr\\
    &\phantom{xx}{}+\int_{0}^{t}\langle \Law(Y(r)|\mathcal{W}^{0}_{r}),(\cdot)\partial_{x}\varphi (\cdot)\rangle\dW^{0}_{r}.
\end{align*}
and we see that for $\varphi\in \mathcal{S}$, this coincides with equation \eqref{eqn:SPDE_classical_formulation}. Hence by the uniqueness of  \eqref{eqn:SPDE_classical_formulation}, its solution coincides with $\Law(Y(t)|\mathcal{W}^{0}_{t})$ and we conclude that $\rho_{t}=\Law(Y(t)|\mathcal{W}^{0}_{t})$.

The polynomial property as stated in \ref{prop:polynomialprop}, follows simply from the fact that $(Y(t), \langle \rho_t, \idx \rangle)$ satisfies \eqref{eqn:MKV_proof_MKV_equation_rewritten_as_system}
-\eqref{eqn:condexp}. Indeed, the drift characteristic being linear and the diffusion characteristic quadratic in the state variables yields the assertion (see \cite{cuchiero2012polynomial,
filipovic_larsson_16_polynomial_diffusion}).
To show the uniqueness of the weak solution of \eqref{eqn:MKV_proof_MKV_equation_rewritten_as_system}-\eqref{eqn:condexp}, note first that \eqref{eqn:condexp} admits a unique strong solution adapted to $\mathcal{W}^0$.
To verify the uniqueness in law of the two-dimensional system,
we use \cite[Theorem 4.4.2 a), p.184]{ethier_kurtz_09_markov}. Hence we have to verify
that all one-dimensional time-marginals of two solutions $(Y,S)$ and $(\widetilde{Y},S)$  coincide. This amounts to check that $\E[\varphi(Y(t),S(t)))]-\widetilde{\E}[\varphi(\widetilde{Y}(t), S(t))]=0$ 
for bounded continuous functions on $\mathbb{R}^2_{+}$. It is however sufficient to consider only the dense set of functions
\[
\{ (y,s)\mapsto \phi(y)\psi(s) \,|\, \phi \in \mathcal{S}_0 \cup 1, \psi \in C_b(\mathbb{R}_+)\}.
\]
Note that we need to add the constant function $1$ to $\mathcal{S}_{0}$ to include functions with non-zero values at zero.
We now use the law of total expectation, the fact that $S$ is adapted to $\mathcal{W}^0$ and \ref{prop:MKV_existence_representation_stoachstic_representation_connection_to_SPDE} to conclude that
\begin{align*}
    \left|\E[\phi(Y(t))\psi(S(t))]-\widetilde{\E}[\phi(\widetilde{Y}(t))\psi(S(t))]\right|&=\left|\E\left[\psi(S(t))\E\left[\phi(Y(t))\big|\mathcal{W}^{0}_{t}\right]\right]-\widetilde{\E}\left[\psi(S(t))\widetilde{\E}\left[\phi(\widetilde{Y}(t))\big|\widetilde{\mathcal{W}}^{0}_{t}\right]\right]\right|\\&=\left|\E\left[\psi(S(t))\langle  \rho_t, \varphi\rangle\right]-\widetilde{\E}\left[\psi(S(t))\langle  \widetilde{\rho}_t, \varphi\rangle\right]\right|
    = 0,
\end{align*}
where the last equality follows from the fact that the expectation of $\langle  \rho_t, \varphi\rangle$ and $\langle  \widetilde{\rho}_t, \varphi\rangle$
coincides by pathwise uniqueness of \eqref{eqn:SPDE_classical_formulation}. 

\end{proof}

\begin{remark}
Note that uniqueness in law of $(Y, \mathbb{E}[Y|\mathcal{W}^0])$ could also be obtained by adapting the arguments of \cite[Theorem 4.4]{filipovic_larsson_16_polynomial_diffusion} from the Lipschitz assumption on the volatility of $Y$ to the square root case.
\end{remark}

\subsection{Density with respect to the Lebesgue measure/ Theorem \ref{thm:collected_results_McKean-Vlasov_SDE_introduction} \ref{prop:P_3:McKean-Vlasov_SDE_regularity_of_density_introduction}}\label{sub_sec:Regularity_of_density}

In this section, we want to verify that, on $\R\backslash \{0\}$ the law of $Y$ has a density with respect to the Lebesgue measure. We follow the ideas presented in \cite{Romito_18_density_SDE} and introduce the notation
\begin{align*}
    &\left(\Delta_{h}^{1} f\right)(x)=f(x+h)-f(x),\\
    &\left(\Delta_{h}^{n} f\right)(x)=\Delta_{h}^{1}\left(\Delta_{h}^{n-1} f\right)(x)=\sum_{j=0}^{n}(-1)^{n-j}\left(\begin{array}{l}
        n \\
        j
        \end{array}\right) f(x+j h).
\end{align*}
For $s>0,1 \leq p \leq \infty, 1 \leq q \leq \infty$, the sums
\begin{align*}
    \|f\|_{L^{p}}+[f]_{B_{p, q}^{s}}
\end{align*}
are equivalent norms on the Besov spaces $B_{p, q}^{s}\left(\mathbb{R}\right)=B_{p, q}^{s}$ for the given range of parameters. Here we have set
\begin{align*}
    [f]_{B^{s}_{p, q}}:=\left\|h \mapsto \frac{\left\|\Delta_{h}^{m} f\right\|_{L^{p}}}{|h|^{s}}\right\|_{L^{q}\left(B_{1}(0) ; \frac{d h}{| h|^{d}}\right)}.
\end{align*}
where $m$ is any integer such that $s<m$, and $B_{1}(0)$ is the unit ball in $\mathbb{R}$. For $f\in C^{m}(\R)$, we will also make use of the inequality
\begin{align}\label{eqn:finite_difference_inequality}
    \|\Delta^{m}_{h}f\|_{L^{1}(\R)}\leq C_{m}|h|^{m}\|\partial_{x}^{m}f\|_{L^{1}(\R)},
\end{align}
which can be shown recursively by considering $\left(\Delta_{h}^{n} f\right)(x)=\Delta_{h}^{1}\left(\Delta_{h}^{n-1} f\right)(x)$ 
 and the linearity of the differential operator $\partial_{x}$.
\begin{proposition}\label{prop:regularity_density_Y}
Let $Y$ be the solution of the McKean-Vlasov SDE  \eqref{eqn:MKV_SDE_introduction}, with coefficients $\alpha,\beta\geq 0$ and $\mu_{Y}$ denote its law. Let $\frac{1}{3}> \delta >0$, $m=\lceil \frac{3\left(1-\delta\right)^{2}}{2\delta}\rceil$ and define $\eta(x)\coloneqq \min\{1,x\}^{m}$. Then, for every $t\in (0,T]$, the measure $\eta(x)\mu_{Y(t)}(\dx)$ is absolutely continuous with respect to the Lebesgue measure on $[0,\infty)$ and its density lies in the Besov space $B^{1/2-\widetilde{\eps}}_{1,\infty}$, for any arbitrarily small but fixed $\widetilde{\eps}=\frac{3\delta}{2}>0$.
\end{proposition}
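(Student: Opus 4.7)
By the dual characterization of the Besov space $B^{1/2-\widetilde{\eps}}_{1,\infty}$, the claim is equivalent to establishing the finite-difference bound
\[
\bigl|\E[\eta(Y(t))\,\Delta_h^{n}\psi(Y(t))]\bigr| \;\lesssim_t\; |h|^{1/2-\widetilde{\eps}}\,\|\psi\|_{\infty}
\]
for every bounded measurable $\psi$, every $|h|<1$, and any integer $n>1/2-\widetilde{\eps}$. Following the freezing/It\^o strategy of \cite{Romito_18_density_SDE}, I would fix a short time-scale $s\in(0,t)$, mollify $\psi$ at scale $\varepsilon>0$ into $\psi_{\varepsilon}=\psi*\varrho_{\varepsilon}$, and expand $\E[\eta(Y(t))\,\Delta_h^{n}\psi_{\varepsilon}(Y(t))]$ by It\^o's formula on $[t-s,t]$, applied to the SDE \eqref{eqn:MKV_SDE_introduction}. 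The crucial simplification is that, by Lemma~\ref{lem:convergence_of_mean_rho_N} and Proposition~\ref{prop:McKeanE&U}, $\E[Y(r)|\mathcal{W}^{0}]=m_{\lambda}\exp((\beta-1/2)r+W^{0}_{r})$ is an explicit $\mathcal{W}^0$-measurable geometric Brownian motion; conditioning on $\mathcal{W}^0$ turns the McKean--Vlasov equation into a classical SDE driven by $B$ with $\mathcal{W}^{0}$-random, but explicit, coefficients, so the whole analysis can be carried out path by path in $\omega\in \Omega^{W^0}$.

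Expanding the It\^o differential of $\eta(Y(r))\,\Delta_h^{n}\psi_{\varepsilon}(Y(r))$ yields a boundary term at $r=t-s$ together with drift/diffusion terms in which up to second-order derivatives of $\Delta_h^{n}\psi_{\varepsilon}$ are paired against the polynomial coefficients of \eqref{eqn:MKV_SDE_introduction}. I would control these using the three complementary facts: (i) $\|\Delta_h^{n}\psi_{\varepsilon}\|_{\infty}\lesssim\|\psi\|_{\infty}$, (ii) the finite-difference inequality \eqref{eqn:finite_difference_inequality} combined with the mollifier bound $\|\partial_x^{k}\psi_{\varepsilon}\|_{\infty}\lesssim \varepsilon^{-k}\|\psi\|_\infty$, giving $\|\partial_x^{k}\Delta_h^{n}\psi_{\varepsilon}\|_{L^{1}}\lesssim |h|^{n}\varepsilon^{k-n}\|\psi\|_\infty$, and (iii) the fact that $\eta(x)=\min\{1,x\}^{m}$ absorbs the factors of $Y$ and $\sqrt{Y}$ that the degenerate diffusion coefficients produce near the boundary $Y=0$. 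The expectations of the resulting integrals are estimated using the moment bounds on $Y$ available from its polynomial structure (Proposition~\ref{prop:McKeanE&U}, Lemma~\ref{lem:EsupXp_EXp_estimates}), and the boundary term at $r=t-s$ is handled by an induction/iteration in $s\downarrow 0$. The exponents $s$, $\varepsilon$, $|h|$ are then balanced so that each of the contributions is $O(|h|^{1/2-\widetilde{\eps}})$; the prescribed value $m=\lceil 3(1-\delta)^{2}/(2\delta)\rceil$ with $\widetilde{\eps}=3\delta/2$ is exactly the smallest power of $\eta$ for which this joint optimization succeeds.

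\textbf{Main obstacle.} The central difficulty is the genuine degeneracy of the SDE~\eqref{eqn:MKV_SDE_introduction} at the origin: both the idiosyncratic coefficient $\sqrt{\alpha Y\,\E[Y|\mathcal{W}^{0}]}$ and the common-noise coefficient $Y$ vanish at $Y=0$, so no uniform ellipticity is available, classical H\"ormander or Malliavin-type density arguments do not apply, and indeed the law $\mu_{Y(t)}$ has no reason to admit a Lebesgue density at $0$. The weight $\eta$ must be chosen heavy enough to swallow every vanishing factor of $Y$ produced by the coefficients (and by the squared diffusion in the second-derivative term of It\^o's formula), and at the same time light enough to preserve the dominant $|h|^{1/2}$ scaling of the Brownian regularisation coming from $B$; tracking this quantitatively is what forces the specific condition on $m$. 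A secondary technical point will be the localization required to kill the stochastic integrals in the It\^o expansion, handled in the usual way by a sequence of stopping times adapted to $Y$ staying bounded away from $\infty$.
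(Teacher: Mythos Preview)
Your plan has a genuine gap: what you describe is \emph{mollification plus an It\^o expansion on $[t-s,t]$}, but this is \emph{not} the freezing argument of \cite{Romito_18_density_SDE} that the paper actually uses, and the estimates you list do not close.

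First, the bound $\|\partial_x^{k}\Delta_h^{n}\psi_{\varepsilon}\|_{L^{1}}\lesssim |h|^{n}\varepsilon^{k-n}$ is miswritten (the mollifier loses $\varepsilon^{-(k+n)}$, not $\varepsilon^{k-n}$), and more importantly an $L^{1}$ bound on the test function is useless inside the expectation $\E[\,\cdot\,\partial_x^{k}\Delta_h^{n}\psi_{\varepsilon}(Y(r))]$ unless you already know $Y(r)$ has a bounded density, which is precisely what is to be proved. With only $L^{\infty}$ bounds, the It\^o drift terms contribute $s\cdot |h|^{n}\varepsilon^{-(n+2)}$, and the ``boundary term at $r=t-s$'' is just the same quantity you started from at an earlier time; iterating in $s$ gains nothing, so the exponents cannot be balanced to reach $|h|^{1/2-\widetilde{\eps}}$.

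The paper's argument is structurally different. One \emph{freezes} the coefficients at $t-\eps$ to define the approximation
\[
Y^{\eps}(t)=Y(t-\eps)+\eps\,\beta\,\E[Y(t-\eps)|\mathcal{W}^{0}_{t-\eps}]+\sqrt{\alpha Y(t-\eps)\E[Y(t-\eps)|\mathcal{W}^{0}_{t-\eps}]}\,(B_t-B_{t-\eps})+Y(t-\eps)(W^{0}_t-W^{0}_{t-\eps}),
\]
which, conditionally on $\mathcal{F}_{t-\eps}$, is \emph{exactly Gaussian} with variance $\eps\big(\alpha Y(t-\eps)\E[Y(t-\eps)|\mathcal{W}^{0}_{t-\eps}]+Y(t-\eps)^{2}\big)$. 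One then splits
\[
\E[\eta(Y(t))\Delta_h^{m}\varphi(Y(t))] = I+II+III,
\]
where $I$ swaps $\eta(Y(t))$ for $\eta(Y(t-\eps))$ (cost $|h|^{a}\eps^{1/2}$ by Lipschitzness of $\eta$ and $\E|Y(t)-Y(t-\eps)|\lesssim\eps^{1/2}$), $II$ swaps $Y(t)$ for $Y^{\eps}(t)$ (cost $\eps^{3a/4}$ from the freezing error $\E|Y(t)-Y^{\eps}(t)|^{2}\lesssim\eps^{3/2}$), and $III=\E[\eta(Y(t-\eps))\Delta_h^{m}\varphi(Y^{\eps}(t))]$ is computed \emph{explicitly} by integrating $\Delta_h^{m}\varphi$ against the conditional Gaussian density and shifting the finite differences onto that density. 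The $m$-th derivative of the Gaussian kernel produces $\operatorname{Var}^{-m/2}\sim \eps^{-m/2}Y(t-\eps)^{-m}$ near the origin, and it is \emph{this} blow-up---not the vanishing of the SDE coefficients---that $\eta(Y(t-\eps))=\min\{1,Y(t-\eps)\}^{m}$ is designed to cancel, yielding $III\lesssim |h|^{m}\eps^{-m/2}$. Optimising $\eps=|h|^{2-2\delta}$, $a=1-\delta$, $m=\lceil 3(1-\delta)^{2}/(2\delta)\rceil$ then gives the claimed exponent via \cite[Lemma~A1]{Romito_18_density_SDE}. The conditional Gaussianity of $Y^{\eps}$ is the missing key step; without it there is no source for the $\eps^{-m/2}$ factor and the balance cannot be achieved.
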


\begin{remark}
We note that the measure $\eta(x)\mu_{Y(t)}(\dx)$ is equivalent to $\mu_{Y(t)}(\dx)$ on $\R_{>0}$, which is the interior of the state space of $Y(t)$. Hence we can conclude that the law of $Y(t)$ is absolutely continuous with respect to the Lebesgue measure on $\{y\colon y\neq 0\}$.
In analogy to squared Bessel processes (see e.g., \cite[Chapter 11]{revuz2013continuous}) one could expect that the density of the law of $Y$ might be even absolutely continuous on $[0,\infty)$ if $\beta >0$ or at least if $\frac{\alpha}{2}\leq\beta$ (as in the latter case  $\{0\}$ is polar).
Our methodology however does not fully capture the regularity at $0$, since the approximation used in the proof does not preserve the behavior of the original equation close to $0$. 
It seems that, locally away from $0$, we capture the regularity of the density well, since the local approach from \cite[Section 4]{Romito_18_density_SDE} applied to our setting would yield local $B^{1/2-\eps}_{1,\infty}$ regularity (away from $0$), for any $\eps>0$.
\end{remark}

\begin{proof}

Let $0<t\leq T$ and $a\in (0,1)$. In order to prove the result, let $\eps\in (0,1)$ and define 

\begin{align}\label{eqn:Y_approximation}
        Y^{\eps}(r)\coloneqq
        &Y(t-\eps)+\int_{t-\eps}^{t}\beta \E\left[Y(t-\eps)\big|\mathcal{W}^{0}_{t-\eps}\right]\ds+\int_{t-\eps}^{t}\sqrt{\alpha}\sqrt{Y(t-\eps)\E\left[Y(t-\eps)\big|\mathcal{W}^{0}_{t-\eps}\right]}\dB_{s}+\int_{t-\eps}^{t}Y(t-\eps)\dW^{0}_{s},
\end{align}
for $r\geq t-\eps$ and $Y^{\eps}(r)\coloneqq Y(r)$, for $r\leq t-\eps$. Our goal is to show that there exists a constant $C_{T}>0$, such that for all $\varphi\in C^{a}_{b}(\R)$ ($a-$H{\"o}lder continuous and bounded) and $h\in \overline{B_{1}(0)}$, one has
\begin{align*}
    \left|\E\left[\eta(Y(t))\Delta_{h}^{m}\varphi(Y(t))\right]\right|\leq C_{T}\left(|h|^{a}\eps^{1/2}+\eps^{3 a/4}+|h|^{m}\eps^{-m/2}\right),
\end{align*}
for $\eps\in (0,1\wedge t)$. We first expand $\left|\E\left[\eta(Y(t))\Delta_{h}\varphi(Y(t))\right]\right|$ with $\pm \eta(Y(t-\eps))\Delta_{h}^{m}\varphi(Y(t))\pm \eta(Y(t-\eps))\Delta_{h}^{m}\varphi(Y^{\eps}(t))$ and split it into three components,
\begin{align}\label{eqn:Y_density_split_terms}
    \left|\E\left[\eta(Y(t))\Delta_{h}^{m}\varphi(Y(t))\right]\right|&=\left|\E\left[\left|\eta(Y(t))-\eta(Y(t-\eps))\right|\left|\Delta_{h}^{m}\varphi(Y(t))\right|\right]\right|\\
    &\phantom{xx}{}+\left|\E\left[\eta(Y(t-\eps))\left(\Delta_{h}^{m}\varphi(Y(t))-\Delta_{h}^{m}\varphi(Y^{\eps}(t))\right)\right]\right|\nonumber\\ &\phantom{xx}{}+\left|\E\left[\eta(Y(t-\eps))\Delta_{h}^{m}\varphi(Y^{\eps}(t))\right]\right|\nonumber\\
    &=I+II+III.
\end{align}

Using $|\eta(x)-\eta(y)|\leq C_{m} (|x-y|\wedge 1)$, we immediately obtain an estimate for $I$ by
\begin{align}\label{eqn:Y_density_regularity_t1}
    \left|\E\left[\left|\eta(Y(t))-\eta(Y(t-\eps))\right|\left|\Delta_{h}^{m}\varphi(Y(t))\right|\right]\right|&\leq C_{m}\|\varphi\|_{C^{a}}|h|^{a}\E\left|Y(t)-Y(t-\eps)\right|\leq  C\|\varphi\|_{C^{a}}|h|^{a}\eps^{1/2}.
\end{align}
The last inequality follows from H{\"o}lder's inequality and Lemma \ref{lem:estimates_Y} wit $p=2$. \newline
The second and the third term in \eqref{eqn:Y_density_split_terms} can be estimated similarly:
To bound $II$, recall that $|\eta(x)|\leq 1$. Since 
\begin{align*}
      &\left|\E\left[\eta(Y(t-\eps))\left(\Delta_{h}^{m}\varphi(Y(t))-\Delta_{h}\varphi(Y^{\eps}(t))\right)\right]\right|\leq \E\left[\left|\Delta_{h}^{m}\varphi(Y(t))-\Delta_{h}^{m}\varphi(Y^{\eps}(t))\right|\right]\\
      &\leq  \|\varphi\|_{C^{a}}\E\left[\left|Y(t)-Y^{\eps}(t)\right|^{a}\right]\leq \|\varphi\|_{C^{a}}\E\left[\left|Y(t)-Y^{\eps}(t)\right|^{2}\right]^{a/2},
\end{align*}
it suffices to estimate $\E\left[\left|Y(t)-Y^{\eps}(t)\right|^{2}\right]$. This can be done by Lemma \ref{lem:Y_density_error_estimate} wit $p=2$, which yields
\begin{align}\label{eqn:Y_density_regularity_t2}
     II=\left|\E\left[\eta(Y(t-\eps))\left(\Delta_{h}^{m}\varphi(Y(t))-\Delta_{h}^{m}\varphi(Y^{\eps}(t))\right)\right]\right|\leq C\|\varphi\|_{C^{a}}\eps^{3a/4}.
\end{align}
The third and final term in \eqref{eqn:Y_density_split_terms} can be estimated by Lemma \ref{lem:regularity_density_Y_eps}. 

Combining \eqref{eqn:Y_density_regularity_t1}, \eqref{eqn:Y_density_regularity_t2} and \eqref{eqn:Y_density_regularity_t3}, we conclude that
\begin{align*}
    \left|\E\left[\eta(Y(t))\Delta_{h}^{m}\varphi(Y(t))\right]\right|\leq C_{T}\left(|h|^{a}\eps^{1/2}+\eps^{3a/4}+|h|^{m}\eps^{-m/2}\right).
\end{align*}
Since $\eps$ was arbitrary, we make the ansatz $\eps=h^{\theta}$,
\begin{align*}
    \left|\E\left[\eta(Y(t))\Delta_{h}^{m}\varphi(Y(t))\right]\right|\leq C_{T}\left(|h|^{a+\frac{\theta}{2}}+|h|^{\frac{3a\theta}{4}}+|h|^{m\left(1-\frac{\theta}{2}\right)}\right).
\end{align*}
We want to choose $a, \theta, m$ such that the conditions of  \cite[Lemma A1]{Romito_18_density_SDE} are satisfied, which yields $\frac{4}{3}<\theta<2$ with free choice of $a$ and sufficiently large $m$. 
Choosing $\theta=2-2\delta$ for any fixed, but arbitrarily small $\delta>0$, yields
\begin{align*}
    \left|\E\left[\eta(Y(t))\Delta_{h}^{m}\varphi(Y(t))\right]\right|&\leq C_{T}\left(|h|^{a+1-\delta}+|h|^{a+\frac{a}{2}-\frac{3a\delta}{2}}+|h|^{m\delta}\right).
\end{align*}
 Ba choosing $m=\lceil \frac{3a(1-\delta)}{2\delta}\rceil$, we obtain
\begin{align*}
    \left|\E\left[\eta(Y(t))\Delta_{h}^{m}\varphi(Y(t))\right]\right|&\leq C_{T} |h|^{a+\frac{a}{2}-\delta}.
\end{align*}
We apply \cite[Lemma A1]{Romito_18_density_SDE} (see also \cite{Debussche_Fournier_13_densities_SDEs,Fournier_10_absolute_continuity_one_dimension}), with $s=a+\frac{a}{2}-\delta$, implying that $\eta(x)\mu_{Y(t)}(\dx)$ has a $B^{\frac{a}{2}-\delta}_{1,\infty}$ density with respect to the Lebesgue measure. Setting $a=1-\delta$, yields that $\frac{a}{2}-\delta=\frac{1}{2}-\frac{3\delta}{2}$ and it follows that $\eta(x)\mu_{Y(t)}(\dx)$ has a density with respect to the Lebesgue measure, which lies in $B^{1/2-\widetilde{\eps}}_{1,\infty}$, for $\widetilde{\eps}=\frac{3\delta}{2}$.
\end{proof}
\begin{lemma}\label{lem:regularity_density_Y_eps}
    Let the assumptions of Proposition \ref{prop:regularity_density_Y} be satisfied and $Y^{\eps}$ be given by \eqref{eqn:Y_approximation}. Let $a\in (0,1)$, $\varphi\in C^{a}(\R)$ and $m$ as in Proposition \ref{prop:regularity_density_Y}, then
\begin{align}\label{eqn:Y_density_regularity_t3}
     \left|\E\left[\eta(Y(t-\eps))\Delta_{h}^{m}\varphi(Y^{\eps}(t))\right]\right|\leq C\frac{|h|^{m}}{\eps^{m/2}}.
\end{align}
\end{lemma}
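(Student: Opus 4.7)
The plan is to exploit the fact that the approximation $Y^{\eps}(t)$ has frozen coefficients: conditional on $\mathcal{F}_{t-\eps}$, the integrals in \eqref{eqn:Y_approximation} are linear combinations of the Gaussian increments $B_{t}-B_{t-\eps}$ and $W^{0}_{t}-W^{0}_{t-\eps}$ with $\mathcal{F}_{t-\eps}$-measurable coefficients. Since $B$ and $W^{0}$ are independent and their increments on $[t-\eps,t]$ are independent of $\mathcal{F}_{t-\eps}$, the random variable $Y^{\eps}(t)$ is, conditional on $\mathcal{F}_{t-\eps}$, Gaussian with mean $\mu = Y(t-\eps) + \beta\,\E[Y(t-\eps)|\mathcal{W}^{0}_{t-\eps}]\,\eps$ and variance
\begin{align*}
\sigma^{2} = \alpha\, Y(t-\eps)\,\E[Y(t-\eps)|\mathcal{W}^{0}_{t-\eps}]\,\eps + Y(t-\eps)^{2}\,\eps \,\geq\, Y(t-\eps)^{2}\,\eps,
\end{align*}
where the crucial lower bound comes entirely from the common-noise term. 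Denote the corresponding conditional Gaussian density by $p$.

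The next step is to shift the finite differences from $\varphi$ onto the smooth density $p$ via a discrete integration-by-parts: for any bounded measurable $\varphi$,
\begin{align*}
\E\bigl[\Delta_{h}^{m} \varphi(Y^{\eps}(t))\big|\mathcal{F}_{t-\eps}\bigr] = \int_{\R} \Delta_{h}^{m} \varphi(y)\, p(y)\,\dy = \int_{\R} \varphi(y)\, \Delta_{-h}^{m} p(y)\,\dy,
\end{align*}
so that the absolute value is bounded by $\|\varphi\|_{\infty}\,\|\Delta_{-h}^{m} p\|_{L^{1}}$. Applying \eqref{eqn:finite_difference_inequality} to $p \in C^{\infty}$ gives $\|\Delta_{-h}^{m} p\|_{L^{1}} \leq C_{m} |h|^{m} \|\partial_{x}^{m} p\|_{L^{1}}$, and the standard Gaussian estimate (by rescaling to the standard normal) yields $\|\partial_{x}^{m} p\|_{L^{1}} \leq C_{m}\sigma^{-m}$. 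Combining with the lower bound on $\sigma$ produces
\begin{align*}
\bigl|\E\bigl[\Delta_{h}^{m} \varphi(Y^{\eps}(t))\big|\mathcal{F}_{t-\eps}\bigr]\bigr| \,\leq\, C_{m} \|\varphi\|_{\infty}\, \frac{|h|^{m}}{\sigma^{m}} \,\leq\, C_{m} \|\varphi\|_{\infty}\, \frac{|h|^{m}}{Y(t-\eps)^{m}\, \eps^{m/2}}.
\end{align*}

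The obstacle in this bound is the potential blow-up as $Y(t-\eps)\downarrow 0$ (the density flattens when the diffusion coefficient degenerates), and this is precisely what the cut-off $\eta(x) = \min\{1,x\}^{m}$ is designed to neutralize: the elementary inequality $\eta(Y(t-\eps))/Y(t-\eps)^{m} = (\min\{1,Y(t-\eps)\}/Y(t-\eps))^{m} \leq 1$ exactly cancels the dangerous denominator. Thus, by the tower property and non-negativity of $\eta$,
\begin{align*}
\bigl|\E\bigl[\eta(Y(t-\eps))\Delta_{h}^{m} \varphi(Y^{\eps}(t))\bigr]\bigr| \leq \E\Bigl[\eta(Y(t-\eps))\, \bigl|\E[\Delta_{h}^{m} \varphi(Y^{\eps}(t))|\mathcal{F}_{t-\eps}]\bigr|\Bigr] \leq C_{m}\, \|\varphi\|_{\infty}\, \frac{|h|^{m}}{\eps^{m/2}},
\end{align*}
which establishes \eqref{eqn:Y_density_regularity_t3}, with the implicit constant absorbing $\|\varphi\|_{\infty}\leq \|\varphi\|_{C^{a}}$. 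Note that only boundedness of $\varphi$ is used here; the H\"older regularity is only needed for the first two terms of \eqref{eqn:Y_density_split_terms}.
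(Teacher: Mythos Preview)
Your proof is correct and follows essentially the same approach as the paper: condition on $\mathcal{F}_{t-\eps}$, use the conditional Gaussianity of $Y^{\eps}(t)$, shift the finite differences from $\varphi$ onto the Gaussian density via the discrete integration-by-parts, invoke \eqref{eqn:finite_difference_inequality}, and finally cancel the singularity $\sigma^{-m}\sim Y(t-\eps)^{-m}\eps^{-m/2}$ with the weight $\eta(Y(t-\eps))=\min\{1,Y(t-\eps)\}^{m}$. The only cosmetic difference is that the paper computes $\|\partial_{x}^{m}p\|_{L^{1}}$ explicitly via Rodrigues' formula and Hermite polynomials, whereas you obtain the same bound by rescaling to the standard normal; both yield $C_{m}\sigma^{-m}$.
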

\begin{proof}
We will use the following representation of \eqref{eqn:Y_density_regularity_t3}:
\begin{align}\label{eqn:lem_density_Y_third_term}
    &\left|\E\left[\eta(Y(t-\eps))\Delta_{h}^{m}\varphi(Y^{\eps}(t))\right]\right|=\left|\E\left[\E\left[\eta(Y(t-\eps))\Delta_{h}^{m}\varphi(Y^{\eps}(t))\big|\mathcal{F}_{t-\eps}\right]\right]\right|\nonumber\\
    &=\bigg|\E\bigg[\E\bigg[\eta(Y(t-\eps))\Delta_{h}^{m}\varphi\bigg(Y(t-\eps)+\int_{t-\eps}^{t}\beta \E\left[Y(t-\eps)\big|\mathcal{W}^{0}_{t-\eps}\right]\ds\\
    &\phantom{xxxx}{}+\int_{t-\eps}^{t}\sqrt{\alpha Y(t-\eps)\E\left[Y(t-\eps)\big|\mathcal{W}^{0}_{t-\eps}\right]}\dB_{s}+\int_{t-\eps}^{t}Y(t-\eps)\dW^{0}_{t-\eps}\bigg)\big|\mathcal{F}_{t-\eps}\bigg]\bigg]\bigg|,\nonumber
\end{align}
where $\mathcal{F}_{t-\eps}$ is the filtration generated by $B, W^{0}$. The key observation is that the first two terms inside the innermost brackets are $\mathcal{F}_{t-\eps}$ measurable, while the two stochastic integrals are Gaussian random variables independent of  $\mathcal{F}_{t-\eps}$. Let
\begin{align*}
    &\sigma(Y(t-\eps))=\sqrt{\alpha Y(t-\eps)\E\left[Y(t-\eps)\big|\mathcal{W}^{0}_{t-\eps}\right]},\\
    &\sigma_{0}(Y(t-\eps))=Y(t-\eps).
\end{align*}
Defining 
\begin{align*}
    g(y_{1},y_{2})\coloneqq \E\left[\eta(y_{1})\Delta_{h}^{m}\varphi\left(y_{2} +\int_{t-\eps}^{t}\sigma(Y(t-\eps))\dB_{s}+\int_{t-\eps}^{t}\sigma_{0}(Y(t-\eps))\dW^{0}_{s}\right)|\mathcal{F}_{t-\eps}\right],
\end{align*}
\eqref{eqn:lem_density_Y_third_term} can be rephrased as
\begin{align*}
    \E\left[\eta(Y(t-\eps))\Delta_{h}^{m}\varphi(Y^{\eps}(t))\right]&=\E g\left(Y(t-\eps),Y(t-\eps)+\eps \beta \E\left[Y_{t-\eps}\big|\mathcal{W}^{0}_{t-\eps}\right] \right).
\end{align*}
 Let $I_{\eps,t}\coloneqq \int_{t-\eps}^{t}\sigma(Y(t-\eps))\dB+\int_{t-\eps}^{t}\sigma_{0}(Y(t-\eps))\dW^{0}$. $\operatorname{Var}(I_{\eps,t})$ i.e. the variance of the Gaussian terms is given by
\begin{align*}
    \operatorname{Var}(I_{\eps,t})=\eps \sigma(Y(t-\eps))^{2}+\eps \sigma_{0}(Y(t-\eps))^{2}=\eps \left(\alpha Y(t-\eps)\E\left[Y(t-\eps)\big|\mathcal{W}^{0}_{t-\eps}\right]+Y(t-\eps)^{2} \right).
\end{align*}
To bound \eqref{eqn:lem_density_Y_third_term} further, let us define $f_{\eps,t}(x)\coloneqq \frac{1}{\sqrt{2\pi \operatorname{Var}(I_{\eps,t})}}e^{-\frac{|x|^{2}}{2\operatorname{Var}(I_{\eps,t})}}$. 
\begin{align*}
    &\left|\eta(y_{1})\int_{\R}\Delta_{h}^{m}\varphi(y_{2}+x)\frac{1}{\sqrt{2\pi \operatorname{Var}(I_{\eps,t})}}e^{-\frac{|x|^{2}}{2\operatorname{Var}(I_{\eps,t})}}\dx\right|=\left|\int_{\R}\eta(y_{1})\Delta_{h}^{m}\varphi(y_{2}+x)f_{\eps,t}(x)\dx\right|\\
    &=\left|\eta(y_{1})\int_{\R}\varphi(y_{2}+x)\Delta_{-h}^{m}f_{\eps,t}(x)\dx\right|\leq \|\varphi\|_{\infty}\|\eta(y_{1})\Delta_{-h}^{m}f_{\eps,t}\|_{L^{1}(\R)}\leq \left|\eta(y_{1})\right|C |h|^{m}\|\partial_{x}^{m}f_{\eps,t}\|_{L^{1}(\R)}.
\end{align*}
The derivative on the right-hand side will involve certain Hermite polynomials. Via Rodrigues' formula the Hermite polynomial $H_{n}$, for each $n\in \N\cup \{0\}$, satisfies
\begin{align*}
    H_n(x) = (-1)^n \, e^{x^2} \, \partial_{x}^{n} e^{-x^2}.
\end{align*}
The change of variables $x \mapsto  \frac{x-\mu}{\sigma\sqrt{2}}$ yields
\begin{align*}
      H_n\left(\frac{x-\mu}{\sigma\sqrt{2}}\right) = (-1)^n \, e^{\frac{1}{2}\frac{(x-\mu)^2}{\sigma^2}} \, 2^{\frac{n}{2}} \, \sigma^n \,  \partial_{x}^{n} e^{-\frac{1}{2}\frac{(x-\mu)^2}{\sigma^2}}  
\end{align*}
and we obtain
\begin{align*}
    \partial_{x}^{n} e^{-\frac{1}{2}\frac{(x-\mu)^2}{\sigma^2}} = (-1)^n \left( \frac{1}{2 \sigma^2} \right)^{\frac{n}{2}} \, e^{-\frac{1}{2}\frac{(x-\mu)^2}{\sigma^2}} \, H_n\left(\frac{x-\mu}{\sigma\sqrt{2}}\right) \; .
\end{align*}
Hence,
\begin{align*}
    \partial_{x}^{m}f_{\eps,t}(x-rh)&=(-1)^{m}\frac{1}{\sqrt{\pi}}\left( \frac{1}{\sqrt{2} \sqrt{\operatorname{Var}(I_{\eps,t})}} \right)^{m} \, e^{-\frac{1}{2}\frac{(x-rh)^2}{\operatorname{Var}(I_{\eps,t})}} \, H_m\left(\frac{x-rh}{\sqrt{\operatorname{Var}(I_{\eps,t})}\sqrt{2}}\right)\\
    &=(-1)^{m}\left( \frac{1}{\sqrt{2} \sqrt{\operatorname{Var}(I_{\eps,t})}} \right)^{m-1} \,  H_m\left(\frac{x-rh}{\sqrt{\operatorname{Var}(I_{\eps,t})}\sqrt{2}}\right)f_{\eps,t}(x-rh)
\end{align*}
 A change of variables yields
\begin{align*}
    \left|\eta(y)\right|C\|\partial_{x}^{m}f_{\eps,t}\|_{L^{1}(\R)}&\leq C\frac{|\eta(y)||h|^{m}}{\operatorname{Var}(I_{\eps,t})^{m/2}}\leq C\frac{|h|^{m}}{\eps^{m/2}}.
\end{align*}    
\end{proof}

\section{Conditional propagation of chaos/ Proof of Proposition \ref{prop:conditional_propagation_of_chaos_introduction}}

This section is dedicated to the proof of Proposition \ref{prop:conditional_propagation_of_chaos_introduction}. We start with the following estimate.

\label{sub_sec:Conditional_propagation_of_chaos}
\begin{lemma}\label{lem:conditional_LLN}
Let Assumptions \ref{A:Assumptions_B} hold, $Y$ denote the solution of \eqref{eqn:MKV_SDE_introduction} and recall that the solution $\rho$ of \eqref{eqn:SPDE_classical_formulation} coincides with the conditional law of $Y$, given $\mathcal{W}^{0}$. Let $Y_{1},\dots,Y_{N}$ be identically distributed and, conditioned on $\mathcal{W}^0$, independent copies of $Y$ obtained by taking $N$ independent Brownian motions in \eqref{eqn:MKV_SDE_introduction}, then
\begin{align*}
    \sqrt{\E\bigg[\bigg|\sum_{i=1}^{N}\frac{1}{N}\varphi(Y_{i}(t))-\langle\rho_{t},\varphi\rangle\bigg|^{2}\bigg|\mathcal{W}^{0}\bigg]}\leq \frac{C_{\varphi}}{\sqrt{N}},
\end{align*}
for almost every $t\in[0,T]$ and every continuous and bounded function $\varphi$. 
\end{lemma}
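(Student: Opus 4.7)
The plan is to exploit the conditional independence of the copies $Y_1,\dots,Y_N$ given $\mathcal{W}^0$, and reduce the claim to a standard variance computation for a conditional law of large numbers.

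First I would recall from Proposition \ref{prop:McKeanE&U} \ref{prop:MKV_existence_representation_stoachstic_representation_connection_to_SPDE} that $\rho_t = \Law(Y(t)\mid\mathcal{W}^0)$, so for any continuous bounded $\varphi$ and each index $i$,
\[
\E[\varphi(Y_i(t))\mid\mathcal{W}^0] = \langle \rho_t,\varphi\rangle \quad \Prob\mbox{-a.s.}
\]
This uses that $Y_1,\dots,Y_N$ are constructed via \eqref{eqn:MKV_SDE_introduction} driven by the same common noise $W^0$ but by independent idiosyncratic Brownian motions $B^1,\dots,B^N$, and that they are identically distributed. Consequently, conditional on $\mathcal{W}^0$, the family $\{Y_i(t)\}_{i=1}^N$ is i.i.d.\ with common (random) law $\rho_t$.

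Next, I would expand the conditional second moment using this conditional independence. Setting $Z_i\coloneqq \varphi(Y_i(t)) - \langle\rho_t,\varphi\rangle$, we have $\E[Z_i\mid\mathcal{W}^0]=0$ and, conditionally on $\mathcal{W}^0$, the $Z_i$'s are independent. Hence
\begin{align*}
\E\!\left[\left|\frac{1}{N}\sum_{i=1}^N \varphi(Y_i(t)) - \langle\rho_t,\varphi\rangle\right|^2 \Bigg|\, \mathcal{W}^0\right]
&= \frac{1}{N^2}\sum_{i,j=1}^N \E[Z_i Z_j\mid\mathcal{W}^0] \\
&= \frac{1}{N^2}\sum_{i=1}^N \E[Z_i^2\mid\mathcal{W}^0]
= \frac{1}{N}\,\mathrm{Var}\bigl(\varphi(Y_1(t))\,\big|\,\mathcal{W}^0\bigr),
\end{align*}
where in the last step we used that the $Y_i$ are identically distributed (even unconditionally, and conditionally given $\mathcal{W}^0$).

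Finally, since $\varphi \in C_b(\R)$, one has the trivial bound
\[
\mathrm{Var}\bigl(\varphi(Y_1(t))\,\big|\,\mathcal{W}^0\bigr) \leq \E[\varphi(Y_1(t))^2\mid\mathcal{W}^0] \leq \|\varphi\|_\infty^2,
\]
so taking square roots yields the claimed estimate with $C_\varphi = \|\varphi\|_\infty$. I do not anticipate any real obstacle: the only conceptual point is the conditional i.i.d.\ structure of the copies $Y_i$, which is immediate from the construction and from the identification $\rho_t = \Law(Y(t)\mid\mathcal{W}^0)$ proved earlier.
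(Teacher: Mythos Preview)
Your proof is correct and follows essentially the same approach as the paper: both exploit the conditional i.i.d.\ structure of the $Y_i$ given $\mathcal{W}^0$ together with the identification $\langle\rho_t,\varphi\rangle=\E[\varphi(Y_1(t))\mid\mathcal{W}^0]$ to reduce the problem to a conditional variance bound. Your use of the centered variables $Z_i$ is slightly more streamlined than the paper's explicit expansion of the square, and yields the marginally better constant $C_\varphi=\|\varphi\|_\infty$ rather than $\sqrt{2}\,\|\varphi\|_\infty$.
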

\begin{proof}
Recall that by Definition \ref{def:strong_solution},  $\rho\in C([0,T],M_{1}(\R_{+}))$ and the pairing with a continuous, bounded function is well defined. We expand $\E\bigg[\bigg|\frac{1}{N}\sum_{i=1}^{N}\varphi(Y_{i}(t))-\langle\rho_{t},\varphi\rangle\bigg|^{2}\bigg|\mathcal{W}^{0}\bigg]$  in the following parts,
\begin{align*}
     \E\bigg[\bigg|\frac{1}{N}\sum_{i=1}^{N}\varphi(Y_{i}(t))-\langle\rho_{t},\varphi\rangle\bigg|^{2}\bigg|\mathcal{W}^{0}\bigg]&= \E\bigg[\bigg(\frac{1}{N}\sum_{i=1}^{N}\varphi(Y_{i}(t))\bigg)^{2}\bigg|\mathcal{W}^{0}\bigg]\\
     &\phantom{xx}{}-2 \E\bigg[\frac{1}{N}\sum_{i=1}^{N}\varphi(Y_{i}(t))\langle\rho_{t},\varphi\rangle\bigg|\mathcal{W}^{0}\bigg]+\E\bigg[\langle\rho_{t},\varphi\rangle^{2}\bigg|\mathcal{W}^{0}\bigg].
\end{align*}
First,
\begin{align*}
    \E\bigg[\bigg(\frac{1}{N}\sum_{i=1}^{N}\varphi(Y_{i}(t))\bigg)^{2}\bigg|\mathcal{W}^{0}\bigg]&=
    \frac{1}{N^{2}}\sum_{i=1}^{N}\E\big[\varphi(Y_{i}(t))^{2}\big|\mathcal{W}^{0}\big]+\frac{1}{N^{2}}\sum_{j\neq i=1}^{N}\E\big[\varphi(Y_{i})\varphi(Y_{j}(t))\big|\mathcal{W}^{0}\big]\\
    &=\frac{1}{N}\E\big[\varphi(Y_{1}(t))^{2}\big|\mathcal{W}^{0}\big]+\frac{N-1}{N}\E\big[\varphi(Y_{1}(t))\varphi(Y_{2}(t))\big|\mathcal{W}^{0}\big].
\end{align*}
Secondly, since $\rho$ is the conditional law of $Y_{1}$, and $\langle\rho_{t},\varphi\rangle=\E\big[\varphi(Y_{1})\big|\mathcal{W}^{0}\big]$ is $\mathcal{W}^{0}$ measurable, we obtain

\begin{align*}
    \mathbb{E}\big[\langle\rho_{t},\varphi\rangle^{2}\big|\mathcal{W}^{0}\big]=\E\big[\big(\E[\varphi(Y_{1}(t))|\mathcal{W}^{0}]\big)^{2}\big|\mathcal{W}^{0}\big]=\big(\E\big[\varphi(Y_{1}(t))\big|\mathcal{W}^{0}\big]\big)^{2},
\end{align*}
\begin{align*}
    \E\bigg[\frac{1}{N}\sum_{i=1}^{N}\varphi(Y_{i}(t))\langle\rho_{t},\varphi\rangle\bigg|\mathcal{W}^{0}\bigg]=\frac{1}{N}\sum_{i=1}^{N}\E\big[\varphi(Y_{i})\big|\mathcal{W}^{0}\big]\E\big[\varphi(Y_{1})\big|\mathcal{W}^{0}\big]=(\E\big[\varphi(Y_{1}(t))\big|\mathcal{W}^{0}\big])^2.
\end{align*}
Hence,
\begin{align*}
     \E\bigg[\bigg|\frac{1}{N}\sum_{i=1}^{N}\varphi(Y_{i}(t))-\langle\rho_{t},\varphi\rangle\bigg|^{2}\bigg|\mathcal{W}^{0}\bigg]&=\frac{1}{N}\E\big[\varphi(Y_{1}(t))^{2}\big|\mathcal{W}^{0}\big]+\frac{N-1}{N}\E\big[\varphi(Y_{1}(t))\varphi(Y_{2}(t))\big|\mathcal{W}^{0}\big]\\
     &\phantom{xx}{}+\big(\E\big[\varphi(Y_{1}(t))\big|\mathcal{W}^{0}\big]\big)^{2}-2(\E\big[\varphi(Y_{1}(t))\big|\mathcal{W}^{0}\big])^2\\
     &\leq \frac{\|\varphi\|_{\infty}^{2}}{N}+\frac{\|\varphi\|_{\infty}^{2}}{N},
\end{align*}
where we used
the conditional independence of $Y_{1}, Y_{2}$ implying that 
$$\E\big[\varphi(Y_{1}(t))\varphi(Y_{2}(t))\big|\mathcal{W}^{0}\big]=(\E\big[\varphi(Y_{1}(t))\big|\mathcal{W}^{0}\big])^2.$$ This yields the assertion.
\end{proof}

We are now ready to show Proposition \ref{prop:conditional_propagation_of_chaos_introduction}. The proof follows roughly the standard argumentation, see e.g. \cite[Lemma 3.6]{Gess_Coghi_19_stochastic_fokker_planck}, with the difference that we are not able to obtain a quantitative convergence rate since the usual Lipschitz conditions (\cite[Assumption 3.5]{Gess_Coghi_19_stochastic_fokker_planck}) are not satisfied. At this point, we need to rely on the convergence result, we obtained via tightness.

\begin{lemma}
Let Assumptions \ref{A:Assumptions_B} hold. The interacting particles $(X_{i}^{(N)})_{i=1,\dots,N}$ are $\rho$ chaotic, conditional to $\mathcal{W}^{0}$, in the sense that for each $k\in \N$ and $\varphi_{1},\dots,\varphi_{k}\in C_{b}(\R)$, $t\in[0,T]$, we have
\begin{align*}
    \lim_{N\rightarrow \infty}\E\bigg|\E\big[\varphi_{1}(X_{1}^{(N)}(t))\cdot\dots\cdot\varphi_{k}(X_{k}^{(N)}(t))\big|\mathcal{W}^{0}\big]-\Pi_{i=1}^{k}\langle \rho_{t},\varphi_{i}\rangle \bigg|=0.
\end{align*}
\end{lemma}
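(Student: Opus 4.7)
The plan is to exploit the classical equivalence between $\rho$--chaoticity of an exchangeable system and convergence of the empirical measure to $\rho$ (see e.g.~\cite[Lemma 3.6]{Gess_Coghi_19_stochastic_fokker_planck}), adapted to the conditional setting, and to feed into it the almost sure convergence $\rho^{N}\to\rho$ obtained in Theorem \ref{thm:introduction_convergence_to_SPDE}. Since the drift and diffusion coefficients of \eqref{eqn:X_Ni_SDE} are symmetric in the particle index and, under Assumption \ref{A:B2_X(0)_iid}, the initial conditions $X_{i}(0)$ are i.i.d., the particles $(X_{1}^{(N)},\dots,X_{N}^{(N)})$ are exchangeable, and remain so after conditioning on $\mathcal{W}^{0}$ (independent of all initial data and idiosyncratic noises).

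The first step will be to replace the joint moment in \eqref{eqn:conditionally_chaotic} by the product of empirical moments. Expanding
\[
\prod_{j=1}^{k}\langle \rho^{N}_{t},\varphi_{j}\rangle=\frac{1}{N^{k}}\sum_{i_{1},\dots,i_{k}=1}^{N}\varphi_{1}(X_{i_{1}}^{(N)}(t))\cdots\varphi_{k}(X_{i_{k}}^{(N)}(t)),
\]
one splits the sum into pairwise distinct multi-indices (there are $N(N-1)\cdots(N-k+1)$ such terms, each with the same conditional expectation $\E[\varphi_{1}(X_{1}^{(N)}(t))\cdots\varphi_{k}(X_{k}^{(N)}(t))\mid\mathcal{W}^{0}]$ by exchangeability) and the ``diagonal'' terms (whose number is bounded by $C_{k}N^{k-1}$, each uniformly bounded by $\prod_{j}\|\varphi_{j}\|_{\infty}$). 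This yields
\[
\E\Bigl[\prod_{j=1}^{k}\langle \rho^{N}_{t},\varphi_{j}\rangle\,\Big|\,\mathcal{W}^{0}\Bigr]=\frac{N(N-1)\cdots(N-k+1)}{N^{k}}\,\E\Bigl[\prod_{j=1}^{k}\varphi_{j}(X_{j}^{(N)}(t))\,\Big|\,\mathcal{W}^{0}\Bigr]+R_{N},
\]
with $|R_{N}|\leq C_{k}\prod_{j}\|\varphi_{j}\|_{\infty}/N$, so the joint moment and the conditional expectation of the product of empirical moments agree up to an $O(1/N)$ deterministic error.

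The second step is to pass to the limit. By Theorem \ref{thm:introduction_convergence_to_SPDE}, $\rho^{N}\to\rho$ almost surely in $C([0,T];M_{1}(\R_{+}))$, so for each fixed $t\in[0,T]$, $\Wd(\rho^{N}_{t},\rho_{t})\to 0$ a.s.; since the $\varphi_{j}\in C_{b}(\R)$ are in particular continuous with at most linear growth, $\langle \rho^{N}_{t},\varphi_{j}\rangle\to\langle \rho_{t},\varphi_{j}\rangle$ a.s. The products are uniformly bounded by $\prod_{j}\|\varphi_{j}\|_{\infty}$, so by dominated convergence
\[
\E\Bigl|\prod_{j=1}^{k}\langle \rho^{N}_{t},\varphi_{j}\rangle-\prod_{j=1}^{k}\langle \rho_{t},\varphi_{j}\rangle\Bigr|\longrightarrow 0.
\]
Noting that $\rho_{t}$ is $\mathcal{W}^{0}$-measurable (Theorem \ref{thm:collected_results_McKean-Vlasov_SDE_introduction}\ref{prop:P_1:stochastic_representation_rho_introduction}) and applying conditional Jensen's inequality gives
\[
\E\Bigl|\E\Bigl[\prod_{j=1}^{k}\langle \rho^{N}_{t},\varphi_{j}\rangle\,\Big|\,\mathcal{W}^{0}\Bigr]-\prod_{j=1}^{k}\langle \rho_{t},\varphi_{j}\rangle\Bigr|\longrightarrow 0.
\]
Combining this with the $O(1/N)$ bound from step one yields the claim \eqref{eqn:conditionally_chaotic}.

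The main point where care is needed is the passage from a priori subsequential convergence of $(\rho^{N})$ to convergence of the full sequence, which is precisely what the pathwise uniqueness (Lemma \ref{prop:uniqueness_SPDE_weighted_space}) combined with the Yamada--Watanabe-type Proposition \ref{prop:Yamada-Watanabe_type_result} provides. Apart from this, the argument is essentially a combinatorial expansion using exchangeability, the boundedness of the test functions $\varphi_{j}$, and the $M_{1}(\R_{+})$-convergence of the empirical measure, so no additional Lipschitz hypothesis on the coefficients is required.
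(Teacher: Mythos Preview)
Your proof is correct and takes a genuinely different route from the paper's. The paper introduces auxiliary conditionally independent McKean--Vlasov copies $Y_{1},\dots,Y_{N}$ of \eqref{eqn:MKV_SDE_introduction}, writes the difference $\E[\varphi_{1}(X_{1}^{(N)})\varphi_{2}(X_{2}^{(N)})\mid\mathcal{W}^{0}]-\langle\rho_{t},\varphi_{1}\rangle\langle\rho_{t},\varphi_{2}\rangle$ as a telescoping sum involving $\varphi_{l}(X_{i}^{(N)})-\varphi_{l}(Y_{i})$, and then averages over distinct index pairs; half of the resulting terms are controlled by the conditional law of large numbers for the $Y_{i}$'s (Lemma~\ref{lem:conditional_LLN}), and the other half by the convergence $\rho^{N}\to\rho$. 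Your argument bypasses the $Y_{i}$'s and Lemma~\ref{lem:conditional_LLN} entirely: you go straight through the classical Sznitman-type expansion of $\prod_{j}\langle\rho^{N}_{t},\varphi_{j}\rangle$, use conditional exchangeability to identify the off-diagonal part with the joint moment up to $O(1/N)$, and then invoke only the convergence $\rho^{N}\to\rho$ together with boundedness of the $\varphi_{j}$.

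This is more economical: it shows that the chaoticity statement is really a corollary of the empirical-measure convergence and exchangeability alone, with no need to construct the limiting particle system on the same probability space. The paper's detour through the $Y_{i}$'s is closer in spirit to a coupling argument and would yield a quantitative rate if one had $L^{1}$-control on $\varphi(X_{i}^{(N)})-\varphi(Y_{i})$ (which, as the paper notes, is unavailable here due to the non-Lipschitz coefficients). One minor remark: your invocation of ``$\rho^{N}\to\rho$ almost surely'' is slightly stronger than what Theorem~\ref{thm:introduction_convergence_to_SPDE} and Proposition~\ref{prop:Yamada-Watanabe_type_result} literally state (subsequential a.s.\ convergence), but since you only need $L^{1}$-convergence of bounded functionals, convergence in probability of the full sequence---which follows from pathwise uniqueness by the usual ``every subsequence has a further subsequence'' argument---suffices, and you correctly flag this as the delicate point.
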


\begin{proof}
Without loss of generality, we can assume that we only have $k=2$ particles. The general case can be obtained inductively, by ``grouping'' the previously studied particles and letting them play the role of $\varphi_{1}(X_{1}^{(N)})$ in the following steps. Let $Y_{i}$ and $Y_{j}$ denote two  solutions of \eqref{eqn:MKV_SDE_introduction} driven by the common noise $W^{0}$ as well as independent Brownian motions $B^{i}$ and $B^{j}$, respectively. We note that, conditioned on $\mathcal{W}^{0}$, the particles $Y_{i}$ and $Y_{j}$ are independent of each other as long as $i\neq j$. This implies that
\begin{align*}
    \E[\varphi_{1}(Y_{i}(t))\varphi_{2}(Y_{i}(t))|\mathcal{W}^{0}]=\langle \varphi_{1},\Law(Y_{i}(t)|\mathcal{W}^{0})\rangle\langle \varphi_{2},\Law(Y_{j}(t)|\mathcal{W}^{0})\rangle=\langle \varphi_{1},\rho_{t}\rangle\langle \varphi_{2},\rho_{t}\rangle,
\end{align*}
$\Prob$--a.s. By the symmetry of the system \eqref{eqn:X_Ni_SDE}, it is immediate that $\forall t\in [0,T]$,
\begin{align*}
    \Law ((X_{i}^{(N)}(t),X_{j}^{(N)}(t))|\mathcal{W}^{0})=\Law ((X_{j}^{(N)}(t),X_{i}^{(N)}(t))|\mathcal{W}^{0}).
\end{align*}
Let $t\in[0,T]$. It follows $\Prob$--a.s. that
\begin{align*}
    \E[&\varphi_{1}(X_{1}^{(N)}(t))\varphi_{2}(X_{2}^{(N)}(t))|\mathcal{W}^{0}]-\langle \varphi_{1},\rho_{t}\rangle\langle \varphi_{2},\rho_{t}\rangle\\
    &=\E\big[\varphi_{1}(X_{1}^{(N)}(t))\varphi_{2}(X_{2}^{(N)}(t)-\varphi_{1}(Y_{1}(t))\varphi_{2}(Y_{2}(t))|\mathcal{W}^{0}\big]\\
    &= \E\big[\varphi_{1}(X_{1}^{(N)}(t))\big(\varphi_{2}(X_{2}^{(N)}(t))-\varphi_{2}(Y_{2}(t))\big)+\varphi_{2}(Y_{2}(t))\big(\varphi_{1}(X_{1}^{(N)}(t))-\varphi_{1}(Y_{1}(t))\big)\big|\mathcal{W}^{0}\big].
\end{align*}
Since the particle pairs $(X_{1}^{(N)}, X_{2}^{(N)})$ and $(Y_{1}, Y_{2})$  can be replaced by any pair $(X_{i}^{(N)}, X_{j}^{(N)})$ and $(Y_{i}, Y_{j})$ with $i\neq j$, respectively, and
\begin{align*}
    \Law((X_{i}^{(N)},Y_{i})|\mathcal{W}^{0})=\Law((X_{j}^{(N)},Y_{j})|\mathcal{W}^{0}),
\end{align*}
we obtain
\begin{align*}
    \E\big[&\varphi_{1}(X_{1}^{(N)}(t))\big(\varphi_{2}(X_{2}^{(N)}(t))-\varphi_{2}(Y_{2}(t))\big)+\varphi_{2}(Y_{2}(t))\big(\varphi_{1}(X_{1}^{(N)}(t))-\varphi_{1}(Y_{1}(t))\big)\big|\mathcal{W}^{0}\big]\\
    &=\E\bigg[\frac{1}{N(N-1)}\sum_{i\neq j=1}^{N}\varphi_{1}(X_{i}^{(N)}(t))\big(\varphi_{2}(X_{j}^{(N)}(t))-\varphi_{2}(Y_{j}(t))\big)\bigg|\mathcal{W}^{0}\bigg]\\
    &\phantom{xx}{}+\E\bigg[\frac{1}{N(N-1)}\sum_{i\neq j=1}^{N}\varphi_{2}(X_{j}(t))\big(\varphi_{1}(X_{i}^{(N)}(t))-\varphi_{1}(Y_{i}(t))\big)\bigg|\mathcal{W}^{0}\bigg].
\end{align*}
Next, we take the absolute value and expectation to obtain
\begin{align*}
    \E\bigg| &\E[\varphi_{1}(X_{1}^{(N)}(t))\varphi_{2}(X_{2}^{(N)}(t))|\mathcal{W}^{0}]-\langle \varphi_{1},\rho_{t}\rangle\langle \varphi_{2},\rho_{t}\rangle\bigg|\\
    &\leq\E\bigg|\E\bigg[\frac{1}{N(N-1)}\sum_{i\neq j=1}^{N}\varphi_{1}(X_{i}^{(N)}(t))\big(\varphi_{2}(X_{j}^{(N)}(t))-\varphi_{2}(Y_{j}(t))\big)\bigg|\mathcal{W}^{0}\bigg]\bigg|\\
    &\phantom{xx}{}+\E\bigg|\E\bigg[\frac{1}{N(N-1)}\sum_{i\neq j=1}^{N}\varphi_{2}(Y_{j}(t))\big(\varphi_{1}(X_{i}^{(N)}(t))-\varphi_{1}(Y_{i}(t))\big)\bigg|\mathcal{W}^{0}\bigg]\bigg|\\
    &\leq \|\varphi_{1}\|_{\infty}\E\E\bigg[\bigg|\frac{1}{N-1}\sum_{j=1, j\neq i}^{N}\big(\varphi_{2}(X_{j}^{(N)}(t))-\varphi_{2}(Y_{j}(t))\big)\bigg|\bigg|\mathcal{W}^{0}\bigg]\\
    &\phantom{xx}{}+\|\varphi_{2}\|_{\infty}\E\E\bigg[\bigg|\frac{1}{N}\sum_{i=1}^{N}\big(\varphi_{1}(X_{i}^{(N)}(t))-\varphi_{1}(Y_{i}(t))\big)\bigg|\bigg|\mathcal{W}^{0}\bigg].
\end{align*}
We expand with $\pm\frac{1}{N-1}\sum_{j=1,j\neq i}^{N}\langle\rho_{t},\varphi_{2}\rangle$ and $\pm\frac{1}{N-1}\sum_{j=1,j\neq i}^{N}\langle\rho_{t},\varphi_{1}\rangle$, respectively. This yields
\begin{align*}
    \E\bigg| &\E[\varphi_{1}(X_{1}^{(N)}(t))\varphi_{2}(X_{2}^{(N)}(t))|\mathcal{W}^{0}]-\langle \varphi_{1},\rho_{t}\rangle\langle \varphi_{2},\rho_{t}\rangle\bigg|\\
    &\leq  \|\varphi_{1}\|_{\infty}\E\bigg|\frac{1}{N-1}\sum_{j=1, j\neq i}^{N}\big(\varphi_{2}(X_{j}^{(N)}(t))-\langle\rho_{t},\varphi_{2}\rangle\big)\bigg|\\
    &\phantom{xx}{}+\|\varphi_{1}\|_{\infty}\E\bigg(\E\bigg[\bigg|\frac{1}{N-1}\sum_{j=1, j\neq i}^{N}\big(\langle\rho_{t},\varphi_{2}\rangle-\varphi_{2}(Y_{j}(t))\big)\bigg|^{2}\bigg|\mathcal{W}^{0}\bigg]\bigg)^{1/2}\\
    &\phantom{xx}{}+\|\varphi_{2}\|_{\infty}\E\bigg|\frac{1}{N}\sum_{i=1}^{N}\big(\varphi_{1}(X_{i}^{(N)}(t))-\langle\rho_{t},\varphi_{1}\rangle\big)\bigg|\\
    &\phantom{xx}{}+\|\varphi_{2}\|_{\infty}\E\bigg(\E\bigg[\bigg|\frac{1}{N}\sum_{i=1}^{N}\big(\langle\rho_{t},\varphi_{1}\rangle-\varphi_{1}(Y_{i}(t))\big)\bigg|^{2}\bigg|\mathcal{W}^{0}\bigg]\bigg)^{1/2}.
\end{align*}
The second and fourth term vanish due to Lemma \ref{lem:conditional_LLN}.
In order to argue that the first and third term vanish, we can combine the arguments from Proposition~\ref{prop:convergence_to_weak_solution_SPDE}, Lemma \ref{lem:convergence_of_mean_rho_N} and Lemma \ref{lem:EsupZp_EZp_estimates}. Recall \eqref{eqn:convergence_rho_t}, implying that $\rho^{N}\rightarrow \rho$ in $L^{1}(\Omega,(C([0,T],M_{1}))$.

\end{proof}

\section{Numerical simulations}\label{sec:numerics}
 One of the goals of this section is to compare the proposed model to actual market dynamics. The empirical data consists of 400 companies of the S\&P 500 index, which remained in the index over the considered time period. Indeed, starting from February 14th 2005, we considered the following 4637 business days until September 9th 2023. 
 We start by studying the relative capitalizations, i.e. 
 \[
 \mu_i(t):= \frac{X_i(t)}{\sum_{j=1}^N X_j(t)}.
 \]
 The first plot (Figure \ref{fig:comparison_market_curves_simulated_curves}) compares several relative capitalization curves from different dates as well as the (time) average of the relative capitalization curves with the average of the simulated curves. For this, we calculated, normalized, and averaged the empirical curves for the chosen 400 assets over the past 4637 trading days (roughly 270 months). The simulations were initiated with 400 particles starting at the given market capitalizations of the 400 empirical assets on February 14th 2005. The evolution of the system \eqref{eqn:X_Ni_SDE}, with parameters $\alpha=2$, $\beta=1$, was modeled by a Milstein-scheme over 270 time steps and analogously to the empirical data, the capitalizations were ranked, normalized and averaged. 

\begin{figure}[H]
\centering
 \includegraphics[scale=0.6]{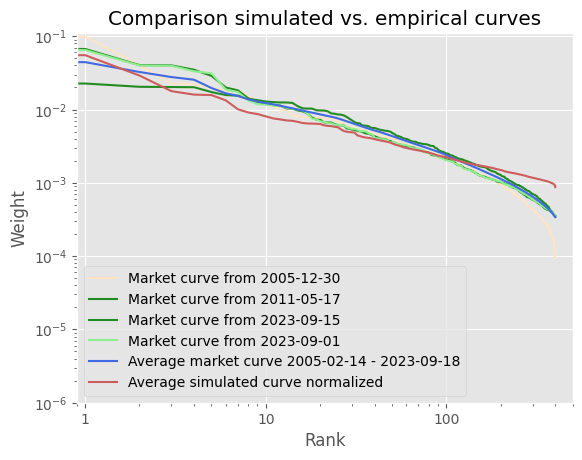}
    \caption{Comparison plot} \label{fig:comparison_market_curves_simulated_curves}
\end{figure}

To obtain the values for $\alpha$ and $\beta$ we calculated the market entropy (\cite[Section 2.3/Section 3.4, Example 3.4.3]{fernholz_02_stochastic_portfolio_theory})
and the market diversity (\cite[Section 3.4, Example 3.4.4]{fernholz_02_stochastic_portfolio_theory}) from our simulated system with different parameter combinations of $\alpha,\beta$. We then compared the results with the market entropy and diversity calculated from the empirical market data and it turned out that the parameter combination $\alpha=2$ and $\beta=1$ worked best. A more careful calibration is beyond the scope of this work.

Next, we want to study the impact of the common noise on the behavior of the particle system. For this purpose, we introduce a new parameter $\gamma\geq 0$ regulating the strength of the common noise in the system \eqref{eqn:X_Ni_SDE}, which yields the following system of equations
\begin{align}\label{eqn:X_Ni_SDE_gamma}
    \d X_{i}^{(\gamma)}(t)=\frac{\beta}{N}\sum_{j=1}^{N}X_{j}^{(\gamma)}(t)\dt+\sqrt{\frac{\alpha}{N}}\sqrt{X_{i}^{(\gamma)}(t)}\sqrt{\sum_{j=1}^{N}X_{j}^{(\gamma)}(t)}\dW_{t}^{i}+\gamma \sqrt{1-\frac{\alpha}{N}}X_{i}^{(\gamma)}(t)\dW^{0}_{t}.
\end{align}
For the choice $\alpha=1$ and $\gamma=0$, \eqref{eqn:X_Ni_SDE_gamma} corresponds to the system studied in \cite{Shkolnikov_13_large_volatility-stabilized}. 
We initialize \eqref{eqn:X_Ni_SDE_gamma} with $\gamma=1$ (orange) and $\gamma=0$ (blue) at the same initial condition which corresponds to the absolute market capitalization in our empirical data from the 14.02.2005. With the exception of $\gamma$, all other parameters are identical ($\alpha=2, \beta=1$) in both models. We let the two systems, each with 400 particles, evolve over 270 simulated time steps. For the system with common noise
we consider 1000 different simulations of the common noise 
and average each particle over them to approximate the empirical distribution. 
Then we compare the distribution of the market capitalization for both models at the final time step, see Figure \ref{fig:Densities_SDE_noise_comparison}.

  \begin{minipage}[H]{0.45\textwidth}
    \includegraphics[width=\textwidth]{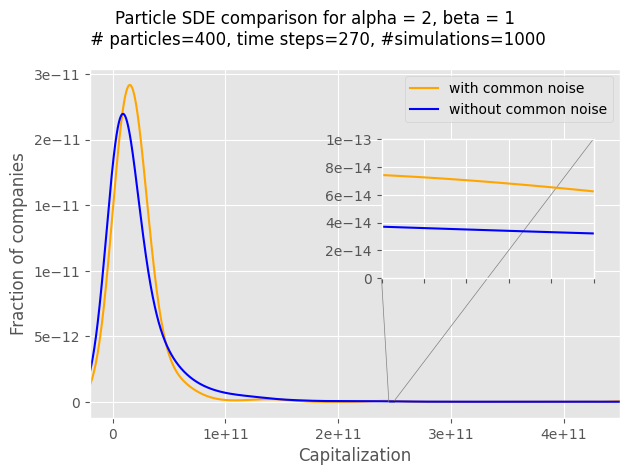}
    \captionof{figure}{Densities SDE system}\label{fig:Densities_SDE_noise_comparison}

  \end{minipage}
  \begin{minipage}[H]{0.45\textwidth}
    \includegraphics[width=\textwidth]{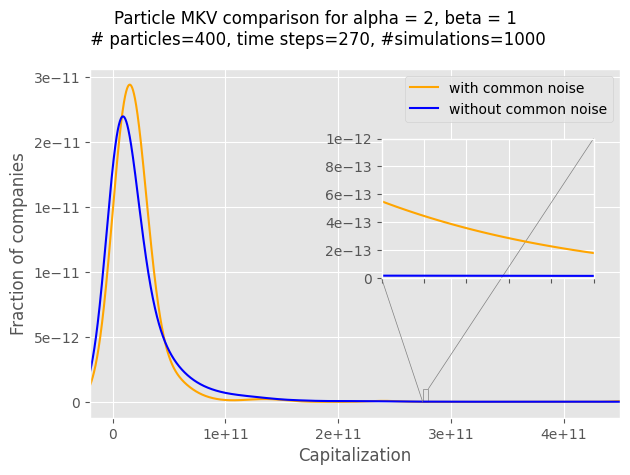}
    \captionof{figure}{Densities MKV system}\label{fig:Densities_MKV_noise_comparison}
  \end{minipage}

As expected, the distributions in the case $\gamma=1$ exhibit a heavier tail behavior due to the additional noise. Note that the mass being present at non-positive capitalizations is an artifact of the smooth approximation of the empirical densities.
The same behavior is observed when simulating the systems with the corresponding McKean-Vlasov dynamics (again 1000 different simulations for the common noise, each with 400 particles) for the systems with and without common noise (see Figure \ref{fig:Densities_MKV_noise_comparison}), which is given by
   \begin{align}
        &\d Y(t)=\beta m_{\lambda}\exp{\bigg(\bigg(\beta-\frac{1}{2}\bigg)t+W^{0}_{t}\bigg)}\dt+\sqrt{\alpha}\sqrt{m_{\lambda}\exp{\bigg(\bigg(\beta-\frac{1}{2}\bigg)t+W^{0}_{t}\bigg)}}\sqrt{Y(t)}\dB_{t}+Y(t)\dW^{0}_{t},
       \label{eqn:sim_MKV_common_noise}\\
          &\d Y_{\operatorname{nocn}}(t)=\beta m_{\lambda}\exp{\bigg(\beta t\bigg)}\dt+\sqrt{\alpha}\sqrt{m_{\lambda}\exp{\bigg(\beta t\bigg)}}\sqrt{Y_{\operatorname{nocn}}(t)}\dB_{t},\label{eqn:sim_MKV_no_common_noise}
    \end{align}
where $W^{0},B$ are independent Brownian motions.

In the following (Figures \ref{fig:time_evolution_SDE} and \ref{fig:time_evolution_market}), we look at the time evolution of the simulated capital densities of \eqref{eqn:X_Ni_SDE_gamma} with $\gamma=1$ and compare it to the empirical market densities. As mentioned above, for 1000 simulations of the common noise, a system of $400$ particles, with parameters $\alpha=2, \beta=1$, was initialized at the empirical market capitalizations from February 14th 2005 and we let it evolve over 270 time steps. At each time step, we averaged each particle over the 1000 simulations of the common noise term and approximated the associated density. For the plot we choose 5 out of the 270 time steps for the simulated system and 100 time steps (out of 4637) for the empirical asset distributions to plot.  We observe a similar behavior with slightly more vertical variation in the empirical system.

\begin{minipage}[H]{0.45\textwidth}
    \includegraphics[width=\textwidth]{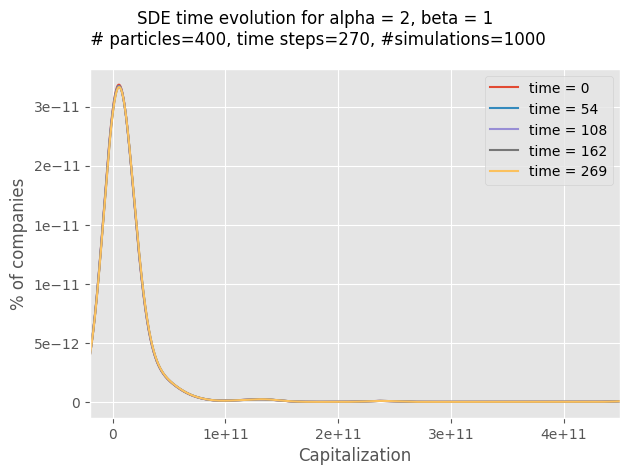}
    \captionof{figure}{Time evolution particle system} \label{fig:time_evolution_SDE}

  \end{minipage}
  \begin{minipage}[H]{0.45\textwidth}
    \includegraphics[width=\textwidth]{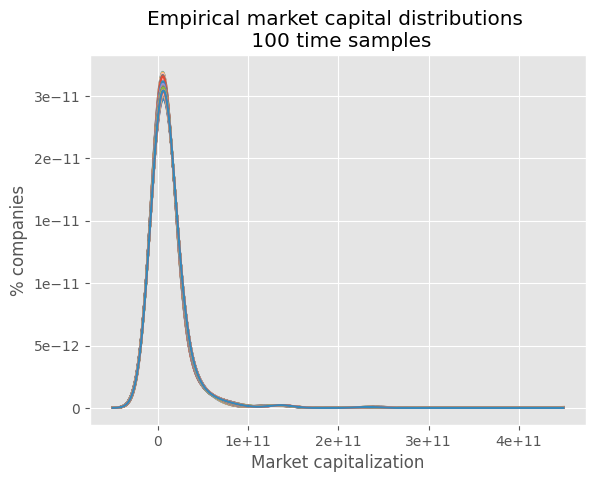}
    \captionof{figure}{Time evolution market}\label{fig:time_evolution_market}
  \end{minipage}

For ``large'' values of $N$, the empirical measure of the particle system should be well approximated by the solution of \eqref{eqn:SPDE_classical_formulation}, i.e.,
\begin{align*}
    \d \rho_{t}&= \frac{\alpha}{2}\langle \rho_{t},\idx\rangle\partial_{x}^{2}(x \rho_{t} )+\frac{1}{2}\partial_{x}^{2}(x^{2} \rho_{t} )-\beta\langle \rho_{t},\idx\rangle \partial_{x} \rho_{t} \dt+\partial_{x}(x \rho_{t} )\dW^{0}_{t},\quad \rho_{0}=\rho^{(0)}.
\end{align*}

To simulate \eqref{eqn:SPDE_classical_formulation}, we use a $P1$-Finite Element Method (FEM) approximation over a grid with $0$ Dirichlet boundary and $1000$ grid points. We let the equation evolve again over $270$ time steps with one simulation of the common noise. 
The initial condition was chosen as a $\chi^{2}$ density with parameter $k=3$, which appeared to be a good approximation of the empirical capital distribution. 

In the following plot, we compare the space profiles, at different points in time, of the solution of the SPDE \eqref{eqn:SPDE_classical_formulation}, simulated as indicated above, and the PDE
\begin{align*}
    \d u_{t}&= \frac{\alpha}{2}\langle u_{t},\idx\rangle\partial_{x}^{2}(x u_{t} )-\beta\langle u_{t},\idx\rangle \partial_{x} u_{t} \dt,\quad
    u_{0}=\rho^{(0)},
\end{align*}
corresponding to the limit of $u(t)=\lim_{N\rightarrow \infty}\frac{1}{N}\sum_{i=1}^{N}\delta_{X_{i}^{(\gamma=0)}(t)}$ of the empirical distributions of system \eqref{eqn:X_Ni_SDE_gamma} with $\gamma=0$. The PDE was simulated with the same specifications as the SPDE.
 As indicated by the following plots, the noise has a visible impact on the SPDE (compare Figure \ref{fig:9_SPDE} and Figure \ref{fig:10_PDE}). 

\begin{center}
 \begin{minipage}[b]{0.4\textwidth}

    \includegraphics[width=\textwidth]{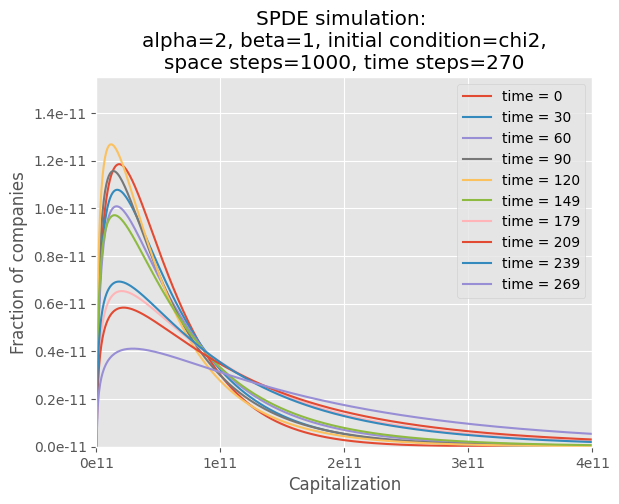}

    \captionof{figure}{SPDE $\chi^{2}$ i.c.}\label{fig:9_SPDE}
  \end{minipage}
  \begin{minipage}[b]{0.4\textwidth}
    \includegraphics[width=\textwidth]{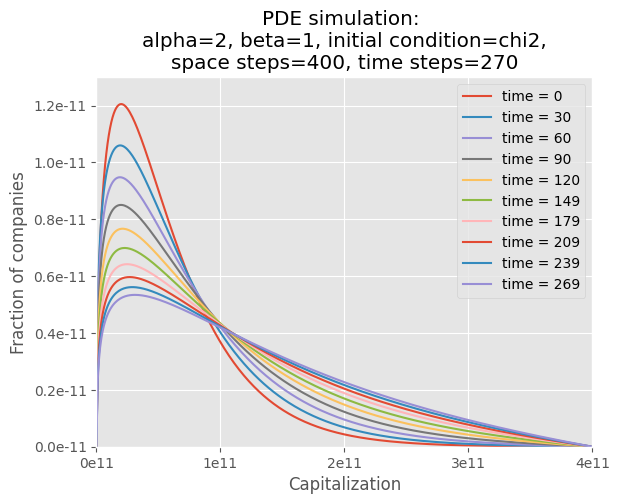}
        \captionof{figure}{PDE $\chi^{2}$ i.c.}\label{fig:10_PDE}
\end{minipage}
  \end{center}
Lastly, we compare the time evolution of the SPDE, simulated under the previous specifications but with a different realization of the common noise, depicted in Figure \ref{fig:SPDE_time_evolution_comparison_MKV} and the time evolution of the approximated density of the McKean-Vlasov system \eqref{eqn:sim_MKV_common_noise}, depicted in Figure \ref{fig:MKV_time_evolution_comparison_SPDE}. For the McKEan-Vlasov system 100000 independent Brownian motions and the same common noise $W^{0}$ used in this iteration of the SPDE simulation, were used. To be comparable to the SPDE, whose initial condition was chosen to be the density of a $\chi^{2}(3)$ distribution, the McKean-Vlasov system was initialized with i.i.d. initial conditions $Y_{0}\sim \chi^{2}(3)$. 
\begin{center}
 \begin{minipage}[b]{0.4\textwidth}
    \includegraphics[width=\textwidth]{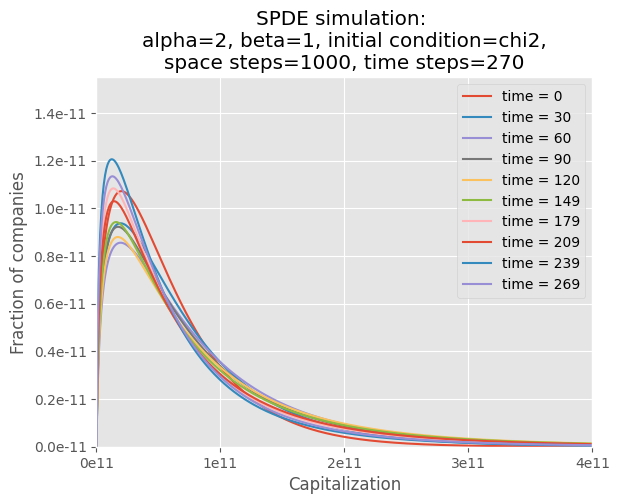}
    \captionof{figure}{SPDE $\chi^{2}$ i.c.}\label{fig:SPDE_time_evolution_comparison_MKV}
  \end{minipage}
 \begin{minipage}[b]{0.4\textwidth}
    \includegraphics[width=\textwidth]{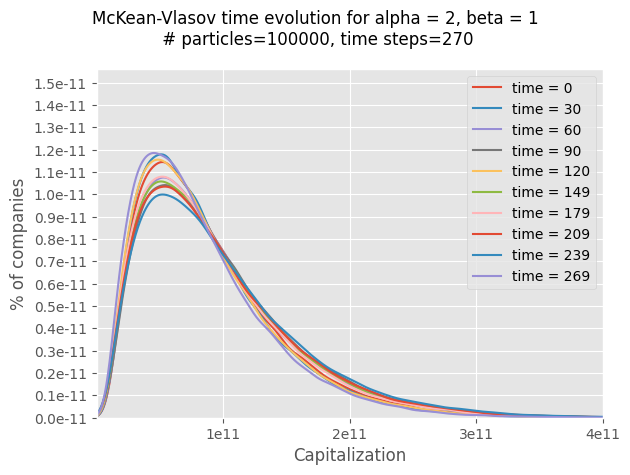}
    \captionof{figure}{McKean-Vlasov $\chi^{2}$ i.c.}\label{fig:MKV_time_evolution_comparison_SPDE}
  \end{minipage}

  \end{center}
  The behavior seems consistent, up to the differences caused by the numerical implementation and approximation of the initial condition.

\section{Appendix}
\subsection{Appendix A: Results regarding weighted Sobolev spaces}
For the sake of generality, we will state and prove the results here for $\R^{d}$ instead of $\R_{+}$. The case $\R_{+}$ can be recovered by setting $d=1$ and using an extension theorem for Sobolev functions. Since we are interested in the case $W^{m,p}_{0}(a,b)$ defined in Definition \ref{def:fractionalSobolev}, we can choose the trivial extension of the weight functions to $\R$. We recall the notation $\partial_{x}^{\gamma}f=\frac{\partial^{\gamma_{1}}}{\partial x_{1}^{\gamma_{1}}}\dots\frac{\partial^{\gamma_{d}}}{\partial x_{d}^{\gamma_{d}}}f$ for a multi-index $\gamma\in \N^{d}$.
\begin{lemma}\label{lem:weighted_spaces_embeddings}
Let $a \geq 0$, $m\in \N$, $a +m b \geq 0$ and $W_{0}^{m,p}(a , b )=W_{0}^{m,p}(a , b ,\R^{d})$ be the weighted Sobolev space of functions, for which 
\begin{align*}
    \sum_{\gamma \in \N^{d}\colon |\gamma|\leq m}\|(1+|x|^{2})^{a +|\gamma| b }\partial_{x}^{\gamma}f\|_{L^{p}}^{p}<\infty,
\end{align*}
then the following assertion holds:
For $0\leq a '<a $ and $0\leq  b '\leq  b $, the embedding 
    \begin{align*}
        W_{0}^{m,p}(a , b )\hookrightarrow W_{0}^{n,q}(a ', b '), \textnormal{ where }m>n,\,
    \end{align*}
is continuous in the case $\frac{1}{p}-\frac{m}{d}= \frac{1}{q}-\frac{n}{d}$ and compact in the case $ \frac{1}{p}-\frac{m}{d}< \frac{1}{q}-\frac{n}{d}$.
\end{lemma}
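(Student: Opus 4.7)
The natural strategy is a dyadic decomposition of $\mathbb{R}^d$ at infinity together with the classical Sobolev embedding on a fixed annulus. Choose a partition of unity $\{\chi_k\}_{k\geq 0}$ subordinate to the cover by $A_0 := \{|x| \leq 2\}$ and $A_k := \{2^{k-1} \leq |x| \leq 2^{k+1}\}$ ($k \geq 1$), satisfying $|\partial^\gamma \chi_k| \leq C_\gamma 2^{-k|\gamma|}$. On $A_k$ the weight $(1+|x|^2)^{a + |\gamma| b}$ is comparable to $2^{2k(a+|\gamma|b)}$, uniformly in $x \in A_k$. For each $k \geq 1$ rescale via $y = 2^{-k}x$ and set $F_k(y) := (\chi_k f)(2^k y)$; on the fixed annulus $A := \{1/2 \leq |y| \leq 2\}$ a direct change-of-variables computation yields
\begin{align*}
\int_{A_k} (1+|x|^2)^{p(a+|\gamma|b)} |\partial^\gamma (\chi_k f)|^p \,dx \asymp 2^{k(d + 2pa + p|\gamma|(2b-1))} \int_A |\partial^\gamma F_k(y)|^p \,dy,
\end{align*}
and an analogous identity with $(a',b',n,q)$ in place of $(a,b,m,p)$. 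On $A_0$ one argues directly without rescaling.

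For the continuous embedding, apply the classical Sobolev inequality $W^{m,p}(A) \hookrightarrow W^{n,q}(A)$, which holds under the index condition $\tfrac{1}{p} - \tfrac{m}{d} = \tfrac{1}{q} - \tfrac{n}{d}$, to each $F_k$. Transforming the resulting bound back to $A_k$ with the weight $(a',b')$, one finds that the ratio of the target to the source scaling factor on $A_k$ is $2^{-k\sigma_\gamma}$ with
\begin{align*}
\sigma_\gamma := 2(a - a') + |\gamma|\bigl( 2(b-b') + (1 - 2b')(1 - p/q)\bigr) - \text{Sobolev scaling term},
\end{align*}
and the Sobolev scaling term from the classical embedding on $A$ exactly matches the part arising from $\tfrac{1}{p} - \tfrac{m}{d} = \tfrac{1}{q} - \tfrac{n}{d}$, leaving a strictly positive residual $\sigma_\gamma \geq 2(a-a') > 0$. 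Summing the resulting geometric series in $k$ produces the bound $\|f\|_{W^{n,q}_0(a',b')} \leq C \|f\|_{W^{m,p}_0(a,b)}$.

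For compactness one combines two ingredients. First, on any finite union $\Omega_K := \bigcup_{k \leq K} A_k$ the Rellich--Kondrachov theorem, available under the strict index inequality $\tfrac{1}{p} - \tfrac{m}{d} < \tfrac{1}{q} - \tfrac{n}{d}$, gives a compact local embedding; by a diagonal extraction any bounded sequence $(f_j)$ in $W_0^{m,p}(a,b)$ admits a subsequence converging in $W^{n,q}(a',b')(\Omega_K)$ for every $K$. Second, the tail estimate in the continuity proof shows that the contribution of $\mathbb{R}^d \setminus \Omega_K$ to the $W_0^{n,q}(a',b')$-norm is bounded by $C 2^{-K(a-a')} \|f_j\|_{W^{m,p}_0(a,b)}$, which tends to zero uniformly in $j$ as $K \to \infty$. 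Combining these two facts upgrades the local convergence to strong convergence in $W_0^{n,q}(a',b')$.

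The main obstacle is the bookkeeping of the scaling exponents, since the weight depends on the differentiation order through $b$: the rescaled norm on $A$ is not exactly the standard Sobolev norm but one in which each derivative of order $|\gamma|$ carries an additional factor $2^{k|\gamma|(2b-1)}$. One must check that for every $|\gamma| \leq n$ the combined exponent $\sigma_\gamma$ is indeed controlled from below by a quantity proportional to $a-a'$; the hypothesis $a + mb \geq 0$ together with $b' \leq b$ and $0 \leq |\gamma| \leq n < m$ are precisely what ensures that the derivative-dependent terms never force the exponent to become negative, so that the geometric series converges and the tail estimate is uniform.
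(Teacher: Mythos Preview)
Your compactness argument --- local Rellich--Kondrachov on balls, diagonal extraction, and a tail estimate driven by the strict inequality $a'<a$ --- is essentially the same as the paper's. The difference lies in the continuity part. The paper avoids the dyadic scaling bookkeeping altogether: it first observes (via \cite[Section 4.2.2, Theorem 1]{edmunds_triebel1996function}) that $\|f\|_{W^{n,q}(a,0)}$ and $\|(1+|x|^2)^a f\|_{W^{n,q}}$ are equivalent norms, which immediately transfers the unweighted Sobolev embedding $W_0^{m,p}\hookrightarrow W_0^{n,q}$ to $W_0^{m,p}(a,0)\hookrightarrow W_0^{n,q}(a,0)$ for any fixed weight $a$. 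It then handles the $b$-dependence by the elementary observation that $\|\partial^\gamma f\|_{W^{m-|\gamma|,p}(a+|\gamma|b,\,b)}$ already appears, term by term, inside $\|f\|_{W^{m,p}(a,b)}$. This sidesteps entirely the derivative-order-dependent scaling exponents that you identify as the main obstacle.

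Your dyadic approach is a legitimate alternative and is standard for power-weighted spaces, but in your write-up the central step is only asserted, not verified: you introduce $\sigma_\gamma$ with an unspecified ``Sobolev scaling term'' and claim it ``exactly matches'', then state that $a+mb\geq 0$, $b'\leq b$, $|\gamma|<m$ ``are precisely what ensures'' positivity of the residual exponent. Because the weight here depends on the order of differentiation through $b$, the rescaled norm on the unit annulus is not a standard Sobolev norm but one in which each $\partial^\gamma$ carries its own factor $2^{k|\gamma|(2b-1)}$; applying the classical embedding $W^{m,p}(A)\hookrightarrow W^{n,q}(A)$ does not directly compare these graded norms, and one has to argue derivative-by-derivative which source term controls which target term. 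This can be made to work, but it is not a one-line computation, and you have not supplied it. The paper's norm-equivalence route is both shorter and avoids this issue entirely.
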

\begin{proof}
 For the continuity, we note that the norms
$\|f\|_{W^{n,q}(a,0 )}$ and $\|(1+|x|^{2})^{a}f\|_{W^{n,q}(0,0 )}$ are equivalent, which follows by similar arguments as in \cite[Section 4.2.2, Theorem 1]{edmunds_triebel1996function}. This also yields the continuous embedding $ W_{0}^{m,p}(a , 0)\hookrightarrow  W_{0}^{n,q}(a ,0)$, which we use to conclude that
\begin{align*}
    \|f\|_{W^{n,q}(a,b )}&=\left(\|f\|^{q}_{L^{q}(a)}+\sum_{\gamma\in \N^{d}\colon |\gamma|=1}^{n}\|\partial_{x}^{\gamma} f\|^{q}_{L^{q}(a+|\gamma|b)}\right)^{1/q}\\
    &\leq \left(\|f\|^{q}_{W^{n,q}(a,0 )}+\sum_{\gamma\in \N^{d}\colon |\gamma|=1}^{n}\|\partial_{x}^{\gamma} f\|^{q}_{W^{n-|\gamma|,q}(a+|\gamma|b,0 )}\right)^{1/q}\\
    &\leq \left(\|f\|^{q}_{W^{m,p}(a,0 )}+\sum_{\gamma\in \N^{d}\colon |\gamma|=1}^{n}\|\partial_{x}^{\gamma} f\|^{q}_{W^{m-|\gamma|,p}(a+|\gamma|b,0 )}\right)^{1/q}\\
    &\leq \left(\|f\|^{q}_{W^{m,p}(a,b )}+\sum_{\gamma\in \N^{d}\colon |\gamma|=1}^{n}\|\partial_{x}^{\gamma} f\|^{q}_{W^{m-|\gamma|,p}(a+|\gamma|b,b)}\right)^{1/q}\\
        &\leq C_{n,d} \|f\|_{W^{m,p}(a,b )}.
\end{align*}

 Let $B_{R}\subseteq \R^{d}$ be a ball with radius $R>0$, centered at the origin. 
 We introduce the notation $W^{k,p}(B_{R})$ for the Sobolev space of functions from $B_{R}\rightarrow \R$, equipped with the usual norm.
  For $f\in W^{k,p}(\R^{d})$ and any multi-index $\gamma\in\N^{d}$, such that $0\leq|\gamma|\leq k$, we have
\begin{align*}
    \left\|\frac{\partial^{\gamma_{1}}}{\partial x_{1}^{\gamma_{1}}}\dots\frac{\partial^{\gamma_{d}}}{\partial x_{d}^{\gamma_{d}}}f\right\|_{L^{p}(B_{R})}&=\|\partial_{x}^{\gamma}f\|_{L^{p}(B_{R})}\leq \max\{1,R^{-2a -2 b |\gamma|}\}\|(1+|x|^{2})^{a + b |\gamma|}\partial_{x}^{\gamma}f\|_{L^{p}(B_{R})}\\
    &\leq \max\{1,R^{-2a -2 b |\gamma|}\}\|(1+|x|^{2})^{a + b |\gamma|}\partial_{x}^{\gamma}f\|_{L^{p}(\R^{d})}. 
\end{align*} 
Hence,
\begin{align*}
    \|u\|_{W^{m,p}(B_{R})}\leq \max\{1,R^{-a}\}\|u\|_{W^{m,p}(a , b ,B_{R})}.
\end{align*}

We conclude that, for any $R\geq 0$, the restriction map 

\begin{align*}
   J_{R} & \colon W^{m,p}(a , b ,\R^{d})\rightarrow W^{m,p}(B_{R}),\\
         &u\mapsto u \big|_{B_{R}},
\end{align*}
is well-defined and bounded. Note that on $B_{R}$, weighted spaces with a positive weight function and unweighted spaces are norm equivalent. 

By \cite[Theorem 7.26]{gilbarg_trudinger_77_elliptic_PDE}, we know that the embedding $W_{0}^{m,p}(B_{R})\hookrightarrow W_{0}^{n,q}(B_{R})$ is compact if $\frac{1}{p}-\frac{m}{d}<\frac{1}{q}-\frac{n}{d}$. 
 Hence, the restriction map $J_{R}\colon  W_{0}^{m,p}(a , b ,\R^{d})\rightarrow W^{n,q}(B_{R})$ is compact. Let $\big(u_{n}\big)_{n\in\N}$ be a bounded sequence in $ W_{0}^{m,p}(a , b ,\R^{d})$ and $\frac{1}{p}-\frac{m}{d}<\frac{1}{q}-\frac{n}{d}$. Let $R=1$, we can find a subsequence $(u_{1n})_{n\in\N}$, which converges in $W_{0}^{n,q}(B_{1})$ and pointwise almost everywhere.
 \begin{align*}
    u_{11},u_{12},\dots,\quad \textnormal{ such that } \|u_{1n}-u_{1n'}\|_{W_{0}^{n,q}(B_{1})}\rightarrow 0,\quad \textnormal{ as } n, n'\rightarrow \infty.
\end{align*}
From this subsequence, we can extract a further subsequence
\begin{align*}
    u_{21},u_{22},\dots,\quad \textnormal{ such that } \|u_{2n}-u_{2n'}\|_{W_{0}^{n,q}(B_{2})}\rightarrow 0,\quad \textnormal{ as } n, n'\rightarrow \infty.
\end{align*}
Repeating this procedure for increasing radii $R$, we obtain a sequence of sequences
\begin{align}\label{eqn:subsequence_matrix}
\begin{matrix}
    u_{11},&u_{12},&\dots,&u_{1n},&\dots,&u_{1n'},&\dots\\
    u_{21},&u_{22},&\dots,&u_{2n},&\dots,&u_{2n'},&\dots\\
    \vdots&\vdots&\vdots&\vdots&\vdots&\vdots&\dots\\
    u_{k1},&u_{k2},&\dots,&u_{kn},&\dots,&u_{kn'},&\dots\\
    \vdots&\vdots&\vdots&\vdots&\vdots&\vdots&\dots
\end{matrix}
\end{align}
with the property that
\begin{align*}
     \|u_{kn}-u_{kn'}\|_{W_{0}^{n,q}(B_{k})}\rightarrow 0,\quad \textnormal{ as } n, n'\rightarrow \infty.
\end{align*}
 By the definition of the norm, we may assume that all partial weak derivatives also converge pointwise almost everywhere. We choose the diagonal sequence $\big(u_{ii}\big)_{i\in\N}$ and show that it has a well-defined limit. We can define the function $\overline{u}(x)=u_{k}(x)$, for $x\in B_{k}$, where $u_{k}$ is given by the limit $u_{k,n}\rightarrow u_{k}$ in $W_{0}^{n,q}(B_{k})$, for every $k$ and pointwise almost everywhere. By pointwise convergence and Fatou's lemma, it follows that this pointwise limit, denoted $\overline{u} \in W_{0}^{n,q}(a , b ,\R^{d})$. Let $a '< a $ and $ b '\leq  b $, then for any $\eps>0$
\begin{align*}
    &\left(\int_{\R^{d}\backslash B_{R}}\sum_{\gamma\in \N^{d}\colon |\gamma|\leq n}(1+|x|^{2})^{qa '+qk b '}|\partial_{x}^{\gamma}u_{ii}|^{q}\dx\right)^{1/q}\\
    &\leq C \left(\int_{\R^{d}\backslash B_{R}}\sum_{\gamma\in \N^{d}\colon |\gamma|\leq m}(1+|x|^{2})^{pa '+pk b '}|\partial_{x}^{\gamma}u_{ii}|^{p}\dx\right)^{1/p}\\
    &\leq C\sum_{\gamma\in \N^{d}\colon |\gamma|\leq m}(1+|R|^{2})^{a '-a +k( b '- b )}\left(\int_{\R^{d}\backslash B_{R}}(1+|x|^{2})^{pa +pk b }|\partial_{x}^{\gamma}u_{ii}|^{p}\dx\right)^{1/p}\leq \eps,
\end{align*}
provided $R$ is chosen sufficiently large. Since $u_{ii}\rightarrow \overline{u}$ in $W_{0}^{n,q}(B_{R})$, it follows that
\begin{align*}
    \left(\int_{B_{R}}\sum_{\gamma\in \N^{d}\colon |\gamma|\leq n}|\partial_{x}^{\gamma}u-\partial_{x}^{\gamma}u_{ii}|^{p}\dx\right)^{1/p}\leq \eps,
\end{align*}
for all $i\geq I$, where $I$ depends on $\eps$ and $R$. Combining these observations yields
\begin{align*}
    \|u_{ii}-\overline{u}\|_{W^{n,q}(a ', b ',\R^{d})}&\leq  CR^{a '+n b '}\|u_{ii}-\overline{u}\|_{W^{n,q}(B_{R})}+\|u_{ii}-\overline{u}\|_{W^{n,q}(a ', b ',\R^{d}\backslash B_{R})}\\
    &\leq \eps+2\eps.
\end{align*}

\end{proof}

\subsection{Appendix B: Mapping properties of the operators}
\begin{lemma}\label{lem:coercivity_L_sigma}
Let $L$ and $\sigma$ be defined by
\begin{align*}
    &L\varphi\coloneqq \partial_{x}(f(x)\nabla\varphi)+g(x)\partial_{x} \varphi\\
    &\sigma(\varphi)\coloneqq \varphi-\partial_{x} (x\varphi),
\end{align*}
where
\begin{align*}
     &f(x,t,\omega)=\left(m_{\lambda}\exp{((\beta-1/2)t+W^{0}_{t})}\frac{\alpha}{2}x+\frac{x^{2}}{2}\right),\\
     &g(x,t,\omega)=-\left(\left(\frac{\alpha}{2}- \beta\right) m_{\lambda}\exp{((\beta-1/2)t+W^{0}_{t})}+\idxo\right).
 \end{align*}
 Let $a\geq 0, b\geq \frac{1}{2}$.
  Then, for $u\in H_{1}=W^{m,2}_{0}(a,b)$, we have the following weak coercivity (weak parabolicity) estimate

\begin{align}\label{eqn:weak_monotonicity_L_sigma}
     \langle Lu,u\rangle_{H_{1}}&+\frac{1}{2} \sup_{\varphi\in H_{1}\colon \|\varphi\|_{H_{1}}=1}|\langle u,\sigma\varphi\rangle_{H_{1}}|^{2}\leq C_{\alpha,\beta,m}(1+m_{\lambda}\exp{((\beta-1/2)t+W^{0}_{t})})\|u\|^{2}_{H_{1}}.
\end{align}
\end{lemma}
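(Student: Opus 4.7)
The strategy is to view \eqref{eqn:weak_monotonicity_L_sigma} as a \emph{stochastic parabolicity} condition: the coercivity produced by $L$ must dominate the It{\^o} variance contribution from $\sigma$, so the two pieces should be estimated \emph{jointly} rather than separately. By the equivalence of the Hilbert-scale norm with the direct weighted norm, I would work with $\|u\|_{H_1}^{2}\simeq\sum_{\gamma=0}^{m}\int_{\R_+} w_\gamma(x)(\partial_x^\gamma u)^{2}\,dx$, where $w_\gamma(x):=(1+x^{2})^{2(a+\gamma b)}$. Abbreviating $c_1(t,\omega):=m_\lambda e^{(\beta-1/2)t+W_{t}^{0}}$, the coefficient $f(x)=\frac{\alpha c_{1}}{2}x+\frac{x^{2}}{2}$ is a quadratic polynomial with nonnegative coefficients on $\R_{+}$ and $g$ is affine in $x$; hence all Leibniz expansions below contain only finitely many terms with coefficients bounded by $C_{\alpha,\beta}(1+c_{1})$.

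For the $L$-contribution, Leibniz expresses $\partial_x^\gamma(Lu)$ as a linear combination of $\partial_x^{\gamma+2}u$, $\partial_x^{\gamma+1}u$, and $\partial_x^\gamma u$; pair with $w_\gamma\,\partial_x^\gamma u$ and integrate. Integration by parts on the top piece $\int w_\gamma f\,\partial_x^{\gamma+2}u\,\partial_x^\gamma u\,dx$ is justified by the density of $C_{c}^{\infty}(\R_{++})$ in $H_{1}$ (boundary terms vanish) and produces the non-positive coercivity
\begin{align*}
-\int_{\R_+} w_\gamma(x)\,f(x)\,(\partial_x^{\gamma+1}u)^{2}\,dx\leq 0,
\end{align*}
together with cross terms of the form $\int w_\gamma\,p(x)\,\partial_x^{\gamma+1}u\cdot\partial_x^\gamma u\,dx$ with $\deg p\leq 2$, to be absorbed below.

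For the $\sigma$-contribution, compute $\partial_x^\gamma(\sigma\varphi)=-x\,\partial_x^{\gamma+1}\varphi-\gamma\,\partial_x^\gamma\varphi$. The piece with $\partial_x^\gamma\varphi$ is bounded directly by Cauchy-Schwarz, giving $\leq C\|u\|_{H_{1}}\|\varphi\|_{H_{1}}$. For $\gamma\leq m-1$, the piece with $\partial_x^{\gamma+1}\varphi$ is also handled by Cauchy-Schwarz, using the crucial weight bound $w_\gamma(x)\,x^{2}\leq w_{\gamma+1}(x)$, which is valid \emph{precisely} because $b\geq 1/2$. At the top index $\gamma=m$, where $\partial_x^{m+1}\varphi$ exceeds the scale of $\|\varphi\|_{H_{1}}$, integrate by parts once to move the extra derivative onto the $u$-side; this generates a controllable term $\int \partial_x(w_m x)\partial_x^m u\partial_x^m\varphi\,dx$ plus the boundary-of-scale term $\int w_m\,x\,\partial_x^{m+1}u\,\partial_x^m\varphi\,dx$. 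The latter is estimated using the pointwise inequality $x^{2}\leq 2f(x)$ on $\R_{+}$, which after Cauchy-Schwarz and Young's inequality with free parameter $\eta>0$ gives a contribution $\leq \eta\int w_m f\,(\partial_x^{m+1}u)^{2}\,dx+C_\eta(1+c_{1})\|u\|_{H_{1}}^{2}$. Choosing $\eta$ small enough allows this to be absorbed into the coercivity $-\int w_m f\,(\partial_x^{m+1}u)^{2}\,dx$ at $\gamma=m$ coming from $\langle Lu,u\rangle_{H_{1}}$; the factor $\frac{1}{2}$ in front of the supremum in the statement is exactly what makes this closure possible. Applying Young's inequality once more to the remaining lower-order cross terms (again using $w_\gamma x^{2}\leq w_{\gamma+1}$) and summing over $\gamma=0,\ldots,m$ yields \eqref{eqn:weak_monotonicity_L_sigma} with a constant of the announced form $C_{\alpha,\beta,m,a,b}(1+c_{1})$.

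The principal technical obstacle is the interplay at the top derivative order: one must verify that the coercivity produced by $L$ at $\gamma=m$ genuinely dominates the $\partial_x^{m+1}u$ contribution arising from the $\sigma$-supremum after integration by parts, with the algebraic inequality $f(x)\geq x^{2}/2$ (matched to the prefactor $\frac{1}{2}$) being precisely what closes the estimate. A secondary point is that the integrations by parts require enough smoothness; this is handled by first proving the bound on the dense subspace $C_c^{\infty}(\R_{++})\subset H_{1}$ and then extending to all of $H_{1}$ by continuity in the $H_{1}$-topology.
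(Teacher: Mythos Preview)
Your overall architecture is correct and close to the paper's: expand via Leibniz, integrate by parts to extract the coercive piece $-\sum_\gamma\int_{\R_+} w_\gamma f\,(\partial_x^{\gamma+1}u)^2\,dx$ from $\langle Lu,u\rangle_{H_1}$, and balance it against the variance term from $\sigma$. The gap is in how you close the top order. The cancellation between the coercivity and the $\sigma$-contribution is \emph{exact}, so a Young-with-parameter argument cannot work. Concretely, with $B^2:=\int_{\R_+} w_m\,x^2\,(\partial_x^{m+1}u)^2\,dx$ and $A$ collecting all lower-order pieces bounded by $C\|u\|_{H_1}$, your procedure yields $|\langle u,\sigma\varphi\rangle_{H_1}|\le (A+B)\|\varphi\|_{H_1}$ and hence
\[
\tfrac12\sup_{\|\varphi\|=1}|\langle u,\sigma\varphi\rangle|^2 \le \tfrac12(A+B)^2 \le \tfrac{1+\eta^{-1}}{2}A^2+\tfrac{1+\eta}{2}B^2.
\]
Against this, the coercivity at $\gamma=m$ contributes $-\int w_m f\,(\partial_x^{m+1}u)^2=-\tfrac12 B^2-\tfrac{\alpha c_1}{2}\int w_m x(\partial_x^{m+1}u)^2$. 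The $B^2$-terms combine to $\tfrac{\eta}{2}B^2$, which is positive and involves $\partial_x^{m+1}u$, hence is \emph{not} dominated by $\|u\|_{H_1}^2$ for any $\eta>0$; the remaining $x$-weighted coercive piece is of lower growth in $x$ and cannot absorb it either. Your sentence ``$x^2\le 2f(x)$ \ldots gives a contribution $\le\eta\int w_m f(\partial_x^{m+1}u)^2+C_\eta(1+c_1)\|u\|^2$ with $\eta$ small'' is therefore not attainable: the sharpest coefficient you can get on $\int w_m f(\partial_x^{m+1}u)^2$ after squaring is $1$, not an arbitrarily small $\eta$.

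The paper avoids this loss by treating all orders $\gamma=0,\dots,m$ uniformly rather than singling out $\gamma=m$. One integration by parts at \emph{every} $\gamma$ rewrites the pairing as a genuine $H_1$ inner product,
\[
\langle u,\sigma\varphi\rangle_{H_1}=\sum_{\gamma=0}^m\int_{\R_+}\Big(C_\gamma(x)\,\partial_x^\gamma u+x\,\partial_x^{\gamma+1}u\Big)\,\partial_x^\gamma\varphi\;w_\gamma\,dx,
\]
so that the \emph{sharp} Cauchy--Schwarz gives $\sup_{\|\varphi\|=1}|\langle u,\sigma\varphi\rangle|^2\le \sum_\gamma\int w_\gamma\big(C_\gamma\partial_x^\gamma u+x\partial_x^{\gamma+1}u\big)^2$. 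Expanding the square produces exactly $\sum_\gamma\int w_\gamma x^2(\partial_x^{\gamma+1}u)^2$, which after the factor $\tfrac12$ is absorbed \emph{identically} by the $\tfrac{x^2}{2}$-part of $f$ in the coercivity at the same $\gamma$. The remaining cross terms $2\int w_\gamma C_\gamma x\,\partial_x^\gamma u\,\partial_x^{\gamma+1}u$ are handled by one further integration by parts (writing $\partial_x^\gamma u\,\partial_x^{\gamma+1}u=\tfrac12\partial_x((\partial_x^\gamma u)^2)$) and a sign computation showing the resulting integrand is non-positive; this is where the explicit structure of $C_\gamma$ and the condition $a\ge 0$, $b\ge\tfrac12$ enter. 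In short: replace your Young-absorption at the top order by a sharp Cauchy--Schwarz in the full $H_1$ inner product, and resolve the new cross terms by an additional integration by parts rather than by Young.
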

Recall that by Lemma \ref{lem:rho_x}, $\langle \rho_{t},\idx\rangle=m_{\lambda}\exp{((\beta-1/2)t+W^{0}_{t})}$.

\begin{proof}
We will drop the explicit dependence on $\omega$ and $t$ and consider them to be fixed. Furthermore, we introduce the constants $a,b \geq 0$, whose values will be determined during the proof. 
 Before we begin, we introduce the following notation to keep the proof more concise:
\begin{align*}
    \V(t,\omega)\coloneqq   m_{\lambda}\exp{((\beta-1/2)t+W^{0}_{t})}.
\end{align*}
By density, it suffices to perform the following calculations for $u$ being smooth and compactly supported. Note that for any two such functions $v_{1},v_{2}$, the boundary term in the following integration by parts vanishes and we obtain \begin{align*}
    \langle \partial_{x}v_{1},v_{2}\rangle_{L^{2}}&=\int_{0}^{\infty}\partial_{x}v_{1}(x)v_{2}(x)\dx=v_{1}(x)v_{2}(x)\bigg|_{x=0}^{x=\infty}-\int_{0}^{\infty}v_{1}(x)\partial_{x}v_{2}(x)\dx\\
    &=-\int_{0}^{\infty}v_{1}(x)\partial_{x}v_{2}(x)\dx=-\langle v_{1},\partial_{x}v_{2}\rangle_{L^{2}}.
\end{align*}

Also recall that $\partial_{x}v_{1}(x)v_{1}(x)=\frac{1}{2}\partial_{x}\left(v_{1}(x)^{2}\right)$.
\begin{align*}
    \langle Lu,u\rangle_{H_{1}}&=\sum_{\gamma = 0}^{m}\langle \partial_{x}^{\gamma}\partial_{x} (f(x)\partial_{x} u)(1+x^{2})^{a+\gamma b},\partial_{x}^{\gamma}u(1+x^{2})^{a+\gamma b}\rangle_{L^{2}}\\
    &\phantom{xx}+\sum_{\gamma = 0}^{m}\langle \partial_{x}^{\gamma}(g(x)\partial_{x} u)(1+x^{2})^{a+\gamma b},\partial_{x}^{\gamma}u(1+x^{2})^{a+\gamma b}\big)\rangle_{L^{2}}\\
    &=I+II.
\end{align*}
We begin by studying the first term. By integration-by-parts and the product rule, we obtain
\begin{align*}
    I&=\sum_{\gamma = 0}^{m}\langle \partial_{x}^{\gamma+1} (f(x)\partial_{x} u)(1+x^{2})^{a+\gamma b},\partial_{x}^{\gamma}u(1+x^{2})^{a+\gamma b}\rangle_{L^{2}}\\
    &=-\sum_{\gamma = 0}^{m}\langle \partial_{x}^{\gamma}(f(x)\partial_{x} u)(1+x^{2})^{a+\gamma b},\partial_{x}^{\gamma+1}u (1+x^{2})^{a+\gamma b}\rangle_{L^{2}}\\
    &\phantom{xx}-\sum_{\gamma = 0}^{m}\langle \partial_{x}^{\gamma}(f(x)\partial_{x} u)(1+x^{2})^{a+\gamma b},\partial_{x}^{\gamma}u (2a+2\gamma b)(1+x^{2})^{a+\gamma b-1}2x\rangle_{L^{2}}\\
    &=-\sum_{\gamma = 0}^{m}\left\langle \left(\sum_{k=0}^{\gamma}C_{k}^{\gamma}\chi_{k \leq 2}\partial_{x}^{k }f(x)\partial_{x}^{\gamma -k}\partial_{x} u\right) (1+x^{2})^{a+\gamma b},\partial_{x}^{\gamma+1}u (1+x^{2})^{a+\gamma b}\right\rangle_{L^{2}}\\
    &\phantom{xx}-\sum_{\gamma = 0}^{m}\left\langle \left(\sum_{k=0}^{\gamma}C_{k}^{\gamma}\chi_{k \leq 2}\partial_{x}^{k }f(x)\partial_{x}^{\gamma -k}\partial_{x} u\right)(1+x^{2})^{a+\gamma b},\partial_{x}^{\gamma}u (2a+2\gamma b)(1+x^{2})^{a+\gamma b-1}2x\right\rangle_{L^{2}},
\end{align*}
where $C_{k}^{n}\coloneqq \frac{n!}{k!(n-k)!}$. Since $\partial_{x}^{k}f=0$ for $k\geq 3$, only 3 terms in each sum over $k$ are non-zero.
\begin{align*}
        &I=-\sum_{\gamma = 0}^{m}\left\langle C_{0}^{\gamma}f(x)\partial_{x}^{\gamma+1} u (1+x^{2})^{a+\gamma b},\partial_{x}^{\gamma+1}u(1+x^{2})^{a+\gamma b}\right\rangle_{L^{2}}\\
    &\phantom{xx}-\sum_{\gamma = 0}^{m}\left\langle C_{0}^{\gamma} f(x)\partial_{x}^{\gamma+1}\ u(1+x^{2})^{a+\gamma b},\partial_{x}^{\gamma}u(2a+2\gamma b)(1+x^{2})^{a+\gamma b-1}2x\right\rangle_{L^{2}}\\
       &-\sum_{\gamma = 0}^{m}\left\langle C_{1}^{\gamma} \partial_{x}f(x)\partial_{x}^{\gamma} u (1+x^{2})^{a+\gamma b},\partial_{x}^{\gamma+1} u(1+x^{2})^{a+\gamma b}\right\rangle_{L^{2}}\\
    &\phantom{xx}-\sum_{\gamma = 0}^{m}\left\langle C_{1}^{\gamma} \partial_{x}f(x)\partial_{x}^{\gamma} u (1+x^{2})^{a+\gamma b},\partial_{x}^{\gamma}u (2a+2\gamma b)(1+x^{2})^{a+\gamma b-1}2x\right\rangle_{L^{2}}\\
    &-\sum_{\gamma = 0}^{m}\left\langle C_{2}^{\gamma} \partial_{x}^{2}f(x)\partial_{x}^{\gamma-1}u (1+x^{2})^{a+\gamma b},\partial_{x}^{\gamma+1}u(1+x^{2})^{a+\gamma b}\right\rangle_{L^{2}}\\
    &\phantom{xx}-\sum_{\gamma = 0}^{m}\left\langle C_{2}^{\gamma} \partial_{x}^{2}f(x)\partial_{x}^{\gamma-1} u (1+x^{2})^{a+\gamma b},\partial_{x}^{\gamma}u (2a+2\gamma b)(1+x^{2})^{a+\gamma b-1}2x\right\rangle_{L^{2}}\\
     &=\underbrace{I^{1}_{1}+I^{1}_{2}}_{I^{1}}+\underbrace{I^{2}_{1}+I^{2}_{2}}_{I^{2}}+\underbrace{I^{3}_{1}+I^{3}_{2}}_{I^{3}}.
\end{align*}
We consider each term separately,
\begin{align*}
     &I^{1}_{1}=-\sum_{\gamma = 0}^{m}\left\langle C_{0}^{\gamma}f(x)\partial_{x}^{\gamma+1} u (1+x^{2})^{a+\gamma b},\partial_{x}^{\gamma+1}u(1+x^{2})^{a+\gamma b}\right\rangle_{L^{2}}\\
    &=-\sum_{\gamma = 0}^{m}\int_{\R_{+}}C_{0}^{\gamma}f(x)|\partial_{x}^{\gamma+1} u|^{2} (1+x^{2})^{2a+2\gamma b}\dx.
\end{align*}
\begin{align*}
     &I^{1}_{2}=-\sum_{\gamma = 0}^{m}\left\langle C_{0}^{\gamma} f(x)\partial_{x}^{\gamma+1} u(1+x^{2})^{a+\gamma b},\partial_{x}^{\gamma}u(2a+2\gamma b)(1+x^{2})^{a+\gamma b-1}2x\right\rangle_{L^{2}}\\
    &=-\sum_{\gamma = 0}^{m}C_{0}^{\gamma}\int_{\R_{+}}f(x)\partial_{x}^{\gamma+1}u\partial_{x}^{\gamma}u (2a+2\gamma b)(1+x^{2})^{a+\gamma b-1}2x\dx\\
    &=-\sum_{\gamma = 0}^{m}C_{0}^{\gamma}\frac{1}{2}\int_{\R_{+}}f(x)\partial_{x}\left(\left(\partial_{x}^{\gamma} u\right)^{2}\right) (2a+2\gamma b)(1+x^{2})^{2a+2\gamma b-1}2x\dx\\
    &=\sum_{\gamma = 0}^{m}C_{0}^{\gamma}\frac{1}{2}\int_{\R_{+}}\left(\partial_{x}^{\gamma} u\right)^{2} (2a+2\gamma b)\partial_{x}\left(f(x)2x(1+x^{2})^{2a+2\gamma b-1}\right)\dx\\
    &=\sum_{\gamma = 0}^{m}C_{0}^{\gamma}\frac{1}{2}\int_{\R_{+}}\left(\partial_{x}^{\gamma} u\right)^{2} (2a+2\gamma b) \left(2x(1+x^{2})^{2a+2\gamma b-1}\partial_{x} f(x)+f(x)(1+x^{2})^{2a+2\gamma b-2}2(1+(4a+4\gamma b-1)x^{2})\right)\dx\\
    &=\sum_{\gamma = 0}^{m}C_{0}^{\gamma}\frac{2}{2}(2a+2\gamma b)\int_{\R_{+}}\left(\partial_{x}^{\gamma} u\right)^{2}  x(1+x^{2})^{2a+2\gamma b-1}\partial_{x} f(x))\dx\\
    &\phantom{xx}+\sum_{\gamma = 0}^{m}C_{0}^{\gamma}\frac{2}{2}(2a+2\gamma b)\int_{\R_{+}}\left(\partial_{x}^{\gamma} u\right)^{2}  f(x)(1+x^{2})^{2a+2\gamma b-2}(1+(4a+4\gamma b-1)x^{2})\dx\\
\end{align*}
The other terms will be estimated in a similar fashion. 
\begin{align*}
       I^{2}_{1}=&-\sum_{\gamma = 0}^{m}\left\langle C_{1}^{\gamma} \partial_{x}f(x)\partial_{x}^{\gamma} u (1+x^{2})^{a+\gamma b},\partial_{x}^{\gamma+1}u(1+x^{2})^{a+\gamma b}\right\rangle_{L^{2}}\\
    &= -\sum_{\gamma = 0}^{m}\int_{\R_{+}}C_{1}^{\gamma}\partial_{x}f(x)\partial_{x}^{\gamma}u \partial_{x}^{\gamma+1} u  (1+x^{2})^{2a+2\gamma b}\dx\\
    &= \sum_{\gamma = 0}^{m}\frac{1}{2}\int_{\R_{+}}C_{1}^{\gamma}(\partial_{x}^{\gamma} u)^{2}\left((1+x^{2})^{2a+2\gamma b} \partial_{x}^{2}f(x)  +\partial_{x}f(x) (2a+2\gamma b)(1+x^{2})^{2a+2\gamma b-1}2x\right)\dx\\
    &= \sum_{\gamma = 0}^{m}\frac{1}{2}\int_{\R_{+}}C_{1}^{\gamma}(\partial_{x}^{\gamma} u)^{2}(1+x^{2})^{2a+2\gamma b}\partial_{x}^{2}f(x)\dx\\
&\phantom{xx}+ \sum_{\gamma = 0}^{m}\frac{2}{2}(2a+2\gamma b)\int_{\R_{+}}C_{1}^{\gamma}(\partial_{x}^{\gamma} u)^{2}\left(\partial_{x}f(x) (1+x^{2})^{2a+2\gamma b-1}x\right)\dx.
\end{align*}
\begin{align*}
       I^{2}_{2}=&-\sum_{\gamma = 0}^{m}\left\langle C_{1}^{\gamma} \partial_{x}f(x)\partial_{x}^{\gamma} u (1+x^{2})^{a+\gamma b},\partial_{x}^{\gamma} u (2a+2\gamma b)(1+x^{2})^{a+\gamma b-1}2x\right\rangle_{L^{2}}\\
    &=-\sum_{\gamma = 0}^{m}2(2a+2\gamma b)\int_{\R_{+}}C_{1}^{\gamma} \partial_{x}f(x)\left(\partial_{x}^{\gamma} u\right)^{2} (1+x^{2})^{2a+2\gamma b-1}x\dx.
\end{align*}
The third term, after integrating by parts and recalling that $\partial_{x}^{k}f(x)=0$ for $k\geq 3$, can be estimated in a similar way.
\begin{align*}
     I^{3}_{1}=&-\sum_{\gamma = 0}^{m}\left\langle C_{2}^{\gamma} \partial_{x}^{2}f(x) \partial_{x}^{\gamma-1} u (1+x^{2})^{a+\gamma b},\partial_{x}^{\gamma+1}u(1+x^{2})^{a+\gamma b}\right\rangle_{L^{2}}\\
    &= -\sum_{\gamma = 0}^{m}\int_{\R_{+}}C_{2}^{\gamma}\partial_{x}^{2}f(x)\partial_{x}^{\gamma-1}u \partial_{x}^{\gamma+1} u  (1+x^{2})^{2a+2\gamma b}\dx\\
    &= \sum_{\gamma = 0}^{m}C_{2}^{\gamma}\int_{\R_{+}}\partial_{x}^{2}f(x)\partial_{x}^{\gamma}u \partial_{x}^{\gamma} u  (1+x^{2})^{2a+2\gamma b}\dx+ \sum_{\gamma = 0}^{m}C_{2}^{\gamma}\int_{\R_{+}}\partial_{x}^{2}f(x)\partial_{x}^{\gamma-1}u \partial_{x}^{\gamma} u  (2a+2\gamma b)(1+x^{2})^{2a+2\gamma b-1}2x\dx\\
    &= \sum_{\gamma = 0}^{m}C_{2}^{\gamma}\int_{\R_{+}}\partial_{x}^{2}f(x)|\partial_{x}^{\gamma}u|^{2} (1+x^{2})^{2a+2\gamma b}\dx+ \sum_{\gamma = 0}^{m}C_{2}^{\gamma}\frac{1}{2}\int_{\R_{+}}\partial_{x}^{2}f(x)\partial_{x}\left(\left(\partial_{x}^{\gamma-1}u\right)^{2}\right)  (2a+2\gamma b)(1+x^{2})^{2a+2\gamma b-1}2x\dx\\
    &= \sum_{\gamma = 0}^{m}C_{2}^{\gamma}\int_{\R_{+}}\partial_{x}^{2}f(x)|\partial_{x}^{\gamma}u|^{2} (1+x^{2})^{2a+2\gamma b}\dx\\
    &\phantom{xx}- \sum_{\gamma = 0}^{m}C_{2}^{\gamma}\frac{2}{2}(2a+2\gamma b)\int_{\R_{+}}\partial_{x}^{2}f(x)\left(\partial_{x}^{\gamma-1}u\right)^{2}  (1+x^{2})^{2a+2\gamma b-2}(1+(4a+4\gamma b-1)x^{2})\dx\\
\end{align*}
\begin{align*}
     I^{3}_{2}=&-\sum_{\gamma = 0}^{m}\left\langle C_{2}^{\gamma} \partial_{x}^{2}f(x)\partial_{x}^{\gamma-1} u (1+x^{2})^{a+\gamma b},\partial_{x}^{\gamma} u(2a+2\gamma b)(1+x^{2})^{a+\gamma b-1}2x\right\rangle_{L^{2}}\\
    &=-\sum_{\gamma = 0}^{m}\int_{\R_{+}}C_{2}^{\gamma} \partial_{x}^{2}f(x)\frac{1}{2}\partial_{x}\left(\partial_{x}^{\gamma-1} u\right)^{2} (2a+2\gamma b) (1+x^{2})^{2a+2\gamma b-1}2x\dx\\
    &=-\sum_{\gamma = 0}^{m}C_{2}^{\gamma}\frac{2}{2}(2a+2\gamma b)\int_{\R_{+}} \partial_{x}^{2}f(x)\left(\partial_{x}^{\gamma-1} u\right)^{2}  (1+x^{2})^{2a+2\gamma b-2}(1+(4a+4\gamma b-1)x^{2})\dx
\end{align*}
For the second term,
\begin{align*}
    II&=\sum_{\gamma = 0}^{m}\langle \partial_{x}^{\gamma}(g(x)\partial_{x} u)(1+x^{2})^{a+\gamma b},\partial_{x}^{\gamma}u(1+x^{2})^{a+\gamma b}\big)\rangle_{L^{2}}\\
    &=\sum_{\gamma = 0}^{m}\left\langle \left(\sum_{k=0}^{\gamma}C_{k}^{\gamma}\chi_{k \leq 1}\partial_{x}^{k }g(x)\partial_{x}^{\gamma -k}\partial_{x} u\right) (1+x^{2})^{a+\gamma b},\partial_{x}^{\gamma}u(1+x^{2})^{a+\gamma b}\right\rangle_{L^{2}},
\end{align*}
where we used again the notation $C_{k}^{n}\coloneqq \frac{n!}{k!(n-k)!}$. As before, we only consider the terms for which$ \partial_{x}^{k}g\neq 0$.
\begin{align*}
    II&=\sum_{\gamma = 0}^{m}\int_{\R_{+}}C_{0}^{\gamma}g(x) \partial_{x}^{\gamma+1} u \partial_{x}^{\gamma}u(1+x^{2})^{2a+2\gamma b}\dx+\sum_{\gamma = 0}^{m}\int_{\R_{+}}C_{1}^{\gamma}\partial_{x}g(x) \partial_{x}^{\gamma}u \partial_{x}^{\gamma}u(1+x^{2})^{2a+2\gamma b}\dx\\
    &=\sum_{\gamma = 0}^{m}C_{0}^{\gamma}\frac{1}{2}\int_{\R_{+}}g(x) \partial_{x}\left(\left(\partial_{x}^{\gamma} u\right)^{2}\right) (1+x^{2})^{2a+2\gamma b}\dx+\sum_{\gamma = 0}^{m}C_{1}^{\gamma}\int_{\R_{+}}\partial_{x}g(x) \partial_{x}^{\gamma} u \partial_{x}^{\gamma}u(1+x^{2})^{2a+2\gamma b}\dx\\
    &=-\sum_{\gamma = 0}^{m}C_{0}^{\gamma} \frac{1}{2}\int_{\R_{+}}\left(\partial_{x}^{\gamma} u\right)^{2} \left((1+x^{2})^{2a+2\gamma b}\partial_{x}g(x)+g(x)(2a+2\gamma b)(1+x^{2})^{2a+2\gamma b-1}2x\right)\dx\\
    &\phantom{xx}+\sum_{\gamma = 0}^{m}C_{1}^{\gamma}\int_{\R_{+}}\partial_{x}g(x) \partial_{x}^{\gamma} u \partial_{x}^{\gamma}u(1+x^{2})^{2a+2\gamma b}\dx\\
    &=-\sum_{\gamma = 0}^{m}\int_{\R_{+}}C_{0}^{\gamma}\frac{1}{2} \partial_{x}g(x)\left(\partial_{x}^{\gamma}u\right)^{2} (1+x^{2})^{2a+2\gamma b}\dx-\sum_{\gamma = 0}^{m}C_{0}^{\gamma}\frac{2}{2}(2a+2\gamma b)\int_{\R_{+}} \left(\partial_{x}^{\gamma}u\right)^{2} g(x)(1+x^{2})^{2a+2\gamma b-1}x\dx\\
    &\phantom{xx}+\sum_{\gamma = 0}^{m}C_{1}^{\gamma}\int_{\R_{+}} \partial_{x}g(x)|\partial_{x}^{\gamma}u|^{2} (1+x^{2})^{2a+2\gamma b}\dx\\
\end{align*}
The last term is already of a desired form, since it corresponds to $\gamma \|u\|^{2}_{H_{1}}$.
Collecting all terms with the same powers of $D$ and $x$, observing that $C_{0}^{\gamma}=1$, $C_{1}^{\gamma}=\gamma\chi_{\gamma\geq 1}$, $C_{2}^{\gamma}=\frac{\gamma(\gamma-1)}{2}\chi_{\gamma\geq 2}$ and recalling the notation $ \V(t,\omega)=  m_{\lambda}\exp{((\beta-1/2)t+W^{0}_{t})}$, yields
\begin{align*}
    \langle Lu,u\rangle_{H_{1}}&=-\sum_{\gamma = 0}^{m}\int_{\R_{+}}f(x)|\partial_{x}^{\gamma+1} u|^{2} (1+x^{2})^{2a+2\gamma b}\dx\\
    &\phantom{xx}+\sum_{\gamma = 0}^{m}\int_{\R_{+}}|\partial_{x}^{\gamma} u|^{2} (1+x^{2})^{2a+2\gamma b}h_{1}(\gamma,x)\dx\\
    &\phantom{xx}+\sum_{\gamma = 0}^{m}\int_{\R_{+}}|\partial_{x}^{\gamma} u|^{2} x(1+x^{2})^{2a+2\gamma b-1}h_{2}(\gamma,x)\dx\\
    &\phantom{xx}+\sum_{\gamma = 0}^{m}\int_{\R_{+}}|\partial_{x}^{\gamma} u|^{2} (1+x^{2})^{2a+2\gamma b-2}(1+(4a+4\gamma b-1)x^{2})h_{3}(\gamma,x)\dx\\
    &\phantom{xx}+\sum_{\gamma = 0}^{m}\int_{\R_{+}}|\partial_{x}^{\gamma-1} u|^{2} (1+x^{2})^{2a+2\gamma b-2}(1+(4a+4\gamma b-1)x^{2})h_{4}(\gamma,x)\dx,
\end{align*}
where 
\begin{align*}
    h_{1}(\gamma,x)&=\left(\frac{1}{2}\gamma \chi_{\gamma\geq 1} \partial_{x}^{2}f(x)+\gamma \chi_{\gamma\geq 1}\partial_{x}g(x)+\frac{\gamma(\gamma-1)}{2}\chi_{\gamma\geq 2}\partial_{x}^{2}f(x)-\frac{1}{2}\partial_{x}g(x)\right)\\
    &=\left(\frac{1}{2}\gamma \chi_{\gamma\geq 1} -\gamma \chi_{\gamma\geq 1}+\frac{\gamma(\gamma-1)}{2}\chi_{\gamma\geq 2}+\frac{1}{2}\right)\\
    h_{2}(\gamma,x)&=(2a+2\gamma b)\left(\partial_{x}f(x)-\gamma \chi_{\gamma\geq 1}\partial_{x}f(x)-g(x)\right)\\
    &=(2a+2\gamma b)\left(\V(t,\omega)\frac{\alpha}{2}+x-\gamma \chi_{\gamma\geq 1}\V(t,\omega)\frac{\alpha}{2}-\gamma \chi_{\gamma\geq 1}x+\left(\left(\frac{\alpha}{2}- \beta\right) \V(t,\omega)+x\right)\right)\\
    &=(2a+2\gamma b)\left(\V(t,\omega)\left(\frac{\alpha}{2}(2-\gamma \chi_{\gamma\geq 1})-\beta\right)+x(2-\gamma \chi_{\gamma\geq 1})\right)\\
    h_{3}(\gamma,x)&=(2a+2\gamma b)f(x)=(2a+2\gamma b)\left(\V(t,\omega)\frac{\alpha}{2}x+\frac{x^{2}}{2}\right)\\
    h_{4}(\gamma,x)&=-2\frac{\gamma(\gamma-1)}{2}\chi_{\gamma \geq 2}(2a+2\gamma b)\partial_{x}^{2}f(x)=-2\frac{\gamma(\gamma-1)}{2}\chi_{\gamma \geq 2}(2a+2\gamma b)\partial_{x}^{2}.
\end{align*}
Using Young's inequality when necessary,
\begin{align*}
    &h_{1}(\gamma,x)\leq m\\
    &x(1+x^{2})^{2a+2\gamma b-1}h_{2}(\gamma,x)=\frac{x}{(1+x^{2})}(1+x^{2})^{2a+2\gamma b}h_{2}(\gamma,x)\leq C_{\alpha,\beta,a,b,m}\V(t,\omega)(1+x^{2})^{2a+2\gamma b}\\
    &(1+x^{2})^{2a+2\gamma b-2}(1+(4a+4\gamma b-1)x^{2})h_{3}(\gamma,x)\leq C_{\alpha,\beta,a,b,m}\V(t,\omega)(1+x^{2})^{2a+2\gamma b}\\
    &(1+x^{2})^{2a+2\gamma b-2}(1+(4a+4\gamma b-1)x^{2})h_{4}(\gamma,x)= (1+x^{2})^{2a+2\gamma b}\frac{(1+(4a+4\gamma b-1)x^{2})}{(1+x^{2})^{2}}h_{4}(\gamma,x)\leq 0.
\end{align*}
Hence,
\begin{align*}
    \langle Lu,u\rangle_{H_{1}}&\leq -\sum_{\gamma = 0}^{m}\int_{\R_{+}}\V(t,\omega)\frac{\alpha}{2}x|\partial_{x}^{\gamma+1} u|^{2} (1+x^{2})^{2a+2\gamma b}\dx\\
&\phantom{xx} -\sum_{\gamma = 0}^{m}\int_{\R_{+}}\frac{x^{2}}{2}|\partial_{x}^{\gamma+1} u|^{2} (1+x^{2})^{2a+2\gamma b}\dx\\
&\phantom{xx} +C_{\alpha,\beta,a,b,m}\V(t,\omega)\|u\|^{2}_{H_{1}}.
\end{align*}
It remains to bound the remaining term in \eqref{eqn:weak_monotonicity_L_sigma}.
We will begin with the term $\langle u,\sigma \varphi\rangle_{H_{1}}$ without the square and supremum, but assume that $\|\varphi\|_{H_{1}}=1$.
\begin{align*}
    &\left|\langle u,\sigma \varphi\rangle_{H_{1}}\right|=\left|\sum_{\gamma = 0}^{m} \int_{\R_{+}}\partial_{x}^{\gamma}u (\partial_{x}^{\gamma}\varphi - \partial_{x}^{\gamma}\partial_{x}(x \varphi))(1+x^{2})^{2a+2\gamma b}\dx\right|\\
         &=\left|\sum_{\gamma = 0}^{m} \int_{\R_{+}}\partial_{x}^{\gamma}u \partial_{x}^{\gamma}\varphi(1+x^{2})^{2a+2\gamma b}\dx- \sum_{\gamma = 0}^{m} \int_{\R_{+}}\partial_{x}^{\gamma}u \partial_{x}^{\gamma+1}(x \varphi))(1+x^{2})^{2a+2\gamma b}\dx \right|\\
        &=\left|\sum_{\gamma = 0}^{m} \int_{\R_{+}}\partial_{x}^{\gamma}u \partial_{x}^{\gamma}\varphi(1+x^{2})^{2a+2\gamma b}\dx- \sum_{\gamma = 0}^{m} \int_{\R_{+}}\sum_{k=0}^{|\gamma|+1}C_{k}^{|\gamma|+1} \partial_{x}^{\gamma}u \partial_{x}^{k}x\partial_{x}^{\gamma+1-k} \varphi(1+x^{2})^{2a+2\gamma b}\dx\right|\\
                &=\left|\sum_{\gamma = 0}^{m} \int_{\R_{+}}\partial_{x}^{\gamma}u \partial_{x}^{\gamma}\varphi(1+x^{2})^{2a+2\gamma b}\dx- \sum_{\gamma = 0}^{m} \int_{\R_{+}}C_{0}^{|\gamma|+1} \partial_{x}^{\gamma}u x\partial_{x}^{\gamma+1} \varphi(1+x^{2})^{2a+2\gamma b}\dx\right.\\
                &\phantom{xx}\left.- \sum_{\gamma = 0}^{m} \int_{\R_{+}}C_{1}^{|\gamma|+1} \partial_{x}^{\gamma}u \partial_{x}^{\gamma} \varphi(1+x^{2})^{2a+2\gamma b}\dx\right|\\
                        &=\left|\sum_{\gamma = 0}^{m} \int_{\R_{+}}\partial_{x}^{\gamma}u \partial_{x}^{\gamma}\varphi(1+x^{2})^{2a+2\gamma b}\dx+\sum_{\gamma = 0}^{m} \int_{\R_{+}}C_{0}^{|\gamma|+1} \partial_{x}^{\gamma+1}u x\partial_{x}^{\gamma} \varphi(1+x^{2})^{2a+2\gamma b}\dx\right.\\
                        &\phantom{xx}+\sum_{\gamma = 0}^{m} \int_{\R_{+}}C_{0}^{|\gamma|+1} \partial_{x}^{\gamma}u \partial_{x}^{\gamma} \varphi(1+x^{2})^{2a+2\gamma b}\dx+\sum_{\gamma = 0}^{m} \int_{\R_{+}}C_{0}^{|\gamma|+1} \partial_{x}^{\gamma}u \partial_{x}^{\gamma} \varphi 2x^{2}(2a+2\gamma b)(1+x^{2})^{2a+2\gamma b-1}\dx\\  
                        &\phantom{xx}\left. - \sum_{\gamma = 0}^{m} \int_{\R_{+}}C_{1}^{|\gamma|+1} \partial_{x}^{\gamma}u \partial_{x}^{\gamma} \varphi(1+x^{2})^{2a+2\gamma b}\dx\right|\\
         &=\left|\sum_{\gamma = 0}^{m} \int_{\R_{+}}\left(\partial_{x}^{\gamma}u -C_{1}^{|\gamma|+1} \partial_{x}^{\gamma}u+C_{0}^{|\gamma|+1}\left( x \partial_{x}^{\gamma+1}u+  \partial_{x}^{\gamma}u+\frac{2x^{2}(2a+2\gamma b)}{(1+x^{2})}\partial_{x}^{\gamma}u\right)\right)\partial_{x}^{\gamma}\varphi(1+x^{2})^{2a+2\gamma b}\dx\right|\\
     &\leq  \sqrt{\sum_{\gamma = 0}^{m}\int_{\R_{+}}\left(\left(2-C_{1}^{|\gamma|+1} +\frac{2x^{2}(2a+2\gamma b)}{(1+x^{2})}\right)\partial_{x}^{\gamma}u + x \partial_{x}^{\gamma+1}u\right)^{2} (1+x^{2})^{2a+2\gamma b}\dx}\sqrt{\sum_{\gamma = 0}^{m}\int_{\R_{+}}\left(\partial_{x}^{\gamma}\varphi\right)^{2} (1+x^{2})^{2a+2\gamma b}}\dx\\
     &= \sqrt{ \int_{\R_{+}}\sum_{\gamma = 0}^{m}\left(\left(2-C_{1}^{|\gamma|+1} +\frac{2x^{2}(2a+2\gamma b)}{(1+x^{2})}\right)\partial_{x}^{\gamma}u + x \partial_{x}^{\gamma+1}u\right)^{2}(1+x^{2})^{2a+2\gamma b}\dx}\|\varphi\|_{H_{1}}\\
     &\leq\sqrt{ \int_{\R_{+}}\sum_{\gamma = 0}^{m}\left(\left(2-C_{1}^{|\gamma|+1} +\frac{2x^{2}(2a+2\gamma b)}{(1+x^{2})}\right)\partial_{x}^{\gamma}u + x \partial_{x}^{\gamma+1}u\right)^{2} (1+x^{2})^{2a+2\gamma b}\dx}.
\end{align*}
We expand the square inside the integral,
\begin{align*}
    &\left(\left(2-C_{1}^{|\gamma|+1} +\frac{2x^{2}(2a+2\gamma b)}{(1+x^{2})}\right)\partial_{x}^{\gamma}u + x \partial_{x}^{\gamma+1}u\right)^{2} \\
    &=\left(\left(2-C_{1}^{|\gamma|+1} +\frac{2x^{2}(2a+2\gamma b)}{(1+x^{2})}\right)^{2}(\partial_{x}^{\gamma}u)^{2} + x^2 (\partial_{x}^{\gamma+1}u)^{2} +2\left(2-C_{1}^{|\gamma|+1} +\frac{2x^{2}(2a+2\gamma b)}{(1+x^{2})}\right)x \partial_{x}^{\gamma}u\partial_{x}^{\gamma+1}u\right).
\end{align*}
Squaring the whole integral implies
\begin{align*}
    \sup_{\varphi\colon \|\varphi\|_{H_{1}}=1}\langle u,\sigma \varphi\rangle_{H_{1}}^{2}&\leq
    \int_{\R_{+}}\sum_{\gamma = 0}^{m}\left(\left(2-C_{1}^{|\gamma|+1} +\frac{2x^{2}(2a+2\gamma b)}{(1+x^{2})}\right)\partial_{x}^{\gamma}u + x \partial_{x}^{\gamma+1}u\right)^{2} (1+x^{2})^{2a+2\gamma b}\dx\\
    &=\int_{\R_{+}}\sum_{\gamma = 0}^{m}\left(2-C_{1}^{|\gamma|+1} +\frac{2x^{2}(2a+2\gamma b)}{(1+x^{2})}\right)^{2}(\partial_{x}^{\gamma}u)^{2} (1+x^{2})^{2a+2\gamma b}\dx\\
    &\phantom{xx}+\int_{\R_{+}}\sum_{\gamma = 0}^{m} x^2 (\partial_{x}^{\gamma+1}u)^{2} (1+x^{2})^{2a+2\gamma b}\dx\\
      &\phantom{xx}+\int_{\R_{+}}\sum_{\gamma = 0}^{m}2\frac{2x^{2}(2a+2\gamma b)}{(1+x^{2})}x \partial_{x}^{\gamma}u\partial_{x}^{\gamma+1}u (1+x^{2})^{2a+2\gamma b}\dx.
\end{align*}
The last term is the most troublesome, which is why we analyse it separately,
\begin{align*}
   &\int_{\R_{+}}\sum_{\gamma = 0}^{m}2\frac{2x^{2}(2a+2\gamma b)}{(1+x^{2})}x \partial_{x}^{\gamma}u\partial_{x}^{\gamma+1}u (1+x^{2})^{2a+2\gamma b}\dx=\int_{\R_{+}}\sum_{\gamma = 0}^{m}2\frac{2x^{2}(2a+2\gamma b)}{(1+x^{2})}x \frac{\partial_{x} \left(\partial_{x}^{\gamma}u\right)^{2}}{2} (1+x^{2})^{2a+2\gamma b}\dx\\
   &=-\int_{\R_{+}}\sum_{\gamma = 0}^{m} \left(\partial_{x}^{\gamma}u\right)^{2}2(2a+2\gamma b)\partial_{x}\left(\frac{x^{3}}{(1+x^{2})}x(1+x^{2})^{2a+2\gamma b}\right)\dx\\
   &=-\int_{\R_{+}}\sum_{\gamma = 0}^{m} \left(\partial_{x}^{\gamma}u\right)^{2}2(2a+2\gamma b)\frac{x^{2}\left(3+(4a+4\gamma b+1)x^{2}\right)}{(1+x^2)^{2}}(1+x^2)^{2a+2\gamma b}\dx\leq 0.
\end{align*}
Hence,
\begin{align*}
    \sup_{\varphi\in H_{1}\colon \|\varphi\|_{H_{1}}=1}|\langle u,\sigma\varphi\rangle_{H_{1}}|^{2}&\leq \sum_{\gamma = 0}^{m} \int_{\R_{+}} x^{2}\left(\partial_{x}^{\gamma+1}u\right)^{2}(1+x^{2})^{2a+2\gamma b}\dx\\
    &\phantom{xx}+ \sum_{\gamma = 0}^{m} \int_{\R_{+}} C_{a,b,\gamma}(x)\left(\partial_{x}^{\gamma}u\right)^{2}(1+x^{2})^{2a+2\gamma b}\dx\\
    &\leq \sum_{\gamma = 0}^{m} \int_{\R_{+}} x^{2}\left(\partial_{x}^{\gamma+1}u\right)^{2}(1+x^{2})^{2a+2\gamma b}\dx\\
    &\phantom{xx}+ C_{a,b,m}\|u\|^{2}_{H_{1}},
\end{align*}
where
\begin{align*}
    C_{\alpha,\beta,\gamma}(x)=\left(2-C_{1}^{|\gamma|+1} +\frac{2x^{2}(2a+2\gamma b)}{(1+x^{2})}\right)^{2}-2(2a+2\gamma b)\frac{x^{2}\left(3+(4a+4\gamma b+1)x^{2}\right)}{(1+x^2)^{2}}.
\end{align*}
Combining the estimates for $L$ and $\sigma$ yields
\begin{align*}
     \langle Lu,u\rangle_{H_{1}}-\frac{1}{2} \|\sigma(u)\|^{2}_{H_{1}}&\leq -\sum_{\gamma = 0}^{m}\int_{\R_{+}}\V(t,\omega)\frac{\alpha}{2}x|D^{\gamma+1} u|^{2} (1+x^{2})^{2a+2\gamma b}\dx\\
&\phantom{xx} +C_{\alpha,\beta,a,b,m}(1+\V(t,\omega))\|u\|^{2}_{H_{1}}.
\end{align*}
Since the first term in non positive, we obtain
\begin{align*}
     \langle Lu,u\rangle_{H_{1}}-\frac{1}{2} \|\sigma(u)\|^{2}_{H_{1}}&\leq C_{\alpha,\beta,a,b,m}(1+\V(t,\omega))\|u\|^{2}_{H_{1}}.
\end{align*}
\end{proof}

\subsection{Appendix C: SDE estimates}
We here provide several estimates used throughout the proofs.

\begin{lemma}\label{lem:EsupZp_EZp_estimates}
Fix $N \in \mathbb{N}$ and consider
$Z_{(N)}\coloneqq \frac{1}{N}\sum_{i=1}^{N}X_{i}$, where $(X_{1},\dots,X_{N})$ is a (non-negative) solution of the system \eqref{eqn:X_Ni_SDE}. Take $p\geq 1$ and assume $\E Z_{(N)}(0)^{p}\leq C$. Then the following estimate hold,
\begin{align}
    \E Z_{(N)}(t)^{p}\leq \E Z_{(N)}(0)^{p}\exp{\bigg(\bigg(p\beta+\frac{p(p-1)}{2}\bigg)t\bigg)} .\label{eqn:EZp_estimate}
\end{align}
\begin{align}
    \E \sup_{t\in [0,T]} Z_{(N)}(t)^{p}\leq C_{\E Z_{(N)}(0)^{p},T,\beta,p}.\label{eqn:EsupZp_estimate}
\end{align}
Moreover, if \ref{A:A_3_X(0)_Z(0)} holds for some $\zeta >0$, then 
\[
\E Z_{(N)}(0)^{1+\zeta} \leq C
\]
for some constant $C$.

\end{lemma}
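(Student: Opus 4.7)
The plan rests on the closed-form expression for $Z_{(N)}$ already derived in \eqref{eqn:equation_mean_Z}: after the martingale representation step, $Z_{(N)}$ solves the linear geometric-Brownian SDE
\[
dZ_{(N)}(t) = \beta Z_{(N)}(t)\,dt + \sqrt{\alpha/N}\,Z_{(N)}(t)\,dB_{t} + \sqrt{1-\alpha/N}\,Z_{(N)}(t)\,dW^{0}_{t},
\]
whose quadratic variation is $Z_{(N)}(t)^{2}dt$ independently of $N$. Since $B$ and $W^{0}$ are independent standard Brownian motions, L\'evy's characterization identifies $\widetilde{W}_{t} := \sqrt{\alpha/N}\,B_{t}+\sqrt{1-\alpha/N}\,W^{0}_{t}$ as a standard Brownian motion, yielding the explicit representation $Z_{(N)}(t) = Z_{(N)}(0)\exp\bigl((\beta-1/2)t + \widetilde{W}_{t}\bigr)$.

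For \eqref{eqn:EZp_estimate}, the plan is to apply It{\^o}'s formula to $f(z)=z^{p}$, obtaining
\[
dZ_{(N)}^{p} = \Bigl(p\beta + \tfrac{p(p-1)}{2}\Bigr) Z_{(N)}^{p}\,dt + p\, Z_{(N)}^{p}\,d\widetilde{W}_{t}.
\]
After localizing via $\tau_{n} := \inf\{t \colon Z_{(N)}(t) > n\}$ to kill the local martingale in expectation, Gronwall's lemma applied to $t \mapsto \E Z_{(N)}(t\wedge\tau_{n})^{p}$ yields the claimed bound on the localized interval, and Fatou's lemma as $n\to\infty$ closes the argument.

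For \eqref{eqn:EsupZp_estimate}, I would decompose
\[
Z_{(N)}(t)^{p} = Z_{(N)}(0)^{p}\,e^{(p\beta + p(p-1)/2)\,t}\,\mathcal{E}_{t}(p\widetilde{W}),
\]
where $\mathcal{E}_{t}(p\widetilde{W}) := \exp\bigl(p\widetilde{W}_{t} - p^{2}t/2\bigr)$ is a nonnegative continuous martingale. Doob's $L^{2}$-maximal inequality then provides
\[
\E\Bigl(\sup_{t\in[0,T]}\mathcal{E}_{t}(p\widetilde{W})\Bigr)^{2} \;\leq\; 4\,\E\,\mathcal{E}_{T}(p\widetilde{W})^{2} \;=\; 4\,e^{p^{2}T},
\]
which combined with the deterministic factor $e^{(p\beta + p(p-1)/2)T}$ and the hypothesis $\E Z_{(N)}(0)^{p} \leq C$ delivers \eqref{eqn:EsupZp_estimate}, with a constant depending only on $\E Z_{(N)}(0)^{p}$, $T$, $\beta$, and $p$.

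The final assertion is immediate from the identity $Z_{(N)}(0)^{1+\zeta} = N^{-(1+\zeta)}\bigl(\sum_{i=1}^{N} X_{i}(0)\bigr)^{1+\zeta}$ together with \eqref{eqn:A:E_sum_X_i_converges} in \ref{A:A_3_X(0)_Z(0)}: taking expectation, the right-hand side converges to $m_{\lambda}^{1+\zeta}$, and is therefore bounded uniformly in $N$. No genuine obstacle arises in this lemma; the only mildly delicate point is the localization in the It{\^o}/Gronwall step, needed because one does not dispose of a priori integrability of $Z_{(N)}^{2p-1}$ required to pass expectations through the martingale term.
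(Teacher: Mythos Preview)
Your proof of \eqref{eqn:EZp_estimate} is essentially identical to the paper's: It\^o's formula on $z\mapsto z^{p}$, take expectations (with localization), Gronwall. Nothing to add there.

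For \eqref{eqn:EsupZp_estimate} you take a genuinely different and cleaner route. The paper does not exploit the explicit exponential representation of $Z_{(N)}$; instead it keeps the stochastic integrals, applies the BDG inequality to each, uses the factorization $Z_{(N)}^{2p}\leq \sup_{s}Z_{(N)}(s)^{p}\cdot Z_{(N)}(s)^{p}$ together with Young's inequality (with a small parameter $\widetilde\eps$) to absorb the supremum into the left-hand side, and then closes with \eqref{eqn:EZp_estimate} or Gronwall. Your argument via the Dol\'eans--Dade exponential and Doob's $L^{2}$-maximal inequality is shorter and more transparent for this specific equation; the paper's BDG--absorption scheme is more robust in that it does not require an explicit solution formula and generalizes to equations where none is available. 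One point you leave implicit: to pass from the bound on $\E\bigl(\sup_{t}\mathcal{E}_{t}(p\widetilde W)\bigr)^{2}$ to a bound on $\E\bigl[Z_{(N)}(0)^{p}\sup_{t}\mathcal{E}_{t}(p\widetilde W)\bigr]$ using only $\E Z_{(N)}(0)^{p}$ (and not $\E Z_{(N)}(0)^{2p}$), you need that $Z_{(N)}(0)$ is independent of $\widetilde W$. This holds because $\widetilde W$ is an $(\mathcal F_{t})$-Brownian motion with $\widetilde W_{0}=0$, hence has increments independent of $\mathcal F_{0}\ni Z_{(N)}(0)$; it is worth stating explicitly.

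For the final assertion you invoke the second limit in \eqref{eqn:A:E_sum_X_i_converges}, which directly gives $\E Z_{(N)}(0)^{1+\zeta}\to m_{\lambda}^{1+\zeta}$ and hence uniform boundedness. The paper instead uses Jensen's inequality, $\E Z_{(N)}(0)^{1+\zeta}\leq \frac{1}{N}\sum_{i}\E X_{i}(0)^{1+\zeta}$, together with \eqref{eqn:A:assumption_average_second_moment_initial_condition}. Both are correct; they simply appeal to different clauses of Assumption~\ref{A:A_3_X(0)_Z(0)}.
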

\begin{proof}
It{\^o}'s formula and taking expectation results in
\begin{align*}
    \E Z_{(N)}(t)^{p}&=\E Z_{(N)}(0)^{p}+p\int_{0}^{t} \beta \E Z_{(N)}(s)^{p}\ds+\frac{p(p-1)}{2}\frac{\alpha}{N}\int_{0}^{t}\E Z_{(N)}(s)^{p}\ds+\frac{p(p-1)}{2}\bigg(1-\frac{\alpha}{N}\bigg)\int_{0}^{t}\E Z_{(N)}(s)^{p}\ds.
\end{align*}
By Gronwall's inequality, we obtain
\begin{align*}
    \E Z_{(N)}(t)^{p}\leq \big(\E Z_{(N)}(0)^{p}\big)\exp{\bigg(\bigg(p\beta+\frac{p(p-1)}{2}\bigg)t\bigg)} \leq C_{\E Z_{(N)}(0)^{p},t,\alpha,\beta,p}.
\end{align*}
For the proof of \eqref{eqn:EsupZp_estimate} we note that, by L{\'e}vy's representation theorem, $B_{(N)}(t)\coloneqq \sum_{i=1}^{N}\frac{\sqrt{X_{i}(t)}}{\sqrt{Z_{(N)}(t)}}W^{i}(t)$ is a Brownian motion. Singling out the stochastic integrals, we apply the BDG inequality and Young's inequality to obtain
\begin{align*}
    p\E\sup_{t\in [0,T]}& \frac{1}{N}\sum_{i=1}^{N}\int_{0}^{t} \sqrt{\alpha} Z_{(N)}(s)^{p}\frac{\sqrt{X_{i}(s)}}{\sqrt{Z_{(N)}(s)}}\dW^{i}_{s}\leq C_{p}\E\bigg(\int_{0}^{T} \frac{\alpha}{N^{2}} Z_{(N)}(s)^{2p}\ds\bigg)^{1/2}\\
    &\leq C_{p}\sqrt{\frac{\alpha}{N^{2}}}\E\bigg(\int_{0}^{T} \sup_{t\in [0,T]}|Z_{(N)}(t)^{p}| Z_{(N)}(s)^{p}\ds\bigg)^{1/2}\leq C_{p}\sqrt{\frac{\alpha}{N^{2}}}\E\bigg(\sqrt{\sup_{t\in [0,T]}|Z_{(N)}(t)^{p}|}\bigg(\int_{0}^{T}  Z_{(N)}(s)^{p}\ds\bigg)^{1/2}\bigg)\\
    &= C_{p}\sqrt{\frac{\alpha}{N^{2}}}\E\bigg(\sqrt{\sup_{t\in [0,T]}|Z_{(N)}(t)^{p}|}\sqrt{\frac{\widetilde{\eps}}{\widetilde{\eps}}}\bigg(\int_{0}^{T}  Z_{(N)}(s)^{p}\ds\bigg)^{1/2}\bigg)\\
    &\leq C_{p}\sqrt{\frac{\alpha}{N^{2}}}\E\bigg(\widetilde{\eps}\sup_{t\in [0,T]}|Z_{(N)}(t)^{p}|+\frac{1}{\widetilde{\eps}}\int_{0}^{T}  Z_{(N)}(s)^{p}\ds\bigg),
\end{align*}
where we introduced a small constant $\widetilde{\eps}$, which will be chosen later, and used Young's inequality in the last step.
The estimation procedure for the second stochastic integral is similar. We again introduce the small constant $\widetilde{\eps}$ which will be used to absorb terms including the supremum over time into the left-hand side, given by $\E\sup_{t\in [0,T]}Z_{(N)}(t)^{p}$.
\begin{align*}
   p\E\sup_{t\in [0,T]}& \frac{1}{N}\sum_{i=1}^{N}\int_{0}^{t} Z_{(N)}(s)^{p}\sqrt{1-\frac{\alpha}{N}}\dW^{0}_{s}\leq \widetilde{C}_{p}\E\bigg(\int_{0}^{T} \bigg(1-\frac{\alpha}{N}\bigg) Z_{(N)}(s)^{2p}\ds\bigg)^{1/2}\\
   &\leq \widetilde{C}_{p}\sqrt{ 1-\frac{\alpha}{N}}\E\bigg(\widetilde{\eps}\sup_{t\in [0,T]}|Z_{(N)}(t)^{p}|+\frac{1}{\widetilde{\eps}}\int_{0}^{T}  Z_{(N)}(s)^{p}\ds\bigg).
\end{align*}
Combining these and the previous estimates, we obtain 
\begin{align*}
   (1-\widetilde{\eps}(\widetilde{C}_{p,\alpha}+C_{p,\alpha})) \E\sup_{t\in [0,T]} Z_{(N)}(t)^{p}&\leq \E Z_{(N)}(0)^{p}+\bigg(p\beta + \frac{p(p-1)}{2N}+ \frac{p(p-1)}{2}\bigg(1-\frac{\alpha}{N}\bigg) \bigg)\E \int_{0}^{T}  Z_{(N)}(s)^{p}\ds\\
    &\phantom{xx}{}+C_{p,\widetilde{\eps}}\bigg(\frac{\sqrt{\alpha}}{N}+\sqrt{ 1-\frac{\alpha}{N}} \bigg)\E\int_{0}^{T}  Z_{(N)}(s)^{p}\ds.
\end{align*}
We choose $\widetilde{\eps}<\frac{1}{\widetilde{C}_{p,\alpha}+C_{p,\alpha})}$ and multiply both sides of the inequality by $\frac{1}{(1-\widetilde{\eps}(\widetilde{C}_{p,\alpha}+C_{p,\alpha}))}$.  Using either \eqref{eqn:EZp_estimate}, or Gronwall's inequality we obtain
\begin{align*}
   \E\sup_{t\in [0,T]} Z_{(N)}(t)^{p}&\leq C_{p,\beta,\alpha}\E Z_{(N)}(0)^{p}\exp{(C_{p,\alpha,\beta}T)}.
\end{align*}
The constant on the right-hand side can indeed be chosen independent of $N$, since the terms involving $N$ are at most of order $\frac{1}{\sqrt{N}}(\geq \frac{1}{N}$, for $N\geq 1$).

For the second assertion we simply use Jensen's inequality and the non-negativity of the initial condition
\begin{align*}
    \E Z_{(N)}(0)^{1+\zeta}=\E\left(\frac{1}{N}\sum_{i=1}^{N}X_{i}(0)\right)^{1+\zeta}\leq \frac{1}{N}\sum_{i=1}^{N}\E X_{i}(0)^{1+\zeta}.
\end{align*}

By \ref{A:A_3_X(0)_Z(0)}, this then yields a uniform bound.

\end{proof}
\begin{lemma}\label{lem:EsupXp_EXp_estimates}
Let $(X_{1},\dots,X_{N})$ be a (non-negative) solution of the system \eqref{eqn:X_Ni_SDE} and define $Z_{(N)}\coloneqq \frac{1}{N}\sum_{i=1}^{N}X_{i}$. Let $p\geq 1$, $\E X_{i}^{p}(0)<\infty$ and $\E Z_{N}^{p}(0)<\infty$, then the following estimate is satisfied,
\begin{align}
     \E \sup_{t\in [0,T]} X_{i}^{p}(t)\leq C_{T,p,\alpha,\beta}\E \left(X_{i}^{p}(0)+Z_{(N)}(0)^{p}\right).\label{eqn:EsupXp_estimate}
\end{align}
Further, we have
    \begin{align*}
        &\E\frac{1}{N}\sum_{i=1}^{N}\sup_{t\in[0,T]}X_{i}^{p}(t)\leq C_{p,\alpha,\beta, Z_{(N)}(0)}\E\frac{1}{N}\sum_{i=1}^{N}X_{i}^{p}(0).
\end{align*}
\end{lemma}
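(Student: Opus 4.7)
First I would apply Itô's formula to $X_i(t)^p$ using \eqref{eqn:X_Ni_SDE} rewritten compactly as $\d X_i=\beta Z_{(N)}\dt+\sqrt{\alpha}\sqrt{X_i Z_{(N)}}\dW^i+\sqrt{1-\alpha/N}\,X_i\dW^0$. The resulting expansion has a finite-variation part consisting of a term proportional to $X_i^{p-1}Z_{(N)}$ (combining the original drift and the quadratic variation of the idiosyncratic noise, with coefficients in $p,\alpha,\beta$ only) and one proportional to $X_i^p$ (arising from the $W^0$-quadratic variation, with coefficient $\frac{p(p-1)}{2}(1-\alpha/N)\leq\frac{p(p-1)}{2}$), together with two local martingales $p\sqrt{\alpha}\int_0^{t}X_i^{p-1/2}Z_{(N)}^{1/2}\dW^i$ and $p\sqrt{1-\alpha/N}\int_0^{t}X_i^p\dW^0$.

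Next I would take $\sup_{t\in[0,T]}$ and expectation. The Burkholder--Davis--Gundy inequality controls the two martingale terms by $C_p\,\E(\int_0^T X_i^{2p-1}Z_{(N)}\ds)^{1/2}$ and $C_p\sqrt{1-\alpha/N}\,\E(\int_0^T X_i^{2p}\ds)^{1/2}$ respectively. Following the strategy used in the proof of Lemma \ref{lem:EsupZp_EZp_estimates}, I would factor $\sqrt{\sup_{s\in[0,T]}X_i(s)^p}$ out of each square root (writing $X_i^{2p-1}Z_{(N)}=X_i^p\cdot X_i^{p-1}Z_{(N)}$ and $X_i^{2p}=X_i^p\cdot X_i^p$) and apply Young's inequality with a sufficiently small parameter $\widetilde\eps$ to absorb the resulting $\E\sup_{t\in[0,T]}X_i(t)^p$ into the left-hand side. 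The elementary bound $X_i^{p-1}Z_{(N)}\le\frac{p-1}{p}X_i^p+\frac{1}{p}Z_{(N)}^p$ reduces all remaining right-hand-side contributions to $\E X_i(0)^p$ plus integrals of $\E\sup_{s\in[0,t]}X_i(s)^p$ and $\E Z_{(N)}(s)^p$.

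At this point the input from Lemma \ref{lem:EsupZp_EZp_estimates}, namely $\E Z_{(N)}(s)^p\leq C_{T,p,\alpha,\beta}\,\E Z_{(N)}(0)^p$ uniformly in $s\in[0,T]$, handles the non-local contribution, and Gronwall's inequality in integral form applied to $t\mapsto\E\sup_{s\in[0,t]}X_i(s)^p$ then yields \eqref{eqn:EsupXp_estimate}. For the second assertion I would simply average \eqref{eqn:EsupXp_estimate} over $i=1,\dots,N$ and invoke Jensen's inequality $Z_{(N)}(0)^p=\left(\frac{1}{N}\sum_jX_j(0)\right)^p\leq\frac{1}{N}\sum_jX_j(0)^p$ to merge the two resulting terms on the right into a single expression proportional to $\E\frac{1}{N}\sum_iX_i(0)^p$.

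The main obstacle is preserving $N$-uniformity of all constants throughout the Young-inequality absorption: the $W^0$-integrand $p\sqrt{1-\alpha/N}\,X_i^p$ has the same homogeneity in $X_i$ as the target $\sup_t X_i^p$ (in contrast to the subcritical $X_i^{p-1/2}Z_{(N)}^{1/2}$ structure of the idiosyncratic-noise integrand), so the Young parameter must be chosen carefully and the absorption performed simultaneously for both stochastic integrals. The $N$-uniformity is ultimately saved by the fact that the only $N$-dependent prefactors, $\sqrt{1-\alpha/N}$ and $1/N$, are bounded by one, so that this step goes through essentially as in the proof of Lemma \ref{lem:EsupZp_EZp_estimates}.
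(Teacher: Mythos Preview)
Your proposal is correct and follows essentially the same route as the paper's proof: It\^o's formula, BDG on the two stochastic integrals, the factor-and-absorb trick with a small Young parameter $\widetilde\eps$, the Young bound $X_i^{p-1}Z_{(N)}\le C(X_i^p+Z_{(N)}^p)$, the input \eqref{eqn:EZp_estimate} from Lemma~\ref{lem:EsupZp_EZp_estimates}, and Gronwall; the second claim is then obtained by averaging and convexity. Your remark on $N$-uniformity of the absorption step is exactly the point the paper handles implicitly via the bounds $\sqrt{1-\alpha/N}\le 1$ and $\sqrt{\alpha}/N\le C$.
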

\begin{proof}
Ito's formula implies
\begin{align*}
   \E \sup_{t\in [0,T]} X_{i}^{p}(t)&=\E X_{i}^{p}(0)+p\beta  \E \sup_{t\in [0,T]}\int_{0}^{t}X_{i}^{p-1}(s)Z_{(N)}(s)\dt+p\sqrt{\alpha} \E \sup_{t\in [0,T]}\int_{0}^{t}X_{i}^{p-1}(s)\sqrt{X_{i}(s)}\sqrt{Z_{(N)}(s)}\dW_{s}^{i}\\
    &\phantom{xx}{}+p\sqrt{1-\frac{\alpha}{N}} \E \sup_{t\in [0,T]}\int_{0}^{t}X_{i}^{p}(s)\dW^{0}_{s}+\alpha\frac{p(p-1)}{2}\E \sup_{t\in [0,T]}\int_{0}^{t} X_{i}^{p-1}(s)Z_{(N)}(s)\ds\\
    &\phantom{xx}+\frac{p(p-1)}{2}\bigg(1-\frac{\alpha}{N}\bigg)\E \sup_{t\in [0,T]}\int_{0}^{t} X_{i}^{p}(s)\ds.
\end{align*}
Applying the BDG inequality to the stochastic terms,
\begin{align*}
    p\sqrt{\alpha} \E \sup_{t\in [0,T]}&\int_{0}^{t}X_{i}^{p-1}(s)\sqrt{X_{i}(s)}\sqrt{Z_{(N)}(s)}\dW_{s}^{i}\leq p\sqrt{\alpha} C \E\bigg(\int_{0}^{T}X_{i}^{2p-1}(s)Z_{(N)}(s)\ds \bigg)^{1/2}\\
    &\leq C_{p,\alpha}  \E\bigg(\sqrt{\sup_{t\in [0,T]}X_{i}^{p}(t)}\bigg(\int_{0}^{T}X_{i}^{p-1}(s)Z_{(N)}(s)\ds \bigg)^{1/2}\bigg).
\end{align*}
Applying Young's inequality twice, where we introduced the term $\sqrt{\frac{\widetilde{\eps}}{\widetilde{\eps}}}$ for a small constant $\widetilde{\eps}$ which will be chosen later, we obtain,
\begin{align*}
    &\leq C_{p,\alpha}  \E\bigg(\sqrt{\sup_{t\in [0,T]}X_{i}^{p}(t)}\bigg(\int_{0}^{T}X_{i}^{p}(s)+Z_{(N)}(s)^{p}\ds \bigg)^{1/2}\bigg)\\
    &\leq C_{p,\alpha}  \E\bigg(\widetilde{\eps}\sup_{t\in [0,T]}X_{i}^{p}(t) +C_{\widetilde{\eps}}\int_{0}^{T}X_{i}^{p}(s)+Z_{(N)}(s)^{p}\ds \bigg)\\
    &= C_{p,\alpha}  \widetilde{\eps}\E\sup_{t\in [0,T]}X_{i}^{p}(t) +C_{\widetilde{\eps},p,\alpha}\E\int_{0}^{T}X_{i}^{p}(s)\ds +C_{\widetilde{\eps},p,\alpha}\E\int_{0}^{T}Z_{(N)}(s)^{p}\ds.
\end{align*}
Similarly,
\begin{align*}
    \E \sup_{t\in [0,T]}&\int_{0}^{t}X_{i}^{p}(s)\dW^{0}_{s}\leq C \E\bigg(\int_{0}^{T}X_{i}^{2p}(s)\ds \bigg)^{1/2}\\
    &\leq \widetilde{C}_{p,\alpha}  \E\bigg(\sqrt{\sup_{t\in [0,T]}X_{i}^{p}(t)}\bigg(\int_{0}^{T}X_{i}^{p}(s)\ds \bigg)^{1/2}\bigg)\\
    &\leq  \widetilde{C} \widetilde{\eps}\E\sup_{t\in [0,T]}X_{i}^{p}(t) +\widetilde{C}_{\widetilde{\eps}}\E\int_{0}^{T}X_{i}^{p}(s)\ds.
\end{align*}
Hence, choosing $\widetilde{\eps}<\frac{1}{\widetilde{C}_{p,\alpha}+C_{p,\alpha}}$, bringing the terms involving a supremum in time of $X_{i}$ from the right to the left-hand-side and multiplying both sides by $\frac{1}{(1-\widetilde{\eps}(\widetilde{C}_{p,\alpha}+C_{p,\alpha}))}$ yields
\begin{align*}
     \E \sup_{t\in [0,T]} X_{i}^{p}(t)&\leq C_{\widetilde{\eps},p,\alpha}\E X_{i}^{p}(0)+C_{\widetilde{\eps},p,\alpha,\beta}\E\int_{0}^{T}X_{i}^{p}(s)\ds +C_{\widetilde{\eps},p,\alpha,\beta}\E\int_{0}^{T}Z_{(N)}(s)^{p}\ds\\
     &\leq C_{\widetilde{\eps},T,p,\alpha,\beta}\E \left(X_{i}^{p}(0)+Z_{(N)}(0)^{p}\right).
\end{align*}
 The last claim follows directly from \eqref{eqn:EsupXp_estimate} and the triangle inequality.
\end{proof}
\subsection{Appendix D: Estimates for the McKean-Vlasov SDE \texorpdfstring{ \eqref{eqn:MKV_SDE_introduction} }{}}
\begin{lemma}\label{lem:estimates_Y}
Let $p\geq 2$ and let $Y$ denote the solution to \eqref{eqn:MKV_SDE_introduction} with initial condition $Y_{0}$, such that $\E[Y_{0}^{p}]<\infty$. Then
\begin{align*}
    \E\sup_{t\in[s,T]}\left|Y(t)\right|^{p}\leq C_{T,p,\alpha,\beta} \E\left|Y(s)\right|^{p},
\end{align*}
\begin{align*}
     \E\left|Y(t)-Y(s)\right|^{p}\leq C_{p,\alpha,\beta} |t-s|^{p/2}\left(|t-s|^{p/2}+1\right)\E\sup_{r\in[0,T]}Y(r)^{p}\leq \widetilde{C}_{T,p,\alpha,\beta} |t-s|^{p/2}\left(|t-s|^{p/2}+1\right)\E Y(0)^{p}.
\end{align*}
\end{lemma}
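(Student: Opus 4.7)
The structure of the argument parallels that of Lemma~\ref{lem:EsupXp_EXp_estimates} for the finite particle system. The key simplification is that, by Proposition~\ref{prop:McKeanE&U}, the factor $\E[Y(t)|\mathcal{W}^{0}]$ appearing in the coefficients of \eqref{eqn:MKV_SDE_introduction} is explicitly given by the geometric Brownian motion $m_{\lambda}\exp((\beta-1/2)t+W^{0}_{t})$, so in particular all its $L^{p}$ moments are finite and depend only on $T,p,\beta,m_{\lambda}$.

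For the first bound, the plan is to apply It\^o's formula to $Y(t)^{p}$ on $[s,T]$, giving a drift term proportional to $Y^{p-1}\E[Y|\mathcal{W}^{0}]$ and $Y^{p}$, plus two stochastic integrals driven by $B$ and $W^{0}$. After taking $\sup_{t\in[s,T]}$ and expectation, the two stochastic integrals are controlled by the Burkholder--Davis--Gundy inequality, which produces terms of the form
$\E(\int_{s}^{T}Y^{2p-1}\E[Y|\mathcal{W}^{0}]dr)^{1/2}$ and $\E(\int_{s}^{T}Y^{2p}dr)^{1/2}$. A standard trick (extracting $\sqrt{\sup_{t}Y^{p}}$ from the integrand and applying Young's inequality with a small parameter) allows to absorb a fraction of $\E\sup_{t\in[s,T]}Y^{p}$ on the left-hand side. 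The remaining drift-type terms, which involve $Y^{p}$ or $Y^{p-1}\E[Y|\mathcal{W}^{0}]$, are handled by conditional Jensen $\E[Y|\mathcal{W}^{0}]^{k}\leq \E[Y^{k}|\mathcal{W}^{0}]$ and the tower property, reducing everything to $\int_{s}^{T}\E Y(r)^{p}\,dr$. A Gronwall argument then yields $\E\sup_{t\in[s,T]}Y(t)^{p}\leq C_{T,p,\alpha,\beta}\E Y(s)^{p}$.

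For the second bound, I would write
\begin{align*}
Y(t)-Y(s) = \int_{s}^{t}\beta \E[Y(r)|\mathcal{W}^{0}]\dr + \int_{s}^{t}\sqrt{\alpha Y(r)\E[Y(r)|\mathcal{W}^{0}]}\dB_{r} + \int_{s}^{t}Y(r)\dW^{0}_{r},
\end{align*}
apply $|\cdot|^{p}$ and the triangle inequality, and treat each piece separately. For the deterministic drift, H\"older's inequality (in time) gives a factor $|t-s|^{p}$ times $\sup_{r\in[0,T]}\E\,\E[Y(r)|\mathcal{W}^{0}]^{p}\leq \sup_{r}\E Y(r)^{p}$. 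For the two martingale terms, BDG followed by H\"older applied to $(\int_{s}^{t}f\,dr)^{p/2}\leq |t-s|^{p/2-1}\int_{s}^{t}f^{p/2}dr$ (using $p/2\geq 1$) produces a factor $|t-s|^{p/2}$ times $\sup_{r}\E(Y(r)\E[Y(r)|\mathcal{W}^{0}])^{p/2}$ and $\sup_{r}\E Y(r)^{p}$ respectively; Cauchy--Schwarz and conditional Jensen reduce the former to $\sup_{r}\E Y(r)^{p}\leq \E\sup_{r}Y(r)^{p}$. Combining yields the first inequality $C_{p,\alpha,\beta}|t-s|^{p/2}(|t-s|^{p/2}+1)\E\sup_{r\in[0,T]}Y(r)^{p}$, and the second inequality follows from applying the first estimate of the lemma at $s=0$.

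I do not anticipate a genuine obstacle: the analysis is essentially routine moment estimation for a well-posed SDE with polynomially bounded coefficients. The only point requiring some care is keeping track of the two distinct contributions (drift of order $|t-s|^{p}$ and diffusion of order $|t-s|^{p/2}$) so that they combine into the factored form $|t-s|^{p/2}(|t-s|^{p/2}+1)$ stated in the lemma, and making sure that every $\E[Y|\mathcal{W}^{0}]$ factor is either controlled via the explicit geometric Brownian motion formula of Proposition~\ref{prop:McKeanE&U} or reduced, via conditional Jensen and the tower property, to a moment of $Y$ itself.
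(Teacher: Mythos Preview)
Your proposal is correct and follows essentially the same approach as the paper: It\^o's formula plus BDG with the small-parameter Young trick and Gronwall for the first bound, and a direct decomposition of $Y(t)-Y(s)$ into drift and martingale pieces (H\"older in time for the drift giving $|t-s|^{p}$, BDG for the martingales giving $|t-s|^{p/2}$) for the second. The paper does not actually invoke the explicit geometric-Brownian-motion form of $\E[Y|\mathcal{W}^{0}]$ here but simply uses conditional Jensen and the tower property, which you also mention; either route works.
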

\begin{proof}
\begin{align*}
    \E\sup_{t\in[s,T]}\left|Y(t)\right|^{p}&=\E\left|Y(s)\right|^{p}+p\beta\E\sup_{t\in[s,T]}\int_{s}^{t} Y(r)^{p-1}\E\left[Y(r)\big|\mathcal{W}^{0}_{r}\right]\dr\\
    &\phantom{xx}{}+p\sqrt{\alpha}\E\sup_{t\in[s,T]}\int_{s}^{t}Y(r)^{p-1}\sqrt{Y(r)\E\left[Y(r)\big|\mathcal{W}^{0}_{r}\right]}\dB_{r}+p\E\sup_{t\in[s,T]}\int_{s}^{t}Y(r)^{p}\dW^{0}_{r}\\
    &\phantom{xx}{}+p(p-1)\E\sup_{t\in[s,T]}\int_{s}^{t}\alpha Y(r)^{p-1}\E\left[Y(r)\big|\mathcal{W}^{0}_{r}\right]+Y(r)^{p}\dr\\
    &\leq \E\left|Y(s)\right|^{p}+p\beta\int_{s}^{T} \left(\E Y(r)^{p}\right)^{\frac{p-1}{p}}\left(\E\E\left[Y(r)\big|\mathcal{W}^{0}_{r}\right]^{p}\right)^{1/p}\dr\\
    &\phantom{xx}{}+p\sqrt{\alpha}\E\left(\int_{s}^{T} \left|Y(r)^{2p-1}\E\left[Y(r)\big|\mathcal{W}^{0}_{r}\right]\right|\dr\right)^{1/2}+p\E\left(\int_{s}^{T} Y(r)^{2p}\dr\right)^{1/2}\\
    &\phantom{xx}{}+p(p-1)\int_{s}^{T}\alpha\left(\E Y(r)^{p}\right)^{\frac{p-1}{p}}\left(\E\E\left[Y(r)\big|\mathcal{W}^{0}_{r}\right]^{p}\right)^{1/p}+\E Y(r)^{p}\dr\\
    &\leq \E\left|Y(s)\right|^{p}+p\beta\int_{s}^{T} \left(\E Y(r)^{p}\right)^{\frac{p-1}{p}}\left(\E\E\left[Y(r)\big|\mathcal{W}^{0}_{r}\right]^{p}\right)^{1/p}\dr\\
    &\phantom{xx}{}+p\sqrt{\alpha}\E\left(\sup_{r\in[s,T]}|Y(s)|^{\frac{p}{2}}\left(\int_{s}^{T} Y(r)^{p-1}\E\left[Y(r)\big|\mathcal{W}^{0}_{r}\right]\dr\right)^{1/2}\right)+p\E\left(\sup_{r\in[s,T]}|Y(r)|^{\frac{p}{2}}\left(\int_{s}^{T} Y(r)^{p}\dr\right)^{1/2}\right)\\
    &\phantom{xx}{}+p(p-1)\int_{s}^{T}\alpha\left(\E Y(r)^{p}\right)^{\frac{p-1}{p}}\left(\E\E\left[Y(r)\big|\mathcal{W}^{0}_{r}\right]^{p}\right)^{1/p}+\E Y(r)^{p}\dr\\
    &\leq \E\left|Y(s)\right|^{p}+p\beta\int_{s}^{T} \left(\E Y(r)^{p}\right)^{\frac{p-1}{p}}\left(\E\E\left[Y(r)\big|\mathcal{W}^{0}_{r}\right]^{p}\right)^{1/p}\dr\\
    &\phantom{xx}{}+p\sqrt{\alpha}\left(\E\sup_{r\in[s,T]}Y(r)^{p}\right)^{1/2}\left(\E\int_{s}^{T} Y(r)^{p-1}\E\left[Y(r)\big|\mathcal{W}^{0}_{r}\right]\dr\right)^{1/2}\\
    &\phantom{xx}+p\left(\E\sup_{r\in[s,T]}Y(r)^{p}\right)^{1/2}\left(\E\int_{s}^{T} Y(r)^{p}\dr\right)^{1/2}\\
    &\phantom{xx}{}+\int_{s}^{T}\alpha\left(\E Y(r)^{p}\right)^{\frac{p-1}{p}}\left(\E\E\left[Y(r)\big|\mathcal{W}^{0}_{r}\right]^{p}\right)^{1/p}+\E Y(r)^{p}\dr.
    \end{align*}
    We introduce the term $\frac{\widetilde{\eps}}{\widetilde{\eps}}$, for a small constant $\widetilde{\eps}$, which will be chosen later. This constant will help us to absorb the terms involving $\sup_{r\in [s,T]}Y(r)^{p}$ into the left-hand side. Young's inequality now yields
    \begin{align*}
    \E\sup_{t\in[s,T]}\left|Y(t)\right|^{p}&\leq \E\left|Y(s)\right|^{p}+C_{\beta}\int_{s}^{T} \E Y(r)^{p}+\E\E\left[Y(r)\big|\mathcal{W}^{0}_{r}\right]^{p}\dr\\
    &\phantom{xx}{}+C_{p,\alpha}\widetilde{\eps}\E\sup_{r\in[s,T]}Y(s)^{p}+C_{\widetilde{\eps}}\E\int_{s}^{T} \E Y(r)^{p}+\E\E\left[Y(r)\big|\mathcal{W}^{0}_{r}\right]^{p}\dr\\
    &\phantom{xx}{}+C_{\widetilde{\eps},p,\alpha}\int_{s}^{T} \E Y(r)^{p}\dr\\
    &\phantom{xx}{}+C_{p,\alpha}\int_{s}^{T}\E Y(r)^{p}+\E\E\left[Y(r)\big|\mathcal{W}^{0}_{r}\right]^{p}+\E Y(r)^{p}\dr\\
    &\leq \E\left|Y(s)\right|^{p}+C_{\widetilde{\eps},p,\alpha,\beta}\int_{s}^{T} \E Y(r)^{p}\dr+\widetilde{\eps} C_{p,\alpha,\beta}\E\sup_{r\in[s,T]}Y(r)^{p}\\
    &\leq \E\left|Y(s)\right|^{p}+C_{\widetilde{\eps},p,\alpha,\beta}\int_{s}^{T} \E \sup_{u\in[s,r]}Y(u)^{p}\dr+\widetilde{\eps} C_{p,\alpha,\beta}\E\sup_{r\in[s,T]}Y(r)^{p}.
\end{align*}
Choosing $\widetilde{\eps}<\frac{1}{C_{p,\alpha,\beta}}$, rearranging terms, multiplying both sides with $\frac{1}{(1-\widetilde{\eps}C_{p,\alpha,\beta})}$ and Gronwall's inequality yield that 
\begin{align*}
     \E\sup_{t\in[s,T]}\left|Y(t)\right|^{p}\leq \widetilde{C}_{\widetilde{\eps},p,\alpha,\beta} e^{\widetilde{C}_{\widetilde{\eps},p,\alpha,\beta}\left|T-s\right|}\E\left|Y(s)\right|^{p}.
\end{align*}
By similar estimates, we obtain
\begin{align*}
    \E\left|Y(t)-Y(s)\right|^{p}&\leq C \E\left(\int_{s}^{t} \E\left[Y(r)\big|\mathcal{W}^{0}_{r}\right]\dr\right)^{p}+C\E\left(\int_{s}^{t}\sqrt{Y(r)\E\left[Y(r)\big|\mathcal{W}^{0}_{r}\right]}\dW_{r}\right)^{p}\\
    &\phantom{xx}{}+C\E\left(\int_{s}^{t}Y(r)\dW^{0}_{r}\right)^{p}\\
    &\leq C|t-s|^{p-1}\int_{s}^{t} \E\E\left[Y(r)\big|\mathcal{W}^{0}_{r}\right]^{p}\dr\\
    &\phantom{xx}{}+C\E\left(\int_{s}^{t}\left(Y(r)\E\left[Y(r)\big|\mathcal{W}^{0}_{r}\right]\right)\dr\right)^{p/2}\\
    &\phantom{xx}{}+C\E\left(\int_{s}^{t} Y(r)^{2}\dr\right)^{p/2}\\
    &\leq C|t-s|^{p}\E\sup_{r\in[s,t]}Y(r)^{p}+C|t-s|^{p/2}\E\sup_{r\in[s,t]}Y(r)^{p}\\
    &\leq C|t-s|^{p}\E\sup_{r\in[s,t]}Y(r)^{p}+C|t-s|^{p/2}\E\sup_{r\in[s,t]}Y(r)^{p}\\
    &\leq C|t-s|^{p/2}\left(1+|t-s|^{p/2}\right)\E\sup_{r\in[s,t]}Y(r)^{p}.
\end{align*}
The previously obtained bound on $\E\sup_{t\in[s,T]}\left|Y(t)\right|^{p}$ now yields the result.
\end{proof}
\begin{lemma}\label{lem:Y_density_error_estimate}
Let $Y$ be the solution of the McKean-Vlasov SDE  \eqref{eqn:MKV_SDE_introduction} with initial condition $Y_{0}$ satisfying $\E[Y_{0}^{p}]\leq C$ for a finite constant $C>0$ and $p\geq 2$. Further let $Y^{\eps}$ be given by \eqref{eqn:Y_approximation}. Then we have the following error estimate,
    \begin{align*}
        \E\left[\left|Y(t)-Y^{\eps}(t)\right|^{p}\right] \leq C\eps^{\frac{3}{4}p}.
    \end{align*}
\end{lemma}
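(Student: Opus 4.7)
The plan is to decompose $Y(t)-Y^{\eps}(t)$ into the three integral differences arising from the drift, the $\d B$--term, and the $\d W^{0}$--term on $[t-\eps,t]$, and then show that the diffusion term driven by $B$ produces the rate $\eps^{3p/4}$, while the other two contributions decay strictly faster. Explicitly, since on $[t-\eps,t]$ one has
\begin{align*}
Y(t)-Y^{\eps}(t) &= \int_{t-\eps}^{t}\beta\bigl(\E[Y(s)|\mathcal{W}^{0}_{s}]-\E[Y(t-\eps)|\mathcal{W}^{0}_{t-\eps}]\bigr)\ds\\
&\phantom{xx}+\int_{t-\eps}^{t}\sqrt{\alpha}\bigl(\sqrt{Y(s)\E[Y(s)|\mathcal{W}^{0}_{s}]}-\sqrt{Y(t-\eps)\E[Y(t-\eps)|\mathcal{W}^{0}_{t-\eps}]}\bigr)\dB_{s}\\
&\phantom{xx}+\int_{t-\eps}^{t}\bigl(Y(s)-Y(t-\eps)\bigr)\dW^{0}_{s},
\end{align*}
I would take $p$--th moments, apply the triangle inequality, and treat each piece separately.

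For the drift piece, Jensen's inequality in time yields an $\eps^{p-1}$ prefactor times $\int_{t-\eps}^{t}\E|\E[Y(s)|\mathcal{W}^{0}_{s}]-\E[Y(t-\eps)|\mathcal{W}^{0}_{t-\eps}]|^{p}\ds$. Since the conditional expectation $\E[Y(s)|\mathcal{W}^{0}_{s}]=m_{\lambda}\exp((\beta-1/2)s+W^{0}_{s})$ by Proposition \ref{prop:McKeanE&U}, a direct computation using $\E|W^{0}_{s}-W^{0}_{t-\eps}|^{p}\leq C_{p}\eps^{p/2}$ plus the smoothness of the exponential on the $W^{0}$--trajectory gives a bound of order $\eps^{p/2}$ on the integrand, hence an overall contribution of order $\eps^{p-1}\cdot\eps\cdot\eps^{p/2}=\eps^{3p/2}$. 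For the pure $\d W^{0}$ piece, BDG followed by Jensen gives $\eps^{p/2-1}\int_{t-\eps}^{t}\E|Y(s)-Y(t-\eps)|^{p}\ds$, and Lemma~\ref{lem:estimates_Y} bounds the integrand by $C\eps^{p/2}$, yielding a contribution of order $\eps^{p}$.

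The main obstacle, which dictates the final rate, is the $\d B$ integral: the square-root structure of the diffusion coefficient is only $1/2$--Hölder, and this is the source of the suboptimal exponent. Here I would apply BDG together with the elementary inequality $|\sqrt{a}-\sqrt{b}|^{2}\leq|a-b|$ to obtain
\begin{align*}
\E\bigg|\int_{t-\eps}^{t}\sqrt{\alpha}\bigl(\sqrt{Y(s)\E[Y(s)|\mathcal{W}^{0}_{s}]}-\sqrt{Y(t-\eps)\E[Y(t-\eps)|\mathcal{W}^{0}_{t-\eps}]}\bigr)\dB_{s}\bigg|^{p}\leq C\,\E\bigg(\int_{t-\eps}^{t}|A(s)-A(t-\eps)|\ds\bigg)^{p/2},
\end{align*}
where $A(s):=Y(s)\E[Y(s)|\mathcal{W}^{0}_{s}]$. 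A further application of Jensen produces $\eps^{p/2-1}\int_{t-\eps}^{t}\E|A(s)-A(t-\eps)|^{p/2}\ds$. I would then split
\[
A(s)-A(t-\eps)=\bigl(Y(s)-Y(t-\eps)\bigr)\E[Y(s)|\mathcal{W}^{0}_{s}]+Y(t-\eps)\bigl(\E[Y(s)|\mathcal{W}^{0}_{s}]-\E[Y(t-\eps)|\mathcal{W}^{0}_{t-\eps}]\bigr),
\]
and use the Cauchy--Schwarz inequality together with the uniform $p$--moment bounds from Lemma~\ref{lem:estimates_Y} and the $\eps^{p/4}$--estimates of the two increments (of order $\eps^{p/4}$ each in $L^{p/2}$, again using Lemma~\ref{lem:estimates_Y} and the explicit form of the conditional expectation) to obtain $\E|A(s)-A(t-\eps)|^{p/2}\leq C\eps^{p/4}$.

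Combining these estimates gives $\eps^{p/2-1}\cdot\eps\cdot\eps^{p/4}=\eps^{3p/4}$ for the $\d B$ piece, which absorbs the faster-decaying $\eps^{3p/2}$ and $\eps^{p}$ contributions from the other two integrals and yields the claimed bound. The only delicate point in executing the plan is keeping track of the moment bounds of $Y$ and of its conditional expectation uniformly on $[0,T]$, which however is already delivered by Lemma~\ref{lem:estimates_Y} and Proposition~\ref{prop:McKeanE&U}.
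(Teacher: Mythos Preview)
Your proposal is correct and follows essentially the same route as the paper: decompose into the three integrals on $[t-\eps,t]$, apply Jensen/BDG, use $|\sqrt{a}-\sqrt{b}|^{2}\leq |a-b|$ for the $\d B$ term, split the product $A(s)-A(t-\eps)$ via Cauchy--Schwarz, and close with the increment bounds from Lemma~\ref{lem:estimates_Y}. The only cosmetic difference is that for the drift increment the paper bounds $|\E[Y(r)|\mathcal{W}^{0}_{r}]-\E[Y(t-\eps)|\mathcal{W}^{0}_{t-\eps}]|^{p}$ via conditional Jensen by $\E[|Y(r)-Y(t-\eps)|^{p}\,|\,\mathcal{W}^{0}_{r}]$ rather than using the explicit geometric Brownian motion formula, but either argument gives a contribution dominated by the $\eps^{3p/4}$ coming from the $\d B$ piece.
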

\begin{proof}
Recall that $\E\left[Y(t-\eps)\big|\mathcal{W}^{0}_{t-\eps}\right]=\E\left[Y(t-\eps)\big|\mathcal{W}^{0}_{r}\right]$ for any $r\geq t-\eps$.
    \begin{align*}
    \E\left[\left|Y(t)-Y^{\eps}(t)\right|^{p}\right]&\leq C_{p} \E\left[\left|\int_{t-\eps}^{t}\E\left[Y(r)\big|\mathcal{W}^{0}_{r}\right]-\E\left[Y(t-\eps)\big|\mathcal{W}^{0}_{t-\eps}\right]\dr\right|^{p}\right]\\
    &\phantom{xx}{}+C_{p} \E\left[\left|\int_{t-\eps}^{t}\sqrt{Y(r)\E\left[Y(r)\big|\mathcal{W}^{0}_{r}\right]}-\sqrt{Y(t-\eps)\E\left[Y(t-\eps)\big|\mathcal{W}^{0}_{t-\eps}\right]}\dB_{r}\right|^{p}\right]\\
    &\phantom{xx}{}+C_{p} \E\left[\left|\int_{t-\eps}^{t}Y(r)-Y(t-\eps)\dW^{0}_{r}\right|^{p}\right]\\
    &\leq C_{p} |\eps|^{p-1}\E\left[\int_{t-\eps}^{t}\E\left[\left|Y(r)-Y(t-\eps)\right|^{p}\big|\mathcal{W}^{0}_{r}\right]\dr\right]\\
    &\phantom{xx}{}+C_{p} \E\left[\left|\int_{t-\eps}^{t}\left|Y(r)\E\left[Y(r)\big|\mathcal{W}^{0}_{r}\right]-Y(t-\eps)\E\left[Y(t-\eps)\big|\mathcal{W}^{0}_{t-\eps}\right]\right|\dr\right|^{p/2}\right]\\
    &\phantom{xx}{}+C_{p} \E\left[\left|\int_{t-\eps}^{t}\left|Y(r)-Y(t-\eps)\right|^{2}\dr\right|^{p/2}\right]\\
    &\leq |\eps|^{p-1}\E\left[\int_{t-\eps}^{t}\E\left[\left|Y(r)-Y(t-\eps)\right|^{p}\big|\mathcal{W}^{0}_{r}\right]\dr\right]\\
    &\phantom{xx}{}+C_{p} |\eps|^{\frac{p}{2}-1}\E\left[\int_{t-\eps}^{t}\left|Y(r)\E\left[Y(r)\big|\mathcal{W}^{0}_{r}\right]-Y(t-\eps)\E\left[Y(t-\eps)\big|\mathcal{W}^{0}_{t-\eps}\right]\right|^{p/2}\dr\right]\\
    &\phantom{xx}{}+C_{p}|\eps|^{\frac{p}{2}-1} \E\left[\int_{t-\eps}^{t}\left|Y(r)-Y(t-\eps)\right|^{p}\dr\right]\\
        &\leq |\eps|^{p-1}\E\left[\int_{t-\eps}^{t}\E\left[\left|Y(r)-Y(t-\eps)\right|^{p}\big|\mathcal{W}^{0}_{r}\right]\dr\right]\\
    &\phantom{xx}{}+C_{p}|\eps|^{\frac{p}{2}-1} \int_{t-\eps}^{t}\left(\E\left|Y(r)\right|^{p}\right)^{1/2}\left(\E\E\left[\left|Y(r)-Y(t-\eps)\right|^{p}\big|\mathcal{W}^{0}_{r}\right]\right)^{1/2}\dr\\
    &\phantom{xx}{}+C_{p}|\eps|^{\frac{p}{2}-1} \int_{t-\eps}^{t}\left(\E\left|\E\left[Y(t-\eps)^{p}\big|\mathcal{W}^{0}_{t-\eps}\right]\right|\right)^{1/2}\left(\E\left|Y(r)-Y(t-\eps)\right|^{p}\right)^{1/2}\dr\\
    &\phantom{xx}{}+C_{p}|\eps|^{\frac{p}{2}-1} \E\left[\int_{t-\eps}^{t}\left|Y(r)-Y(t-\eps)\right|^{p}\dr\right].
\end{align*}
The result now follows from Lemma \ref{lem:estimates_Y}.
\end{proof}
\subsection{Appendix E: Tightness criteria}
Let $H$ be a Polish space.
We recall the requirements for a subset of $(M_{1}(H), \Wd)$, the space of probability  measures with finite first moment equipped with the Wasserstein distance,
   to be relatively-compact (see for example \cite[Proposition 2.2.3]{panaretos_2020_statistics_in_Wasserstein_space}). 
\begin{lemma}\label{lem:compactness_in_M1}
   A set $S$ is relatively compact in $(M_{1}(H),\Wd)$, if
\begin{enumerate}[label=(\Roman*)]
    \item \label{compactness_M1_C1} it is tight, i.e.~for every $\eps>0$ there exists a compact set $K_{\eps}$ such that
    \begin{align*}
         \mu(K_{\eps})>1-\eps,
    \end{align*}
    for every $\mu\in S$,
   \item \label{compactness_M1_C2} and 1-uniformly integrable: For every $\eps>0$, there exists an $R >0$ and $x_{0}$, such that
    \begin{align}\label{eqn:uniform_1_integrability_condition}
        \sup_{\mu\in S}\int_{\{x\in H \backslash B_{R}(x_{0})\}}d(x,x_{0}) \mu (\dx)<\eps,
    \end{align}
    where $B_{R}(x_0)$ denotes the metric ball with radius $R$ around $x_{0}$.
\end{enumerate}
\end{lemma}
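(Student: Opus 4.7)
The plan is to combine Prokhorov's theorem (to get weak pre-compactness from tightness) with the standard characterization of Wasserstein-1 convergence (to upgrade weak convergence to $\Wd$-convergence using uniform integrability). Concretely, let $(\mu_n)_{n \in \N} \subset S$. By condition \ref{compactness_M1_C1} and Prokhorov's theorem on the Polish space $H$, there is a subsequence (not relabeled) converging weakly to some probability measure $\mu$ on $H$.

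The second step is to verify that the limit $\mu$ belongs to $M_1(H)$, i.e.\ has a finite first moment with respect to the reference point $x_0$ from \ref{compactness_M1_C2}. For every $R>0$ the function $x \mapsto d(x,x_0) \wedge R$ is continuous and bounded, so weak convergence gives $\int (d(x,x_0) \wedge R)\, \mu_n(dx) \to \int (d(x,x_0) \wedge R)\, \mu(dx)$. Condition \ref{compactness_M1_C2} implies $\sup_n \int d(x,x_0)\, \mu_n(dx) < \infty$, and by monotone convergence in $R$ this upper bound is inherited by $\mu$, so $\mu \in M_1(H)$.

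For the final step, I would invoke the well-known equivalence (see e.g.\ \cite[Theorem 6.9]{villani_09_optimal_transport_old_and_new}): for probability measures with finite first moment on a Polish space, $\Wd(\mu_n,\mu) \to 0$ if and only if $\mu_n \to \mu$ weakly and $\int d(x,x_0)\, \mu_n(dx) \to \int d(x,x_0)\, \mu(dx)$, which in turn is equivalent to weak convergence plus 1-uniform integrability of the family. Condition \ref{compactness_M1_C2} provides exactly this uniform integrability, closing the argument.

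The only genuine obstacle is the equivalence invoked in the last step; if one wants a self-contained proof, the natural route is to split the transport cost as $\Wd(\mu_n,\mu) \leq \Wd(\mu_n^R, \mu^R) + \eps$, where $\mu^R$ denotes the restriction/truncation of $\mu$ to the ball $B_R(x_0)$, use \ref{compactness_M1_C2} and the Fatou-type bound above to control the tail contribution uniformly in $n$, and exploit that on the precompact set $\overline{B_R(x_0)}$ weak convergence of probability measures coincides with $\Wd$-convergence (via, e.g., the Kantorovich-Rubinstein duality applied to bounded Lipschitz functions). Everything else is routine measure-theoretic bookkeeping.
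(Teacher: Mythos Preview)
Your argument is correct and follows the standard route: Prokhorov for weak precompactness, then the Villani-type characterization of $\Wd$-convergence via weak convergence plus uniform integrability of first moments. The paper does not give its own proof of this lemma at all; it simply recalls the statement and cites \cite[Proposition 2.2.3]{panaretos_2020_statistics_in_Wasserstein_space} as a reference, so your write-up actually goes beyond what the paper provides.
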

Recall also that a sequence of measures converges in the Wasserstein distance, iff
\begin{align*}
\mu_{n}\rightharpoonup \mu \quad \text{and} \quad 
    \int d(x,x_{0}) \mu_{n}(\dx)\rightarrow \int d(x,x_{0}) \mu(\dx),
\end{align*}
for some $x_{0}\in H$, where $\mu_{n}\rightharpoonup \mu$ denotes  weak convergence of measures, see \cite[Definition 6.8]{villani_09_optimal_transport_old_and_new} for equivalent characterizations. 
Since in the paper, we work with $H=\R_{+}$ which possesses the Heine-Borel property, the first tightness condition follows directly from \eqref{eqn:uniform_1_integrability_condition}.
\begin{theorem}\label{thm:tightness_ethier_kurtz}
    Let $E$ be a Polish space. Suppose $\left\{u_n(\cdot)\right\}_{n \geq 1}$ is a sequence of continuous E-valued processes, defined on the same stochastic basis $(\Omega,\mathcal{F},(\mathcal{F}_{t})_{t},\Prob)$. Then the laws of the family $\left\{u_n(\cdot)\right\}_{n \geq 1}$ are tight on $C([0, \infty), E)$ if and only if the following two conditions hold:
    \begin{enumerate}[label=(\Roman*)]
        \item\label{item:tightness_ek_tightness} For every $ t >0$ and every $\varepsilon>0$ there exists a compact set $K_{t, \varepsilon}^0 \subseteq E$ such 
        that
        \begin{align*}
            \sup _n \Prob\left(u_n(t) \notin K_{t, \varepsilon}^0 \right) \leq \varepsilon.
        \end{align*}
        
        \item \label{item:tightness_Aldous_condition} For every $\eta, \eps>0$, there exists a $\delta>0$ such that for every $n$ and every $(\mathcal{F}_{t})$--stopping time 
        \begin{align*}
          \sup_{0\leq \theta \leq \delta}\Prob\left(d\left(u_n(\tau+\theta), u_n(\tau)\right) >\eta\right) \leq \eps.
        \end{align*}
    \end{enumerate}
\end{theorem}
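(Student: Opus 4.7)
The statement is the classical Aldous--Ethier--Kurtz tightness characterization, and the plan is to prove the two implications separately via Prohorov's theorem together with the Arzel\`a--Ascoli description of relatively compact sets in $C([0,\infty),E)$. Recall that a set $K\subseteq C([0,\infty),E)$ is relatively compact if and only if for every $T>0$ the set $\{u|_{[0,T]}:u\in K\}$ is equicontinuous and $\{u(t):u\in K\}$ is relatively compact in $E$ for each $t\in[0,T]$. I will use this throughout, together with the modulus of continuity
\[
w_T(u,\delta) := \sup\bigl\{ d(u(s),u(t)) : s,t\in[0,T],\ |s-t|\leq\delta\bigr\}.
\]

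For the necessity direction \emph{(tightness $\Rightarrow$ (I) and (II))}, I would first apply Prohorov on the Polish space $C([0,\infty),E)$ to obtain, for each $\eps>0$, a compact set $K_\eps\subseteq C([0,\infty),E)$ with $\inf_n\Prob(u_n\in K_\eps)\geq 1-\eps$. Evaluating at a fixed $t$ shows that $K_{t,\eps}^{0}:=\{u(t):u\in K_\eps\}$ is relatively compact in $E$, which gives (I) after replacing it by its closure. For (II), the equicontinuity of $K_\eps$ on every $[0,T]$ provides some $\delta>0$ such that $w_T(u,\delta)\leq\eta$ for all $u\in K_\eps$, and this pathwise modulus bound trivially dominates the stopping-time expression in (II).

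For the sufficiency direction I would, given $\eps>0$, build a compact subset $K_\eps$ of $C([0,\infty),E)$ satisfying $\inf_n\Prob(u_n\in K_\eps)\geq 1-\eps$ and conclude by Prohorov. To control the marginals I choose a countable dense set $D\subseteq[0,\infty)$, apply (I) to each $t\in D$ with budget $\eps 2^{-k-1}$ on an enumeration, and obtain a set $A_\eps^{(1)}$ with $\Prob(u_n(t)\in K_{t,\eps 2^{-k-1}}^{0}\ \forall t\in D)\geq 1-\eps/2$. The real work is the modulus of continuity: the key step is to upgrade the stopping-time hypothesis (II) into the pathwise bound $\Prob(w_T(u_n,\delta)>\eta)\leq\eps'$, for any prescribed $\eta,\eps'>0$ and suitably small $\delta=\delta(\eta,\eps',T)>0$, uniformly in $n$. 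This is the Aldous argument, and will be the main obstacle: one partitions $[0,T]$ using a sequence of stopping times $\tau_0=0$, $\tau_{k+1}=\inf\{t>\tau_k: d(u_n(t),u_n(\tau_k))>\eta/3\}\wedge T$, applies (II) to each increment $\tau_{k+1}-\tau_k$, and uses that with high probability only finitely many such excursions occur before $T$. Combining these two high-probability events and taking closures yields an equicontinuous, pointwise relatively compact, hence relatively compact, set $K_\eps\subseteq C([0,\infty),E)$ (working on each $[0,T]$ with $T\to\infty$ along $\eps 2^{-T}$ budgets and intersecting) of the required probability.

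The routine pieces are the density/countability bookkeeping, the diagonalization across $T\in\mathbb{N}$, and the Arzel\`a--Ascoli invocation; the delicate step is the Aldous-type passage from stopping-time control in (II) to the pathwise modulus bound, where one has to argue that the number of $\eta/3$-jumps up to time $T$ is tight, so that a uniform $\delta$ works for all $n$.
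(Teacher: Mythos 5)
The paper does not actually prove this theorem: it records it as a known fact and cites Ethier--Kurtz (Corollary 3.7.4, Theorem 3.10.2) and Billingsley (Theorem 16.9). Your proposal reconstructs the classical argument behind those references---Prohorov plus the Arzel\`a--Ascoli characterization of compacta in $C([0,\infty),E)$ for necessity, and for sufficiency the Aldous stopping-time scheme $\tau_0=0$, $\tau_{k+1}=\inf\{t>\tau_k:\,d(u_n(t),u_n(\tau_k))>\eta/3\}\wedge T$ together with a tightness bound on the number of excursions. This is the correct and standard route, so there is no disagreement with the paper, only more detail.

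One point you elide is genuinely delicate and worth flagging explicitly. Condition (II) as stated controls $\Prob\bigl(d(u_n(\tau+\theta),u_n(\tau))>\eta\bigr)$ for a \emph{single} stopping time $\tau$ and a \emph{deterministic} $\theta\le\delta$. In the Aldous excursion argument the gap $\tau_{k+1}-\tau_k$ is random, so you cannot ``apply (II) to each increment'' directly; you first need to pass from the one-stopping-time/deterministic-increment form of (II) to the two-stopping-time form (for all stopping times $\tau\le\sigma\le\tau+\delta$, $\Prob(d(u_n(\sigma),u_n(\tau))>\eta)\le\eps$), which is done via a Fubini/averaging argument over $\theta$ and is exactly the content of the auxiliary lemma in Billingsley preceding Theorem~16.9 (equivalently, \cite[Lemma 3.8.1]{ethier_kurtz_09_markov}). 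A second, minor remark: as written in the paper the stopping time $\tau$ in (II) is unbounded, whereas the references quantify over $\tau\le T$ for each $T>0$; your necessity argument (equicontinuity on $[0,T]$ dominating (II)) implicitly uses this restriction, and it is the form one should prove.
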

This theorem follows from of \cite[Corollary 3.7.4 and Theorem 3.10.2]{ethier_kurtz_09_markov} and \cite[Theorem 16.9]{billingsley_99_convergence_of_probability_measures}. Condition \ref{item:tightness_Aldous_condition} is also referred to as Aldous's condition (see \cite{aldous_78_stopping_and_tightness_1,aldous_89_stopping_and_tightness_2} as well as \cite{jacod_83_tightness_and_stopping_times}). 

\subsection{Appendix F: Construction of the functions used in Lemma \ref{lem:rho_x}}\label{sec:contruction_of_test_function}
    We give an explicit construction of the sequence of smooth functions used in Lemma \ref{lem:rho_x}. Although the construction is standard, it might not be obvious that such a function exists.
    Let $A\subseteq \R$ and
    \begin{align*}
        \varphi(t)= \begin{cases}e^{-\frac{1}{t}} & \text { if } t>0 \\ 0 & \text { otherwise }\end{cases}
    \end{align*}
This function is $C^{\infty}$, and $0<\varphi(t)$ iff $0<t$.
Define $\psi$ as
\begin{align*}
    \psi(x):=k \varphi\left(1-\|x\|^2\right),
\end{align*}
where $k$ can be chosen so that $\int \psi(x) \dx=1$, if so desired. $\psi$ is $C^{\infty}$, further $0 \leq \psi$, and in particular $0<\psi(x)$ iff $\|x\|^2<1$.
Let $\delta>0$ and set 
\begin{align*}
    \psi_\delta(x)=\frac{1}{\delta} \psi\left(\frac{x}{\delta}\right)
\end{align*}
and note that $0<\psi_\delta(x)$ iff $\|x\|<\delta$ and $\int \rho_\delta=1$.
Let $\dom=A+B_{\delta}(0)$, where $B_{\delta}(0)$ denotes the ball with radius $\delta$ around $0$ and consider the function
\begin{align*}
    \psi_{A,\delta}=\indicator_{\dom} * \psi_\delta .
\end{align*}
Clearly $0 \leq \psi_{A,\delta} \leq 1$ and $\psi_{A,\delta}$ is $C^{\infty}$.
If $y \in A$, then $B_{\delta}(y) \subseteq A+B_{\delta}(0)=\dom$, hence $\left(\indicator_{\dom} * \psi_\delta\right)(y)=1$.
If $y \notin A+B_{2\delta}(0)$ then $B_{\delta}(y) \cap\left(A+B_{\delta}(0)\right)=\emptyset$, so $\left(\indicator_{\dom} * \psi_\delta\right)(y)=0$.
For derivatives of any order we obtain
\begin{align*}
  D^\nu \psi_{A,\delta}(x)=D^\nu \int_{\dom} \frac{1}{\delta} \psi\left(\frac{x-y}{\delta}\right) \dy=\frac{1}{\delta^{|\nu|+1}} \int_{\dom} \left(D^\nu \psi\right)\left(\frac{x-y}{\delta}\right) \dy,  
\end{align*}
which yields that
\begin{align*}
    \left|D^\nu \psi_{A,\delta}(x)\right| \leq\left(\int\left|\left(D^\nu \psi\right)(y)\right| \dy\right) \frac{1}{\delta^{|\nu|}} .
\end{align*}
In Lemma \ref{lem:rho_x} for $R\in\R,\,R>0$, $A=[-R,R]$ and $\delta=4$ will be chosen. It is important to note that we do not want to approximate $\indicator_{A}$, but rather the constant $1$ function pointwise. Hence we are not interested in letting $\delta\rightarrow 0$ or maintaining the $L^{1}$ norm of our test functions.

 \bibliographystyle{abbrv}
 \bibliography{references}
\end{document}